\documentclass[preprint,pdftex,english]{imsart}
\usepackage{amsthm,amsmath,amssymb,amsfonts,epic,multirow,ulem}

\usepackage[numbers]{natbib}
\RequirePackage{natbib}
\RequirePackage[colorlinks,citecolor=blue,urlcolor=blue]{hyperref}

\usepackage{comment}
\usepackage{wrapfig}
\usepackage[english]{babel}
 \usepackage[T1]{fontenc} 

 \usepackage{makeidx}
 \usepackage{fancyhdr}
\usepackage{epsf}
\usepackage[dvips]{epsfig}  
\usepackage{subfig}
 \usepackage{graphicx}
\usepackage[nice]{nicefrac}
\usepackage{dsfont}
\usepackage{calrsfs} %Lettres plus jolies

\usepackage{pifont}
\usepackage{bm}

\usepackage{wasysym}
\usepackage{stmaryrd}
\usepackage{aeguill}
\usepackage{mathrsfs}
\usepackage{enumerate}
\usepackage{enumitem}
\usepackage[dvipsnames,table,xcdraw]{xcolor}
\usepackage{cancel}
\usepackage{ulem}

\textheight 21truecm 
\textwidth 15truecm
\oddsidemargin +0.5truecm
\evensidemargin +0.5truecm

\startlocaldefs

%%%%%%%%%%%%%%%%%%%%
%%% raccourcis environnement %%%
%%%%%%%%%%%%%%%%%%%%
\newtheorem{theo}{Theorem}[section]

\newtheorem{lemm}[theo]{Lemma}

\def\tt{\bm{\tau}}
%%%%%%%%%%%
%%% notations %%%
%%%%%%%%%%%

%%% ensembles %%%
\def\R{\mathbb{R}}
\def \N{\mathbb{N}}
\def \Z{\mathbb{Z}}

%%% indicatrice, probas, esperances

\def \Pe{\mathbb{P}^{\mathcal{E}}}
\def \P{\mathbb{P}} % proba 
\def \Ee{\mathbb{E}^{\mathcal{E}}}
\def \E{\mathbb{E}} % esperance 
 % tribu
 % tribu borel

\newcommand{ \un }{\mathds{1}}
\def \Ran{\mathcal{R}}
\def \bfa{\mathbf{f}}
\def \tS{\bm{S}}

\newcommand{\cro}[1]{\left[ \left. #1 \right. \right]}

\newcommand{\Var}{\mathbb{V}\mbox{ar}}

%%% divers 

\def\ud{\text{d}}

\def\T{\mathbb{T}}

\newcommand{\red}[1]{{\color{red}{#1}}}

\newcommand{\pe}[1]{\p[\mathcal{E}]{#1}}

\endlocaldefs

%%%%%%%%%%%%%%%%%%%%%%%%%%%%%%%%%%%%
%%%%%%%%%%%%%%%%%%%%%%%%%%%%%%%%%%%%

% References to Theorems, Lemmata, etc.

 % Theorem tag equals ``thm''
 % Lemma tag equals ``lem''
 % Proposition tag equals ``prop''
 % Remark equals ``rem''
 % Claim tag equals ``claim''
%\newcommand{\corref}[1]{Corollary~\ref{cor:#1}} % Corollary = ``cor''
 % Assumption = ``assum''
 % Definition= ``def''
 % Section = ``sec''
 % Equation = ``eq''
 % Figure = "fig"
 % chapter = "chap"
 % question = "question"

\newcommand{\mto}{many-to-one Lemma}

\newtheorem{theorem}{Theorem}

\newtheorem{proposition}[theorem]{Proposition}

\newtheorem{remark}{Remark}
%\newtheorem{comment}{Comment}

%%%% General mathematical notations

%%\newcommand{\implique}{\ensuremath{\Rightarrow}} % implication
 % egalite definissant la quantite de gauche
 % egalite definissant la quantite de droite
 % tel que dans une definition d'ensemble

 %corps des rationnels
 %corps des complexes

\renewcommand{\T}{\mathbb{T}}
\newcommand{\X}{\mathbb{X}}

% cal

% bf

% tilde

% bb

\def \Eb{\mathbf{E}}
\def \Pb{\mathbf{P}}

%Estimators

% greek

%\newcommand{\e}{\varepsilon}

\newcommand{\paren}[1]{\left( \left. #1 \right. \right)} 
\newcommand{\set}[1]{\left\{ \left. #1 \right. \right\}}
 %joli abs
%\providecommand{\norm}[1]{\left \lVert #1 \right\rVert}
 %Partie Entiere superieure
 %Partie Entiere superieure

%%%% Probability notations

%%\newcommand{\Proba}{\mathbb{P}}%{\mbp}

\newcommand{\tlp}{\ell^d}

\renewcommand{\P}{\mathbb{P}}

 %probabilite
 %support (d'une mesure)

\newcommand{\seq}[2][n]{\left(#2_{#1}\right)_{#1\in\N}}

 %variance
 %median
 %covariance

%\newcommand{\p}[2]{\left[\left|#1,#2\right|\right]}

%%%%% Test notations

%%%% Assumptions and constants names

% noms des hypotheses

%Test

% 

%%%%%%%%% Juste pour la redaction
\newtheorem{postita}{Post-it}

%Commandes locales

\renewcommand{\P}{\mathbb{P}}
\newcommand{\p}[2][]{\P^{#1}\!\paren{#2}}
\newcommand{\bP}{\mathbf{P}}
\newcommand{\bp}[2][]{\bP^{#1}\!\paren{#2}}

\def\T{\mathbb{T}}

\newcommand{\Hline}[2]{\mathcal{O}_{{#1},{#2}}}

\newcommand{\bmlambda}{\bm{\lambda}_n}

\makeatletter
\newcommand*\bigcdot{\mathpalette\bigcdot@{.5}}
\newcommand*\bigcdot@[2]{\mathbin{\vcenter{\hbox{\scalebox{#2}{$\m@th#1\bullet$}}}}}
\makeatother

\newenvironment{Customtheo}[1]
  {\innerCustomtheo}
  {\endinnerCustomtheo}%Personnaliser l'intitulé d'un théorème

\begin{document}

\begin{frontmatter}

%%%%%%%%%
%%%%%%%%%%

\title{Generalized range of slow random walks on trees}

\author{\fnms{Pierre} \snm{Andreoletti}\ead[label=e1]{Pierre.Andreoletti@univ-orleans.fr}}
\address{
Institut Denis Poisson,   UMR CNRS 7013,
Universit\'e d'Orl\'eans, Orl\'eans, France. \printead{e1}}

\and 
\author{\fnms{Alexis} \snm{Kagan}\ead[label=e2]{Alexis.Kagan@univ-orleans.fr}}
\address{Institut Denis Poisson,   UMR CNRS 7013,
Universit\'e d'Orl\'eans, Orl\'eans, France. \printead{e2}} \vspace{0.5cm}

\runauthor{Andreoletti, Kagan} 

%Version : \today{}

\runtitle{Generalized range for slow random walks on trees}

\begin{abstract}
In this work, we are interested in the set of visited vertices of a tree $\mathbb{T}$ by a randomly biased random walk $\mathbb{X}:=(X_n,n \in \N)$. The aim is to study a generalized range, that is to say the volume of the trace of $\mathbb{X}$ with both constraints on the trajectories of $\mathbb{X}$ and on the trajectories of the underlying branching random potential $\mathbb{V}:=(V(x), x \in \mathbb{T})$.  Focusing on slow regime's random walks (see \cite{HuShi15}, \cite{AndChen}), we prove a general result and detail examples.  These examples exhibit many different behaviors for a wide variety of ranges, showing the interactions between the trajectories of $\mathbb{X}$ and the ones of $\mathbb{V}$.
%Among these examples one of them focus on simply or highly visited vertices with hight potential $V$, another one on ranges with a renormalization depending on the descent of $V$ along visited trajectory of $\mathbb{X}$.
%(Hu and Shi \cite{HuShi15b}, who proves that for slow regime's random walks , vertices with high potential can be visited
%. This leads naturally to the study of a range $R_n$ of this random walks on high potential. We exhibit a very interesting regime depending on the localisation of  the maximum of the visited potential as well as a general method to treat various ranges with quite general constraints.

\end{abstract}

% \begin{keyword}[class=AenMS]
 % \kwd[MSC : Primary ] 
% \end{keyword}
 \begin{keyword}[class=AenMS]
 \kwd[MSC2020 :  ] {60K37}
   \kwd{60J80}
 \end{keyword}
% % 62M05 Markov processes: estimation
% % 62F12 Asymptotic properties of estimators
% % 60J25 Markov processes with continuous parameter
% % 60J27 Markov chains with continuous parameter
% % 60J35 Transition functions, generators and resolvents

\begin{keyword}
\kwd{randomly biased random walks}
\kwd{branching random walks}
\kwd{range} 
\end{keyword}

\end{frontmatter}

\section{Introduction}
The construction of the process we are interested in starts with a supercritical Galton-Watson tree $\T$ with offspring  distributed as a random variable $\nu$ such that $\E\cro{\nu}>1$. We adopt the following usual notations for tree-related quantities: the root  of $\T$ is denoted by $e$, for any $x \in \T$, $\nu_x$ denotes the number of descendants of $x$, %the variables $\nu_x$ are thus independent copies of $\nu$, 
 the parent of a vertex $x$ is denoted by $x^*$ and its children by $\set{x^i,\ 1\leq i\leq \nu_x}$. For technical reasons, we add to the root $e$, a parent $e^*$ which is not considered as a vertex of the tree. 
 % by \bl{$\llbracket x, y\rrbracket$} the sequence of vertices  realizing the unique shortest path between $x$ and $y$, by
 We denote by $|x|$ the generation of $x$, that is the length of the path from $e$ to $x$ and we write $x<y$ when $y$ is a descendant of $x$, also $x\leq y$ signifying  that $x$ can also be equal to $y$. Finally, we write $\T_n$ for the tree truncated at generation $n$. We then introduce a real-valued branching random walk indexed by $\T$: $\paren{V(x),x\in\T}$. We suppose that $V(e)=0$ and for any generation $n$, conditionally to $\mathcal{E}_n=\set{\T_n,(V(x),x\in\T_n)}$, the vectors of increments $((V(x^i)-V(x),i \leq \nu_x),|x|=n)$ are assumed to be i.i.d. 
 Finally, we denote by $\bP$ the distribution of $\mathcal{E}=\set{\T,\paren{V(x),x\in\T}}$ and $\bP^*$, the probability conditioned on the survival set of the tree $\T$.

We can now introduce the main process of this work which is a random walk $\seq{X}$ on $\T\cup\set{e^*}$ : for a given realization of the environment $\mathcal{E}$, $\seq{X}$ is a Markov chain  with transition probabilities given by
\begin{align*}
&\pe{X_{n+1}=e|X_n=e^*}=1\ ,\\
\forall x\in\T\smallsetminus\set{e^*},\quad &\pe{X_{n+1}=x^*|X_n=x}=\frac{e^{-V(x)}}{e^{-V(x)}+\sum_{i=1}^{\nu_x}e^{-V(x^i)}}, \\
	\forall j\leq\nu_x,\quad &\pe{X_{n+1}=x^j|X_n=x}=\frac{e^{-V(x^j)}}{e^{-V(x)}+\sum_{i=1}^{\nu_x} e^{-V(x^i)}}\enspace.
\end{align*}
The measure $\Pe$ is usually referred to as the quenched distribution of the walk $\seq{X}$ in contrast to the annealed distribution $\P$ which is the measure $\Pe$ integrated with respect to the law of $\mathcal{E}$: 
$$\p{\cdot}=\int\pe{\cdot}\bp{\ud \mathcal{E}}.$$
Similarly, $\P^*$ is the annealed probability conditioned on the survival set of the tree $\T$ (defined by replacing $\bf{P}$ by $\bf{P}^*$ in the above probability).  For $x\in\T\cup\set{e^*}$, we use the notation $\Pe_x$ for the conditional probability  $\Pe(\cdot | X_0=x)$; when there is no subscript, the walk is supposed to start at the root $e$. Recurrent criteria for these walks is determined from the fluctuations of log-Laplace transform 
\begin{align} \psi(s):=\log \Eb\Big[\sum_{|z|=1} e^{-sV(z)}\Big], s>0.\label{fcpsi} \end{align} If $\inf_{0\leq s \leq 1} \psi(s)>0$ then $(X_n,n)$ is $\bP$ almost surely transient and recurrent otherwise. It turns out that recurrent cases can be themselves classified, this can be found in the works of G. Faraud \cite{Faraud} and equivalently for transient cases in E. Aidekon \cite{Aidekon2008}.
%The walk $\seq{X}$, called biased random walk on a tree, was first introduced by R. Lyons (see \cite{Lyons} and \cite{Lyons2}). In our case where the bias is random, the first references go back to R. Lyons and R. Pemantle \cite{LyonPema} and M.V. Menshikov and D. Petritis \cite{MenPet}. Random walks in random environment on trees form a subclass of canonical models in the more general framework of random motions in random media that are widely used in physics. They are a natural extension of the one dimensional model, originally introduced in the works of \cite{Chernov}.  These models  have been intensively studied in  the last four decades, mostly   in   the    physics   and    probability   theory literature.%, seminal papers are \cite{Solomon}, \cite{KesKozSpi} and \cite{Sinai} for the one-dimensional case and \cite{Lyons}, \cite{HuShi10}, \cite{HuShi10a} for walks on trees. 

 %The behaviors of randomly biased walks on trees differ deeply from the behaviors of the RWRE in the one-dimensional case.  In particular there are several regimes for both recurrent and transient cases. 
 
% as resumed in Figure \ref{fig1}.
 %   \begin{figure}[ht]
%\begin{center} 
%{\scalebox{1.2} {\input{RecCrit.tex} }}
%\caption{Recurrence criteria for $\seq{X}$ on Galton-Watson trees} \label{fig1}
%\end{center}
%\end{figure}

Here we consider recurrent cases and more particularly the regime where the random walk is particularly slow (see \cite{HuShi15}), that is to say we put ourselves in the boundary case for which 
\begin{align}
\psi(1)=\psi'(1)=0. \label{hyp0}
\end{align}
\noindent  In this paper, we are interested in the trace of $\mathbb{X}$ which is the set of vertices visited by this random walk until a given instant. The literature on the subject initially started with the study of the range, that is to say the volume of the trace of the simple random walk on $\Z^d$, where $d\geq 2$ is the dimension. In particular P. Erdös and S. Taylor \cite{ErdTay} prove that the asymptotic in time of the trace depends on the dimension $d$. If we put ourselves in the present context of random walk in random environment on trees then the trace naturally depends on the hypothesis on the environment $\mathcal{E}$, see for example \cite{AndChen}, \cite{AidRap} and \cite{DeRap}. A first step in the extension of the notion of the range is to count, for example, the number of vertices visited a large number of time (instead of at least one time). This aspect has been studied for the simple random walk in \cite{Rosen} and in our context by \cite{AndDiel3} and \cite{Chen22}, about which we will give some details later in the paper. A second step in the study of the trace, especially in the case of random walk in random environment, is to select certain vertices not only with criteria on the trajectory of the walk but also on the underlying potential $V$.    
With this in mind we introduce a generalization of the range : for any $n$, let $\bfa^n=\{f^{n,k}:\R^k \rightarrow \R^+;\ k\in\N^*\}$ be a collection of bounded functions. Also, let ${g_n}:\R^+ \rightarrow \R $ be a positive function. Then, the generalized range $\Ran_n(g_n,\mathbf{f}^n)$ is given by \\
\begin{align}
& \Ran_n(g_n,\mathbf{f}^n):=\sum_{x \in \T}g_n(\mathcal{L}_x^n) f^{n,|x|}(V(x_1),V(x_2),\cdots,V(x)), \label{genRange} \textrm{ with }   \\
& \mathcal{L}_x^n:=\sum_{k=1}^n \un_{\{X_k=x\}}, \nonumber
\end{align}
 $(x_i,i \leq |x|)$ being  the sequence of vertices of the unique path from the root (excluded) to vertex $x$ and $\mathcal{L}_x^n$ is the usual local time of the walk at $x$ before the instant $n$. As we may see, $\Ran_n(g_n,\mathbf{f}^n)$ is quite  general and can not be treated in this form, at once for every of these functions $g_n$ and $\mathbf{f}^n$, so additional assumptions (involving $\mathbf{f}^n$, $g_n$ and distribution $\bP$)  will be introduced in Section \ref{sec1.2}. \\
The aim of studying this extended range is twofold, first it allows to understand the interactions between the trajectories of the main process $\mathbb{X}$ and of the underlying branching potential $\mathbb{V}$, second we  develop a general tool allowing to treat many examples (for chosen $\mathbf{f}^n$ and $g_n$).
\noindent Note that if we take, for example, $f^{n,k}=1$ and $g_n=\un_{[1,\infty)}$ for any integer $n$ and $k$, then we get the regular range (treated in \cite{AndChen}), and if $g_n= \un_{[n^b,\infty)}$ with $0<b<1$, then we get the heavy range (see  \cite{AndDiel3} and \cite{Chen22}). \\
The presentation of the results is divided into three subsections. In the first one below, we detail and comment particular examples showing a large variety of behaviors for the range for different $\mathbf{f}^n$ and $g_n$. In a second subsection, we present an informal statement of the general result, the aim of which is to give the main ideas without introducing too much technical material. Finally, in the last section, we introduce  assumptions which leads to the full statement of the main theorem.

\subsection{First results :  examples \label{sec1}}

The first two theorems (Theorems \ref{thm1} and \ref{thm2}) we present in this section derive from three other works : in the first one \cite{HuShi15b}, it is proved that, during its first $n$ steps, the walk can reach 
height of potential of order $(\log n)^2$. More precisely, it is proved that the random variable $(\nicefrac{\max_{1 \leq  k \leq n}V(X_k)}{(\log n)^2})_{n\geq 2}$ converges almost surely to one half. Note that this behavior can be quite disappointing if we have in mind the intuitive behavior of Sinai's one dimensional random walk in random environment \cite{Sinai} for which the highest height of potential reached by the walk is of order $\log n$. Of course the fact that the walk evolves on a tree instead of a one dimensional lattice changes the deal but at the same time it is also proved in \cite{HuShi15} that this walk has a similar behavior than Sinai's one (they are both at a distance of order $(\log n)^2$ from the origin at a given instant $n$). In both cases, the potential plays a crucial role. In the two other papers (\cite{AndChen} and \cite{AndDiel3}), the range is studied : in \cite{AndChen}, it is proved that regular range (the number of visited vertices up to the instant $n$) is of order $\nicefrac{n}{\log n}$, whereas in \cite{AndDiel3}, it is proved that the number of edges visited more than $n^b$ (with $0<b<1$)  times is typically of order $n^{1-b}$ (this particular range is called \guillemotleft heavy range\guillemotright in that paper, see also \cite{Chen22} for a refinement of this work). \\
Our first theorem below mixes the two approaches, showing the influence of a strong constraint on the trajectories of $V$ on both regular or heavy range. What we mean by strong constraint here is a condition of the form $V \geq (\log n)^{\alpha} $ with $1<\alpha<2$, that is to say when the potential is larger than what we can call regular height of potential for this walk (in the slow regime, a regular height is of order $\log n$ since it can be proved that $(\nicefrac{V(X_n)}{ \log n})_{n\geq 2}$ converges weakly, see \cite{HuShi15}) but smaller than the extreme value $(\log n)^2$ of \cite{HuShi15b}. %(which is the result we have just talked about : \cite{HuShi15b}).

%, that is to say volume of a particular trace before a given instant $n$ of random walk $\mathbb X$.
\noindent Before stating this result, let us introduce the  following hypothesis on the  distribution of the branching random walk : there exists $\theta>0$ and $\delta_1\in(0,1/2)$ such that
\begin{align}
\Eb\Big[\sum_{|z|=1}e^{-(1+\theta)V(z)}\Big]+\Eb\Big[\sum_{|z|=1}e^{\theta V(z)}\Big]<&\infty, \label{hyp0+}\\
 \Eb\Big[\Big(\sum_{|z|=1}(1+|V(z)|)e^{-V(z)}\Big)^2\Big]+\Eb\Big[\Big(\sum_{|z|=1}e^{-(1-\delta_1)V(z)}\Big)^2\Big]<&\infty, \label{hyp1}
\end{align}
these are common hypothesis used for example in \cite{AndChen}.

\vspace{0.3cm}
\noindent 

\begin{theo} \label{thm1}
Assume \eqref{hyp0}, \eqref{hyp0+} and \eqref{hyp1} hold. If for any $n$ and $k$, $f^{n,k}(t_1,t_2, \cdots,t_k)= \un_{\{t_k \geq (\log n)^{\alpha}\}}$ with $\alpha\in(1,2)$ and if $g_n(t)=\un_{\{t \geq n^{b}\}}$ with $b\in[0,1)$, then
\begin{center}
$\big(\frac{\log^+\Ran_n(g_n,\bfa^n)-(1-b)\log n}{(\log n)^{\alpha-1}}\big)_{n\geq 2}$ converges in $\P^*$-probability to $-1$,	\\
\end{center}
where  $\log ^+ x = \log (\max(1,x))$.
\end{theo}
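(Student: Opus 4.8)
\emph{Proof strategy for Theorem~\ref{thm1}.} With this choice of $\bfa^n$ and $g_n$ one has
\[
\Ran_n:=\Ran_n(g_n,\bfa^n)=\#\big\{x\in\T:\ \mathcal{L}_x^n\ge n^{b},\ V(x)\ge(\log n)^{\alpha}\big\},
\]
and since $b<1$, $\alpha<2$ we have $n^{1-b}e^{-(1\pm\eps)(\log n)^{\alpha-1}}\to\infty$, so it suffices to prove, for each $\eps>0$, that $\P^*(\Ran_n> n^{1-b}e^{-(1-\eps)(\log n)^{\alpha-1}})\to0$ and $\P^*(\Ran_n< n^{1-b}e^{-(1+\eps)(\log n)^{\alpha-1}})\to0$ as $n\to\infty$ (the $\log^+$ being harmless here). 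Throughout I would first intersect with a ``good environment'' event $\Gcal_n$ of $\bP^*$-probability tending to $1$ on which $\X$ is localised in a bounded number of valleys, its range is at most $n$, and the generations and potential heights reachable by time $n$ are controlled --- the kind of event already used in \cite{HuShi15}, \cite{HuShi15b}, \cite{AndChen}, \cite{AndDiel3}.

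\emph{Upper bound (first moment).} Write $\E^*[\Ran_n;\Gcal_n]=\sum_{k\ge1}\sum_{|x|=k}\P^*(\mathcal{L}_x^n\ge n^{b},\,V(x)\ge(\log n)^{\alpha};\Gcal_n)$. For a fixed $x$ at generation $k$ I would bound the quenched probability $\Pe(\mathcal{L}_x^n\ge n^{b})$ by splitting the trajectory into excursions away from $x$: reaching $x$ at all has quenched probability controlled by the standard hitting-time estimate ($\Ee[\tau_x]$ being of order $\sum_{e<y\le x}e^{V(y)}$ up to lower-order terms), and each visit to $x$ is followed by a geometric number of immediate re-visits whose parameter is read off the potential well around $x$; Markov at level $n^{b}$ then expresses $\Pe(\mathcal{L}_x^n\ge n^{b})$ on $\Gcal_n$ as an explicit functional of $(V(x_1),\dots,V(x))$. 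Summing over $|x|=k$ with the \mto\ converts this into $\E[e^{S_k}\Phi_n(S_1,\dots,S_k)]$ for a centred random walk $S$, the constraint $V(x)\ge(\log n)^{\alpha}$ becoming $S_k\ge(\log n)^{\alpha}$ (together with a reachability constraint on the path, imposed through $\Gcal_n$). What remains is a Gaussian-type moderate-deviation estimate for $\P(S_k\ge(\log n)^{\alpha})$ optimised over the generations $\X$ can reach by time $n$: this is the same optimisation that, for the height $(\log n)^2$ in place of $(\log n)^{\alpha}$, produces the $\tfrac12$ in $\max_{k\le n}V(X_k)\sim(\log n)^2/2$ of \cite{HuShi15b}, and here it yields $\E^*[\Ran_n;\Gcal_n]\le n^{1-b}e^{-(1-\eps)(\log n)^{\alpha-1}}$. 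Markov's inequality together with $\bP^*(\Gcal_n^c)\to0$ finishes the upper bound.

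\emph{Lower bound (second moment).} Here I would run a truncated second-moment (Paley--Zygmund) argument on a restricted count $\Ran_n'\le\Ran_n$ keeping only ``good'' vertices: those at a well-chosen generation $k_n$, with $V(x)$ slightly above $(\log n)^{\alpha}$, sitting at the bottom of a valley of depth of order $b\log n$ and behind a barrier low enough that the quenched probability of reaching $x$ and then recording at least $n^{b}$ visits there is bounded below by an explicit $q_n$. Computed with the \mto\ as above but now from below, $\E^*[\Ran_n']\ge n^{1-b}e^{-(1+\eps/2)(\log n)^{\alpha-1}}$; the second moment is handled by a many-to-two (two-spine) computation whose diagonal term reproduces the first moment and whose off-diagonal term is of strictly smaller order, since two admissible deep excursions into valleys with well-separated entrance points are asymptotically decorrelated --- the mechanism already exploited for the heavy range in \cite{AndDiel3}, \cite{Chen20}. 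Paley--Zygmund gives $\P^*(\Ran_n\ge\tfrac12\E^*[\Ran_n'])$ bounded below, and a concentration argument over the (conditionally almost independent) good valleys upgrades this to probability tending to $1$; alternatively one exhibits, with probability $\to1$, enough disjoint ``good channels'' in the environment and uses conditional independence of $\X$ inside each of them.

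\emph{Where the difficulty lies.} The routine ingredients are the quenched hitting-time/local-time estimates and the bookkeeping through the \mto. The real point is the \emph{constant}: one must show that forcing $\X$ to reach potential $(\log n)^{\alpha}$ within $n$ steps and to linger there costs exactly $e^{-(1+o(1))(\log n)^{\alpha-1}}$ in probability, i.e. that the normalising rate is $(\log n)^{\alpha-1}$ with coefficient precisely $1$. This is the problem of pinning down the optimal generation in the moderate-deviation estimate obtained from the \mto, balancing the entropic weight $e^{S_k}$, the cost of $\{S_k\ge(\log n)^{\alpha}\}$ and the reachability constraint; it is the tree analogue of the multiscale ``ratcheting'' behind the $(\log n)^2/2$ of \cite{HuShi15b}. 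Two extra difficulties relative to that result: everything must survive the heavy-range accounting without losing polynomial-in-$n$ factors (which would swamp $e^{\pm(\log n)^{\alpha-1}}$), and the pool of candidate high-potential vertices is itself exponentially large, so its entropy has to be paid for carefully --- conditioning on $\Gcal_n$ from the outset is what keeps this manageable.
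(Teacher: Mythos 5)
Your strategy is, at bottom, the one the paper runs: an annealed first-moment bound for the upper tail and a truncated second-moment/concentration argument for the lower tail, both fed through the many-to-one lemma and the excursion decomposition at the root. The difference is organisational: the paper does not prove Theorem \ref{thm1} directly but first establishes the general Proposition \ref{Prop1} (whose proof contains exactly your two moments --- the walk part reduced to i.i.d.\ edge local times over the $n$ excursions to $e$, with the local-time constraint handled by the geometric law of $N_x^{T_e}$ and the observation that a high-potential vertex is visited during at most one excursion, and the environment part handled by a second moment on $Z_n$ after cutting the tree at generation $m_n$), then Theorem \ref{Prop2}; the proof of Theorem \ref{thm1} itself consists only of the two environment estimates \eqref{Fact1TH1}--\eqref{Fact2TH1} and the verification of \eqref{Hypothese1}--\eqref{Hypothese4}, giving $h_n\sim(\log n)^{\alpha-1}$, $\kappa_b=0$ and $\xi=-1$.

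One point in your sketch needs correcting, because it is precisely where the constant $-1$ comes from. The decisive one-dimensional estimate is not a moderate deviation for $\Pb(S_k\geq(\log n)^{\alpha})$ optimised over the reachable generations: the walk reaches generations of order $(\log n)^3$, at which scale that probability is not even small when $\alpha<3/2$ and costs only $\exp(-c(\log n)^{2\alpha-3})$ otherwise; in both cases this is of strictly larger order than $e^{-(\log n)^{\alpha-1}}$, so on its own it cannot close the upper bound. The binding constraint is the reachability condition $x\in\mathcal{O}_n$, i.e.\ $\max_{j\le|x|}H_{x_j}\le n$, which after many-to-one becomes the drawdown condition $\max_{j}(\overline{S}_j-S_j)\le\log n$ along the whole path to $x$; the cost of first passage to level $(\log n)^{\alpha}$ under this constraint is $e^{-(1+o(1))(\log n)^{\alpha}/\log n}$ (Lemma A.3 of \cite{HuShi15b}, see also Lemma \ref{Laplace2} here), and it is this ratcheting cost --- not the Gaussian cost of $S_k$ being large --- that produces the rate $(\log n)^{\alpha-1}$ with coefficient exactly $1$. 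You do name this mechanism in your closing paragraph, so the fix is only to make it the estimate actually applied in the first-moment computation, and symmetrically in the lower bound, where the restricted count must impose low drawdown along the entire path (as in Lemmas \ref{Minor_HRHP1} and \ref{MinorRangeHP2}) rather than merely a well-chosen generation.
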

This result shows that the number of vertices with high potential visited at least once (resp. strongly visited,  with $b>0$) is of the same order, though smaller, than the  regular  range (resp.  heavy-range). So visiting high potential is not just an accident appearing a couple of times on very specific paths of the tree. Far from that in fact, as the constraint of high potential creates a decrease of order $e^{- (\log n)^{\alpha-1}+ o(1)}$ and therefore appears as a second order correction comparing to ranges without constraint on the environment. \\
In the second theorem below, we add a slight different constraint which force the random walk to reach a high level of potential far from the ultimate visited vertices of given paths: 

\begin{theo} \label{thm2}
Assume \eqref{hyp0}, \eqref{hyp0+} and \eqref{hyp1} hold. If for any $n$ and $k$, $f^{n,k}(t_1,t_2, \cdots,t_k)= \\ \un_{\{t_{\lfloor k/\beta \rfloor} \geq (\log n)^{\alpha}\}}$ with $\beta>1$, $\alpha\in(1,2)$ ($\lfloor x \rfloor$ stands for the integer part of $x$) and for any $b\in[0,1)$, $g_n(t)=\un_{\{t\geq n^b\}}$ then
\begin{center}
$\big(\frac{\log^+\Ran_n(g_n,\mathbf{f}^n)-(1-b)\log n}{(\log n)^{\alpha-1}})_{n\geq 2}$ converges in $\P^*$-probability to $-1-\frac{\pi}{2}\sqrt{\beta-1}+\rho\big( (\beta-1) \frac{\pi^2}{4} \big)$, 	
\end{center}
where for any $c>0$,
\[ \rho(c):= \frac{c \sigma }{\sqrt{2\pi}} \int_{0}^{+ \infty} {e^{- \frac{c \sigma^2}{2} u}} \Big [ \frac{2}{u^{1/2}}\P(\overline{\mathfrak{m}}_1>1/\sqrt{u \sigma^2})  - \frac{1}{2}\int_u^{+\infty} \frac{1}{ y^{3/2}} \P(\overline{\mathfrak{m}}_1>1/\sqrt{y \sigma^2})dy \Big] du, \]
and  $\mathfrak{m}$ is a Brownian meander, $\overline{\mathfrak{m}}_1:=\sup_{s \leq 1} \mathfrak{m}_s $ and $\sigma^2:= \Eb[\sum_{|x|=1} V^2(x) e^{-V(x)}]$. 
\end{theo}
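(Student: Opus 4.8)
The cleanest route is to deduce the statement from the general result of the next subsection, by checking that the choice $f^{n,k}(t_1,\dots,t_k)=\un_{\{t_{\lfloor k/\beta\rfloor}\ge(\log n)^\alpha\}}$, $g_n(t)=\un_{\{t\ge n^b\}}$ fulfils its hypotheses and then evaluating the resulting limiting constant; equivalently one argues directly by a first-moment and a truncated second-moment analysis of
\begin{align*}
\Ran_n(g_n,\bfa^n)=\sum_{x\in\T}\un_{\{\mathcal{L}_x^n\ge n^b\}}\,\un_{\{V(x_{\lfloor|x|/\beta\rfloor})\ge(\log n)^\alpha\}},
\end{align*}
which is the route I sketch. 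The task is to count the vertices visited at least $n^b$ times whose ancestor at the fraction $1/\beta$ of the generations sits at potential at least $(\log n)^\alpha$.

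For the upper bound I would apply Markov's inequality and estimate $\E^*[\Ran_n(g_n,\bfa^n)]$ with the \mto: after the spinal change of measure the potential along the spine is a centred random walk $(S_j)_j$ of variance $\sigma^2$ (centred because $\psi'(1)=0$), and, exactly as in \cite{AndChen} for the regular range and \cite{AndDiel3,Chen20} for the heavy one, the constraint $\{\mathcal{L}_x^n\ge n^b\}$ forces the walk to localise in a potential valley whose bottom is $x$ and whose depth is at most $\simeq(1-b)\log n$; this is what produces the $n^{1-b}$ factor and makes $S$, in the diffusive rescaling, converge to a Brownian meander (a Brownian motion conditioned to stay positive). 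The new feature is the indicator $\un_{\{V(x_{\lfloor|x|/\beta\rfloor})\ge(\log n)^\alpha\}}$: it requires the walk to reach, at an interior point of the spine (the generation $\lfloor|x|/\beta\rfloor$), a potential of order $(\log n)^\alpha$, i.e.\ well above the regular height $\log n$, and then to come back down to the moderate level of the trapping valley while remaining positive. On the exponential scale $(\log n)^{\alpha-1}$ this is governed by a deterministic variational problem for the Brownian meander: the ascent to the high level is responsible for the leading $-1$ (recovered as $\beta\to1$, where $\lfloor k/\beta\rfloor\to k$ and no descent is needed, giving back Theorem~\ref{thm1}), while the constrained descent over the interval of relative length $(\beta-1)/\beta$, analysed through the law of $\overline{\mathfrak{m}}_1=\sup_{s\le1}\mathfrak{m}_s$, contributes the extra term $-\tfrac{\pi}{2}\sqrt{\beta-1}+\rho\big((\beta-1)\tfrac{\pi^2}{4}\big)$. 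Optimising over the generation of $x$ and over the shape of the spinal excursion then gives the announced constant.

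For the lower bound I would run a truncated second-moment (Paley--Zygmund) argument restricted to those vertices whose spinal path stays in a tube realising the optimal profile above, the correlations between vertices with a common ancestor (and shared local times) being handled by the machinery of \cite{AndChen,AndDiel3}. Since only $\log^+\Ran_n(g_n,\bfa^n)$ renormalised by $(\log n)^{\alpha-1}$ is sought, a multiplicative error of size $e^{o((\log n)^{\alpha-1})}$ is irrelevant, so the two bounds match and yield convergence in $\P^*$-probability. The main obstacle is the exact evaluation of the meander variational problem in the descent phase: unlike in Theorem~\ref{thm1} the spine must both climb high at a prescribed interior time and return while staying positive, so one has to control a Brownian meander conditioned to be large at a fixed intermediate time, which is precisely what produces the function $\rho$ and the $\sqrt{\beta-1}$ correction; a secondary difficulty is transporting this sharp (not merely logarithmic-order) asymptotic through the second-moment bound without losing the constant.
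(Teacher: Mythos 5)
Your overall strategy (check Assumptions \eqref{Hypothese1}--\eqref{Hypothese4} of Theorem \ref{Prop2} for this choice of $\mathbf{f}^n$, or equivalently run a first-moment upper bound and a truncated second-moment lower bound) is the right frame, and it is indeed how the paper proceeds. But the proposal never produces the two estimates that carry all the content of the theorem, namely the analogues of \eqref{Fact1TH2}--\eqref{Fact2TH2}: matching upper and lower bounds of the form $e^{-c_\beta(\log n)^{\alpha-1}(1\pm\varepsilon)}$ for $\Eb\big[\sum_{x}e^{-V(x)}\un_{\{V(x_{\lfloor|x|/\beta\rfloor})\ge(\log n)^{\alpha}\}}\un_{\{x\in\mathcal{O}_n\}}\big]$. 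Everything after "on the exponential scale $(\log n)^{\alpha-1}$ this is governed by a deterministic variational problem for the Brownian meander... Optimising... then gives the announced constant" is precisely the statement to be proved, and it is deferred rather than argued.

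More importantly, the heuristic you offer for where the constant comes from is wrong, so the deferred step would not go through as described. You say the spine must climb to $(\log n)^\alpha$ at generation $p=\lfloor|x|/\beta\rfloor$ and "then come back down to the moderate level of the trapping valley while remaining positive", attributing the correction $-\frac{\pi}{2}\sqrt{\beta-1}+\rho\big((\beta-1)\frac{\pi^2}{4}\big)$ to a constrained descent. There is no descent: on $\mathcal{O}_n$ the downfall $\overline{S}_j-S_j$ stays below $\log n$ for all $j\le k$, so once the path has reached height $(\log n)^{\alpha}\gg\log n$ it cannot return to moderate levels, and the terminal vertex $x$ itself sits at potential $\ge(\log n)^{\alpha}-O(\log n)$ (it is visited with exponentially small quenched probability, compensated by the number of such vertices — not by being the bottom of a trapping valley). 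The actual mechanism, after applying the Markov property at time $p$, is that the remaining $\lceil(\beta-1)p\rceil$ generations must merely keep satisfying the downfall constraint, which by the small-deviation bound of Lemma \ref{LemmBorneUnif} costs $e^{-p\pi^2\sigma^2(\beta-1)/(8(\log n)^2)}$; this acts as an exponential killing of the hitting time $\tau_{(\log n)^{\alpha}}$, and the constant $1+\sqrt{c}-\rho(c)$ with $c=(\beta-1)\pi^2/4$ is read off from the Laplace transform $\E\big[e^{-\frac{c\sigma^2}{2\ell^2}\tau_r}\un_{\{\tau_r\le\tau^{\overline S-S}_{\ell}\}}\big]$ of Lemma \ref{Laplace2}, where the meander enters through the tail asymptotics of $\P(\tau_0^+>z\ell^2,\tau_0^+>\tau_{-\ell}^-)$ — not through a meander "conditioned to be large at a fixed intermediate time". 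A variational problem for a descent phase would give a different (and incorrect) exponent, so this is a genuine gap and not merely a presentational difference.
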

%$\rho(c)= \frac{c \sigma }{\sqrt{2\pi}} \int_{0}^{+ \infty} {e^{- \frac{c \sigma^2}{2} u} f(u)}du$, and $f(u)=\frac{2}{u^{1/2}}\P(\overline{\mathcal{M}}_1>1/\sqrt{u \sigma^2})  - \frac{1}{2}\int_u^{+√É¬É√Ç¬É√É¬Ç√Ç¬É√É¬É√Ç¬Ç√É¬Ç√Ç¬É√É¬É√Ç¬É√É¬Ç√Ç¬Ç√É¬É√Ç¬Ç√É¬Ç√Ç¬ä\infty} \frac{1}{ y^{3/2}} \P(\overline{\mathcal{M}}_1>1/\sqrt{y \sigma^2})dy$. 

\noindent As we may see, a slight change in function $\mathbf{f}^n$ (comparing to previous theorem) makes appear something new, as the constant in the limit is very different than in Theorem \ref{thm1}. Note that $\rho$ can be explicitly calculated : for any $c>0$
\begin{align}\label{Def_rho}
\rho(c)& =2 \sqrt{c}\Big ( \frac{1-e^{-\sqrt{c}}}{\sinh(\sqrt{c})} \Big) -2\Big( \sqrt c-\log( (e^{\sqrt{c}}+1)/2) \Big),
% & = 2 \sqrt{c}\Big ( \frac{1-e^{-\sqrt{c}}}{\sinh(\sqrt{c})} \Big)-2\Big( \frac{\sqrt c}{2}-\log(\cosh({\sqrt{c}/2})) \Big)  
\end{align}
so we clearly obtain continuity when $\beta$ converges to 1, getting back to the previous theorem. At this point, we also would like to discuss the appearance of the Brownian meander distribution in $\rho$. First, note that a Brownian meander appears  in the asymptotic distribution of the (correctly normalized) generation of $X_n$ (see \cite{HuShi15}) which is the consequence  of the positivity of $V$ (see Fact 4 below, page 11) together with an induced  constraint on the largest downfall of $V$ (we  call maximal downfall, for a given $x \in \T$, the quantity $\max_{y \leq x}(\overline{V}(y) -V(y))$, where $\overline{V}(y):=\max_{z \leq y} V(z)$)  visited by the walk before the instant $n$.  Also in \cite{AndChen}, the distribution of two independent Brownian meanders ($\mathfrak{m}^1$ and $\mathfrak{m}^2$) appears in the result for the regular range $\mathcal{R}_n$ (that is when $f^{n,k}=1$ and $g_n= \un_{[1,\infty)}$) : in $\P^*$-probability
 \begin{align} 
\lim_{n \rightarrow +\infty}\mathcal{R}_n  \frac{\log n}{n}=C(\mathcal{D}_{\mathfrak{m}^1},\mathcal{D}_{\mathfrak{m}^2}), \label{Rera1}
\end{align}  
one of these Brownian meanders also coming from the positivity of $V$ and the other one coming from the fact that for a given visited vertex $x$, the maximum of $V$ (on the unique path from the root  to $x$) is attained pretty near the generation of $x$. \\
Here, the Brownian meander appears as we ask a visited vertex $x$ to have reached a high level of potential in an early generation before the one of $x$ and it turns out that the constraint of low downfall of $V$ appearing in \cite{HuShi15}  ($\max_{y \leq x}(\overline{V}(y) -V(y) \leq \log n$)  along this kind of path produces this appearance of the Brownian meander. However, contrarily to \eqref{Rera1}, the Brownian meander is involved in the correction of the main fluctuation $(e^{-C(\mathcal{D}_{\mathfrak{m}})(\log n)^{\alpha-1}})$ and not just in the  constant 
of the limit ($C(\mathcal{D}_{\mathfrak{m}^1},\mathcal{D}_{\mathfrak{m}^2})$).  \\

%Note that we did not treat the heavy range in this second theorem, however we can expect a similar behavior that is to say taking $f$ as in Theorem  \eqref{thm1} and $g(t)=\un_{\{t \geq n^{b}\}}$ : 
%\begin{center}
%$\frac{\log^+\Ran_n(f,g)-(1-b)\log n}{(\log n)^{\alpha-1}} $ converges in $\P^*$-probability to $-\pi /2(\beta-1)$,  	
%\end{center}
In the third example below, we choose $f^{n,k}$ in such a way that an interaction appears between the trajectory of $\mathbb{X}$ and the downfalls of $V$, which have an important role in the behavior of these walks. More particularly, let us introduce, for a given $\mathbf{t}=(t_1,t_2, \cdots, t_k)$ with $k$ a positive integer, the following quantity
\[ H_k(\mathbf{t}) := \sum_{j=1}^k e^{t_j-t_k}, \]
then we call \textit{sum of exponential downfalls} of $V$ at $x \in \T$ with $|x|=k$ the quantity
\begin{align}
H_{|x|}(\mathbf{V}_x):=H_{|x|}({V}(x_1),\cdots,V(x_k))= \sum_{i=1}^{|x|} e^{V(x_i)-V(x_k)}. \label{Def_Hx}
\end{align}
In order to simplify the notation and when there is no possible confusion, we will simply write $H_x$ instead of $H_{|x|}(\mathbf{V}_x)$ in the sequel. %\red{[Bien verifier la coherence et la lecture des differentes notations]}

%the quantity which is nothing but a deterministic version of $H_x=H_k(\mathbf{V}_x)=H_k(\mathbf{V}(x_1),\cdots,V(x_k))$ if $|x|=k$ introduced  in \eqref{RegularLine}. 

\begin{theo} \label{thm3} 
Assume \eqref{hyp0}, \eqref{hyp0+} and \eqref{hyp1} hold.\\
For any $n$ and $k$ let $f^{n,k}(t_1,t_2, \cdots,t_k)=\un_{\{t_{k} \geq a(\log n)^{\alpha}\}}(\sum_{j=1}^kH_j(\mathbf{t}))^{-d}$ with $\alpha\in[1,2)$, $a\in \R$, $d \in \{0,1\}$ and  $g_n(t)=\un_{\{t \geq n^b\}}$ with $b\geq 0$. \\
 If $b\in[0,1/(1+d))$ and $\alpha = 1$ (with $a>1/\delta_1$ when $d=1$) then
\begin{center}
$\big(\frac{\log^+\mathcal{R}_n(g_n,\mathbf{f}^n)}{ \log n}\big)_{n\geq 2}$ converges in $\P^*$-probability to $1-(1+d)b$,
\end{center}
otherwise if $a=1$, $b=0$, $d=1$ and $1< \alpha <2$
\begin{center}
$\big(\frac{\log^+\mathcal{R}_n(g_n,\mathbf{f}^n)-\log n}{(\log n)^{\alpha/2}}\big)_{n\geq 2}$ converges in $\P^*$-probability to $-2$,
\end{center}
 finally if $a=1$, $0<b<1/2$, $d=1$ and $1< \alpha <2$
\begin{center}
$\big(\frac{\log^+ \mathcal{R}_n(g_n,\mathbf{f}^n)-(1-2b)\log n}{(\log n)^{\alpha-1}}\big)_{n\geq 2}$ converges in $\P^*$-probability to $-1/b$.
\end{center}
%\red{Continuit\'e $b \to 0$ ? exemple \`a revoir, d'autant que je crois que je pr\'ef\`ere la normalisation en $\exp(\overline V-V)$ .... mais cela doit pouvoir s'arranger.}
\end{theo}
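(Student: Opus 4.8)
\medskip
\noindent\textbf{Proof proposal.}
All three regimes would be handled by the same two-step scheme: an upper bound for $\log^+\mathcal{R}_n(g_n,\mathbf{f}^n)$ from a first-moment (Markov) estimate, and a matching lower bound from a truncated second-moment estimate, a Paley--Zygmund inequality and an independence-based boosting to $\P^*$-probability one. What changes from one regime to the next is only which term dominates in the ensuing optimisation over the generation $|x|$ and over the shape of $\big(V(x_1),\dots,V(x_{|x|})\big)$. As $\mathcal{R}_n(g_n,\mathbf{f}^n)$ is non-decreasing in the walk's lifetime, I would first replace the horizon $n$ by $T$, the instant of the $\lfloor\aan\rfloor$-th return of $\X$ to $e^*$, with $\aan$ calibrated (as in \cite{AndChen,AndDiel3}) so that $T$ concentrates around $n$, work with $\mathcal{R}^T(g_n,\mathbf{f}^n):=\sum_{x\in\T}g_n(\mathcal{L}^T_x)f^{n,|x|}\big(V(x_1),\dots,V(x)\big)$, and absorb the discrepancy on $\{|T-n|\le n^{1-\eta}\}$, an event of $\P^*$-probability going to one. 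Under $\Pe$ the excursion decomposition at $e^*$ gives the law of $(\mathcal{L}^T_x)_{x\in\T}$: reading the path $e=x_0<\dots<x_{|x|}=x$ as an electrical network with edge $(x_{i-1},x_i)$ of conductance $e^{-V(x_i)}$, the probability that the walk from $x$ reaches $e^*$ before returning to $x$ is of order $H_x^{-1}$ with $H_x=\sum_{i\le|x|}e^{V(x_i)-V(x)}$ the resistance of the path seen from $x$; hence $\mathcal{L}^T_x$, conditionally on $x$ being visited, is --- up to universal constants --- geometric with mean of order $H_x$, while $x$ is reached in a given excursion with quenched probability of order $e^{-\overline V(x)}$ up to polynomial factors, $\overline V(x)=\max_{i\le|x|}V(x_i)$. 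In particular $\{\mathcal{L}^T_x\ge n^b\}$ essentially forces $\aan\,e^{-\overline V(x)}\,H_x\gtrsim n^b$.

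Feeding this into $\E^*[\mathcal{R}^T(g_n,\mathbf{f}^n)]$ and invoking the \mto\ --- which turns $\E\big[\sum_{|x|=k}\phi(V(x_1),\dots,V(x_k))\big]$ into $\E\big[e^{S_k}\phi(S_1,\dots,S_k)\big]$ for the centred walk $(S_j)_{j\ge0}$ of variance $\sigma^2$ attached to $\bP$ through \eqref{hyp0} --- reduces the upper bound to a sum over $k$ of expectations of $e^{S_k}\big(\sum_{j\le k}H_j(S)\big)^{-d}\,\un_{\{S_k\ge a(\log n)^\alpha\}}$ weighted by the (explicit) probability of the local-time event, the moment hypotheses \eqref{hyp0+}--\eqref{hyp1} absorbing the off-spine contributions. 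Carrying out the optimisation over $k$ and over the location of $\overline V(x)$, under the slow-regime constraint that the largest downfall $\max_{y\le x}\big(\overline V(y)-V(y)\big)$ stays of order $\le\log n$, yields, after logarithms: the exponent $1-(1+d)b$ when $\alpha=1$ (the barrier $V(x)\ge a\log n$ then only shifts constants, and for $d=1$ the optimal vertices satisfy $\sum_jH_j\asymp n^{b}$, which produces the extra $-b\log n$ --- the assumption $a>1/\delta_1$ guaranteeing, via the $e^{-(1-\delta_1)V}$-moment in \eqref{hyp1}, that the weighted sum is finite); the correction $-2(\log n)^{\alpha/2}$ when $a=1,\ b=0,\ d=1$; and the correction $-b^{-1}(\log n)^{\alpha-1}$ when $a=1,\ 0<b<1/2,\ d=1$, the factor $b^{-1}$ reflecting that at the optimal generation the polynomial price $n^{b}$ is paid twice --- once to reach local time $n^b$ and once inside $\sum_jH_j$.

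For the matching lower bound I would keep in $\mathcal{R}^T(g_n,\mathbf{f}^n)$ only the ``good'' vertices: generation in a narrow window around the optimal $k_n$ found above, potential $a(\log n)^\alpha\le V(x)\le a(\log n)^\alpha+O(1)$, path confined to a suitable tube with largest downfall $\le\log n$, and $H_x$, $\sum_jH_j$ of the prescribed order. On this set the conditional probability of $\{\mathcal{L}^T_x\ge n^b\}$ is bounded below, so the restricted first moment is of the announced order; a second-moment computation --- with leading contribution from pairs $(x,y)$ whose most recent common ancestor is near the root, deeper pairs being controlled through \eqref{hyp1} --- then gives a restricted second moment at most a constant times the square of the restricted first moment. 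Paley--Zygmund provides $\mathcal{R}^T(g_n,\mathbf{f}^n)$ of the right order with $\P^*$-probability bounded away from $0$; cutting $\T$ at a fixed generation $N$ into i.i.d.\ subtrees and using that on survival $N$ may be taken so that arbitrarily many of these subtrees are infinite upgrades ``bounded away from $0$'' to ``$\to1$'', which finishes the proof.

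The main obstacle is the joint control, along a single branch and uniformly over generations up to order $(\log n)^2$, of the triple $\big(\mathcal{L}^T_x,\,H_x,\,\sum_{j\le|x|}H_j\big)$ together with the downfall process. In the regimes $1<\alpha<2$ the second-order constants $-2$ and $-b^{-1}$ are not read off a single term but emerge from a saddle-point balance between the exponential cost $e^{-V(x)}$ of reaching potential of order $(\log n)^\alpha$, the polynomial-in-$n$ number of candidate vertices, and the penalty $-\log\big(\sum_jH_j\big)$; making this rigorous requires sharp (not merely logarithmic) asymptotics for $(S_j)$ conditioned to stay non-negative and to realise a prescribed large downfall, which is exactly where the Brownian meander behind Theorem~\ref{thm2} reappears implicitly.
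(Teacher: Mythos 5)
Your scheme --- excursion decomposition at the returns to the root, geometric edge local times with mean governed by $H_x$, a first-moment (many-to-one) upper bound and a truncated second-moment lower bound boosted to probability one by cutting the tree into independent subtrees --- is exactly the machinery the paper packages into Proposition~\ref{Prop1} and Theorem~\ref{Prop2}, so that the proof of Theorem~\ref{thm3} reduces to verifying assumptions \eqref{Hypothese1}--\eqref{Hypothese4} and computing $\kappa_b$ and $h_n$, with the constants $-2$ and $-1/b$ emerging from the same optimisation over the maximal downfall that you describe. One small correction of attribution: for this theorem the sharp one-dimensional input is not a meander estimate but the elementary ladder-epoch bound $\E\big[e^{-\max_{j\le\tau_r}(\overline{S}_j-S_j)}\big]\le e^{-2\sqrt{r}(1-\varepsilon)}$ of Lemma~\ref{EspMaxLine} (case $b=0$, giving the $2(\log n)^{\alpha/2}$), while for $0<b<1/2$ the factor $1/b$ comes from the downfall being capped at $b\log n$ by the constraint $\sum_{j}H_{x_j}\lesssim n^b$, rather than from paying $n^b$ twice.
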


%\bl{Dire que les cumultative exponential downfall ont une grande importance pour la trajectoire de $X$ d'o\`u l'id\'ee d'inclure une contrainte l'impliquant dans $f$.}
\noindent  For the first limit (when $ \alpha = 1$, implying that we have set a common height of potential - see Fact 1), by taking $d=0$, we obtain the limit ($1-b$) of the usual heavy range of \cite{AndDiel3}. Otherwise, if we add the penalization with the cumulative exponential downfalls $(\sum_{y \leq x} H_y)$, that is when $d=1$, then an extra cost $d*b=b$ appears. \\ %\red{This example can be used as a point of comparison to the two other more interesting cases that follow}.  \\
The second case (with $b=0$ but $1<\alpha < 2$) has two constraints on the environment so the normalization  $(\log n)^{\alpha/2}$  appears as a compromise  between the fact that high level of potential is asked ($\un_{ \{t_{k} \geq (\log n)^{\alpha}\}}$), which alone yields by Theorem \ref{thm1} a normalization $(\log n)^{\alpha-1}$, and the fact that cumulative exponential downfall fluctuations ($\sum_{m \leq k}H_m(\mathbf{t})$) can not be two large as it appears in the denominator of the range. This yields the $(\log n)^{\alpha/2}$ (note that as $\alpha<2$, $\alpha/2> \alpha-1$). \\ 
For the last case ($0<b<1/2$ and $1< \alpha <2$), the range is of order $n^{1-2b}e^{ -(\log n)^{ \alpha-1} /b}$ comparing to $n e^{ -2(\log n)^{ \alpha/2}}$ when $b=0$ of the previous case. In particular, the parameter $b$ of the heavy range appears in both the main normalization $n^{1-2b}$ and in the correction $e^{ -(\log n)^{\alpha-1} /b}$. This can be intuitively understood as follows : first  $n^{1-2b}=n*n^{-b}* n^{-b}$, one  $n^{-b}$ is classical from the heavy range when asking for a local time to be larger than $n^b$ (which already appears in  the first part of the Theorem), the second $n^{-b}$ comes from the fact that a local time at a given vertex $x$ can be larger than $n^b$ only if 
$\sum_{j=1}^{|x|} e^{V(x_j)-V(x)} \geq n^b$ and as this quantity appears in the normalization of the range (via $f^{n,k}(t_1,t_2, \cdots,t_k)$) this produced this second $n^{-b}$. So this part ($n^{1-2b}$) appears as a first interaction between the constraints on the trajectory of $\mathbb{X}$ and the one of $\mathbb V$. Let us now discuss about $e^{-\nicefrac{(\log n)^{\alpha- 1}}{b}} =e^{-\nicefrac{(\log n)^{\alpha}}{(b \log n)}}$. For this term, we see intuitively the constrains for the walk to reach height of potential of order $(\log n)^{\alpha}$ but a the same time,  in order to keep the denominator $\sum_{j \leq k} H_j(\mathbf{t})$ as low as possible, the maximal downfall has to remain  smaller than ${b \log n}$, thus producing the ratio $(\log n)^{\alpha} / (b \log n)$.  
\\

In the ultimate example below, we ask similar constraints for the environment than above but only in the early visited generations :

\begin{theo} \label{thm4}
Assume \eqref{hyp0}, \eqref{hyp0+} and \eqref{hyp1} hold. Let $\beta >1$. For any $n$ and $k$, let $f^{n,k}(t_1,t_2, \cdots,t_k)= \un_{\{t_{\lfloor k/\beta \rfloor} \geq (\log n)^{\alpha}\}}(\sum_{j=1}^{\lfloor k/\beta \rfloor}H_j(\mathbf{t}))^{-1}$, $\alpha\in(1,2)$ and if $g_n(t)=\un_{\{t \geq n^{b}\}}$ with $b\in(0,1)$, then
\begin{center}
$\big(\frac{\log^+ \mathcal{R}_n(g_n,\mathbf{f}^n)-(1-b)\log n}{(\log n)^{\alpha/2}}\big)_{n\geq 2}$ converges in $\P^*$-probability to $-2$. 	
\end{center}
\end{theo}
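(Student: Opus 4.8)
The plan is to prove matching upper and lower bounds for $\mathcal{R}_n:=\mathcal{R}_n(g_n,\mathbf{f}^n)$ by the first-moment / truncated-second-moment scheme underlying Theorems~\ref{thm1}--\ref{thm3}, the present statement being the one that carries simultaneously the early-generation constraint of Theorem~\ref{thm2} and the exponential-downfall penalization of Theorem~\ref{thm3}. Throughout write $h_n:=(\log n)^\alpha$, $m=m(x):=\lfloor|x|/\beta\rfloor$ and $F_n(x):=\un_{\{V(x_m)\ge h_n\}}\big(\sum_{j\le m}H_j(\mathbf{V}_x)\big)^{-1}$, so that $\mathcal{R}_n=\sum_{x\in\T}\un_{\{\mathcal{L}_x^n\ge n^b\}}F_n(x)$.

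\emph{Upper bound.} The first step detaches the walk from the environment. Conditionally on $\mathcal{E}$, cut trajectories into excursions from $e^*$: there are at most $n$ of them before time $n$, they are i.i.d.\ under $\Pe$, and the local time at a fixed $x$ during one excursion is, on the event it is positive, dominated by a geometric variable with parameter the quenched escape probability from $x$; this is the quenched input of \cite{AndChen,AndDiel3}. It provides a bound $\Pe_e(\mathcal{L}_x^n\ge n^b)\le n^{o(1)}\,\Phi_{\mathcal E}(x)$ in which $\Phi_{\mathcal E}(x)$ is an explicit functional of the conductances along the ray to $x$ and in the bush at $x$, vanishing unless the mean per-excursion local time at $x$ is $\gtrsim n^b$; the essential feature for us is that, after averaging the bush, $\E^*[\mathcal{R}_n]$ reduces to a sum over generations $k$ of $\sum_{|x|=k}$ of an annealed quantity that the \mto\ for the boundary branching random walk expresses as $\E[e^{S_k}\Psi_k(S)]$ for the centred walk $S$ attached to \eqref{hyp0}. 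Optimising the trajectory of $S$, exactly as for Theorems~\ref{thm2} and~\ref{thm3}, one exploits the tension between the requirements $S_m\ge h_n$ (forced by the indicator in $F_n$) and ``$\sum_{j\le m}H_j$ small'' (it sits in the denominator): the best compromise keeps $\sum_{j\le m}H_j$ of order $e^{(\log n)^{\alpha/2}}$, which both penalises the range directly by $e^{-(\log n)^{\alpha/2}}$ and forces the climb to $h_n$ over the first $m\asymp(\log n)^2$ generations -- now constrained to have downfalls $\asymp(\log n)^{\alpha/2}$ -- to pay an additional $e^{-(\log n)^{\alpha/2}}$; this produces the factor $e^{-2(\log n)^{\alpha/2}(1+o(1))}$. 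Meanwhile the heavy-range requirement, as in the first part of Theorem~\ref{thm3}, is met by a single downfall of height $b\log n$ placed on the stretch $[m,|x|]$ -- which is disjoint from $[0,m]$ and hence does not inflate $\sum_{j\le m}H_j$ -- and contributes only the polynomial factor $n^{-b}$ relative to the unconstrained $n$. Summing over $k$ gives $\E^*[\mathcal{R}_n]\le n^{1-b}e^{-2(\log n)^{\alpha/2}(1+o(1))}$, and Markov's inequality yields the upper bound in $\P^*$-probability.

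\emph{Lower bound.} I would freeze a deterministic template realising the above optimum and take $\mathcal{G}_n$ to be the set of vertices $x$ at a well-chosen generation $k_n\asymp(\log n)^2$ such that: (i) $(V(x_j))_{j\le m}$ stays in a narrow tube around the optimal climb, so that $V(x_m)\ge h_n$ and $\sum_{j\le m}H_j\le e^{C(\log n)^{\alpha/2}}$; (ii) on $[m,k_n]$ the potential performs the single controlled downfall of height $b\log n$ that turns the bush at $x$ into a trap of the appropriate depth; (iii) $x$ has a favourable offspring configuration. On the event that such an $x$ is reached, the quenched local time $\mathcal{L}_x^n$ concentrates around a deterministic quantity that on $\mathcal{G}_n$ exceeds $n^b$ with the required probability. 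Then I would run Paley--Zygmund on $Z_n:=\sum_{x\in\mathcal{G}_n}\un_{\{\mathcal{L}_x^n\ge n^b\}}F_n(x)$: after the \mto\ and a spine decomposition, $\E^*[Z_n^2]$ splits according to $|x\wedge y|$, the contributions with $x\wedge y$ before or in the climbing region reproducing $(\E^*[Z_n])^2$ up to a factor $n^{o(1)}$ and those with a deep common ancestor being negligible. This gives $\P^*(Z_n\ge\tfrac12\E^*[Z_n])\ge n^{-o(1)}$, and a restart/localisation argument at generation $\eta\log n$, as in \cite{AndChen}, exploiting the many near-independent subtrees rooted there, upgrades it to convergence in $\P^*$-probability.

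\emph{Main obstacle.} The delicate part is the second moment: one must control simultaneously the environmental correlation of two rays with a common ancestor and the correlation of the quenched local times $\mathcal{L}_x^n,\mathcal{L}_y^n$, which is strong because the walk visits the subtree of $x\wedge y$ a single random number of times before it can reach either $x$ or $y$. Keeping the off-diagonal sum below $(\E^*[Z_n])^2\,n^{o(1)}$ requires designing $\mathcal{G}_n$ so that the common ancestor is essentially confined to the climbing region and, following Theorem~\ref{thm2}, so that the level $h_n$ is reached only just before generation $m$, so that a Brownian-meander estimate governs the entropy of the admissible climbing paths while the trap estimates of \cite{AndDiel3} handle the visits on $[m,k_n]$. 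A further point requiring care is that the errors in the exponent of both bounds must be $o((\log n)^{\alpha/2})$ and not merely $o((\log n)^\alpha)$: this needs matching estimates on the cost of climbing to $h_n$ with downfalls of prescribed size, and it is there that the constant $-2$, rather than some unidentified negative constant, is pinned down.
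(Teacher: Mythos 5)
Your overall architecture --- excursion decomposition of the walk, a first--moment upper bound via Markov, and a truncated second moment with Paley--Zygmund plus a restart at generation $\eta\log n$ --- is exactly the machinery the paper packages once and for all into Proposition~\ref{Prop1} and Theorem~\ref{Prop2}; the theorem-specific content then reduces to the two random-walk estimates \eqref{Fact1TH4}--\eqref{Fact2TH4}, and you have correctly identified the optimisation $\min_M\{M+(\log n)^{\alpha}/M\}=2(\log n)^{\alpha/2}$ over the maximal downfall $M$ that produces the constant $-2$, as well as the reason the heavy-range constraint only costs the polynomial factor $n^{-b}$ (it acts on $H_x$ at the terminal generation, disjoint from the penalised stretch $j\le\lfloor|x|/\beta\rfloor$). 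Two quantitative choices in your sketch would, however, fail to deliver that constant. First, you localise the good vertices at a generation $k_n\asymp(\log n)^2$ and assert $m\asymp(\log n)^2$. Under the conditioning $\max_{j\le \tau_r}(\overline{S}_j-S_j)\le M$ with $M\asymp(\log n)^{\alpha/2}$ and $r=(\log n)^{\alpha}$, the hitting time $\tau_r$ is of order $rM=(\log n)^{3\alpha/2}$ (this is what Lemma~\ref{Laplace2} encodes: the Laplace transform at scale $\ell^{-2}$ decays like $e^{-(1+\sqrt{c}-\rho(c))r/\ell}$). Forcing $S_m\ge r$ with $m\asymp(\log n)^2$ costs $e^{-c(\log n)^{2\alpha-2}}$ when $\alpha>4/3$, while for $\alpha<4/3$ the walk reaches $r$ long before generation $m$ and must then hold the barrier for $\asymp(\log n)^{2}$ further steps at cost $e^{-c(\log n)^{2-\alpha}}$; in both regimes the extra exponent strictly dominates $(\log n)^{\alpha/2}$ (and no re-optimisation of $M$ repairs this), so your lower bound would miss the upper bound for every $\alpha\neq 4/3$. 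The paper avoids the issue by summing over all generations $k\le (\log n)^{4}$ (Lemmas~\ref{Minor_HRHP1}, \ref{MinorRangeHP2} and \ref{LemmBorneUnif}), letting the sum select the optimal generation $\asymp(\log n)^{3\alpha/2}$ automatically.

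Second, the tolerances you allow yourself --- $\Pe(\mathcal{L}_x^n\ge n^b)\le n^{o(1)}\Phi_{\mathcal{E}}(x)$, a second moment bounded by $n^{o(1)}(\E^*[Z_n])^2$, a success probability $n^{-o(1)}$ --- are too coarse for a second-order statement at scale $(\log n)^{\alpha/2}$. An $n^{o(1)}=e^{o(\log n)}$ multiplicative error in the first moment can be as large as $e^{(\log n)^{1-\delta}}$ with $1-\delta>\alpha/2$, which swamps the correction $e^{-2(\log n)^{\alpha/2}}$ entirely. You do observe at the end that the errors must be $o((\log n)^{\alpha/2})$ in the exponent, but the bounds quoted in the body of your argument do not meet that requirement; the admissible slack is $(\log n)^{O(1)}=e^{O(\log\log n)}$, which is what the paper actually obtains through Remark~\ref{rem1b}, the event $\mathcal{A}_n$ of Section~\ref{sec3.3}, and the $e^{\varepsilon h_n}$-calibrated assumptions \eqref{Hypothese2}--\eqref{Hypothese4} with $h_n\sim 2(\log n)^{\alpha/2}$. (The $n^{-o(1)}$ Paley--Zygmund probability itself is harmless, since the restart over the $\asymp n^{c\eta}$ subtrees at generation $\eta\log n$ boosts it to $1-o(1)$; it is only the multiplicative errors on the first moment and on $\E^*[Z_n]$ that must be kept below $e^{\varepsilon(\log n)^{\alpha/2}}$.) Both defects are repairable, but as written they are precisely the points at which the constant $-2$ would be lost.
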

This last theorem just prove that if the factor $(\sum_{j=1}^{\lfloor k/\beta \rfloor}H_j(\mathbf{t}))^{-1}$ only concerns the beginning of the trajectory, that is the sites at a distance $\lfloor |x|/\beta\rfloor $ of the root (if  $x$ is a visited vertex), then things go back to normal: there is no more multiple interactions between $\mathbb{X}$ and $\mathbb V$.% in the final result like it was the case in the previous theorem.
\\

We can imagine more examples like the ones we present above (by acting more on the function $g_n$ as we did for example) but for now, let us introduce a more general result with  general hypothesis on $g_n$ and $\mathbf f^n$.

\subsection{A general result (informal statement)}

 %\red{[on a besoin d'un peu plus, les ajouter le cas \'ech\'eant]}.
In this section, we present an informal statement for the asymptotic in $n$ of   $\mathcal{R}_n({g_n},\mathbf{f}^n)$ for general  $g_n$ and $\mathbf{f}^n$ (including, in particular, the results of the preceding section). The aim, in a first step, is to introduce the result and the main ideas but to minimize the technical materials.
%which are 
%Comparing to above theorems it is less readable both in its statement and on the complexity of additional assumptions needed to introduce it. 
\noindent First recall the expression of the generalized range \eqref{genRange}
$$ \Ran_n(g_n,\mathbf{f}^n)=\sum_{x \in \T}g_n(\mathcal{L}_x^n) f^{n,|x|}(V(x_1),V(x_2),\cdots,V(x)),$$
with $\mathcal{L}_x^n$ the local time of $\mathbb X$ at $x$ before the instant $n$. \\
We assume that $g_n$ can be written as the product of an indicator function and a function $\varphi$ which is positive non-decreasing:  for any $b\geq 0$ and $t\geq 1$, $g_n(t):= \un_{\{t\geq n^b\}}\varphi(t)$. The indicator function is here to include all types of range (regular or heavy). Also, we ask the function $t\mapsto\varphi(t)/t$ to be non-increasing, so that $\varphi(\mathcal{L}_x^n)$ remains  reasonable (at most of the order of the local time itself). %\bl{[...]}
%Finally we also add the technical assumption : for all $\theta\in\R$, there exists $\gamma\in\R$, $\varphi\big(t(\log t)^{\theta})/\varphi(t)=O\big((\log t)^{\gamma}\big)$ \bl{[controler si la decroissance ne suffit pas]}. \\

\noindent
Let us introduce the branching object  $\Psi$ as follows : let $0\lor\lambda'<\lambda$ be two real numbers and $k\geq 1$ an integer, also let $\phi:\R^k\longrightarrow \R$ be a bounded function. $\Psi^k_{\lambda,\lambda'}(\phi)$ is then defined as a mean of $\phi$ along the trajectory of $V$ (with constraints) until generation $k$, that is 
\begin{align}
    \Psi^k_{\lambda,\lambda'}(\phi):= \mathbf{E}\Big[\underset{|x|=k}{\sum}e^{-V(x)}\phi\left(V\left(x_{1}\right),\ldots, V(x)\right)\mathds{1}_{\mathcal{O}_{\lambda,\lambda'}}(x)\Big], \label{Def_Psi}
\end{align} 
where $\mathcal{O}_{\lambda,\lambda'}$ is the set of $(\lambda,\lambda')$-\textit{regular lines} 
\begin{align}\label{RegularLine}
    \mathcal{O}_{\lambda,\lambda'} := \big\{x\in \mathbb{T};\; \underset{j\leq|x|}{\max}\; H_{x_j}\leq \lambda,\; H_x>\lambda'\big\},\ \textrm{ with } H_{x_j}=\sum_{i=1}^j e^{V(x_i)-V(x_j)}, %\ (H_x=H_{x_{|x|}}),
\end{align}
also we denote
\begin{align*}
    \mathcal{O}_{\lambda} := \big\{x\in \mathbb{T};\; \underset{j\leq|x|}{\max}\; H_{x_j}\leq \lambda\big\},\ \textrm{ and } \Psi^k_{\lambda}(\phi):= \mathbf{E}\Big[\underset{|x|=k}{\sum}e^{-V(x)}\phi\left(V\left(x_{1}\right),\ldots, V(x)\right)\mathds{1}_{\mathcal{O}_{\lambda}}(x)\Big].
\end{align*}
Note that since $H_x\geq 1$ ($H_x>1$ when $|x|>1$), we have, for all $\lambda'< 1$, $\mathcal{O}_{\lambda,\lambda'}=\mathcal{O}_{\lambda}$ and $\Psi^k_{\lambda,\lambda'}(\phi)=\Psi^k_{\lambda}(\phi)$. \\

\noindent The appearance of this set of regular lines $\mathcal{O}_{\lambda,\lambda'}$ is partly inspired from the works of  \cite{HuShi15} ($\lambda$ representing extreme exponential downfalls of $V$ related to a reflecting barrier for the walk $(X_k,k \leq n)$), and also (for $\lambda'$) from the constraint on the local time appearing in the function $g_n$. It turns out indeed that constraints on the value of the local time at a site $x$ imply constraints on $H_x$. In other words, there are constraints on the branching potential $\mathbb{V}$ induced by constraints on the random walk $\X$ and sometimes, these constraints have a major impact on the range. We call this type of contribution \guillemotleft   contribution of type one\guillemotright, that is of order $n^{\theta}$ where $\theta\in(0,1]$ (this actually appears for example in Theorem \ref{thm3}). To be more specific, let us introduce the following notations: first $C_{\infty}:=C_{\infty}(\{\mathbf{f}^n;n\geq 1\})$ stands for the supremum of $\{\mathbf{f}^n;n\geq 1\}$ that is
    $C_{\infty}:=\sup_{m,\ell}\|f^{m,\ell}\|_{\infty}$. Then,  define the set 
\begin{align}
\mathcal{U}_b := \big\{\kappa\in[0,1];\textrm{ for all } k\geq 1,\mathbf{t}\in\R^k,n\geq 1: \un_{\{H_k(\mathbf{t})>n^b\}} f^{n,k}(\mathbf{t})\leq C_{\infty} n^{-\kappa}\big\}, \label{Ub}
\end{align}
note that $\mathcal{U}_b\not = \emptyset$ because $0\in \mathcal{U}_b$ and as the supremum is attained, let
% where for any $k>0$
%\[ H_k(\mathbf{t}) := \sum_{j=1}^k e^{t_j-t_k}, \]
%which is nothing but a deterministic version of $H_x=H_k(\mathbf{V}_x)=H_k({V}(x_1),\cdots,V(x_k))$ if $|x|=k$ introduced  in \eqref{RegularLine}.
\begin{align} \kappa_b =: \max\mathcal{U}_b \label{kappab}. \end{align}
When $\kappa_b>0$, we say that a mixing between the constraints on trajectories of the random walk $\X$ and on those of the branching potential $\mathbb{V}$ produce a contribution of type one. \\
{To introduce a second type of contribution, which can be seen as the  second order comparing to the contribution of type one,  we present an important quantity which is the sum over all the generations of $\Psi^{\cdot}_{n,n^b}(f^{n,\cdot})$ :   $\sum_{k\geq 1}\Psi^{k}_{n,n^b}$ $(f^{n,k})$}. First, let us give an heuristic about the way it appears in the asymptotic of the range.  \\ 
For any $k \geq 1$, introduce the $k^{th}$ return time $T^k:=\inf\{k>T^{k-1}, X_k=e\}$ \label{defoftn} to $e$ and take $T^0=0$. Recall the definition of $\overline V$ before \eqref{Rera1} and let $\overline{\mathcal{R}}_{T^n}(g_n,\mathbf{f}^n):=\sum_{x\in \T}g_n(\mathcal{L}^{T^n}_x)f^{n,|x|}(V_x)\un_{\{\overline{V}(x)\geq A\log n\}}$ with $A>0$ \label{leRbar}. $\overline{\mathcal{R}}_{T^n}(g_n,\mathbf{f}^n)$ is a version of the generalized range where we have replaced the instant $n$ by $T^n$ and we have made appear the additional constraint $\overline{V}(x)\geq A\log n$. {Note that it is known (following Lemma 2.1 in \cite{AndChen} and its proof at the beginning of Section 4.2) that  this additional condition $\un_{\{\overline{V}(x)\geq A\log n\}}$ has no effect on the normalization of the range, that is \\
{\bf Fact 1}: There exists $0<c_1=c_1(A) \leq 1$ such that $\lim_{n \rightarrow +¬†\infty} \P^*\big(\frac{ \overline{\mathcal{R}}_{T^n}} {\mathcal{R}_{T^n}}=c_1\big)=1$. \\
So here, we typically consider collections of functions $\mathbf{f}^n$ such that $ \overline{\mathcal{R}}_{T^n}(g_n,\mathbf{f}^n)/ \mathcal{R}_{T^n}(g_n,\mathbf{f}^n) \rightarrow Cte>0$}. One of the main gain of this consideration is the fact that relatively high potential yields interesting quasi-independence in the trajectory of $(X_n,n)$. \\
With this fact, we have (see Section \ref{sec3.1}) something like $\overline{\mathcal{R}}_{T^n}(g_n,\mathbf{f}^n)\gtrsim n \Ee[\overline{\mathcal{R}}_{T^1}(g_n,\mathbf{f}^n)]$ in probability and thanks to the fact that $\varphi$ is non-decreasing and to the expression of the quenched mean of $\overline{\mathcal{R}}_{T^1}(g_n,\mathbf{f}^n)$,  in probability, for large $n$
 \begin{align*}
    \overline{\mathcal{R}}_{T^n}(g_n,\mathbf{f}^n) \gtrsim n\Ee[\overline{\mathcal{R}}_{T^1}(g_n,\mathbf{f}^n)]\gtrsim \frac{\varphi(n^b)}{n^b}n\sum_{k\geq 1}\Psi^k_{n,n^b}(f^{n,k}), 
 \end{align*}
which makes appear $\sum_{k\geq 1}\Psi^k_{n,n^b}(f^{n,k})$. It turns out that this lower bound is exactly the good quantity which leads to our main result. \\
%Keeping that in mind,\Modif\Modif let us now introduce the sequence $(b_n,n)$ which will parametrized the indicator function present in $g$ (in words $b_n$ is the lower bound asked for the local time of the walk) :
%\begin{align}
% b_n:=an^b (\log n)^{d}, \textrm{ for any } a\geq 1 \textrm{ and } b,d\geq 0. \Modif\Modif
%\end{align}
The following assumption ensures that $\sum_{k\geq 1}\Psi^k_{n,n^b}(f^{n,k})$ is not too small, which would correspond to an exaggerate penalization on the potential $V$: \\
\textit{Assumption 1}.\\
For all $b\in[0,1)$, $\varepsilon>0$ and $n$ large enough
\begin{align}\label{Hypothese1}
   \sum_{k\geq 1}\Psi^{k}_{n,n^b}(f^{n,k})\geq  \frac{1}{n^{(\kappa_b+\varepsilon)\land 1}}.
    \tag{\textbf{A1}}  
\end{align}
The second type of contribution that we call\textrm{ \guillemotleft contribution of type two\guillemotright } strongly involves the term $n^{\kappa_b}\sum_{k\geq 1}\Psi^k_{n,n^b}(f^{n,k})$.
It is negligible with respect to $n^{\varepsilon}$ for all $\varepsilon>0$ and  comes also from a mixing between the constraints on $\X$ and the constraints on $\mathbb{V}$.
 %We are now ready to introduce two assumptions, and then state the result. \\
\noindent So finally introduce $(h_n,n)$ which is certainly the most important sequence of the paper : for any $n \geq 2$
\begin{align}
    h_n :=\left \{
    \begin{array}{l c l}
      \big|\log\big(n^{\kappa_b}\sum_{k\geq 1}\Psi^{k}_{n,n^b} (f^{n,k})\big)\big|  \quad \text{if} \quad \exists\;\gamma\in(0,1):\;\frac{(\log n)^{\gamma}}{\log \big(n^{\kappa_b}\sum_{k\geq 1}\Psi^{k}_{n,n^b} (f^{n,k})\big)}\to 0 \\
      \log n   \qquad \text{otherwise} 
   \end{array}.
   \right.\label{Def_hn}
\end{align}

%\noindent \red{Discussion sur $h_n$ : Inclure la Remarque 1 ci-dessous.} %We discuss about the order of $h_n$ below (see Remark \ref{rem0}), before that let us introduce our first main assumption which  is a lower bound involving $\Psi$. 

%\RevAlexis{\textbf{Version 1, pas tout à fait ok! (discutée le 15/09)} The parameter $\kappa_b$ is dedicated to the contribution of both constraints on the trajectories of the random walk $\X$ and on those of the branching potential $\mathbb{V}$ via the term $\un_{\{H_k(\mathbf{t})>n^b\}} f^{n,k}(\mathbf{t})$ in \eqref{Ub}. \\
%The term $|\log(n^{\kappa_b}\sum_{k\geq 1}\Psi^{k}_{n,n^b} (f^{n,k}))|$ is actually the cost of the constraint $\mathbf{f}^n$. According to the asymptotic behavior of this term, we assign $h_n$ two possible expressions: if $(\log n)^{\gamma}$ is negligible with respect to $|\log(n^{\kappa_b}\sum_{k\geq 1}\Psi^{k}_{n,n^b} (f^{n,k}))|$ for some $\gamma\in(0,1)$ (which then remains smaller than $\varepsilon\log n $ by \eqref{Hypothese1}), the constraint $\mathbf{f}^n$ is considered penalizing and we set $h_n:=|\log(n^{\kappa_b}\sum_{k\geq 1}\Psi^{k}_{n,n^b} (f^{n,k}))|$, see Theorem \ref{thm1} for example. Otherwise, the constraint $\mathbf{f}^n$ is not penalizing enough and we set $h_n:=\log n$, see Theorem \ref{thm3} with $\alpha=1$ for example. In this latter case, the choice is significant since $\log n$ is the right order for the logarithm of the regular range, that is to say the range without any constraint on the trajectories of the branching random potential $\mathbb{V}$; Mettre la Remark 1 avant? } 

\noindent Let us start by a discussion about $(h_n,n)$ with the following remark in which we note that either $h_n=o(\log n)$ or $h_n=\log n$.
\begin{remark} \label{rem0}
By definition of $\kappa_b$, $$n^{\kappa_b}\sum_{k\geq 1}\Psi^{k}_{n,n^b} (f^{n,k}) \leq C_{\infty} \sum_{k\geq 1}\Psi^{k}_{n} (1)= C_{\infty} \Eb\Big[\sum_{x \in \T}e^{-V(x)}\un_{\{x\in\mathcal{O}_n\}}\Big]\leq C_{\infty}  (\log n)^3,$$ where the last inequality is a quite elementary fact that will be proved later (see {\bf Remark} \ref{rem1b}). This implies, in particular, that 
if there exists $0<\gamma<0$ such that ${(\log n)^{\gamma}}/{\log \big(n^{\kappa_b}\sum_{k\geq 1}\Psi^{k}_{n,n^b} (f^{n,k})\big)}\to 0$, then necessarily  $\log(n^{\kappa_b}\sum_{k\geq 1}\Psi^{k}_{n,n^b} (f^{n,k}))<0$ and $\lim_{n \rightarrow + \infty} \log(n^{\kappa_b}\sum_{k\geq 1}\Psi^{k}_{n,n^b} (f^{n,k})) =-\infty$. Moreover, in this case, there  exists $0<\gamma<1$ such that $ h_n \geq (\log n)^{\gamma}.$
Also assumption \eqref{Hypothese1} above ensures that 
\begin{align*}
\log(n^{\kappa_b}\sum_{k\geq 1}\Psi^{k}_{n,n^b} (f^{n,k}) ) \geq \log \Big ( \frac{n^{\kappa_b}}{n^{(\kappa_b+\varepsilon)\land 1}} \Big) \geq  -((\kappa_b+\varepsilon)\land 1- \kappa_b) \log n \geq -\varepsilon\log n, 
\end{align*}
overall, definition of $h_n$ implies, under \eqref{Hypothese1}, that
$$ (\log n)^{\gamma} \leq h_n \leq   \log n.$$
\end{remark}
\noindent The sequence $(h_n,n)$ is the quantity which gives the contribution of type two and produces the second order in our result. It is important to note that we carefully assign an expression to $h_n$ depending on whether constraints are penalizing or not. According to the asymptotic behavior of the term $n^{\kappa_b}\sum_{k\geq 1}\Psi^{k}_{n,n^b} (f^{n,k})$, we assign $h_n$ two possible expressions : if $(\log n)^{\gamma}$ is negligible with respect to $|\log(n^{\kappa_b}\sum_{k\geq 1}\Psi^{k}_{n,n^b} (f^{n,k}))|$ for some $\gamma\in(0,1)$ (which then remains smaller than $\varepsilon\log n $ by Remark \ref{rem0}), constraints are considered penalizing and we set $h_n:=|\log(n^{\kappa_b}\sum_{k\geq 1}\Psi^{k}_{n,n^b} (f^{n,k}))|$, see Theorem \ref{thm1} for example. Otherwise, constraints are not penalizing enough and we set $h_n:=\log n$, see Theorem \ref{thm3} with $\alpha=1$ for instance. In this latter case, the choice is significant since $\log n$ is the right order for the logarithm of the regular range, that is to say the range without any constraint on the trajectories of the branching random potential $\mathbb{V}$.\\
\noindent We are now almost ready to state a result. But first introduce two last values :  $L$ (with $L=\pm \infty$ possibly) and $\xi\in\{-1,0\} $ defined as follows
\begin{align}
    & L := \liminf_{n\rightarrow \infty}h_n^{-1}\log \big(n^{1-b-\kappa_b}\varphi(n^b)\big)\label{limite_L},  \textrm{ and}\\ &
    \xi := \lim_{n\rightarrow \infty}h_n^{-1}\log \big(n^{\kappa_b}\sum_{k\geq 1}\Psi^{k}_{n,n^b} (f^{n,k})\big), \label{Limite_xi}
\end{align}
and note that, following Remark \ref{rem0}, $\xi$ necessarily exists. \\
The full statement of our main result below need additional quite complex assumptions, involving $\mathbf{f}^n$ in particular,  they are described precisely in the next section (see \eqref{Hypothese2}, \eqref{Hypothese3} and \eqref{Hypothese4}). 
The interesting point is the fact that all of these assumptions concern $\sum_{k\geq 1}\Psi^{k}$. And more than that, we can  resume  the actions  of \eqref{Hypothese2}, \eqref{Hypothese3} and \eqref{Hypothese4} by  saying that $\Psi$ has to be stable for small perturbations of its parameters. In the informal statement below, we will say that $\Psi$ should have controlled fluctuations. \\
\begin{theo} [Informal statement] \label{Prop2bb} Assume \eqref{hyp0}, \eqref{hyp0+} and \eqref{hyp1} hold, $b \in [0,1)$, assume also that  \eqref{Hypothese1} is satisfied and $\Psi$ has controlled fluctuations.
%\noindent Assume \eqref{Hypothese2}, \eqref{Hypothese1}, \eqref{Hypothese3} and \eqref{Hypothese4} hold \DO{+ compl\'eter}. \\
If $L \in (-\xi,+\infty]$, then in $\P^*$-probability \begin{align*}%\label{ConvProba1}
      h_n^{-1}\big(\log^{+} \mathcal{R}_{n}(g_{n},\mathbf{f}^n)-\log( n^{1-b-\kappa_b}\varphi(n^b))\big)\underset{n\rightarrow\infty}{\longrightarrow}\xi,
\end{align*}
if $L=-\xi$, with $\underline{\Delta}_n:=h_n^{-1}\log(n^{1-b-\kappa_b}\varphi(n^b))-\inf_{\ell\geq n}h_{\ell}^{-1}\log(\ell^{1-b-\kappa_b}\varphi(\ell^b))$, then in $\P^*$-probability
\begin{align*}
    h_n^{-1}\log^{+} \mathcal{R}_{n}(g_{n},\mathbf{f}^n)-\underline{\Delta}_n\underset{n\rightarrow\infty}{\longrightarrow} 0,
\end{align*}
otherwise $L\in[-\infty,-\xi[$ and in $\P^*$-probability 
\begin{align*}%\label{ConvProba3}
       \mathcal{R}_{n_{\ell}}(g_{n_{\ell}},\mathbf{f}^{n_{\ell}})\underset{\ell\rightarrow\infty}{\longrightarrow} 0,
\end{align*}
for some increasing sequence $(n_{\ell})_{\ell}$ of positive integers. Note that when $\lim h_n^{-1}\log(n^{1-b-\kappa_b}\varphi(n^b))=L$, $n_{\ell}=\ell$.
\end{theo}

%\noindent \DO{One can notice that when $L \in (- \xi,+\infty) $ we simply have ( 03/09 √É¬É√Ç¬† retirer car s√É¬É√Ç¬ªrement faux)
%\begin{align*}
%    h_n^{-1}\log^{+} \mathcal{R}_{T^{n}}(g_{n},\mathbf{f}^n)\underset{n\rightarrow\infty}{\longrightarrow}\xi+L, 
%\end{align*}
%so no second order appears in the estimation. \\
\noindent We now present particular examples which lead to different values of $L$ and $\xi$. First, note that all theorems presented in the previous section satisfy $L=+\infty$ {and $\xi=-1$}, corresponding, from our point of view, to the most interesting case. 
Let us take, for example, $g_n(t)= \un_{\{t\geq n^b\}}$ and $f^{n,k}(t_1,t_2 $ $, \cdots,t_k)= \un_{ \{t_{k} \geq a(\log n)^{\alpha}\}}(\sum_{l \leq k }H_l(\mathbf{t}))^{-1}$ as in Theorem \ref{thm3}, with $a>0$, $\alpha\in[1,2)$, but $b \in [1/2,1)$. 
%\DO{Let us link this general result with some of the examples we have treated in the previous section. 
%When $\mathcal{R}_{T^{n}}(g_{n},\mathbf{f}^n)=\sum_{x\in\T}\un_{\{\mathcal{L}_x^{T^n}\geq n^b\}}\un_{\{V(x)\geq(\log n)^{\alpha}\}}(\sum_{j\leq |x|}H_{x_j})^{-1}$, that is the same function as the one in Theroem \ref{thm3}, 
When $\alpha>1$ and $b>1/2$, we can prove that $h_n\sim a(\log n)^{\alpha-1}/b$ (with the usual notation { $t_n\sim s_n$ if and only if $t_n/s_n\to 1$}) and $n^{1-b-\kappa_b}\varphi(n^b)=n^{1-2b}$ so we obtain $\lim h_n^{-1}\log(n^{1-b-\kappa_b}\varphi(n^b))$ $=L=-\infty$. % \red{which tells that ???} \red{\underline{r√É¬É√Ç¬©ponse A}: cela implique que le cas $L=-\infty$ existe, en voici un exemple.}. \\
\noindent However, when $\alpha=1$ and $a>1/\delta_1$, we can prove that for all $b\in[1/2,1)$, $\kappa_b=b$ and $h_n=\log n$ thus giving $L=1-2b$ and $\xi=0$. In other words, $L\in(-\infty,-\xi]$ (with $L=-\xi$ if and only if $b=1/2$). \\
\noindent Let us finally take the simple example  $g_n(t)=t\un_{\{t\geq n^b\}}$ and $f^{n,k}=1$. We can prove that for all $b\in(0,1)$, $h_n=\log n$, $\xi=0$ and $n^{1-b-\kappa_b}\varphi(n^b)=n$ so $\lim h_n^{-1}\log(n^{1-b-\kappa_b}\varphi(n^b))=L=1$ and we are in the case $L\in (-\xi,+\infty$).

\noindent To finish, we present an example for which $f^{n,k}$ is quite general but with a simple form. Assume
\begin{itemize}
    \item $f^{n,k}=\un_{\bm{A}^{n,k}}$ with $\bm{A}^{n,k}\subset\R^k$ and $\bm{A}^{n,k}_b:=\bm{A}^{n,k}\cap\{\mathbf{t}\in\R^k;\; \max_{1\leq j\leq k}H_j(\mathbf{t})\leq n,\; H_k(\mathbf{t})>n^b\}$;
    \item $(\bm{A}^{n,k}_b\times\R^{k'-k})\cap \bm{A}^{n,k'}_b=\varnothing$ for all $k<k'$;
    \item $\kappa_b=0$.
\end{itemize}
We obtain the following simple expression for $n^{\kappa_b}\sum_{k\geq 1}\Psi^k_{n,n^b}(f^{n,k})=\Pb(\cup_{k\geq 1}\{(S_1,\ldots,S_k)\in \bm{A}^{n,k}_b\})$, where $(S_i,i)$ is a sum of i.i.d random variables with mean $0$ and variance $\psi''(1)$ (this comes from the so-called \mto, see Lemma \ref{manytoone}). So
$$ \big|\log(n^{\kappa_b}\sum_{k\geq 1}\Psi^k_{n,n^b}(f^{n,k}))\big|=-\log\Pb(\cup_{k\geq 1}\{(S_1,\ldots,S_k)\in \bm{A}^{n,k}_b\}). $$\\
Consequently, if the probability $\Pb(\cup_{k\geq 1}\{(S_1,\ldots,S_k)\in \bm{A}^{n,k}_b\})$ is small enough, that is to say such that $(\log n)^{\gamma}$ is negligible comparing to $-\log\Pb(\cup_{k\geq 1}\{(S_1,\ldots,S_k)\in \bm{A}^{n,k}_b\})$ for a  certain $\gamma\in(0,1)$, then the constraint is penalizing enough and $h_n=-\log\Pb(\cup_{k\geq 1}\{(S_1,\ldots,S_k)\in \bm{A}^{n,k}_b\})$. Otherwise, $h_n=\log n$. For example, take
$ \bm{A}^{n,k}=\{\mathbf{t}\in\R^k;\; \inf\{j\leq k;\; t_j\geq(\log n)^{\alpha}\}=k\}$ which leads to an example similar to Theorem \ref{thm1}.

%we discuss the appearance of $\sum_{k\geq 1}\Psi^{k}_{n,n^b}$ below but first let us focus on sequence $(h_n,n)$.

\subsection{A general result (full statement) \label{sec1.2}} 

In this section, we explain precisely what\textrm{ \guillemotleft$\Psi$ has controlled fluctuations\guillemotright } means. For that, we present the assumptions \eqref{Hypothese2},  \eqref{Hypothese3} and \eqref{Hypothese4} mentioned in the previous section. We start with \eqref{Hypothese2}, and then state a preliminary result (Proposition \ref{Prop1}) of the main theorem (Theorem \ref{Prop2bb}). This proposition is quite technical especially in its statement. However, it stresses on the fact that all the expressions  involved depend deeply on $\sum_{k} \Psi^k_{.,.}(f^{n,k}) $ and therefore justify the last two Assumptions  \eqref{Hypothese3} and \eqref{Hypothese4} which leads to the formal statement of Theorem \ref{Prop2bb}. \\
%These assumptions that we describe below mix in particular $\mathbf{f}^n$ and distribution of $V$. \\ 

%\noindent\DO{J'aimerais ajouter un fact qui regroupe des propri\'et\'es sur $h_n$ qui reviennent assez souvent, dont certaines sont des cons\'equences de l'assumption \eqref{Hypothese1} et/ou de la d\'efinition de $h_n$ \eqref{Def_hn}: \\
%\textbf{Facts sur $h_n$}:
%\begin{enumerate}[label=\roman*)]
%    \item il existe $\gamma\in(0,1)$ tel que $h_n\geq(\log n)^{\gamma}$ pour $n$ assez grand,
%    \item d'apr\`es l'assumption \eqref{Hypothese1}, soit $h_n=\log n$ pour tout $n\geq 2$ soit $h_n=o(\log n)$,
%    \item toujours d'apr\`es l'assumption \eqref{Hypothese1}, $\log(n^{\kappa_b}\sum_{k\geq 1}\Psi^k _{n,n^b}(f^{n,k}))\to -\infty$.
%\end{enumerate}}

\noindent\textit{Assumption 2}. \\
Assumption \eqref{Hypothese2} below is an upper bound for a conditional version of $\sum_{k\geq 1}\Psi^{k}_{n,n^b}(f^{n,k})$ actually requiring in order to be introduced two facts and additional notations. %So we start with two elementary results already present and proved in \cite{AndDiel3} \\
\\
{\bf Fact 2} : 
By Lemma 2.3 in \cite{AndDiel3}, there exists two real numbers $c_2,\tilde{c}_2>0$ such that for any $h>0$ 
\begin{align}\label{GenPotMax}
    \Pb^*\big(\max_{|w|\leq\lceil h/c_2\rceil}|V(w)|>h\big)\leq he^{-\tilde{c}_2h}.
\end{align}
This fact, that will be useful when cutting on early generations of the tree, justifies the introduction of the following notation : for any $n$ and $k$, $f^{n,k}_{h}$ is the function defined by 
%\noindent Let $\left\{F^{n,k};\;k,n\geq 1\right\}$ a family of uniformly bounded functions ($\sup_{k,n\in \mathbb{N}^*}\|F^{n,k}\|_{\infty}<\infty$),   we assume that $f^{n,k}$ can be written as 
%\begin{align}
%& f^{n,k}(t_1,\cdots,t_k)=F^{n,k}(t_1,\cdots,t_k)\un_{t_k \geq v_n},  \textrm{with }  (v_n,n),   \textrm{ a positive sequence.}
%\end{align}
%We will explain this condition later and precise the sequence $v_n$ \red{on va voir comment introduire \c{c}a car cela restreint $f$, d'autant que cela n'apparait pas dans le Th√É¬É√Ç¬É√É¬Ç√Ç¬É√É¬É√Ç¬Ç√É¬Ç√Ç¬É√É¬É√Ç¬É√É¬Ç√Ç¬Ç√É¬É√Ç¬Ç√É¬Ç√Ç¬Ç√É¬É√Ç¬É√É¬Ç√Ç¬É√É¬É√Ç¬Ç√É¬Ç√Ç¬Ç√É¬É√Ç¬É√É¬Ç√Ç¬Ç√É¬É√Ç¬Ç√É¬Ç√Ç¬éor√É¬É√Ç¬É√É¬Ç√Ç¬É√É¬É√Ç¬Ç√É¬Ç√Ç¬É√É¬É√Ç¬É√É¬Ç√Ç¬Ç√É¬É√Ç¬Ç√É¬Ç√Ç¬Ç√É¬É√Ç¬É√É¬Ç√Ç¬É√É¬É√Ç¬Ç√É¬Ç√Ç¬Ç√É¬É√Ç¬É√É¬Ç√Ç¬Ç√É¬É√Ç¬Ç√É¬Ç√Ç¬ème 1.2 ni le futur 1.3 ....}. A last notation 
\begin{align}
 f^{n,k}_{h}(t_1,\ldots, t_k) :=  \underset{\mathbf{s}\in [- h, h]^{m}}{\inf}f^{n,k+m}\left(s_1,\ldots,s_m,t_1+s_m,\ldots, t_k+s_m\right), \label{Def_f} 
\end{align}
with $m = \lceil h/c_2\rceil$  and $\mathbf{s} = (s_1,\ldots,s_m)\in\R^m$.  \\
The second fact is about the largest generation visited by the walk before the instant $n$ or before $n$ excursions to the vertex $e$. \\
{\bf Fact 3 }:  Let  $(\ell_n= (\log n)^3,n\geq 2)$, by Lemma 3.2 in \cite{AndDiel3}, there exists $A>0$ such that :
\begin{align*}
   \lim_{n \rightarrow +\infty} \P( \max_{ k\leq T^n } |X_k| \leq A \ell_n )=1.
\end{align*}
This fact is here essentially to justify the introduction of the sequence $(\ell_n,n)$ which appears in our second assumption and all along the paper. Note that a very precise result on the largest generation visited by the walk before the instant $n$ can be found in  \cite{HuShi10b} .

\noindent A last notation we need to introduce is a conditional and translated version of $\Psi^{k}_{\lambda,\lambda'}(F)$ for a given bounded function $F$. For all $k\in\N^*$, $l\in\N^*$,  $F:\R^{l+k}\longrightarrow \R$ bounded and $\mathbf{t}=(t_1,\ldots,t_l)\in\R^l$
\begin{align}
    \Psi^{k}_{\lambda,\lambda'}(F|\mathbf{t}):= \mathbf{E}\Big[\underset{|x|=k}{\sum}e^{-V(x)}F(t_1,\ldots,t_l,V(x_{1})+t_l,\ldots, V(x)+t_l)\mathds{1}_{\mathcal{O}_{\lambda,\lambda'}}(x)\Big],
    \label{PsiCondi}
\end{align}
where ${\sum}_{|x|=k}$ is the sum over all the individuals $x$ of generation $k$. Otherwise, if  $l=0$, then $\Psi^{k}_{\lambda,\lambda'}(F|\mathbf{t}):=\Psi^{k}_{\lambda,\lambda'}(F)$. \\
We are now ready to introduce the second assumption : for all $\delta,\varepsilon,A,B>0$ and $b\in[0,1)$, there exists $n_0\in\N^*$ such that for any $n\geq n_0$, $l\leq \lfloor A\ell_n\rfloor$ and any $\mathbf{t}=\left(t_1,\ldots,t_l\right)\in\R^{l}$ with $t_l\geq -B$ and $H_l(\mathbf{t})\leq n$
\begin{align}\label{Hypothese2}
    \sum_{k\geq 1}\Psi^{k}_{n,n^b-H_l(\mathbf{t})}\big(f^{n,l+k}_{\varepsilon h_n}|\mathbf{t}\big)\leq e^{\delta t_l+\frac{\varepsilon}{A}h_n}\sum_{k\geq 1}\Psi^k_{n,n^b}(f^{n,k}).
    \tag{\textbf{A2}}
\end{align}
Let us comment this inequality which plays two roles. A first one ensures that the fluctuations of $V$ in the early  generations of the 
tree have minor influence, this yields the presence of $e^{\frac{\varepsilon}{A}h_n}$. 

\noindent {The second point is technical and aims to show that $\Ee[\overline{\mathcal{R}}_{T^1}(g_n,\mathbf{f}^n)]\gtrsim n ^{-b}\varphi(n^b)\sum_{k\geq 1}\Psi^k_{n,n^b}(f^{n,k})$ in probability. For that, the second moment of $$Z_n:=\sum_{x\in\Hline{n}{n^b}}e^{-V(x)}f^{n,k}_{\varepsilon h_n}(\bm{V}_x)\un_{\{\overline{V}(x)\geq A\log n,\underline{V}(x)\geq-B,\overline{V}(x)=V(x)\}}$$
has to be controlled, with $\bm{V}_x:=(V(x_1),\ldots,V(x))$ and $\underline{V}(x):= \min_{v \leq x }V(v)$. We first observe that
\begin{align*}
    Z_n^2\approx\sum_{z\in\T}\sum_{x,y>z}\prod_{u\in\{x,y\}}\un_{\{u\in\Hline{n}{n^b}\}}e^{-V(u)}f^{n,|u|}_{\varepsilon h_n}(\bm{V}_u)\un_{\{V(u)\geq A\log n,\underline{V}(u)\geq-B,\overline{V}(u)=V(u)\}}.
\end{align*}
Then taking the expectation of $ Z_n^2$, $\sum_{k\geq 1}\Psi^{k}_{n,n^b-H_l(\mathbf{t})}(f^{n,l+k}_{\varepsilon h_n}|\mathbf{t})$ in \eqref{Hypothese2} actually appears as the conditional expectation of a well chosen function of the translated potential $(V_z(u):=V(u)-V(z))_{u>z}$. Indeed, note that $u\in\Hline{n}{n^b}$ together with $\overline{V}(u)=V(u)$ implies that $u\in\Hline{n}{n^b-H_z}^z:=\{u>z: \max_{z<v\leq u}H_{z,v}\leq n, H_{z,u}>n^b-H_z\}$ with $H_{z,v}:=\sum_{z<w\leq v}e^{V_z(w)-V_z(v)}$. Hence, for all $\delta\in(0,1/2)$, by independence of the increments of the branching random walk $(\T,V(u);u\in\T)$
\begin{align*}
    \Eb[Z_n^2]&\lesssim e^{(1-2\delta)B}\Eb\Big[\sum_{z\in\mathcal{O}_n}e^{-V(z)}\sum_{x,y>z}\prod_{u\in\{x,y\}}\un_{\{u\in\Hline{n}{n^b-H_z}^z\}}e^{-V_z(u)}F_{\bm{V}_z}^{n,|u|}(V_z(u_{|z|+1}),\ldots,V_z(u))\Big] \\ & \approx e^{(1-2\delta)B}\Eb\Big[\sum_{z\in\mathcal{O}_n}e^{-V(z)}\Big(e^{\delta V(z)}\sum_{k\geq 1}\Psi^{k}_{n,n^b-H_z}(f^{n,l+k}|\bm{V}_z)\Big)^2\Big],
\end{align*}
where, for $|z|=l$ and any $\mathbf{t}=(t_1,\ldots,t_l)\in\R^l$ 
\begin{align*}
    F_{\mathbf{t}}^{n,|u|}(V_z(u_{l+1}),\ldots,V_z(u)):=e^{\delta t_l}f^{n,|u|}_{\varepsilon h_n}(t_1,\ldots,t_l,V_z(u_{l+1})+t_l,\ldots,V_z(u)+t_l).
\end{align*}
Assumption \eqref{Hypothese2} finally allows to say that $\Eb[Z_n^2]\lesssim e^{\varepsilon h_n}(\sum_{k\geq 1}\Psi^k_{n,n^b}(f^{n,k}))^2$ for all $\varepsilon>0$ and $n$ large enough.}

\noindent We are now almost ready to state an intermediate result which is a proposition giving a lower and an upper bound for the generalized range stopped at $T^n$. This proposition is followed by the theorem, much easier to read, but requiring extra assumptions. First, let us introduce for any $\mathfrak{z}>0$
\begin{align}  \label{HighSet}
    \mathcal{H}^k_{\mathfrak{z}}:= \left\{\left(t_1,\ldots,t_k\right)\in \mathbb{R}^k;\; t_k\geq\mathfrak{z}\right\},\  \mathcal{H}^k_{B,\mathfrak{z}}:= \{\left(t_1,\ldots,t_k\right)\in \mathbb{R}^k;\; t_k\geq\mathfrak{z}, \min_{1\leq i\leq k} t_i \geq -B \}, 
\end{align} 
respectively the set of vectors such that its last coordinate is larger than $\mathfrak{z}$ and additionally with all coordinates larger than $-B$.
The introduction of these last two objects is justified by  \\
{\bf Fact 4 :} for any $\varepsilon>0$, there exists $a>0$ such that (see\ \cite{Aid13})  
\begin{align*}
\Pb( \min_{u \in \T} V(u) \geq  -a)\geq 1- \varepsilon,\ 
\end{align*}
and Fact 1 we have already talked about saying that, in $\P^*$-probability, $\ell_n^{1/3}$ is a height of potential usually reached by the walk.
%\begin{align}
% \mathcal{R}_{T^n}(\un_{\cdot \geq 1},\un_{\mathcal{H}_{B,v_n}})=o(\mathcal{R}_{T^n}) 
%\end{align}
%\bl{[pas facile de voir si bien coherent avec def range generalised]} in words $v_n$ is the height of potential usually reach by the walk (see \cite{AndChen}) and moreover (see \cite{AndDiel3}) having the nice property to split excursions (to the root) of the walk  into independent excursions.
%\noindent \Amodif{which is a truncated version of the reflecting barrier $\mathcal{L}_{\lambda}$
%\begin{align*}
%    \mathcal{L}_{\lambda} := \left\{x\in \mathbb{T};\; \underset{j<|x|}{\max}\; H_{x_j}\leq \lambda,\; H_x>\lambda\right\}.

%\noindent \red{A definir  $\lambda_n$, $\Phi$, $h_n$ $\Psi^{k}_{\lambda_n} \left(F_n^{k}\right)$ (je ne comprends pas bien cette notation)}

%\bl{mettre la def de $W$ dans la Proposition}
%\noindent Let $W$ be the random variable defined by 
%\begin{align}\label{Def_W}
%    W:=\sum_{|z|=1}e^{-V(z)}.
%\end{align}

\begin{proposition}\label{Prop1}
Recall \eqref{kappab}, let $\varepsilon_b:=\min(b+\un_{\{b=0\}},1-b)/13$ and $W:=\sum_{|z|=1}e^{-V(z)}$. Assume \eqref{hyp0}, \eqref{hyp0+} and \eqref{hyp1} hold as well as \eqref{Hypothese1} and \eqref{Hypothese2}. \\
\textit{Lower bound}: there exists $c_5>0$ such that for all $b\in[0,1)$, $\varepsilon\in(0,\varepsilon_b)$, $B>0$ and $n$ large enough 
\begin{align}\label{LowerProp1}
    \P\Big(\frac{\mathcal{R}_{T^n}(g_n,\mathbf{f}^n)}{n^{1-b}\varphi(n^b) u_{1,n}}<e^{-5\varepsilon h_n}\Big)\leq \frac{e^{-c_5\varepsilon h_n}}{(u_{1,n})^2}\big(\sum_{k\geq 1}\Psi^k_{n,n^b}(f^{n,k}_{\varepsilon h_n})\big)^2 + h_ne^{-\varepsilon \tilde{c}_2h_n}+\frac{e^{-\min(\varepsilon\log n,3h_n)}}{(n^{\kappa_b}u_{1,n})^2},
\end{align}
with
\begin{align*}
    u_{1,n}=u_{1,n}(\varepsilon):=\sum_{k\geq 1}\Psi^{k}_{\lambda_n/2,n^b}\big(f^{n,k}_{\varepsilon h_n}\mathds{1}_{\Upsilon^k_n}\big),
\end{align*}
 $\Upsilon^k_n:=\{\mathbf{t}\in\R^k;\;H_k(\mathbf{t})\leq n^be^{\varepsilon h_n}\}\cap\mathcal{H}^k_{B,2\ell_n^{1/3}/\delta_1}$ and $\lambda_n=ne^{-\min(10\varepsilon\log n,5h_n)}$. \\
\noindent Upper bound: for any $\varepsilon>0$ and $n$ large enough
\begin{align}\label{UpperProp1}
    \P\Big(\frac{\mathcal{R}_{T^n}(g_n,\mathbf{f}^n)}{n^{1-b}\varphi(n^b)u_{2,n}}>e^{\varepsilon h_n}\Big)\leq e^{-\frac{\varepsilon}{2}h_n}+o(1)
\end{align}
with
\begin{align*}
    u_{2,n}:=\sum_{k\geq 1}\big(\Psi^{k}_{n} \big(f^{n,k}\un_{\R^k\setminus \mathcal{H}^k_{\ell_n^{1/3}/\delta_1}}\big)+\Psi^{k}_{n,n^b/(\log n)^2} (f^{n,k})+\Eb\big[W\Psi^k_{n,n^b/(W(\log n)^2)}(f^{n,k})\big]\big).
\end{align*}
Note that \eqref{LowerProp1} and \eqref{UpperProp1} remain true replacing $\mathcal{R}_{T^n}(g_n,\mathbf{f}^n)$ by $\mathcal{R}_{T^{k_n}}(g_n,\mathbf{f}^n)$ with {$ k_n = \lfloor n/ (\log n)^p \rfloor $, $p>0$}.
\end{proposition}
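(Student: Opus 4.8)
\noindent The plan is to control $\mathcal{R}_{T^n}(g_n,\mathbf{f}^n)$ by decomposing it over the $n$ excursions of $\mathbb{X}$ away from the root, comparing it with $n$ times the quenched mean of a single-excursion range, and then estimating that quenched mean through the branching objects $\Psi^k_{\lambda,\lambda'}$. I would first perform the reductions permitted by Facts~1--4: by \textbf{Fact~1} it suffices to treat the restricted range $\overline{\mathcal{R}}_{T^n}(g_n,\mathbf{f}^n)$ carrying the extra indicator $\un_{\{\overline{V}(x)\geq A\log n\}}$, as this only multiplies the range by a constant $c_1\in(0,1]$; by \textbf{Fact~3}, up to an event of probability $o(1)$ every vertex-sum may be restricted to $|x|\leq A\ell_n$; by \textbf{Fact~4} one may assume $\underline{V}(x)\geq-B$ for all $x\in\mathbb{T}$ up to probability $\varepsilon$; and \textbf{Fact~2} lets one replace $f^{n,k}$ by the regularised version $f^{n,k}_{\varepsilon h_n}$ on the first $\lceil\varepsilon h_n/c_2\rceil$ generations, at the cost of the event of probability $\leq h_ne^{-\varepsilon\tilde{c}_2h_n}$, which is exactly the second error term in \eqref{LowerProp1}.

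Next I would compute the quenched first moment of one excursion. Using the standard return/escape identities for the walk in a fixed environment $\mathcal{E}$, the quenched probability that a vertex $x$ is visited during one excursion from $e$ and accumulates there a local time at least $n^b$ is, up to constants, $e^{-V(x)}$ times a factor comparable to $e^{-n^b/H_x}$, so that $\mathcal{L}_x^{T^1}$ conditioned to be positive is roughly geometric with parameter $\asymp 1/H_x$. Since $\varphi$ is increasing and $t\mapsto\varphi(t)/t$ decreasing, this gives
\begin{align*}
\Ee\big[\overline{\mathcal{R}}_{T^1}(g_n,\mathbf{f}^n)\big]\asymp\frac{\varphi(n^b)}{n^b}\sum_{x}e^{-V(x)}f^{n,|x|}(\mathbf{V}_x)\un_{\{H_x>n^b,\ \overline{V}(x)\geq A\log n,\ \underline{V}(x)\geq-B\}},
\end{align*}
and, imposing the reflecting-barrier constraint $\max_{j\leq|x|}H_{x_j}\leq\lambda_n$ (legitimate thanks to the $(\log n)^2$ height bound of \cite{HuShi15b}) and taking $\bP$-expectations, one obtains $n^{-b}\varphi(n^b)u_{1,n}$ for the lower bound, and $n^{-b}\varphi(n^b)u_{2,n}$ for the upper bound once the complementary decompositions are carried out.

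For the lower bound \eqref{LowerProp1} I would use that, conditionally on $\mathcal{E}$, the $n$ excursions are i.i.d., so that $\overline{\mathcal{R}}_{T^n}\gtrsim n\,\Ee[\overline{\mathcal{R}}_{T^1}]$ up to a deviation governed by the quenched variance; the second moment $\Eb[Z_n^2]$ of the quantity $Z_n$ introduced before the statement is then handled as sketched there: one writes $Z_n^2\approx\sum_{z\in\mathbb{T}}\sum_{x,y>z}(\cdots)$, factors through the common ancestor $z$ by the branching-Markov property, and closes the estimate with \eqref{Hypothese2} to reach $\Eb[Z_n^2]\lesssim e^{\varepsilon h_n}\big(\sum_k\Psi^k_{n,n^b}(f^{n,k})\big)^2$. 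Combined with \eqref{Hypothese1} (which keeps $\sum_k\Psi^k_{n,n^b}(f^{n,k})$ from being too small relative to $n^{-\kappa_b}$) and Chebyshev's inequality this yields the first error term in \eqref{LowerProp1}, while the residual coming from the regularity truncation together with \eqref{Hypothese1} produces the last term $\tfrac{e^{-\min(\varepsilon\log n,3h_n)}}{(n^{\kappa_b}u_{1,n})^2}$.

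For the upper bound \eqref{UpperProp1} I would instead estimate $\Eb[\overline{\mathcal{R}}_{T^n}]\lesssim n\,\Eb\big[\Ee[\overline{\mathcal{R}}_{T^1}]\big]$ directly, decomposing the quenched mean according to whether $V(x)\geq v_n$ (this introduces $\mathcal{H}^k_{v_n}$), whether $H_x>\lambda'_n=n^b(\log n)^{-2}$, and peeling off the first generation (which produces the factor $W$ and the term $\Eb[W\Psi^k_{n,\lambda'_n/W}(f^{n,k})]$), to reach $\Eb[\overline{\mathcal{R}}_{T^n}]\lesssim n^{1-b}\varphi(n^b)u_{2,n}$; Markov's inequality at level $e^{\varepsilon h_n}$ then gives $e^{-\varepsilon h_n}\leq e^{-\frac{\varepsilon}{2}h_n}$, plus the $o(1)$ of Facts~1 and~3. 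Finally, for the version with $T^{k_n}$, $k_n=\lfloor n/(\log n)^p\rfloor$: since $\log k_n=\log n-p\log\log n+o(1)$ and, by Remark~\ref{rem0}, $(\log n)^{\gamma}\leq h_n\leq\log n$, replacing $n$ by $k_n$ modifies each of $n^{1-b}$, $\varphi(n^b)$, $h_n$, $u_{1,n}$ and $u_{2,n}$ only by factors negligible on the scale $e^{\varepsilon h_n}$, so the monotonicity of $k\mapsto\mathcal{R}_{T^k}$ together with a sandwich between the bounds already established at indices $k_n$ and $n$ gives the claim.

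The hard part will be the concentration step in the lower bound: controlling the deviation of $\overline{\mathcal{R}}_{T^n}$ from $n\,\Ee[\overline{\mathcal{R}}_{T^1}]$ with an error small on the scale $e^{\varepsilon h_n}$ forces the delicate second-moment computation sketched before the statement, where the branching structure at the common ancestor and assumption \eqref{Hypothese2} must be used with care; the simultaneous passage between the deterministic clock $n$ and the excursion counter $T^n$, and between $f^{n,k}$ and its regularisation $f^{n,k}_{\varepsilon h_n}$ (which forces $h_n$-dependent margins throughout), is the other technical bottleneck.
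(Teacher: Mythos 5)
Your overall architecture (excursion decomposition, quenched first moment of a single excursion expressed through the geometric law of the edge local time, reduction to the branching functionals $\Psi^k_{\lambda,\lambda'}$, second moment of $Z_n$ closed by \eqref{Hypothese2}) matches the paper's, but two steps as you describe them would not go through. First, the upper bound: you propose to bound $\Eb[\mathcal{R}_{T^n}]$ by $n\,\Eb\big[\Ee[\mathcal{R}_{T_e}]\big]$ "directly". This fails because $g_n(m)=\un_{\{m\geq n^b\}}\varphi(m)$ is not subadditive over excursions: the total local time $\mathcal{L}_x^{T^n}$ can exceed $n^b$ while no single excursion's contribution does, so $\E^{\mathcal{E}}[g_n(\mathcal{L}_x^{T^n})]$ is not controlled by $n\,\E^{\mathcal{E}}[g_n(\mathcal{L}_x^{T_e})]$. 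The paper resolves this by working on the event $\mathcal{A}_n$ on which every high-potential vertex ($V(x)\geq v_n$) is reached during at most one excursion, both through the edge $(x^*,x)$ and through the edges to its children (whence the two quantities $E^n_x$ and $\tilde E^n_x$ and the appearance of $\tilde H_x$ and $W$ in $u_{2,n}$); only then does $\mathcal{L}_x^{T^n}\leq 2(N_x^{T^i}-N_x^{T^{i-1}})$ or $\leq 2\sum_{y:y^*=x}(N_y^{T^i}-N_y^{T^{i-1}})$ hold, letting the threshold $n^b$ interact with the geometric tail $(1-1/H_x)^{\lceil n^b/2\rceil-1}$ that produces $\mathcal{X}_{2,n}$ and $\mathcal{X}_{3,n}$. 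Low-potential vertices are treated separately with the linear bound $g_n(t)\leq tn^{-b}\varphi(n^b)$, which is why $u_{2,n}$ carries the extra term $\Psi^k_n(f^{n,k}\un_{\R^k\setminus\mathcal{H}^k_{v_n}})$; your appeal to Fact~1 to discard those vertices is not available here, since Proposition~\ref{Prop1} makes no assumption of the type $\overline{\mathcal{R}}/\mathcal{R}\to c_1$.

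Second, in the lower bound you invoke "Chebyshev's inequality" to concentrate the environment functional around its $\bP$-mean, but plain Chebyshev gives $\Eb[Z_n^2]/\Eb[Z_n]^2$, which by Lemma~\ref{Lemm_Minor_Esp_du_carre} and \eqref{Hypothese3} is only $\lesssim e^{C\varepsilon h_n}\geq 1$ — useless, and in particular incapable of producing the decaying prefactor $e^{-c_5\varepsilon h_n}$ in the first error term of \eqref{LowerProp1}. The cut at generation $m_n=\lceil\varepsilon h_n/c_2\rceil$ is not merely the source of the regularisation $f^{n,k}_{\varepsilon h_n}$ and of the error $h_ne^{-\varepsilon\tilde c_2 h_n}$: it decomposes $R(\mathbf{f}^n)$ into the branching-indexed family $(Z_n^u)_{|u|=m_n}$ of identically distributed pieces, to which the concentration inequality for such sums (Lemma~2.4 of \cite{AndDiel3}) applies and yields the crucial factor $e^{-c_4m_n}$. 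Relatedly, note that there are two distinct deviation estimates, not one: the quenched Chebyshev step for the walk (whose error is the \emph{third} term of \eqref{LowerProp1}, coming from $\Eb[\sum_{x,y}\E^{\mathcal{E}}[N_x^{T_e}N_y^{T_e}]]\lesssim \ell_n^4\lambda_n n^{-2\kappa_b}$ on the regular lines $\mathcal{O}_{\lambda_n}$ with $\lambda_n=ne^{-\min(10\varepsilon\log n,5h_n)}$) and the annealed concentration of $R(\mathbf{f}^n)$ (whose error is the \emph{first} term); your sketch attributes the third term to the truncation residual, which conflates the two mechanisms.
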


\noindent This proposition is technical and difficult to read, we present it here however because it shows that all the estimations depend deeply on $\Psi_{.,.}^.(f)$ and $g_n$, recall indeed that the key sequence $(h_n,n)$ defined in \eqref{Def_hn} depends both on $\Psi_{.,.}^.(f)$ and $\kappa_b$ (with $b$ coming from the function $g_n$).  
This also means that without any more information on $\Psi_{.,.}^.(f)$, it is difficult to state a more explicit result. Finally, note that  {the exact role of  \eqref{Hypothese1} and \eqref{Hypothese2} will appear clearly in the proof of the lower bound} (Section \ref{sec3.2}). \\

We now present two new assumptions \eqref{Hypothese3}  and \eqref{Hypothese4} which lead to the formal statement of the result. These assumptions  tell essentially that quantities $u_{1,n}$ and $u_{2,n}$, which appear in the previous proposition, are actually very similar.   \noindent Now introduce \eqref{Hypothese3}  and \eqref{Hypothese4} : \\

\noindent\textit{Assumption 3} %\DO{(lower bound bis?)}}\\
: for all ${b\in[0,1)}$, $\varepsilon\in(0,\varepsilon_b)$, {$\varepsilon_1 \in(0, \varepsilon) $} and $n$ large enough
\begin{align}\label{Hypothese3}
    u_{1,n}\geq e^{-\varepsilon_1h_n}\sum_{k\geq 1}\Psi^k_{n,n^b}(f^{n,k}).
    \tag{\textbf{A3}}
\end{align}

\noindent\textit{Assumption 4} %\DO{(upper bound bis?)}}\\
 : for all ${\varepsilon_1>0}$, ${b\in[0,1)}$ and $n$ large enough 
\begin{align}\label{Hypothese4}
    u_{2,n}\leq e^{\varepsilon_1h_n}\sum_{k\geq 1}\Psi^k_{n,n^b}(f^{n,k}).
    \tag{\textbf{A4}}
\end{align}

\noindent The full statement of Theorem \ref{Prop2} then writes as follows:

\begin{Customtheo}{1.5}[Full statement]\label{Prop2} Assume \eqref{hyp0}, \eqref{hyp0+} and \eqref{hyp1} hold, $b \in [0,1)$ and  \eqref{Hypothese1}, \eqref{Hypothese2}, \eqref{Hypothese3} and \eqref{Hypothese4} are satisfied.
%\noindent Assume \eqref{Hypothese2}, \eqref{Hypothese1}, \eqref{Hypothese3} and \eqref{Hypothese4} hold \DO{+ compl\'eter}. \\
If $L \in (-\xi,+\infty]$, then in $\P^*$-probability \begin{align*}%\label{ConvProba1}
      h_n^{-1}\big(\log^{+} \mathcal{R}_{n}(g_{n},\mathbf{f}^n)-\log( n^{1-b-\kappa_b}\varphi(n^b))\big)\underset{n\rightarrow\infty}{\longrightarrow}\xi,
\end{align*}
if $L=-\xi$, with $\underline{\Delta}_n:=h_n^{-1}\log(n^{1-b-\kappa_b}\varphi(n^b))-\inf_{\ell\geq n}h_{\ell}^{-1}\log(\ell^{1-b-\kappa_b}\varphi(\ell^b))$, then in $\P^*$-probability
\begin{align*}
    h_n^{-1}\log^{+} \mathcal{R}_{n}(g_{n},\mathbf{f}^n)-\underline{\Delta}_n\underset{n\rightarrow\infty}{\longrightarrow} 0,
\end{align*}
otherwise $L\in[-\infty,-\xi[$ and in $\P^*$-probability 
\begin{align*}%\label{ConvProba3}
       \mathcal{R}_{n_{\ell}}(g_{n_{\ell}},\mathbf{f}^{n_{\ell}})\underset{\ell\rightarrow\infty}{\longrightarrow} 0,
\end{align*}
for some increasing sequence $(n_{\ell})_{\ell}$ of positive integers. Note that when $\lim h_n^{-1}\log(n^{1-b-\kappa_b}\varphi(n^b))=L$, $n_{\ell}=\ell$.
\end{Customtheo}

%The appearance of the set regular lines $\mathcal{O}_{\lambda,\lambda'}$ is partly inspired from the works of \cite{HuShiFar} and  \cite{HuSHi15}, we recall these facts in following Remark  

%$\underset{k\geq 1}{\sum}\Psi^k_{\lambda_n} \left(F^k_n\right)$

%\subsection{Main steps of the proof}

\noindent {The rest of the paper is decomposed as follows: in Section \ref{sec2}, after short preliminaries (Section \ref{sec2.1}), we prove the theorems of Section \ref{sec1}. For these proofs (Section \ref{sec2.2}), we check that the four assumptions (\ref{Hypothese1}-\ref{Hypothese4}) of Theorem  \ref{Prop2} are realized, obtaining simultaneously the asymptotic of $h_n$. In section \ref{sec2.3}, we prove Theorem \ref{Prop2} : essentially, Proposition \ref{Prop1} is assumed to be true and we only check that if Assumptions \eqref{Hypothese3} and \eqref{Hypothese4} are true then the theorem comes. \\ 
We prove Proposition \ref{Prop1} in section \ref{sec3}, this is the most technical part of the paper which can be read independently of the other parts : in Section \ref{sec3.1}, we summarize usual facts, in a second sub-section we prove a lower bound for stopped generalized range $\mathcal{R}_{T^n}(g_n,\mathbf{f}^n)$ and finally in a last one an upper bound. \\
In section \ref{sec4bis} we present some estimates on sums of i.i.d. random variables useful for the proof of the examples of Section \ref{sec1}. Finally, we resume in a last section (page \pageref{notations}) the notations which are transversal along the paper.  }

%{\bf Ici on discute de la proposition g\'en\'erale et des hypoth\`eses sur $f$ et $g$. } \\

%\noindent We are now ready to state the main assumptions on the familly $\left\{f^k_n;\;k\geq 1\right\}$:

\vspace{0.2cm}

\section{Proof of the theorems} \label{sec2}

This section is decomposed in three parts: in the first section below, one can find preliminaries that are useful all along the rest of the paper. In the second sub-section, we prove the four theorems presented as examples. Finally, the last section is devoted to the proof of Theorem \ref{Prop2}.

\subsection{Preliminary material} \label{sec2.1}

%\red{[Section \'a reprendre avec la re-lecture des sections 3 et 4]} 

\noindent We recall the many-to-one formula (see \cite{Shi2015} Chapter 1, and \cite{HuShi10b} equation 2.1) which will be used several times in the paper to compute  expectations related to the environment. {Note that the identity below comes from a change of probability measure (see references above), however we still keep $\Pb$ and $\Eb$ for simplicity.}
\begin{lemm}[Many-to-one Lemma]\label{manytoone} Recall the definition of $\psi$ in \eqref{fcpsi}. For any $t>0$,
	\begin{align*}
\Eb\Big[\sum_{|x|=m}f(V(x_i),1 \leq i \leq m)\Big]= \Eb(e^{tS_m+\psi(t) m }f(S_i,1 \leq i \leq m)),  
\end{align*}
where $\seq{S}$ is the random walk starting at 0, such that the increments $\paren{S_{n+1}-S_n}_{n\in\N}$ are i.i.d. and for any measurable function $h:\R^m\to [0,\infty)$, 
$$
\Eb[h(S_1)]=e^{-\psi(t)}\Eb(\sum_{|x|=1}e^{-tV(x)}h(V(x))).
%\frac{\e{\sum_{|x|=1}e^{-tV(x)}h(V(x))}}{\e{\sum_{|x|=1}e^{-tV(x)}}}=e^{-\psi(t)}\e{\sum_{|x|=1}e^{-tV(x)}h(V(x))}\ .
$$
\end{lemm}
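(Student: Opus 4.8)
The identity is the standard many-to-one formula, and I would prove it by induction on $m$, using only the branching/i.i.d.\ structure of $(\T,(V(x),x\in\T))$ and the definition of the law of the increments of $(S_n)$. Throughout I take $f\geq 0$, so that all expectations make sense in $[0,+\infty]$ and Tonelli's theorem can be used freely; the signed integrable case then follows by splitting into positive and negative parts, and the passage from bounded/indicator $f$ to general measurable $f\geq 0$ is a routine monotone-class argument. First I would record that the prescription for the law of $S_1$ is consistent: with $h\equiv 1$ it gives $\Eb(1)=e^{-\psi(t)}\Eb\big(\sum_{|x|=1}e^{-tV(x)}\big)=e^{-\psi(t)}e^{\psi(t)}=1$ by the very definition $\psi(t)=\log\Eb\big(\sum_{|z|=1}e^{-tV(z)}\big)$, so $(S_n)$ is well defined. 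Equivalently, for every measurable $\phi\geq 0$ on $\R$,
\begin{align}
\Eb\Big[\sum_{|x|=1}\phi(V(x))\Big]=e^{\psi(t)}\,\Eb\big(e^{tS_1}\,\phi(S_1)\big),\label{eq:mtoplan}
\end{align}
which is just the defining identity rewritten with $h(y)=e^{ty}\phi(y)$; taking $\phi=f$ gives the case $m=1$.

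For the inductive step I would assume the formula at level $m$ for every bounded measurable function on $\R^m$ and condition on the first generation $\mathcal{E}_1$: by the branching property, for each $u$ with $|u|=1$ the subtree rooted at $u$ carrying the shifted potential $(V(y)-V(u))_{y\geq u}$ is an independent copy of $(\T,V)$, so grouping the vertices at level $m+1$ according to their ancestor at level $1$ yields
\begin{align*}
\Eb\Big[\sum_{|x|=m+1}f(V(x_1),\ldots,V(x_{m+1}))\,\Big|\,\mathcal{E}_1\Big]=\sum_{|u|=1}G\big(V(u)\big),\quad G(a):=\Eb\Big[\sum_{|y|=m}f\big(a,\,a+V(y_1),\ldots,a+V(y_m)\big)\Big].
\end{align*}
Applying the induction hypothesis to $(s_1,\ldots,s_m)\mapsto f(a,a+s_1,\ldots,a+s_m)$ gives $G(a)=\Eb\big[e^{tS_m+m\psi(t)}f(a,a+S_1,\ldots,a+S_m)\big]$. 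Taking expectations, then \eqref{eq:mtoplan} with $\phi=G$, and writing $S_1'$ for an independent copy of $S_1$ (independent of the walk $(S_n)$ appearing in $G$),
\begin{align*}
\Eb\Big[\sum_{|x|=m+1}f(\cdots)\Big]=\Eb\Big[\sum_{|u|=1}G(V(u))\Big]=e^{(m+1)\psi(t)}\,\Eb\Big[e^{t(S_1'+S_m)}\,f\big(S_1',\,S_1'+S_1,\ldots,S_1'+S_m\big)\Big],
\end{align*}
where Tonelli was used to swap the two expectations. Setting $S_1^\sharp:=S_1'$ and $S_{j+1}^\sharp:=S_1'+S_j$ for $1\leq j\leq m$, the vector $(S_1^\sharp,\ldots,S_{m+1}^\sharp)$ is a random walk with $m+1$ i.i.d.\ steps of the prescribed law, $S_1'+S_m=S_{m+1}^\sharp$, and the $\psi(t)$-exponents add to $(m+1)\psi(t)$, so the last display equals $\Eb\big[e^{tS_{m+1}^\sharp+(m+1)\psi(t)}f(S_1^\sharp,\ldots,S_{m+1}^\sharp)\big]$, which is the formula at level $m+1$.

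The only genuinely delicate point is the bookkeeping in the inductive step: one must carry the additive shift $V(u)$ correctly inside $f$ and check that the weight $e^{tS_1'}$ produced by the first step combines with the weight $e^{tS_m}$ of the remaining $m$ steps into $e^{t\,S_{m+1}^\sharp}$ (the last point of the concatenated walk), and likewise that the $\psi(t)$-exponents add up. Everything else — measurability of $G$, the interchange of sums and expectations, and the identification in law of $(S_1',S_1'+S_1,\ldots,S_1'+S_m)$ with a length-$(m+1)$ walk — is routine given Tonelli and the definition of the increment law.
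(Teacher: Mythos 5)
Your induction is correct and complete. The base case is exactly the defining identity for the law of $S_1$ rewritten with $h(y)=e^{ty}\phi(y)$ (note the statement's ``$h:\R^m\to[0,\infty)$'' is a typo for $h:\R\to[0,\infty)$, which you implicitly and correctly treat as a one-variable tilt), and the inductive step carries the shift $V(u)$ and the weights $e^{tS_1'}$, $e^{tS_m}$, $e^{\psi(t)}$, $e^{m\psi(t)}$ through correctly, with the concatenated walk $(S_1',S_1'+S_1,\ldots,S_1'+S_m)$ having the right i.i.d.\ increment law. The paper itself does not prove this lemma: it cites Shi's lecture notes and Hu--Shi and remarks only that the identity arises from a change of probability measure (the tilting by the additive martingale $e^{-tV(\cdot)-m\psi(t)}$, which in the full spinal-decomposition form also identifies the law of the potential along a distinguished ray). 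Your self-contained induction is the standard elementary proof of the same statement and requires nothing beyond the branching property and Tonelli; the change-of-measure viewpoint buys more (a description of the spine) but is not needed for the identity as stated. One cosmetic remark: since you work with $f\geq 0$ throughout, Tonelli already lets the induction run over all nonnegative measurable $f$ directly, so the monotone-class reduction you mention is not actually needed.
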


{A second very useful fact is contained in the following remark, it tells essentially that, in probability, the $e^{-V(x)}$-weighted  number of vertices $x$ such that $x \in \mathcal{O}_n$ (recall \eqref{RegularLine}) can be found in a quite small quantity when $|x| \leq A\ell_n$ and can not be found when $|x| > A\ell_n$. This remark is not precise at all but will be enough for our purpose.

%\bl{Ci dessous, remarque a completer et mettre au bon endroit : v\'erifier qu'elle n'est pas deja cite plus haut ...}
\begin{remark}  \label{rem1b}
%In words $\ell_n$ is (typically) the largest generation visited by the walk before instant $n$.
There exists $c_3\in(0,1)$ such that for any $A>0$ and $n$ large enough 
\begin{align*}
   \Eb\Big[\sum_{|x|>\lfloor A\ell_n\rfloor}e^{-V(x)}\un_{\{x\in\mathcal{O}_n\}}\Big]\leq {n^{-Ac_3}} \textrm{ and }   \Eb\Big[\sum_{|x| \leq \lfloor A\ell_n\rfloor}e^{-V(x)}\un_{\{x\in\mathcal{O}_n\}}\Big]\leq  \ell_n/2,
\end{align*}
which implies   $\Eb\Big[\sum_{x \in \T}e^{-V(x)}\un_{\{x\in\mathcal{O}_n\}}\Big]\leq  \ell_n$.
\label{Rem1}
\end{remark}

\begin{proof} 
We give a proof here which essentially use technical Lemma \ref{LemmBorneUnif} (for the second inequality below), indeed by Lemma  \ref{manytoone} above  \begin{align*}
\Eb\Big[\sum_{|x|>\lfloor A\ell_n\rfloor}e^{-V(x)}\un_{\{x\in\mathcal{O}_n\}}\Big] & \leq \sum_{k > \lfloor A\ell_n\rfloor} \P( \sup_{i \leq k}(\overline S_i-S_i) \leq \log n ) \\ 
& \leq \sum_{k > \lfloor A\ell_n\rfloor}  \exp({-\frac{k \pi^2 \sigma^2(1- \varepsilon)}{8 \log n}}) \leq n^{-Ac_3}.
\end{align*}
 A similar computation gives the second fact and both of them the last one.
\end{proof} }

%\DO{Petite remarque: $\mathcal{O}_n$ est inutile quand $|x| \leq \lfloor A\ell_n\rfloor$, faut-il quand m√É¬É√Ç¬™me garder? moi j'ai enlev√É¬É√Ç¬© dans certains calculs.} \\ 

%Comme son nom l'indique, on utilise la Proposition g/'e/'erale et les propi\'et\'se de l'environnement de la Section 5.

\subsection{Proofs of Theorems \ref{thm1} to \ref{thm4}} \label{sec2.2}

The pattern of the proofs of each theorem is the following : we first prove two facts (an upper and a lower bound) about the sum $ \sum_{k\geq 1}\Psi^{k}_{\cdot,\cdot}(F)$ with specific $F$, depending on the considered function $f^{n,k}$ and on a slightly different version of the latter whether we are looking for an upper or a lower bound. Then we use this two facts to prove that \eqref{Hypothese1}, \eqref{Hypothese2}, \eqref{Hypothese3} and \eqref{Hypothese4} are satisfied. \\ 
In these proofs, we use several times the notation  $\varepsilon_b=\min(b+\un_{\{b=0\}},1-b)/13$ which was introduced in Proposition \ref{Prop1}.

\begin{proof}[Proof of Theorem \ref{thm1}]
Recall that $f^{n,k}(t_1,t_2, \cdots,t_k)= \un_{\{t_k\geq(\log n)^{\alpha}\}}$, $\alpha\in(1,2)$ and see \eqref{RegularLine} for the definition of $\Hline{\lambda}{\lambda'}$. All along the proof, we assume that $B,\delta>0$, $\varepsilon\in(0,\varepsilon_b)$, $n$ is large enough and  $t\geq -B$. %The proof is devided into two steps. The first one is to prove some facts about $\mathbf{f}^n$. The second is to check that for all $n\geq 1$, $\mathbf{f}^n$ actually satisfies main assumptions \eqref{Hypothese1}, \eqref{Hypothese2} and \eqref{Hypothese3}.\\ 
Let us start with the proof of the following two facts: 
\begin{align}
    \Eb\Big[\sum_{x\in\mathcal{O}_n}e^{-V(x)}\un_{\{V(x)\geq (\log n)^{\alpha}-t\}}\Big]\leq e^{\delta t-(\log n)^{\alpha-1}(1-\varepsilon)},
    \label{Fact1TH1}
\end{align}
and for any $0\leq m\leq \log n$ 
\begin{align}
    \Eb\Big[\sum_{x\in\Hline{\bmlambda}{n^b}}e^{-V(x)}\un_{\{V(x)\geq (\log n)^{\alpha}+m,\; H_x\leq n^be^{\varepsilon(\log n)^{\alpha-1}},\;\underline{V}(x)\geq -B\}}\Big]\geq e^{-(\log n)^{\alpha-1}(1+\varepsilon)}, 
    \label{Fact2TH1}
\end{align} 
with $\bmlambda=ne^{-6(\log n)^{\alpha-1}}$ and recall $\underline{V}(x)=\min_{u \leq x} V(u)$. We first deal with the upper bound \eqref{Fact1TH1}. Recall $\ell_n=(\log n)^3$, 
\begin{align*}
    \Eb\Big[\sum_{x\in\mathcal{O}_n}e^{-V(x)}\un_{\{V(x)\geq (\log n)^{\alpha}-t\}}\Big]\leq & \sum_{k\leq \lfloor A\ell_n \rfloor}\Eb\Big[\sum_{|x|=k}e^{-V(x)}\un_{\{V(x)\geq(\log n)^{\alpha}-t\}}\un_{\{x\in\mathcal{O}_n\}}\Big] \\ & + \Eb\Big[\sum_{|x|>\lfloor A\ell_n \rfloor}e^{-V(x)}\un_{\{x\in\mathcal{O}_n\}}\Big], 
\end{align*}
where $A>0$ is chosen such that $\Eb[\sum_{|x|>\lfloor A\ell_n \rfloor}e^{-V(x)}\un_{\{x\in\mathcal{O}_n\}}]\leq 1/n$ (see Remark \ref{Rem1}). This yields, as $\alpha\in(1,2)$, $\Eb[\sum_{|x|>\lfloor A\ell_n \rfloor}e^{-V(x)}\un_{\{x\in\mathcal{O}_n\}}]\leq  e^{\delta (t+B)}\frac{1}{n}\leq \frac{1}{2}e^{\delta t-(\log n)^{\alpha-1}(1-\varepsilon)}$ for $n$ large enough and any $t\geq-B$. Thanks to \mto\ \ref{manytoone}, the first sum in the above inequality is smaller than 
\begin{align*}
    \sum_{k\leq\lfloor A\ell_n \rfloor}\Pb\big(S_k\geq(\log n)^{\alpha}-t,\;\max_{j\leq k}\;\overline{S}_j-S_j\leq\log n\big)\leq \lfloor A\ell_n \rfloor \Pb\big(\max_{j\leq \tau_{(\log n)^{\alpha}-t}}\;\overline{S}_j-S_j\leq \log n\big),
\end{align*}
with {$\tau_r=\inf\{i\geq 1; S_i\geq r\}$}. Then, thanks to Lemma A.3 in \cite{HuShi15b}, and  %\bl{[mettre dans les fact de la fin ?]\bo{√É¬É√Ç¬É√É¬Ç√Ç¬É√É¬É√Ç¬Ç√É¬Ç√Ç¬ßa vaut le coup? on ne l'utilise que deux fois.}} 
as  $t\geq-B$
\begin{align*}
    \lfloor A\ell_n\rfloor\Pb\big(\max_{j\leq \tau_{(\log n)^{\alpha}-t}}\;\overline{S}_j-S_j\leq\log n\big) \leq\lfloor A\ell_n\rfloor e^{\frac{t}{\log n}-(\log n)^{\alpha-1}(1-\frac{\varepsilon}{2})}&\leq \lfloor A\ell_n\rfloor e^{\frac{t+B}{\log n}-(\log n)^{\alpha-1}(1-\frac{\varepsilon}{2})} \\ & \leq e^{\delta t+\delta B-(\log n)^{\alpha-1}(1-\frac{\varepsilon}{2})} \\ & \leq \frac{1}{2}e^{\delta t-(\log n)^{\alpha-1}(1-\varepsilon)},
\end{align*}
so we get exactly \eqref{Fact1TH1}. \\
We now turn to the lower bound \eqref{Fact2TH1}. Let $\ell'_n=(\log n)^4$ and $\alpha_n=(\log n)^{\alpha}+\log n$. By the \mto, for any $m\leq\log n$, the expectation in \eqref{Fact2TH1} is larger than 
\begin{align*}
    \sum_{k\geq 1}\Pb\big(S_k\geq\alpha_n,\;\max_{j\leq k}\;H_j^S\leq \bmlambda, n^b<H_k^{S}\leq n^be^{\varepsilon(\log n)^{\alpha-1}} ,\underline{S}_k\geq-B\big),
\end{align*}
with $H_j^S:=\sum_{i=1}^j e^{S_i-S_j}$ \label{HS}. For any $b\in(0,1)$, by Lemma \ref{Minor_HRHP1} \eqref{lem4.1b} (with $\ell=(\log n)^2$, $t_{\ell}=\alpha_n$, $q=1$, $a_b=a=6$, $d=(\alpha-1)/2$ and $c=\varepsilon$), above sum is larger than $e^{-(\log n)^{\alpha-1}(1+\varepsilon)}$. Otherwise, if $b=0$, observe that for all $k\leq\ell'_n$, $\overline{S}_k=S_k$ implies $H_k^{{S}}\leq k\leq\ell'_n$ so the sum is larger than $\sum_{k\leq\ell'_n}\Pb\big(S_k\geq\alpha_n,\;\max_{j\leq k}\;H_j^S\leq\bmlambda,\overline{S}_k=S_k, \underline{S}_k\geq-B \big)$. Lemma \ref{MinorRangeHP2} (with $\ell=(\log n)^2$, $t_{\ell}=\alpha_n$, $d=1/2$, $a=6$ and $d'=(\alpha-1)/2$) leads to \eqref{Fact2TH1} also for $b=0$. \\
We are now ready to prove that $\mathbf{f}^n$ satisfies assumptions  \eqref{Hypothese1}, \eqref{Hypothese2}, \eqref{Hypothese3} and \eqref{Hypothese4}. Recall that $\Psi^k_{n,n^b}(f^{n,k})=\Eb\big[\sum_{|x|=k}e^{-V(x)}f^{n,k}(V(x_1),\ldots,V(x))\un_{\{x\in\Hline{n}{n^b}\}}\big]$ where $x\in\mathcal{O}_{n,n^b}$ if and only if $\max_{j\leq|x|}H_{x_j}\leq n$ and $H_x>n^b$, also $\mathcal{U}_b =\{\kappa\in[0,1];\textrm{ for all } k\geq 1,\mathbf{t}\in\R^k,n\geq 1: \un_{\{H_k(\mathbf{t})>n^b\}} f^{n,k}(\mathbf{t})\leq C_{\infty} n^{-\kappa}\}$ with $C_{\infty}=\sup_{n,\ell}\|f^{n,\ell}\|_{\infty}$. \\
$\bullet$ {Check of \eqref{Hypothese1} and asymptotic of $h_n$}. We obtain from \eqref{Fact2TH1} with $m=0$ that for any $\varepsilon\in(0,\varepsilon_b)$ and $n$ large enough, $\Eb[\sum_{x\in\Hline{n}{n^b}}e^{-V(x)}\un_{\{V(x)\geq(\log n)^{\alpha}\}}]$ is larger than (as $\bmlambda\leq n$)
\begin{align*}
    \Eb\Big[\sum_{x\in\mathcal{O}_{\bmlambda,n^b}}e^{-V(x)}\un_{\{V(x)\geq(\log n)^{\alpha}\}}\un_{\{H_x\leq n^be^{\varepsilon(\log n)^{\alpha-1}},\underline{V}(x)\geq-B\}}\Big]\geq e^{-(\log n)^{\alpha-1}(1+\varepsilon)}.
\end{align*}
Note that above inequality implies that for all $b\in[0,1)$, $\kappa_b=\max\mathcal{U}_b=0$. Indeed, if we had $\kappa_b>0$, then this should imply that for any $x \in \T$
\begin{align*}
e^{-V(x)}\un_{\{x\in\mathcal{O}_{n,n^b}\}} f^{n,k}(V(x_1),\ldots,V(x))\leq C_{\infty} n^{- \kappa_b}e^{-V(x)}  \un_{\{x\in\mathcal{O}_{n,n^b}\}}, 
\end{align*}
which gives that  $\Eb[\sum_{x\in\Hline{n}{n^b}}e^{-V(x)} \un_{\{V(x)\geq(\log n)^{\alpha}\}} \un_{\{H_x\leq n^be^{\varepsilon(\log n)^{\alpha-1}},\underline{V}(x)\geq-B\}}]$ is smaller than $C_{\infty}n^{-\kappa_b}\Eb[\sum_{x\in\mathcal{O}_{n}}e^{-V(x)}]\leq C_{\infty}\ell_n n^{-\kappa_b}$ by Remark \ref{Rem1}, but this contradicts the above lower bound \eqref{Fact2TH1} as $\alpha\in(1,2)$. \\
Then, by definition of $\Psi^{k}_{n,n^b}$, 
\begin{align*}
    \sum_{k\geq 1}\Psi^{k}_{n,n^b}(f^{n,k}) =\Eb\Big[\sum_{x\in\mathcal{O}_{n,n^b}}e^{-V(x)}\un_{\{V(x)\geq (\log n)^{\alpha}\}}\Big]\geq e^{-(\log n)^{\alpha-1}(1+\varepsilon)}, 
\end{align*}
and additionally  with \eqref{Fact1TH1} (taking $t=0$), asymptotic of $h_n$ is given by
\begin{align*}
    h_n = \Big|n^{\kappa_b}\log \big(\sum_{k\geq 1}\Psi^k_{n,n^b}(f^{n,k})\big)\Big|=\Big|\log\Eb\Big[\sum_{x\in\mathcal{O}_{n,n^b}}e^{-V(x)}\un_{\{V(x)\geq (\log n)^{\alpha}\}}\Big]\Big|\sim (\log n)^{\alpha-1}. 
\end{align*}
We also deduce from the previous lower bound that \eqref{Hypothese1} is satisfied, indeed, as $\alpha\in(1,2)$, $\sum_{k\geq 1}\Psi^{k}_{n,n^b}(f^{n,k})\geq n^{-(\kappa_b+\varepsilon_1)\land 1}$ for any $\varepsilon_1>0$ and $n$ large enough. \\
$\bullet$ For \eqref{Hypothese2}, recalling $m_n=\lceil \varepsilon h_n/c_2\rceil$ (see \eqref{GenPotMax}), then by definition 
\begin{align*}
    f^{n,j}_{\varepsilon h_n}(t_1,\ldots,t_j)&=\inf_{\mathbf{s}\in[-\varepsilon h_n,\varepsilon h_n]^{m_n}}f^{n,m_n+j}(s_1,\ldots,s_{m_n},t_1+s_{m_n},\ldots,t_j+s_{m_n}) \\ & = \inf_{s_{m_n}\in[-\varepsilon h_n,\varepsilon h_n]}\un_{\{t_j+s_m \geq (\log n)^{\alpha}\}}=\un_{\{t_j  \geq (\log n)^{\alpha}+\varepsilon h_n \}}.
\end{align*}
Observe that for $A>0$, $n$ large enough, any $l\in\N$ and $\mathbf{t}=(t_1,\ldots,t_l)$, by definition of  $\Psi^{k}_{n}(F|\mathbf{t})$ (see \eqref{PsiCondi}) and \eqref{Fact1TH1} with ${\varepsilon}/{3A}$ instead of $\varepsilon$
%=\Eb[\sum_{|x|=k}e^{-V(x)}F(t_1,\ldots,t_l,V(x_1)+t_l,\ldots,V(x)+t_l)\un_{\{n^b<H_x\lor H_l(\mathbf{t})\leq n\}}\un_{x\in\mathcal{O}_n}]$. Then, for all $\varepsilon,A,B>0$ and $n$ large enough, any $l\in\N^*$ and $\mathbf{t}=(t_1,\ldots,t_l)\in\R^l$ with $t_l\geq -B$
\begin{align*}
    \sum_{k\geq 1}\Psi^{k}_{n,n^b-H_l(\mathbf{t})}\big(f^{n,l+k}_{\varepsilon h_n}|\mathbf{t}\big)&=\Eb\Big[\sum_{x\in\mathcal{O}_n}e^{-V(x)}\un_{\{V(x)+t_l\geq(\log n)^{\alpha}+\varepsilon h_n\}}\un_{\{H_x>n^b-H_l(\mathbf{t})\}}\Big] \\ & \leq\Eb\Big[\sum_{x\in\mathcal{O}_n}e^{-V(x)}\un_{\{V(x)\geq (\log n)^{\alpha}-t_l\}}\Big]\leq e^{\delta t_l-(\log n)^{\alpha-1}(1-\frac{\varepsilon}{3A})}.
\end{align*}
Moreover, $e^{-(\log n)^{\alpha-1}(1-\frac{\varepsilon}{3A})}=e^{\frac{2\varepsilon}{3A}(\log n)^{\alpha-1}}e^{-(\log n)^{\alpha-1}(1+\frac{\varepsilon}{3A})}\leq e^{\frac{\varepsilon}{A}h_n}\sum_{k\geq 1}\Psi^k_{n,n^b}(f^{n,k})$, the last inequality coming from the fact that $h_n\sim (\log n)^{\alpha-1}$ and \eqref{Fact2TH1} with $m=0$ and as above $\frac{\varepsilon}{3A}$ instead of $\varepsilon$. So \eqref{Hypothese2} is satisfied. \\

\noindent We are left to prove that technical assumptions \eqref{Hypothese3} and \eqref{Hypothese4} are realized. \\
$\bullet$ For \eqref{Hypothese3}, recall first, from Proposition \ref{Prop1}, that for all $b\in[0,1)$, $\Upsilon^k_n$ is the set
\begin{align*}
    \{\mathbf{t}=(t_1,\ldots,t_k)\in\R^k; H_k(\mathbf{t})\leq n^be^{\varepsilon h_n}, t_k\geq 2\ell_n^{1/3}/\delta_1, \min_{j\leq k}t_j\geq-B\},
\end{align*}
with $\lambda_n=ne^{\min(10 \varepsilon \log n,-5h_n)} =ne^{-5h_n}$ for large $n$. Let $0<\varepsilon_1<\varepsilon$, note that $\lambda_n/2\geq\bmlambda=ne^{-6(\log n)^{\alpha-1}}$ so for $n$ large enough 
\begin{align*}
 u_{1,n}=   \sum_{k\geq 1}\Psi^k_{\lambda_n/2,n^b}(f^{n,k}_{\varepsilon h_n}\un_{\Upsilon^k_n})&=\Eb\Big[\sum_{x\in\Hline{\lambda_n/2}{n^b}}e^{-V(x)}\un_{\{V(x)\geq(\log n)^{\alpha}+\varepsilon h_n,H_x\leq n^be^{\varepsilon h_n},\underline{V}(x)\geq-B\}}\Big] \\ & \geq\Eb\Big[\sum_{x\in\Hline{\bmlambda}{n^b}}e^{-V(x)}\un_{\{V(x)\geq(\log n)^{\alpha}+h_n,H_x\leq n^be^{\frac{\varepsilon_1}{3}(\log n)^{\alpha-1}},\underline{V}(x)\geq-B\}}\Big] \\ & \geq e^{-(\log n)^{\alpha-1}(1+\frac{\varepsilon_1}{3})},
\end{align*}
where we use that $(\log n)^{\alpha}>2\ell_n^{1/3}/\delta_1$ for the second equality and the last inequality comes from \eqref{Fact2TH1},  with $m=h_n$ and $\varepsilon_1/3$ instead of $\varepsilon$. \\ 
 Moreover, $e^{-(\log n)^{\alpha-1}(1+\frac{\varepsilon_1}{3})}=e^{-\frac{2\varepsilon_1}{3}(\log n)^{\alpha-1}}e^{-(\log n)^{\alpha-1}(1-\frac{\varepsilon_1}{3})}\geq e^{-\varepsilon_1 h_n}\sum_{k\geq 1}\Psi^k_{n,n^b}(f^{n,k})$ which comes from the  fact that $h_n\sim (\log n)^{\alpha-1}$ and \eqref{Fact1TH1} with $t=0$, $\frac{\varepsilon_1}{3}$ instead of $\varepsilon$. \\ 
$\bullet$ Finally for \eqref{Hypothese4}, recall the definition of $u_{2,n}$ just below \eqref{UpperProp1}. First observe that as $\alpha\in(1,2)$, for $n$ large enough, $(\log n)^{\alpha}>\ell_n^{1/3}/\delta_1$ so  for  any $k$
\begin{align*}
    \Psi^k_n(f^{n,k}\un_{\R\setminus\mathcal{H}^k_{\ell_n^{1/3}/\delta_1}})=\Eb\Big[\sum_{|x|=k}e^{-V(x)}\un_{\{V(x)\geq(\log n)^{\alpha},V(x)<\ell_n^{1/3}/\delta_1\}}\un_{\{x\in\mathcal{O}_n\}}\Big]=0.
\end{align*}
Recall that $\Eb[W]=e^{\psi(1)}=1$  so 
\begin{align*}
    \sum_{k\geq 1}\big(\Psi^k_{n,n^b/(\log n)^2}(f^{n,k}) +\Eb\big[W\Psi^k_{n,n^b/(W(\log n)^2)}(f^{n,k})\big]\big)  & \leq \sum_{k\geq 1}\big(\Psi^k_{n}(f^{n,k}) +\Eb\big[W\Psi^k_{n}(f^{n,k})\big]\big) \\ & =2\sum_{k\geq 1}\Psi^k_n(f^{n,k}), 
\end{align*}
$2\sum_{k\geq 1}\Psi^k_n(f^{n,k})=2\Eb[\sum_{x\in\mathcal{O}_n}e^{-V(x)}\un_{\{V(x)\geq(\log n)^{\alpha}\}}]\leq 2e^{-(\log n)^{\alpha-1}(1-\frac{\varepsilon_1}{6})}\leq e^{-(\log n)^{\alpha-1}(1-\frac{\varepsilon_1}{3})}$ thanks to \eqref{Fact1TH1} with $t=0$, $\varepsilon_1/6$ instead of $\varepsilon$. \\ 
Moreover, $e^{-(\log n)^{\alpha-1}(1-\frac{\varepsilon_1}{3})}=e^{\frac{2\varepsilon_1}{3}(\log n)^{\alpha-1}}e^{-(\log n)^{\alpha-1}(1+\frac{\varepsilon_1}{3})}\leq e^{\varepsilon_1 h_n}\sum_{k\geq 1}\Psi^k_{n,n^b}(f^{n,k})$. The last inequality comes from the fact that $h_n\sim (\log n)^{\alpha-1}$ and \eqref{Fact2TH1} with $m=0$ and  $\frac{\varepsilon_1}{3}$ instead of $\varepsilon$.
\end{proof}

\begin{proof}[Proof of Theorem \ref{thm2}] Here $f^{n,k}(t_1,t_2, \cdots,t_k)= \un_{\{t_{\lfloor k/\beta \rfloor} \geq (\log n)^{\alpha}\}}$ with $\beta>1$ and $\alpha\in(1,2)$, let us start with the proof of the two following facts, for all $B,\delta>0$, $\varepsilon\in(0,\varepsilon_b)$, $n$ large enough, any $t\geq -B$ and $i\in\N$
%Recall that $f^{n,k}(t_1,t_2, \cdots,t_k)= \un_{\{t_{\lfloor k/\beta \rfloor} \geq (\log n)^{\alpha}\}}$, $\alpha\in(1,2)$ and $\beta>1$. The proof is devided into two steps. The first one is to prove some facts about $\mathbf{f}^n$. 
%The second is to check that for all $n\geq 1$, $\mathbf{f}^n$ actually satisfies main assumptions \eqref{Hypothese1}, \eqref{Hypothese2} and \eqref{Hypothese3}.\\ Let's start with the proof of the two following facts: for all $B,\varepsilon,\delta>0$ and $n$ large enough, for any $t\geq -B$ and $i\in\N$ 
\begin{align}
    \Eb\Big[\sum_{x,\lfloor (|x|+i)/\beta\rfloor>i}e^{-V(x)}\un_{\{V(x_{\lfloor (|x|+i)/\beta\rfloor-i})\geq (\log n)^{\alpha}-t\}}\un_{\{x\in\mathcal{O}_n\}}\Big]\leq e^{\delta t-c_{\beta}(\log n)^{\alpha-1}(1-\varepsilon)},
    \label{Fact1TH2}
\end{align}
and for any $m\leq \log n$ 
\begin{align}
    \Eb\Big[\sum_{x,\lfloor (|x|+i)/\beta\rfloor>i}e^{-V(x)}\un_{\{V(x_{\lfloor (|x|+i)/\beta\rfloor-i})\geq (\log n)^{\alpha}+m\}}\un_{\{x\in\Upsilon_n\cap\Hline{\bmlambda}{n^b}\}}\Big]\geq e^{-c_{\beta}(\log n)^{\alpha-1}(1+\varepsilon)},
    \label{Fact2TH2}
\end{align}
with $\bmlambda=ne^{-6c_{\beta}(\log n)^{\alpha-1}}$, for any $a>\frac{1}{\delta_1}$
\begin{align*}
   \Upsilon_n= \Upsilon_n(\varepsilon):=\{x\in\T;H_x\leq n^be^{\varepsilon c_{\beta}(\log n)^{\alpha-1}}, V(x)\geq a\log n,\underline{V}(x)\geq-B\},
\end{align*}
and $c_{\beta}=-1-\pi\sqrt{\beta-1}/2+\rho((\beta-1)\pi^2/4)$ (for $\rho$ see  \eqref{Def_rho}). Recall $\ell_n=(\log n)^3$ and introduce $L_n:=\lfloor (\log n)^{2+\varepsilon_{\alpha}}\rfloor$ with $\varepsilon_{\alpha}\in(0,\alpha-1)$. \\
\textit{Proof of \eqref{Fact1TH2}}  : first note that if $t>(\log n)^{\alpha}/2$, \eqref{Fact1TH2} is obviously satisfied, indeed 
\begin{align*}
    \Eb\Big[\sum_{x,\lfloor (|x|+i)/\beta\rfloor>i}e^{-V(x)}\un_{\{V(x_{(\lfloor (|x|+i)/\beta\rfloor-i)})\geq (\log n)^{\alpha}-t\}}\un_{\{x\in\mathcal{O}_n\}}\Big]\leq \Eb\Big[\sum_{x\in\mathcal{O}_n}e^{-V(x)}\Big],
\end{align*}
and by Remark \ref{Rem1}, $\Eb[\sum_{x\in\mathcal{O}_n}e^{-V(x)}]=\Eb[\sum_{x\in\mathcal{O}_n}e^{-V(x)}]e^{\delta t-\delta t}\leq \ell_n e^{\delta t-\frac{\delta}{2}(\log n)^{\alpha}}\leq e^{\delta t-c_{\beta}(\log n)^{\alpha-1}}$ for $n$ large enough. Now assume $t\leq (\log n)^{\alpha}/2$. The expectation in \eqref{Fact1TH2} is smaller than
\begin{align*}
    \sum_{k\leq\lfloor A\ell_n \rfloor}\sum_{p\geq 1}\un_{\{p=\lfloor\frac{k+i}{\beta}\rfloor-i\}}\Eb\Big[\sum_{|x|=k}e^{-V(x)}\un_{\{V(x_p)\geq(\log n)^{\alpha}-t\}}\un_{\{x\in\mathcal{O}_n\}}\Big] +\Eb\Big[\sum_{|x|>\lfloor A\ell_n \rfloor}e^{-V(x)}\un_{\{x\in\mathcal{O}_n\}}\Big], 
\end{align*}
with $A>0$ such that the last term is smaller than $1/n$ (Remark \ref{Rem1}). Note that $p=\lfloor\frac{k+i}{\beta}\rfloor-i$ implies $k\geq\lceil\beta p\rceil$ and as $\sum_{k\leq\lfloor A\ell_n \rfloor}\un_{\{p=\lfloor\frac{k+i}{\beta}\rfloor-i\}}\leq\beta$ for any $p\geq 1$, the above sum is smaller, by the \mto, than
\begin{align}
    \beta\sum_{p\leq\lfloor A\ell_n \rfloor}\Pb\big(S_p\geq (\log n)^{\alpha}-t,\max_{j\leq\lceil p\beta\rceil }H_j^S\leq n\big)+\frac{1}{n}\leq&\beta\sum_{p=L_n}^{\lfloor A\ell_n \rfloor}\Pb\big(S_p\geq (\log n)^{\alpha}-t,\max_{j\leq\lceil p\beta\rceil }H_j^S\leq n\big)\label{eqp12} \\ & +\beta\sum_{p<L_n}\Pb\big(S_p\geq (\log n)^{\alpha}-t\big)+\frac{1}{n}.\nonumber
\end{align}
For the second sum in \eqref{eqp12}, by  the exponential Markov inequality, for $n$ large enough, all $p<L_n$ and $t\geq-B$
\begin{align*}
    \Pb\big(S_p\geq (\log n)^{\alpha}-t\big)\leq e^{\bm{\delta}_nt-\bm{\delta}_n(\log n)^{\alpha}+p\psi(1-\bm{\delta}_n)}\leq e^{\bm{\delta}_n(t+B)-\frac{(\log n)^{2\alpha}}{2\sigma^2L_n}+L_n\psi(1-\bm{\delta}_n)}\leq e^{\delta t-(1-\varepsilon)\frac{(\log n)^{2\alpha}}{2\sigma^2L_n}}, 
\end{align*}
with $\bm{\delta}_n:=(\log n)^{\alpha}/\sigma^2L_n$, and we have used that $\psi(1-\bm{\delta}_n)\in\R^+$ for the second inequality and that $\bm{\delta}_n\to 0$ ($\alpha\in(1,2)$) together with $\psi(1)=\psi'(1)=0$ and $\psi''(1)=\sigma^2$ for the last one. \\ 
For the first sum in \eqref{eqp12}, which gives the main contribution,  by the Markov property at time $p$, $\Pb\big(S_p\geq (\log n)^{\alpha}-t,\max_{j\leq \lceil \beta p\rceil}H_j^S\leq n\big)$ is smaller than $\Pb\big(S_p\geq (\log n)^{\alpha}-t,\max_{j\leq p}H_j^S\leq n\big)\Pb\big(\max_{j\leq \lceil (\beta-1) p\rceil}H_j^S\leq n\big)$. Then thanks to Lemma \ref{LemmBorneUnif} \eqref{LemmBorneUnif1} (with $\ell=(\log n)^2$, $\lceil(\beta-1)p\rceil$ and $\varepsilon/2$ in place of, respectively, $k$ and $\varepsilon$), for $n$ large enough and any $p\in\{L_n,\ldots,\lfloor A\ell_n \rfloor\}$
\begin{align*}
  \Pb\big(\max_{j\leq \lceil (\beta-1) p\rceil}H_j^S\leq n\big)\leq e^{-p\frac{\pi^2\sigma^2(\beta-1)}{8(\log n)^2}(1-\frac{\varepsilon}{2})}=e^{-p\frac{\pi^2\sigma^2(\beta-1)}{8((1-\varepsilon/2)^{-1/2}\log n)^2}}.
\end{align*}
Hence, as $\log n\leq (1-\varepsilon/2)^{-1/2}\log n$, $\sum_{p=L_n}^{\lfloor A\ell_n \rfloor}\Pb\big(S_p\geq (\log n)^{\alpha}-t,\max_{j\leq\lceil p\beta\rceil }H_j^S\leq n\big)$ is smaller than 
\begin{align*}
     &\sum_{p=L_n}^{\lfloor A\ell_n \rfloor}\Eb\Big[\un_{\{\tau_{(\log n)^{\alpha}-t}\leq p,\; \max_{j\leq k}\overline{S}_j-S_j\leq (1-\varepsilon/2)^{-1/2}\log n\}}e^{-p\frac{\pi^2\sigma^2(\beta-1)}{8((1-\varepsilon/2)^{-1/2}\log n)^2}}\Big] \\ & \leq A\ell_n\Eb\Big[\un_{\{\max_{j\leq \tau_{(\log n)^{\alpha}-t}}\overline{S}_j-S_j\leq(1-\varepsilon/2)^{-1/2}\log n\}}e^{-\tau_{(\log n)^{\alpha}-t}\frac{\pi^2\sigma^2(\beta-1)}{8((1-\varepsilon/2)^{-1/2}\log n)^2}}\Big] \\ & \leq A\ell_ne^{\sqrt{1-\frac{\varepsilon}{2}}\frac{c_{\beta}t}{\log n}-c_{\beta}(\log n)^{\alpha-1}(1-\frac{\varepsilon}{2})}\leq A\ell_ne^{\frac{c_{\beta}(t+B)}{\log n}-c_{\beta}(\log n)^{\alpha-1}(1-\frac{\varepsilon}{2})}\leq \frac{1}{3}e^{\delta t-c_{\beta}(\log n)^{\alpha-1}(1-\varepsilon)},
\end{align*}
where Lemma \ref{Laplace2} (with $\ell=((1-\varepsilon/2)^{-1/2}\log n)^2$, $r(\ell)=(\log n)^{\alpha}-t$, $c=\pi^2(\beta-1)/4$ and $1-\sqrt{1-\varepsilon/2}$ instead of $\varepsilon$) provides the second inequality. Finally collecting all the upper bounds of the three sums in \eqref{eqp12}, for $n$ large enough 
\begin{align*}
    &\Eb\Big[\sum_{x;\lfloor (|x|+i)/\beta\rfloor>i}e^{-V(x)}\un_{\{V(x_{(\lfloor (|x|+i)/\beta\rfloor-i)})\geq (\log n)^{\alpha}-t\}}\un_{\{x\in\mathcal{O}_n\}}\Big] \\ & \leq\frac{1}{3}e^{\delta t-c_{\beta}(\log n)^{\alpha-1}(1-\varepsilon)} + \beta e^{\delta t-(1-\varepsilon)\frac{(\log n)^{2\alpha}}{2\sigma^2L_n}}+ \frac{1}{n}\leq \frac{2}{3}e^{\delta t-c_{\beta}(\log n)^{\alpha-1}(1-\varepsilon)}+\frac{e^{\delta(t+B)}}{n}, 
\end{align*}
which is smaller than $e^{\delta t-c_{\beta}(\log n)^{\alpha-1}(1-\varepsilon)}$ (we have used that $(\log n)^{2\alpha}/L_n\geq (\log n)^{2(\alpha-1)-\varepsilon_{\alpha}}$ and $(\log n)^{\alpha-1}=o((\log n)^{2(\alpha-1)-\varepsilon_{\alpha}})$). This yields the upper bound in \eqref{Fact1TH2}. \\
 %We used that $(\log n)^{2\alpha}/L_n\geq (\log n)^{2(\alpha-1)-\varepsilon_{\alpha}}=o((\log n)^{\alpha-1})$ (recall $\varepsilon_{\alpha}\in(0,\alpha-1)$ and $\alpha\in(1,2)$). \\
\textit{Proof of \eqref{Fact2TH2}}. Let  $\alpha_n:=(\log n)^{\alpha}+\log n$. For all $m\leq \log n$,  by the \mto, the expectation in \eqref{Fact2TH2} is larger than 
\begin{align*}
    \sum_{p,k\geq 1}\un_{\{p=\lfloor(k+i)/\beta\rfloor-i\}}\Pb\big(S_{p}\geq\alpha_n, n^b<H^S_{k}\leq n^be^{\varepsilon c_{\beta}(\log n)^{\alpha-1}}, \max_{j\leq k}\;H^S_j\leq \bmlambda, S_k\geq \frac{2\ell_n^{\frac{1}{3}}}{\delta_1},\underline{S}_k\geq-B\big). 
\end{align*}
The above probability is larger than (as $\alpha_n>a\log n$ for all $a>\frac{1}{\delta_1}$)
\begin{align*}
    \Pb\big(S_{p}\geq\alpha_n,\underline{S}_p\geq-B,\overline{S}_p=S_p, n^b<H^S_{k}\leq n^be^{\varepsilon c_{\beta}(\log n)^{\alpha-1}}, \max_{j\leq k}\;H^S_j\leq \bmlambda,\min_{p<j\leq k}S_j\geq S_p\big). 
\end{align*}
Recall that $H_j^S=\sum_{i=1}^je^{S_i-S_j}$ so we have, for any $p<j\leq k$, $H_j^S=e^{S_p-S_j} H_p^S+H_{p,j}^S$ where $H_{p,j}^S= \sum_{i=p+1}^j e^{S_i-S_j}$. Note that $\overline{S}_p=S_p$ and $\min_{p<j\leq k}S_j\geq S_p$ implies $H_{j}^S\leq p+H_{p,j}^S$ so the previous probability is larger than
\begin{align*}
    \Pb\big(S_{p}\geq\alpha_n,\underline{S}_p\geq-B,\overline{S}_p=S_p, \max_{j\leq p}H_j^S\leq \bmlambda,n^b<H^S_{p,k}\leq &n^be^{\varepsilon c_{\beta}(\log n)^{\alpha-1}}-p \\ &, \max_{p<j\leq k}\;H^S_{p,j}\leq \bmlambda-p,\min_{p<j\leq k}S_j\geq S_p\big), 
\end{align*}
which, thanks to the Markov property at time $p$, is nothing but the product of $\Pb\big(S_{p}\geq\alpha_n,\underline{S}_p\geq-B,\overline{S}_p=S_p, \max_{j\leq p}H_j^S\leq \bmlambda\big)$ and $\Pb\big(n^b<H^S_{k-p}\leq n^be^{\varepsilon c_{\beta}(\log n)^{\alpha-1}}-p, \max_{j\leq k-p}\;H^S_{j}\leq \bmlambda-p=ne^{-6(\log n)^{\alpha-1}}-p,\underline{S}_{k-p}\geq 0\big)$. From now, let $p\in\{L_n,\ldots,\ell'_n= (\log n)^4\}$. We first deal with the second probability. Observe that for all $i\geq 0$, $p=\lfloor(k+i)/\beta\rfloor-i$ implies $k-p\geq\lceil(\beta-1)L_n\rceil$. It follows that for all $\varepsilon\in(0,\varepsilon_b)$, $n$ large enough, for all $L_n\leq p\leq\ell'_n$, $k\geq 1$, $i\geq 0$ such that $p=\lfloor(k+i)/\beta\rfloor-i$, $\Pb\big(n^b<H^S_{k-p}\leq n^be^{\varepsilon c_{\beta}(\log n)^{\alpha-1}}-p,\max_{j\leq k-p}\;H^S_{j}\leq \bmlambda-p,\underline{S}_{k-p}\geq 0\big)$ is larger than (as $\bmlambda-p\geq \bmlambda-\ell'_n\geq ne^{-7c_{\beta}(\log n)^{\alpha-1}}$)
\begin{align*}
    \Pb\big(n^b<H^S_{k-p}\leq n^be^{\frac{\varepsilon}{2} c_{\beta}(\log n)^{\alpha-1}}, \max_{j\leq k-p}\;H^S_{j}\leq ne^{-7c_{\beta}(\log n)^{\alpha-1}},\underline{S}_{k-p}\geq 0\big)\geq e^{-\frac{\pi^2\sigma^2}{8}\frac{(k-p)}{(\log\bmlambda')^2}},
\end{align*} 
with $\bmlambda':=n^{(1+\varepsilon/2)^{-1/2}}$ \label{blambdap}. The  last inequality comes from  Lemma \ref{LemmBorneUnif} \eqref{LemmBorneUnif2} (with $\ell=(\log n)^2$, $a=7$, $c=\frac{\varepsilon c_{\beta}}{2}$, $d=\frac{\alpha-1}{2}$, $k-p$ and $\varepsilon/2$ instead respectively of $k$ and $\varepsilon$). The equality $p=\lfloor(k+i)/\beta\rfloor-i$ also implies, for any $0\leq i\leq\log n$ that $k-p\leq(p+\log n)(\beta-1)+\beta$ so it follows that the above probability is larger than $C\exp(\frac{\pi^2\sigma^2(\beta-1)}{8(\log\bmlambda')^2}p)$ for some positive constant $C\in(0,1)$. Collecting the previous inequalities together with Lemma \ref{LemmFKG} gives, as $\sum_{k\geq 1}\un_{\{p=\lfloor(k+i)/\beta\rfloor-i\}}\geq 1$, that for $n$ large enough, the mean in \eqref{Fact2TH2} is larger than 
\begin{align*}
    &C\sum_{p=L_n}^{\ell'_n}\Eb\Big[e^{-\frac{\pi^2\sigma^2(\beta-1)}{8(\log\bmlambda')^2}p}\un_{\{S_{p}\geq\alpha_n, \underline{S}_p\geq-B,\overline{S}_p=S_p,\max_{j\leq p}H_j^S\leq ne^{-7c_{\beta}(\log n)^{\alpha-1}}\}}\Big]\sum_{k\geq 1}\un_{\{p=\lfloor(k+i)/\beta\rfloor-i\}} \\ & \geq C\Pb(\underline{S}_{\ell'_n}\geq 0)^2\Eb\Big[e^{-\frac{\pi^2\sigma^2(\beta-1)}{8(\log\bmlambda')^2}\tau_{\alpha_n}}\un_{\{L_n\leq\tau_{\alpha_n}\leq\ell'_n,\forall j\leq \tau_{\alpha_n}:\overline{S}_j-S_j\leq\log\bmlambda'\}}\Big] \\ & \geq C\Pb(\underline{S}_{\ell'_n}\geq 0)^2\Pb(\overline{S}_{\ell'_n}\geq\alpha_n)\Eb\Big[e^{-\frac{\pi^2\sigma^2(\beta-1)}{8(\log\bmlambda')^2}\tau_{\alpha_n}}\un_{\{\forall j\leq \tau_{\alpha_n}:\overline{S}_j-S_j\leq\log\bmlambda'\}}\Big]-\Pb(\overline{S}_{L_n}\geq\alpha_n).
\end{align*}
Note that thanks to \eqref{lim45} and the fact that $\alpha\in(1,2)$, we can find a constant $c_{(1.2)}>0$ such that
$C\Pb\big(\underline{S}_{\ell'_n}\geq 0\big)^2\Pb\big(\overline{S}_{\ell'_n}\geq \alpha_n\big) \geq c_{(1.2)}(\ell'_n)^{-1}\geq 2e^{-\frac{\varepsilon}{2}(\log n)^{\alpha-1}}$. Then applying Lemma \ref{Laplace2} (with $\ell=\log\bmlambda'$, $r=\alpha_n$, $c=\pi^2(\beta-1)/4$ and $\sqrt{1+\varepsilon/2}-1$ instead of $\varepsilon$), for $n$ large enough
\begin{align*}
    \Eb\Big[e^{-\frac{\pi^2\sigma^2(\beta-1)}{8(\log\bmlambda')^2}\tau_{\alpha_n}}\un_{\{\forall j\leq \tau_{\alpha_n}:\overline{S}_j-S_j\leq\log\bmlambda'\}}\Big]\geq e^{-c_{\beta}(\log n)^{\alpha-1}(1+\frac{\varepsilon}{2})}.
\end{align*}
Finally, by Markov inequality, $\Pb(\overline{S}_{L_n}\geq\alpha_n)\leq L_ne^{-c'_{(1.2)}\alpha_n^2/L_n}$ for some constant $c'_{(1.2)}>0$. Since $\alpha_n^2/L_n\geq(\log n)^{2(\alpha-1)-\varepsilon_{\alpha}}$ and $(\log n)^{\alpha-1}=o((\log n)^{2(\alpha-1)-\varepsilon_{\alpha}})$, %, recall indeed that $\varepsilon_{\alpha}\in(0,\alpha-1)$ and $\alpha\in(1,2)$,
we get that $\Pb(\overline{S}_{L_n}\geq\alpha_n)\leq e^{-c_{\beta}(\log n)^{\alpha-1}(1+\varepsilon)}$. Collecting the different estimates  yields \eqref{Fact2TH2}. \\ \\
We are ready to prove that $\mathbf{f}^n$ satisfies assumptions \eqref{Hypothese1}, \eqref{Hypothese2}, \eqref{Hypothese3} and \eqref{Hypothese4}.
Recall that $\Psi^k_{n,n^b}(f^{n,k})=\Eb\big[\sum_{|x|=k}e^{-V(x)}f^{n,k}(V(x_1),\ldots,V(x))\un_{\{x\in\Hline{n}{n^b}\}}\big]$ where $x\in\mathcal{O}_{n,n^b}$ if and only if $\max_{j\leq|x|}H_{x_j}\leq n$ and $H_x>n^b$, $\mathcal{U}_b =\{\kappa\in[0,1];\textrm{ for all } k\geq 1,\mathbf{t}\in\R^k,n\geq 1: \un_{\{H_k(\mathbf{t})>n^b\}} f^{n,k}(\mathbf{t})\leq C_{\infty} n^{-\kappa}\}$ with $C_{\infty}=\sup_{n,\ell}\|f^{n,\ell}\|_{\infty}$. \\
$\bullet$ Check of \eqref{Hypothese1} and asymptotic of $h_n$. We obtain from \eqref{Fact2TH2} with $i=m=0$ and $n$ large enough
\begin{align*}
   \Eb\Big[\sum_{x\in\mathcal{O}_{n,n^b}}e^{-V(x)}\un_{\{V(x_{\lfloor |x|/\beta \rfloor})\geq(\log n)^{\alpha}\}}\Big]& \geq\Eb\Big[\sum_{x\in\T}e^{-V(x)}\un_{\{V(x_{\lfloor |x|/\beta \rfloor})\geq (\log n)^{\alpha}\}}\un_{\{x\in\Upsilon_n\cap\mathcal{O}_{\bmlambda,n^b}\}}\Big] \\ & \geq e^{-c_{\beta}(\log n)^{\alpha-1}(1+\varepsilon)}.
\end{align*}
This implies that for all $b\in[0,1)$, $\kappa_b=\max\mathcal{U}_b=0$ {(we use a similar argument than in the proof of Theorem \ref{thm1})} and additionally with \eqref{Fact1TH2}, gives, taking $i=t=0$ 
\begin{align*}
    h_n = \Big|n^{\kappa_b}\log \big(\sum_{k\geq 1}\Psi^k_{n,n^b}(f^{n,k})\big)\Big|=\Big|\log\Eb\Big[\sum_{x\in\mathcal{O}_{n,n^b}}e^{-V(x)}\un_{\{V(x_{\lfloor |x|/\beta \rfloor})\geq (\log n)^{\alpha}\}}\Big]\Big|\sim c_{\beta}(\log n)^{\alpha-1}.
\end{align*}
We also deduce from the previous lower bound that \eqref{Hypothese1} is satisfied. \\
$\bullet$ For \eqref{Hypothese2}, recalling $m_n=\lceil \varepsilon h_n/c_2\rceil$ ($c_2$ is defined in \eqref{GenPotMax}), by definition,  for any $j>0$
\begin{align*}
    f^{n,j}_{\varepsilon h_n}(t_1,\ldots,t_j) & =  \inf_{\mathbf{s}\in[-\varepsilon h_n,\varepsilon h_n]^{m_n}}f^{n,m_n+j}(s_1,\ldots,s_{m_n},t_1+s_{m_n},\ldots,t_j+s_{m_n}) \\ & = \inf_{{s_{m_n}}\in[-\varepsilon h_n,\varepsilon h_n]} \un_{\{t_{\lfloor(m_n+j)/\beta\rfloor-m_n} 
    \geq (\log n)^{\alpha}-s_{m_n}\}} \\ &  = \un_{\{\lfloor(m_n+j)/\beta\rfloor>m_n\}}\un_{\big\{t_{\lfloor(m_n+j)/\beta\rfloor-m_n}\geq (\log n)^{\alpha}+\varepsilon h_n\big\}}.
\end{align*}
Then for any $l\in\N^*$ and all $\mathbf{t}=(t_1,\ldots,t_l)\in\R^l$, $f^{n,l+k}_{\varepsilon h_n}(t_1,\ldots,t_l,V(x_1)+t_l,\ldots,V(x)+t_l)$, with $|x|=k$, is equal to 
\begin{align*}
    \un_{\{m_n<\lfloor(k+i)/\beta\rfloor\leq i\}}\un_{\{t_{\lfloor(k+i)/\beta\rfloor-m_n}\geq (\log n)^{\alpha}+\varepsilon h_n\}}+\un_{\{\lfloor (k+i)/\beta\rfloor>i\}}\un_{\{V(x_{(\lfloor (k+i)/\beta\rfloor-i)})+t_l\geq (\log n)^{\alpha}+\varepsilon h_n\}},
\end{align*}
with $i=m_n+l$. Recall the definition of  $\Psi^{.}_{.,.}(F|\mathbf{t})$ in \eqref{PsiCondi}, we have
\begin{align*}
   \sum_{k\geq 1}\Psi^{k}_{n,n^b-H_l(\mathbf{t})}\big(f^{n,l+k}_{\varepsilon h_n}|\mathbf{t}\big)&\leq \mathbf{E}\Big[\underset{x\in\mathcal{O}_n}{\sum}e^{-V(x)}f^{n,l+k}_{\varepsilon h_n}(t_1,\ldots,t_l,V(x_{1})+t_l,\ldots, V(x)+t_l)\Big] \\ & \leq \sum_{k\geq 1}\un_{\{m_n<\lfloor(i+k)/\beta\rfloor\leq i\}}\un_{\{t_{\lfloor(i+k)/\beta\rfloor-m_n}\geq (\log n)^{\alpha}\}}\Psi^k_n(1) \\  & +  \Eb\Big[\sum_{x;\lfloor (|x|+i)/\beta\rfloor>i}e^{-V(x)}\un_{\{V(x_{(\lfloor (|x|+i)/\beta\rfloor-i)})\geq (\log n)^{\alpha}-t_l\}}\un_{\{x\in\mathcal{O}_n\}}\Big].
\end{align*}
$\sum_{k\geq 1}\un_{\{m_n<\lfloor(i+k)/\beta\rfloor\leq i\}}\un_{\{t_{\lfloor(i+k)/\beta\rfloor-m_n}\geq (\log n)^{\alpha}\}}\Psi^k_n(1)$ is equal to
\begin{align*}
    \sum_{p=1}^l\un_{\{t_p\geq(\log n)^{\alpha}\}}\sum_{k\geq 1}\Psi^k_n(1)\un_{\{p=\lfloor\frac{i+k}{\beta}\rfloor-m_n\}}\leq \beta \sum_{p=1}^l\un_{\{t_p\geq(\log n)^{\alpha}\}},
\end{align*}
where we have used that $\sum_{k\geq 1}\Psi^k_n(1)\un_{\{p=\lfloor\frac{i+k}{\beta}\rfloor-m_n\}}\leq e^{\psi(1)}\sum_{k\geq 1}\un_{\{p=\lfloor\frac{i+k}{\beta}\rfloor-m_n\}}\leq\beta$. Also by \eqref{Fact1TH2} with $i=m_n+l$, $t=t_l$ and $\frac{\varepsilon}{4A}$ instead of $\varepsilon$, $$\Eb\Big[\sum_{x;\lfloor (|x|+i)/\beta\rfloor>i}e^{-V(x)}\un_{\{V(x_{(\lfloor (|x|+i)/\beta\rfloor-i)})\geq (\log n)^{\alpha}-t_l\}}\un_{\{x\in\mathcal{O}_n\}}\Big]\leq e^{\delta t_l-c_{\beta}(\log n)^{\alpha-1}(1-\frac{\varepsilon}{4A})},$$ so
\begin{align*}
  \sum_{k\geq 1}\Psi^{k}_{n,n^b-H_l(\mathbf{t})}\big(f^{n,l+k}_{\varepsilon h_n}|\mathbf{t}\big)\leq  \beta\sum_{p=1}^l\un_{\{t_p\geq(\log n)^{\alpha}\}} + \frac{1}{2}e^{\delta t_l-c_{\beta}(\log n)^{\alpha-1}(1-\frac{\varepsilon}{3A})}.
\end{align*}
Note that $\beta\sum_{p=1}^l\un_{\{t_p\geq(\log n)^{\alpha}\}}$ is very small for $n$ large enough, any $l<\lfloor A\ell_n\rfloor$ and $H_l(\mathbf{t})\leq n$. Indeed, $\sum_{p=1}^le^{\delta(t_p-t_l)}\leq lH_l(\mathbf{t})^{\delta}\leq A\ell_n n^{\delta}$ so 
\begin{align*}
    \beta\sum_{p=1}^l\un_{\{t_p\geq(\log n)^{\alpha}\}}=e^{\delta t_l}\beta\sum_{p=1}^le^{\delta(t_p-t_l)}e^{-\delta t_p}\un_{\{t_p\geq(\log n)^{\alpha}\}}\leq e^{\delta t_l}\beta A\ell_n n^{\delta}e^{-\delta(\log n)^{\alpha}},
\end{align*}
which, as $\alpha\in(1,2)$, is smaller than $\frac{1}{2}e^{\delta t_l-c_{\beta}(\log n)^{\alpha-1}(1-\frac{\varepsilon}{3A})}$. Finally observe that
\begin{align*}
    e^{-c_{\beta}(\log n)^{\alpha-1}(1-\frac{\varepsilon}{3A})}= e^{c_{\beta}(\log n)^{\alpha-1}\frac{2\varepsilon}{3A}}e^{-c_{\beta}(\log n)^{\alpha-1}(1+\frac{\varepsilon}{3A})}\leq e^{\frac{\varepsilon}{A}h_n}\sum_{k\geq 1}\Psi^k_{n,n^b}(f^{n,k}),
\end{align*}
where we have used that $h_n\sim c_{\beta}(\log n)^{\alpha-1}$ and \eqref{Fact2TH2} with $i=m=0$. \\

%%%% ICI %%%% 
\noindent We are left to prove that the technical assumptions \eqref{Hypothese3} and \eqref{Hypothese4} are realized. The ideas are very similar to those of the proof of these two assumptions in the previous theorem, we give details here however to keep the proofs independent from one another.  \\
$\bullet$ For \eqref{Hypothese3}, recall that $\Upsilon^k_n$ is the set
\begin{align*}
   \{\mathbf{t}=(t_1,\ldots,t_k)\in\R^k; H_k(\mathbf{t})\leq n^be^{\varepsilon h_n}, t_k\geq 2\ell_n^{1/3}/\delta_1, \min_{j\leq k}t_j\geq-B\}.
\end{align*}
Let $0<\varepsilon_1< \varepsilon$ and recall that $\lambda_n=ne^{-5h_n}$. Note that $\lambda_n/2\geq\bmlambda=ne^{-6(\log n)^{\alpha-1}}$ so the sum $\sum_{k\geq 1}\Psi^k_{\lambda_n/2,n^b}(f^{n,k}_{\varepsilon h_n}\un_{\Upsilon^k_n})$ is larger than $\sum_{k\geq 1}\Psi^k_{\bmlambda,n^b}(f^{n,k}_{\varepsilon h_n}\un_{\Upsilon^k_n})$ which is nothing but
\begin{align*}
    &\Eb\Big[\sum_{x\in\Hline{\bmlambda}{n^b}}e^{-V(x)}\un_{\{\lfloor\frac{|x|+m_n}{\beta}\rfloor>m_n\}}\un_{\{V(x_{\lfloor(|x|+m_n)/\beta\rfloor-m_n})\geq(\log n)^{\alpha}+\varepsilon h_n,H_x\leq n^be^{\varepsilon h_n}, V(x)\geq \frac{2}{\delta_1}\ell_n^{\frac{1}{3}},\underline{V}(x)\geq-B\}}\Big] \\ & \geq\Eb\Big[\sum_{x,\lfloor(|x|+m_n)/\beta\rfloor>m_n}e^{-V(x)}\un_{\{V(x_{\lfloor(|x|+m_n)/\beta\rfloor-m_n})\geq(\log n)^{\alpha}+h_n\}}\un_{\{x\in\Upsilon_n(\frac{\varepsilon_1}{3})\cap\Hline{\bmlambda}{n^b}\}}\Big] \\ & \geq e^{-c_{\beta}(\log n)^{\alpha-1}(1+\frac{\varepsilon_1}{3})},
\end{align*}
where this last inequality comes from \eqref{Fact2TH2} with $i=m=m_n$ and $\varepsilon_1/3$ instead of $\varepsilon$. Moreover, $e^{-c_{\beta}(\log n)^{\alpha-1}(1+\frac{\varepsilon_1}{3})}=e^{-\frac{2\varepsilon_1}{3}c_{\beta}(\log n)^{\alpha-1}}e^{-c_{\beta}(\log n)^{\alpha-1}(1-\frac{\varepsilon_1}{3})}\geq e^{-\varepsilon_1 h_n}\sum_{k\geq 1}\Psi^k_{n,n^b}(f^{n,k})$, the last inequality comes from the fact that $h_n\sim c_{\beta}(\log n)^{\alpha-1}$ and \eqref{Fact1TH2} with $i=t=0$. \\ 
$\bullet$ For \eqref{Hypothese4}, first observe that for all $k\in\N^*$ and $\alpha\in(1,2)$, $(\log n)^{\alpha}-\ell_n^{1/3}/\delta_1>\log n$ for $n$ large enough so
\begin{align*}
    \Psi^k_n(f^{n,k}\un_{\R\setminus\mathcal{H}^k_{\ell_n^{1/3}/\delta_1}})&=\Eb\Big[\sum_{|x|=k}e^{-V(x)}\un_{\{V(x_{\lfloor|x|/\beta\rfloor})\geq(\log n)^{\alpha},V(x)<\ell_n^{1/3}/\delta_1\}}\un_{\{x\in\mathcal{O}_n\}}\Big] \\ & \leq \Eb\Big[\sum_{|x|=k}e^{-V(x)}\un_{\{\overline{V}(x)\geq(\log n)^{\alpha},V(x)<\ell_n^{1/3}/\delta_1\}}\un_{\{\overline{V}(x)-V(x)\leq \log n\}}\Big]=0.
\end{align*}
Recall that $W=\sum_{|z|=1}e^{-V(z)}$ and  
\begin{align*}
    \sum_{k\geq 1}\big(\Psi^k_{n,n^b/(\log n)^2}(f^{n,k}) +\Eb\big[W\Psi^k_{n,n^b/(W(\log n)^2)}(f^{n,k})\big]\big)\leq \sum_{k\geq 1}\big(\Psi^k_{n}(f^{n,k}) +\Eb\big[W\Psi^k_{n}(f^{n,k})\big]\big),
\end{align*}
which is equal to $2\sum_{k\geq 1}\Psi^k_n(f^{n,k})$ since $\Eb[W]=e^{\psi(1)}=1$ and thanks to \eqref{Fact1TH2} with $i=t=0$ and $\frac{\varepsilon_1}{4}$ in place of $\varepsilon$
\begin{align*}
    2\sum_{k\geq 1}\Psi^k_n(f^{n,k})=2\Eb\Big[\sum_{x\in\mathcal{O}_n}e^{-V(x)}\un_{\{V(x_{|x|/\beta})\geq(\log n)^{\alpha}\}}\Big]&\leq 2e^{-c_{\beta}(\log n)^{\alpha-1}(1-\frac{\varepsilon_1}{4})} \\ & \leq e^{-c_{\beta}(\log n)^{\alpha-1}(1-\frac{\varepsilon_1}{3})}.
\end{align*}
Moreover, $e^{-c_{\beta}(\log n)^{\alpha-1}(1-\frac{\varepsilon_1}{3})}=e^{\frac{2\varepsilon_1}{3}c_{\beta}(\log n)^{\alpha-1}}e^{-c_{\beta}(\log n)^{\alpha-1}(1+\frac{\varepsilon_1}{3})}\leq e^{\varepsilon_1 h_n}\sum_{k\geq 1}\Psi^k_{n,n^b}(f^{n,k})$, the last inequality comes from the fact that $h_n\sim c_{\beta}(\log n)^{\alpha-1}$ and \eqref{Fact2TH2} with $i=m=0$.\end{proof}

\begin{proof}[Proof of Theorem \ref{thm3}] 
\noindent \\
\textbf{Assume first that $a=d=1$ and $\alpha\in(1,2)$} which corresponds to the second and third case of the theorem. 
Let us start with the proof of the two facts,  note that we distinguish whether $b=0$ or $b \in(0,1/2)$. \\
\underline{Facts for the case $b=0$} : for all $B,\delta>0$, $\varepsilon\in(0,\varepsilon_b)$ and $n$ large enough, for any $t\geq -B$, 
\begin{align}
    \mathbf{E}\Big[\sum_{x\in\mathcal{O}_n}\frac{e^{-V(x)}}{\sum_{j\leq |x|} H_{x_j}}\un_{\{V(x) \geq (\log n)^{\alpha}-t\}}\Big]\leq e^{\delta t-2(\log n)^{\alpha/2}(1-\varepsilon)},
    \label{Fact1TH3}
\end{align}
and for all $0\leq m\leq\log n$, $0\leq M\leq e^{(\log n)^{\alpha/2}}$
\begin{align}
    \mathbf{E}\Big[\sum_{x\in\T}\frac{e^{-V(x)}\un_{\{x\in\Upsilon_{n,1}\cap\mathcal{O}_{\bm{\lambda}_{n,1}}\}}}{M|x|+\sum_{j\leq |x|} H_{x_j}}\un_{\{V(x)\geq(\log n)^{\alpha}+m\}}\Big]\geq e^{-2(\log n)^{\alpha/2}(1+\varepsilon)},
    \label{Fact2TH3}
\end{align}
 with $\bm{\lambda}_{n,1}=ne^{-12(\log n)^{\alpha/2}}$ and
\begin{align*}
    \Upsilon_{n,1}=\Upsilon_{n,1}(\varepsilon):=\{x\in\T;H_x\leq e^{2\varepsilon(\log n)^{\alpha/2}},\underline{V}(x)\geq-B\}. %\red{ \varepsilon \ ici\ ? \ probleme\ plus\ bas}
\end{align*}
We first deal with the upper bound \eqref{Fact1TH3}. Note that if $t>(\log n)^{\alpha}/2$, then \eqref{Fact1TH3} is obviously satisfied. Indeed, $(\sum_{j\leq |x|} H_{x_j})^{-1}\un_{\{V(x)\geq(\log n)^{\alpha}-t\}}\leq 1$ so for $n$ large enough
\begin{align*}
    \mathbf{E}\Big[\sum_{x\in\mathcal{O}_n}\frac{e^{-V(x)}}{\sum_{j\leq |x|} H_{x_j}}\un_{\{V(x) \geq (\log n)^{\alpha}-t\}}\Big] & \leq \Eb\Big[\sum_{x\in\mathcal{O}_n}e^{-V(x)}\Big]e^{-\delta t}e^{\delta t}  \leq \ell_n e^{\delta t-\frac{\delta}{2}(\log n)^{\alpha}} \\ & \leq e^{\delta t-2(\log n)^{\alpha/2}(1-\varepsilon)},
\end{align*}
where we have used Remark \ref{Rem1}. Now assume $t\leq (\log n)^{\alpha}/2$, by the \mto, the expectation in \eqref{Fact1TH3} is smaller than
\begin{align}
    \sum_{k\leq\lfloor A\ell_n \rfloor}\mathbf{E}\Big[\frac{1}{\sum_{j=1}^{k}H_j^S}\un_{\{\tau_{(\log n)^{\alpha}-t}\leq k,\;\max_{j\leq k}H^S_j\leq n\}}\Big]+\Eb\Big[\sum_{|x|>\lfloor A\ell_n\rfloor}e^{-V(x)}\un_{\{x\in\mathcal{O}_n\}}\Big], \label{lessum1.2}
\end{align}
the second sum is treated as usual : Remark \ref{Rem1} with a chosen $A$, together with the fact that $\alpha \in (1,2)$ and $t\geq-B$ implies that $\Eb\Big[\sum_{|x|>\lfloor A\ell_n\rfloor}e^{-V(x)}\un_{\{x\in\mathcal{O}_n\}}\Big]\leq 1/n \leq \frac{1}{2}e^{\delta t-2(\log n)^{\alpha/2}(1-\varepsilon)}.$
%\gr{where $A>0$ is chosen, thanks to Remark \ref{Rem1}, such that $\sum_{k>\lfloor A\ell_n \rfloor}\Pb\big(\max_{j\leq k}\overline{S}_j-S_j\leq \log n\big)\leq 1/n$}. 
Also using that $(\sum_{j=1}^{k} H_j^S)^{-1}\leq e^{-\max_{j\leq k}\overline{S}_j-S_j}$ leads to
\begin{align*}
    \sum_{k\leq \lfloor A\ell_n \rfloor}\mathbf{E}\Big[\frac{1}{\sum_{j=1}^{k}H_j^S}\un_{\{\tau_{(\log n)^{\alpha}-t}\leq k,\;\max_{j\leq k}H^S_j\leq n\}}\Big] & \leq  \lfloor A\ell_n \rfloor\mathbf{E}\Big[e^{-\max_{j\leq \tau_{(\log n)^{\alpha}-t}}\overline{S}_j-S_j}\Big].
\end{align*}
Since $t<(\log n)^{\alpha}/2$, $(\log n)^{\alpha}-t>(\log n)^{\alpha}/2$ so by Lemma \ref{EspMaxLine} with $\frac{\varepsilon}{2}$ instead of $\varepsilon$ and any $t\geq-B$ %, for all $\delta,\varepsilon,B>0$, $n$ large enough and any $t\geq -B$ 
\begin{align*}
    \mathbf{E}\Big[e^{-\max_{j\leq \tau_{(\log n)^{\alpha}-t}}\overline{S}_j-S_j}\Big]\leq e^{-2(1-\frac{\varepsilon}{2})\sqrt{(\log n)^{\alpha}-t}}\leq e^{-2(1-\frac{\varepsilon}{2})\frac{(\log n)^{\alpha}-(t+B)}{\sqrt{(\log n)^{\alpha}+B}}}\leq \frac{1}{2}e^{\delta t-2(\log n)^{\alpha/2}(1-\varepsilon)}.
\end{align*}
%Finally, since $\alpha\in(1,2)$, we have, for all $\varepsilon,\delta>0$, $n$ large enough and any $t\geq-B$ we have, $1/n\leq e^{\delta t-2(\log n)^{\alpha/2}(1-\varepsilon/2)}$
This treats the first sum in \eqref{lessum1.2} and yields \eqref{Fact1TH3}. \\
We now turn to the lower bound \eqref{Fact2TH3}. Recall $\ell'_n=(\log n)^4$, using that $\sum_{j=1}^kH_j^S\leq k\max_{j\leq k}H_j^S$ and the fact that $m\leq \log n$, $0\leq M\leq e^{(\log n)^{\alpha/2}}$ and  $\bm{\lambda}_{n,1}>e^{(\log n)^{\alpha/2}}$, we obtain thanks to the \mto\;
\begin{align*}
     &\mathbf{E}\Big[\sum_{x\in\T}\frac{e^{-V(x)}\un_{\{x\in\Upsilon_{n,1}\cap\mathcal{O}_{\bm{\lambda}_{n,1}}\}}}{M|x|+\sum_{j\leq |x|} H_{x_j}}\un_{\{V(x)\geq(\log n)^{\alpha}+m\}}\Big] \\ & \geq \sum_{k\leq\ell'_n} \mathbf{E}\Big[\frac{1}{2ke^{(\log n)^{\alpha/2}}}\un_{\{S_k\geq\alpha_n,\;\max_{j\leq k}H_j^S\leq e^{ (\log n)^{\alpha/2}},\; \underline{S}_k\geq-B,\; \overline{S}_k=S_k\}}\Big] \\ & \geq \frac{e^{-(\log n)^{\alpha/2}}}{2\ell'_n}\sum_{k\leq\ell'_n}\Pb\big(S_{k}\geq\alpha_n,\;\max_{j\leq k}H_j^S\leq e^{(\log n)^{\alpha/2}}, \underline{S}_k\geq-B,\overline{S}_k=S_k\big),
\end{align*}
where $\alpha_n=(\log n)^{\alpha}+\log n$. By Lemma \ref{MinorRangeHP2} (with $\ell=(\log n)^2$, $t_{\ell}=\alpha_n$, $d=\alpha/4$ and $a=0$), the previous probability is larger than $e^{-(\log n)^{\alpha/2}(1+\frac{\varepsilon}{2})}$ . Finally collecting the inequalities, we get  \eqref{Fact2TH3}. %for $n$ large enough $\Pb\big(\forall j\leq \tau_{\alpha_n}:\overline{S}_j-S_j\leq(\log n)^{\alpha/2}, S_j\geq-B\big)\geq e^{-(\log n)^{\alpha/2}(1+\varepsilon/3)}$. We conclude by recalling that $\alpha\in(1,2)$.  \\ 
% Besides, by theorem A in \cite{Kozlov1976}, $\Pb\big(\overline{S}_{\ell'_n}\geq\alpha_n\big)=1-\Pb\big(\overline{S}_{\ell'_n}<\alpha_n\big)\geq1-C_{4,K}\alpha_n/\sqrt{\ell'_n}$ and

\noindent \underline{Facts for the case $b \in (0,1/2)$} : for any $t\geq -B$, $r\geq 0$ and $w>0$
\begin{align}
    \mathbf{E}\Big[\sum_{x\in\mathcal{O}_n}\frac{e^{-V(x)}\un_{\{r+H_x>n^b/(w(\log n)^2)\}}}{r+\sum_{j\leq |x|} H_{x_j}}\un_{\{V(x) \geq (\log n)^{\alpha}-t\}}\Big]\leq (w+1)n^{-b}e^{\delta t-\frac{1-\varepsilon}{b}(\log n)^{\alpha-1}}.
    \label{Fact1TH3bis}
\end{align}
Also for all $0\leq m\leq\log n$, $0\leq M\leq n^b$ 
\begin{align}
    \mathbf{E}\Big[\sum_{x\in\T}\frac{e^{-V(x)}\un_{\{x\in\Upsilon_{n,2}\cap\Hline{\bm{\lambda}_{n,2}}{n^b}\}}}{M|x|+\sum_{j\leq |x|} H_{x_j}}\un_{\{V(x)\geq (\log n)^{\alpha}+m\}}\Big]\geq n^{-b}e^{-\frac{1+\varepsilon}{b}(\log n)^{\alpha-1}},
    \label{Fact2TH3bis}
\end{align}
with $\bm{\lambda}_{n,2}=ne^{-\frac{6}{b}(\log n)^{\alpha-1}}$ and  
\begin{align*}
    \Upsilon_{n,2}=\Upsilon_{n,2}(\varepsilon):=\{x\in\T;H_x\leq n^be^{\frac{\varepsilon}{b}(\log n)^{\alpha-1}},\underline{V}(x)\geq-B\}.
\end{align*} 
We first deal with the upper bound \eqref{Fact1TH3bis}. We split the sum according to the generation of $x$: when $|x|>\lfloor A\ell_n\rfloor$, we use that $\un_{\{r+H_x>n^b/(w(\log n)^2),V(x)\geq(\log n)^{\alpha}-t\}}(r+\sum_{j\leq |x|}H_{x_j})^{-1}\leq 1$ so the expectation in \eqref{Fact1TH3bis} is smaller than
\begin{align*}
    \Eb\Big[\sum_{|x|>\lfloor A\ell_n\rfloor}e^{-V(x)}\un_{\{x\in\mathcal{O}_n\}}\Big]+ \mathbf{E}\Big[\sum_{|x|\leq\lfloor A\ell_n\rfloor}\frac{e^{-V(x)}\un_{\{r+H_x>n^b/(w(\log n)^2)\}}}{r+\sum_{j\leq |x|} H_{x_j}}\un_{\{V(x) \geq (\log n)^{\alpha}-t\}}\un_{\{x\in\mathcal{O}_n\}}\Big].
\end{align*}
Then, when $|x|\leq\lfloor A\ell_n\rfloor$, we again split the sum but this time according to $\max_{j\leq |x|}H_{x_j}$: when $\max_{j\leq |x|}H_{x_j}>n^be^{\frac{1}{b}(\log n)^{\alpha-1}}$, we use that $\un_{\{r+H_x>n^b/(w(\log n)^2),V(x)\geq(\log n)^{\alpha}-t\}}(r+\sum_{j\leq |x|}H_{x_j})^{-1}\leq (\max_{j\leq |x|}H_{x_j})^{-1}\leq n^{-b}e^{-\frac{1}{b}(\log n)^{\alpha-1}}$. Otherwise, one can observe that $\un_{\{r+H_x>n^b/(w(\log n)^2)\}}(r+\sum_{j\leq |x|}H_{x_j})^{-1}\leq\un_{\{r+H_x>n^b/(w(\log n)^2)\}}(r+H_{x})^{-1}\leq wn^{-b}(\log n)^2$. Therefore, the expectation in \eqref{Fact1TH3bis}  is smaller than 
\begin{align*}
    &\Eb\Big[\sum_{|x|>\lfloor A\ell_n\rfloor}e^{-V(x)}\un_{\{x\in\mathcal{O}_n\}}\Big]+\Eb\Big[\sum_{|x|\leq\lfloor A\ell_n\rfloor}e^{-V(x)}\un_{\{x\in\mathcal{O}_n\}}\Big]n^{-b}e^{-\frac{1}{b}(\log n)^{\alpha-1}} \\ & + wn^{-b}(\log n)^2\mathbf{E}\Big[\sum_{|x|\leq\lfloor A\ell_n\rfloor}e^{-V(x)}\un_{\{V(x) \geq (\log n)^{\alpha}-t,\underset{j\leq |x|}{\max}H_{x_j}\leq n^be^{\frac{1}{b}(\log n)^{\alpha-1}}\}}\Big],
\end{align*}
which, by Remark \ref{Rem1} and the \mto, is smaller, for $n$ large enough, than
\begin{align*}
    \frac{1}{n}+\ell_n n^{-b}e^{-\frac{1}{b}(\log n)^{\alpha-1}}+wn^{-b}(\log n)^2\sum_{k\leq \lfloor A\ell_n\rfloor}\Pb\big(S_k\geq(\log n)^{\alpha}-t,\underset{j\leq k}{\max}\;H^S_{j}\leq n^be^{\frac{1}{b}(\log n)^{\alpha-1}}\big).
\end{align*}
Also, $\sum_{k\leq \lfloor A\ell_n\rfloor}\Pb\big(S_k\geq(\log n)^{\alpha}-t,\underset{j\leq k}{\max}\;H^S_{j}\leq n^be^{-\frac{1}{b}(\log n)^{\alpha-1}}\big)$ is smaller than
\begin{align*}
    \lfloor A\ell_n\rfloor\Pb\big(\underset{j\leq \tau_{(\log n)^{\alpha}-t}}{\max}\;\overline{S}_j-S_j\leq b\log n+\frac{1}{b}(\log n)^{\alpha-1}\big)\leq e^{\frac{t}{\log n}-\frac{1}{b}(\log n)^{\alpha-1}(1-\frac{\varepsilon}{2})}, 
\end{align*}
where  Lemma A.3 in \cite{HuShi15b} provides us the last inequality for $n$ large enough and any $t$. Finally, note that for any $\delta>0$, $n$ large enough, any $w>0$ and any $t\geq-B$, $1/n\leq\frac{1}{3} n^{-b}e^{-\delta B-\frac{1-\varepsilon}{b}(\log n)^{\alpha-1}}\leq\frac{w+1}{3} n^{-b}e^{\delta t-\frac{1-\varepsilon}{b}(\log n)^{\alpha-1}}$, $\ell_n n^{-b}e^{-\frac{1}{b}(\log n)^{\alpha-1}}\leq \frac{1}{3}n^{-b}e^{-\delta B-\frac{1-\varepsilon}{b}(\log n)^{\alpha-1}}\leq \frac{w+1}{3}n^{-b}e^{\delta t-\frac{1-\varepsilon}{b}(\log n)^{\alpha-1}}$, $wn^{-b}(\log n)^2e^{\frac{t}{\log n}-\frac{1}{b}(\log n)^{\alpha-1}(1-\frac{\varepsilon}{2})}\leq\frac{w+1}{n^b}(\log n)^2e^{\frac{t+B}{\log n}-\frac{1}{b}(\log n)^{\alpha-1}(1-\frac{\varepsilon}{2})}\leq \frac{w+1}{3n^b}e^{\delta t-\frac{1-\varepsilon}{b}(\log n)^{\alpha-1}}$ and this finish the proof of the first fact.
 %As  $1/n=o(n^{-b} e^{\delta t-\frac{1}{b}(\log n)^{\alpha-1}})$ this ends the proof of the fact. \\
We now turn to the lower bound \eqref{Fact2TH3bis}. By the \mto, for any $m\leq\log n$, $0\leq M\leq n^b$ and $A>0$, the mean in \eqref{Fact2TH3bis}  is larger than (as $\bm{\lambda}_{n,2}>n^be^{\frac{\varepsilon}{3b}(\log n)^{\alpha-1}}$)
\begin{align*}
    &\sum_{k\leq\lfloor A\ell_n \rfloor}\Eb\Big[\frac{1}{kn^b+\sum_{j=1}^kH_j^S}\un_{\{S_k\geq\alpha_n,\max_{1\leq j\leq k}H_j^S\leq n^be^{\frac{\varepsilon}{3b}(\log n)^{\alpha-1}},H_k^S>n^b,\underline{S}_k\geq-B\}}\Big] \\ & \geq \frac{n^{-b}}{2A\ell_n}e^{-\frac{\varepsilon}{3b}(\log n)^{\alpha-1}}\sum_{k\leq\lfloor A\ell_n \rfloor}\Pb\big(S_k\geq\alpha_n,\max_{1\leq j\leq k}H_j^S\leq n^be^{\frac{\varepsilon}{3b}(\log n)^{\alpha-1}},H_k^S>n^b,\underline{S}_k\geq-B\big),
\end{align*}
with $\alpha_n:=(\log n)^{\alpha}+\log n$. By Lemma \ref{Minor_HRHP1} \eqref{lem4.1b} (with $\ell=(\log n)^2$, $t_{\ell}=\alpha_n$, $q=b$, $a_b=-a=-\frac{\varepsilon}{3b}$, $d=\frac{\alpha-1}{2}$ and $c=\frac{\varepsilon}{3b}$) the above sum is larger, for $n$ large enough, than $e^{-\frac{1}{b}(\log n)^{\alpha-1}(1+\frac{\varepsilon}{2})}\geq 2A\ell_ne^{-\frac{1}{b}(\log n)^{\alpha-1}(1+\varepsilon)}$, which completes the proof of the upper bound. \\ 

\noindent We are ready to prove that $\mathbf{f}^n$ satisfies assumptions \eqref{Hypothese1}, \eqref{Hypothese2}, \eqref{Hypothese3} and \eqref{Hypothese4}.
%Recall that $\Psi^k_{n,n^b}(f^{n,k})=\Eb\big[\sum_{|x|=k}e^{-V(x)}f^{n,k}(V(x_1),\ldots,V(x))\un_{\{x\in\Hline{n}{n^b}\}}\big]$ where $x\in\mathcal{O}_{n,n^b}$ if and only if $\max_{j\leq|x|}H_{x_j}\leq n$ and $H_x>n^b$, $\mathcal{U}_b =\{\kappa\geq 0;\textrm{ for all } k\geq 1,\mathbf{t}\in\R^k,n\geq 1: \un_{\{H_k(\mathbf{t})>n^b\}} f^{n,k}(\mathbf{t})\leq C_{\infty} n^{-\kappa}\}$ with $C_{\infty}=\sup_{n,\ell}\|f^{n,\ell}\|_{\infty}$. \\ 

\noindent $\bullet$ Check of \eqref{Hypothese1} and asymptotic of $h_n$. \eqref{Fact2TH3} with $m=M=0$ implies, for $b=0$ and $n$ large enough
\begin{align*}
   \sum_{k\geq 1}\Psi^k_{n}(f^{n,k})\geq\mathbf{E}\Big[\sum_{x\in\T}\frac{e^{-V(x)}}{\sum_{j\leq |x|} H_{x_j}}\un_{\{V(x)\geq(\log n)^{\alpha}\}}\un_{\{x\in\Upsilon_{n,1}\cap\mathcal{O}_{\bm{\lambda}_{n,1}}\}}\Big]\geq e^{-2(\log n)^{\alpha/2}(1+\varepsilon)}.
\end{align*}
This implies that $\kappa_0=\max\mathcal{U}_0=0$ (see the part concerning $\kappa_b$ in the proof of Theorem \ref{thm1} for details) and additionally with \eqref{Fact1TH3} and $t=0$
\begin{align*}
    h_n = \Big|n^{\kappa_b}\log \big(\sum_{k\geq 1}\Psi^k_{n,n^b}(f^{n,k})\big)\Big|\sim 2(\log n)^{\alpha/2}.
\end{align*}
We also deduce from the previous lower bound that \eqref{Hypothese1} is satisfied.\\
From \eqref{Fact1TH3bis} with $r=t=0$, $w=1$ and $\frac{\varepsilon}{2}$ instead of $\varepsilon$, we get for all $b\in(0,1)$ and $n$ large enough
\begin{align*}
    \sum_{k\geq 1}\Psi^k_{n,n^b}(f^{n,k})\leq\mathbf{E}\Big[\sum_{x\in\mathcal{O}_n}\frac{e^{-V(x)}\un_{\{H_x>n^b/(\log n)^2\}}}{\sum_{j\leq |x|} H_{x_j}}\un_{\{V(x) \geq (\log n)^{\alpha}\}}\Big]\leq n^{-b}e^{-\frac{1-\varepsilon}{b}(\log n)^{\alpha-1}}.
\end{align*}
This implies that for all $b\in(0,1)$, $\kappa_b\geq b$. From \eqref{Fact2TH3bis} with $m=M=0$, we get that for all $b\in(0,1)$
\begin{align*}
   \sum_{k\geq 1}\Psi^k_{n,n^b}(f^{n,k})\geq\mathbf{E}\Big[\sum_{x\in\T}\frac{e^{-V(x)}}{\sum_{j\leq |x|} H_{x_j}}\un_{\{V(x)\geq(\log n)^{\alpha}\}}\un_{\{x\in\Upsilon_{n,2}\cap\Hline{\bm{\lambda}_{n,2}}{n^b}\}}\Big]\geq n^{-b}e^{-\frac{1+\varepsilon}{b}(\log n)^{\alpha-1}}.
\end{align*}
This implies that for all $b\in(0,1/2)$, $\kappa_b\leq b$. Finally, for any $b\in(0,1/2)$, $\kappa_b=b$ and
\begin{align*}
    h_n = \Big|n^{\kappa_b}\log \big(\sum_{k\geq 1}\Psi^k_{n,n^b}(f^{n,k})\big)\Big|\sim \frac{1}{b}(\log n)^{\alpha-1}.
\end{align*}
We also deduce from the previous lower bound that \eqref{Hypothese1} is satisfied. \\
$\bullet$ For \eqref{Hypothese2}, recalling $m_n=\lceil \varepsilon h_n/c_2\rceil$ (see  \eqref{GenPotMax}) and for all $\mathbf{s}=(s_1,\ldots,s_{m_n})\in\R^{m_n}$, $\mathbf{t}=(t_1,\ldots,t_k)\in\R^k$, with $\mathbf{u}=(s_1,\ldots,s_{m_n},t_1+s_{m_n},\ldots,t_k+s_{m_n})$
\begin{align}\label{GrandH}
f^{n,m_n+k}(s_1,\ldots,s_{m_n},t_1+s_{m_n},\ldots,t_k+s_{m_n}) = \un_{\{t_k+s_{m_n}\geq (\log n)^{\alpha}\}} \frac{1}{\sum_{j=1}^{m_n+k}H_j(\mathbf{u})}.
\end{align}
Note that $\sum_{j=1}^{m_n+k}H_j(\mathbf{u})=\sum_{j=1}^{m_n}H_j(\mathbf{s})+\sum_{j=1}^{k}\big(e^{-t_j}H_{m_n}(\mathbf{s})+H_j(\mathbf{t}))\geq\sum_{j=1}^kH_j(\mathbf{t}\big)$ so 
\begin{align*}
    f^{n,k}_{\varepsilon h_n}(t_1,\ldots,t_k) & =  \inf_{\mathbf{s}\in[-\varepsilon h_n,\varepsilon h_n]^{m_n}}f^{n,m_n+k}(s_1,\ldots,s_{m_n},t_1+s_{m_n},\ldots,t_k+s_{m_n}) \\ & \leq \inf_{s_{m_n}\in[-\varepsilon h_n,\varepsilon h_n]}\un_{\{t_k+s_{m_n}\geq (\log n)^{\alpha}\}} \frac{1}{\sum_{j=1}^kH_j(\mathbf{t})}= \frac{\un_{\{t_k\geq (\log n)^{\alpha}+\varepsilon h_n\}}}{\sum_{j=1}^kH_j(\mathbf{t})}.
\end{align*} 
It follows that $f^{n,k}_{\varepsilon h_n}(t_1,\ldots,t_k)\leq \un_{\{t_k\geq(\log n)^{\alpha}\}}\big(\sum_{j=1}^kH_j(\mathbf{t}_j)\big)^{-1}$ and for $|x|=k$ with $\mathbf{u}_x=(t_1,\ldots,t_l,V(x_1)+t_l,\ldots,V(x)+t_l)$
\begin{align*}
  f^{n,l+k}_{\varepsilon h_n}(t_1,\ldots,t_l,V(x_1)+t_l,\ldots,V(x)+t_l)\leq\un_{\{V(x)\geq(\log n)^{\alpha}-t_l\}}\frac{1}{\sum_{j=1}^{l+k}(\mathbf{u}_x)}.
\end{align*}
Assume $b=0$. Observe again that $\sum_{j=1}^{l+k}(\mathbf{u}_x)=\sum_{j=1}^{l}H_j(\mathbf{t})+\sum_{j=1}^{k}\big(e^{-V(x_j)}H_{l}(\mathbf{t})+H_{x_j}\big)\geq\sum_{j\leq k}H_{x_j}$. Then, by definition of $\Psi^{k}_{n}\big(F|\mathbf{t}\big)$ (see \eqref{PsiCondi}), for all $A,B,\varepsilon,\delta>0$, $n$ large enough, for any $l\in\N^*$ and all $\mathbf{t}=(t_1,\ldots,t_l)\in\R^l$ with $t_l\geq -B$
\begin{align*}
      \sum_{k\geq 1}\Psi^{k}_{n,n^b-H_l(\mathbf{t})}\big(f^{n,l+k}_{\varepsilon h_n}|\mathbf{t}\big)&\leq\mathbf{E}\Big[\underset{x\in\mathcal{O}_n}{\sum}e^{-V(x)}f^{n,l+k}_{\varepsilon h_n}(t_1,\ldots,t_l,V(x_{1})+t_l,\ldots, V(x)+t_l)\Big] \\ & \leq \mathbf{E}\Big[\underset{x\in\mathcal{O}_n}{\sum}e^{-V(x)}\un_{\{V(x)\geq(\log n)^{\alpha}-t_l\}}\frac{1}{\sum_{j\leq k}H_{x_j}}\Big]\leq e^{\delta t_l-2(\log n)^{\alpha/2}(1-\frac{\varepsilon}{3A})},
\end{align*}
where we have used \eqref{Fact1TH3} with $t=t_l$ and replaced $\varepsilon$ by $\frac{\varepsilon}{3A}$ for the last inequality. Finally, observe that 
\begin{align*}
    e^{-2(\log n)^{\alpha/2}(1-\frac{\varepsilon}{3A})}=e^{\frac{4\varepsilon}{3A}(\log n)^{\alpha/2}}e^{-2(\log n)^{\alpha/2}(1+\frac{\varepsilon}{3A})}
    \leq e^{\frac{\varepsilon}{A}h_n}\sum_{k\geq 1}\Psi^k_{n,n^b}(f^{n,k}),
\end{align*}
where we have used that $h_n\sim 2(\log n)^{\alpha/2}$ and \eqref{Fact2TH3} with $m=M=0$. \\
Assume $b\in(0,1/2)$. Note that $\sum_{j=1}^{l+k}H_j(\mathbf{u}_x)\geq H_l(\mathbf{t})+\sum_{j\leq k}H_{x_j}$. Then for all $A,B,\varepsilon,\delta>0$, $n$ large enough, for any $l\in\N^*$ and all $\mathbf{t}=(t_1,\ldots,t_l)\in\R^l$ with $t_l\geq -B$ 
\begin{align*}
      \sum_{k\geq 1}\Psi^{k}_{n,n^b-H_l(\mathbf{t})}\big(f^{n,l+k}_{\varepsilon h_n}|\mathbf{t}\big)&\leq \mathbf{E}\Big[\underset{x\in\mathcal{O}_n}{\sum}e^{-V(x)}\frac{\un_{\{V(x)\geq(\log n)^{\alpha}-t_l\}}}{H_l(\mathbf{t})+\sum_{j\leq |x|}H_{x_j}}\un_{\{H_l(\mathbf{t})+H_x>n^b/(\log n)^2\}}\Big] \\ & \leq 2n^{-b}e^{\delta t_l-\frac{1}{b}(\log n)^{\alpha-1}(1-\frac{\varepsilon}{4A})}\leq n^{-b}e^{\delta t_l-\frac{1}{b}(\log n)^{\alpha-1}(1-\frac{\varepsilon}{3A})},
\end{align*}
where we have used \eqref{Fact1TH3bis} with $r=H_l(\mathbf{t})$, $w=1$, $t=t_l$ and $\frac{\varepsilon}{4A}$ instead of $\varepsilon$ for the last inequality. Finally, observe that 
\begin{align*}
    n^{-b}e^{\delta t_l-\frac{1}{b}(\log n)^{\alpha-1}(1-\frac{\varepsilon}{3A})}=e^{\frac{2\varepsilon}{3bA}(\log n)^{\alpha-1}}n^{-b}e^{-\frac{1}{b}(\log n)^{\alpha-1}(1+\frac{\varepsilon}{3A})}\leq e^{\frac{\varepsilon}{A}h_n}\sum_{k\geq 1}\Psi^k_{n,n^b}(f^{n,k}),
\end{align*}
where we have used that $h_n\sim\frac{1}{b}(\log n)^{\alpha-1}$ and \eqref{Fact2TH3bis} with $m=M=0$. \\  

\noindent We are left to prove that technical assumptions \eqref{Hypothese3} and \eqref{Hypothese4} are realized. \\ 
$\bullet$ For $\eqref{Hypothese3}$, recall that  $\Upsilon^k_n =\{\mathbf{t}=(t_1,\ldots,t_k)\in\R^k; H_k(\mathbf{t})\leq n^be^{\varepsilon h_n}, V(x)\geq 2\ell_n^{1/3}/\delta_1, t_k\geq-B\}$. By \eqref{GrandH}, for $|x|=k$ with $\mathbf{v}_x=(s_1,\ldots,s_{m_n},V(x_1)+s_{m_n},\ldots,V(x)+s_{m_n})$
\begin{align*}
    f^{n,k}_{\varepsilon h_n}(V(x_1),\ldots,V(x))  =\inf_{\mathbf{s}\in[-\varepsilon h_n,\varepsilon h_n]^{m_n}}\un_{\{V(x)+s_{m_n}\geq (\log n)^{\alpha}\}} \frac{1}{\sum_{j=1}^{m_n+k}H_j(\mathbf{v}_x)},
\end{align*} 
and recall that $\sum_{j=1}^{m_n+k}H_j(\mathbf{v}_x)=\sum_{j=1}^{m_n}H_j(\mathbf{s})+\sum_{j=1}^{k}\big(e^{-V(x_j)}H_{m_n}(\mathbf{s})+H_{x_j}\big)$. For $|x|=k$ such that $\underline{V}(x)\geq-B$, observe, as $\mathbf{s}\in[-\varepsilon h_n,\varepsilon h_n]^{m_n}$, that $\sum_{j=1}^{m_n+k}H_j(\mathbf{v}_x)\leq m_ne^{2\varepsilon h_n}+km_n^2e^{2\varepsilon h_n+B}+\sum_{j=1}^kH_{x_j}$. Also recall, by definition, that $h_n\geq(\log n)^{\gamma}$ for $\gamma\in(0,1)$ so $\sum_{j=1}^{m_n+k}H_j(\mathbf{v}_x)\leq 2km_n^2e^{2\varepsilon h_n+B}+\sum_{j=1}^kH_{x_j}\leq ke^{3\varepsilon h_n}+\sum_{j=1}^kH_{x_j}$. It follows that 
\begin{align*}
    f^{n,k}_{\varepsilon h_n}(V(x_1),\ldots,V(x))\geq 
    \un_{\{V(x)\geq (\log n)^{\alpha}+\varepsilon h_n\}}\Big(ke^{3\varepsilon h_n}+\sum_{j=1}^kH_{x_j}\Big)^{-1}.
\end{align*}
Let $0<\varepsilon_1<\varepsilon$ and recall $\lambda_n=ne^{-5 h_n} \geq 2 \bm{\lambda}_{n,i}$, $i\in\{1,2\}$. Thanks to the previous inequality and the fact that $(\log n)^{\alpha}>2\ell_n^{1/3}/\delta_1$, we have 
\begin{align*}
    \sum_{k\geq 1}\Psi^k_{\lambda_n/2,n^b}(f^{n,k}_{\varepsilon h_n}\un_{\Upsilon^k_n})\geq\Eb\Big[\sum_{x\in\Hline{\bm{\lambda}_{n,i}}{n^b}}\frac{e^{-V(x)}\un_{\{V(x)\geq(\log n)^{\alpha}+\varepsilon h_n\}}}{|x|e^{3\varepsilon h_n}+\sum_{j\leq|x|}H_{x_j}}\un_{\{H_x\leq n^be^{\varepsilon h_n},\underline{V}(x)\geq-B\}}\Big].
\end{align*}
Assume $b=0$. By \eqref{Fact2TH3} with $m=h_n$, $M=e^{(\log n)^{\alpha/2}}$ and $\frac{\varepsilon_1}{3}$ instead of $\varepsilon$, together with the fact that $h_n\sim 2(\log n)^{\alpha/2}$, for $n$ large enough 
\begin{align*}
    \sum_{k\geq 1}\Psi^k_{\lambda_n/2}(f^{n,k}_{\varepsilon h_n}\un_{\Upsilon^k_n})&  \geq  \Eb\Big[\sum_{x\in\T}\frac{e^{-V(x)}\un_{\{V(x)\geq(\log n)^{\alpha}+ h_n\}}}{|x|e^{(\log n)^{\alpha/2}}+\sum_{j\leq|x|}H_{x_j}}\un_{\{x\in\Upsilon_{n,1}(\frac{\varepsilon_1}{3})\cap\mathcal{O}_{\bm{\lambda}_{n,1}}\}}\Big]\\ & \geq e^{-2(\log n)^{\alpha/2}(1+\frac{\varepsilon_1}{3})}.
\end{align*}
Moreover, $e^{-2(\log n)^{\alpha/2}(1+\frac{\varepsilon_1}{3})}=e^{-\frac{4\varepsilon_1}{3}(\log n)^{\alpha/2}}e^{-2(\log n)^{\alpha/2}(1-\frac{\varepsilon_1}{3})}\geq e^{-\varepsilon_1 h_n}\sum_{k\geq 1}\Psi^k_{n,n^b}(f^{n,k})$, the last inequality comes from the fact that $h_n\sim 2(\log n)^{\alpha/2}$ and \eqref{Fact1TH3} with $t=0$. \\ 
Assume $b\in(0,1/2)$. By \eqref{Fact2TH3bis} with $m= h_n$ and $M=n^b$, together with the fact that $h_n\sim\frac{1}{b}(\log n)^{\alpha-1}$, for $n$ large enough
\begin{align*} 
    \sum_{k\geq 1}\Psi^k_{\lambda_n/2,n^b}(f^{n,k}_{\varepsilon h_n}\un_{\Upsilon^k_n})& \geq\Eb\Big[\sum_{x\in\T}\frac{e^{-V(x)}\un_{\{V(x)\geq(\log n)^{\alpha}+ h_n\}}}{|x|n^b+\sum_{j\leq|x|}H_{x_j}}\un_{\{x\in\Upsilon_{n,2}(\frac{\varepsilon_1}{3})\cap\mathcal{O}_{\bm{\lambda}_{n,2}}\}}\Big] \\ & \geq n^{-b}e^{-\frac{1}{b}(\log n)^{\alpha-1}(1+\frac{\varepsilon_1}{3})}.
\end{align*}
Moreover, $e^{-\frac{1}{b}(\log n)^{\alpha-1}(1+\frac{\varepsilon_1}{3})}=e^{-\frac{2\varepsilon_1}{3b}(\log n)^{\alpha-1}}e^{-\frac{1}{b}(\log n)^{\alpha-1}(1-\frac{\varepsilon_1}{3})}\geq n^be^{-\varepsilon_1 h_n}\sum_{k\geq 1}\Psi^k_{n,n^b}(f^{n,k})$, the last inequality comes from the fact that $h_n\sim\frac{1}{b}(\log n)^{\alpha-1}$ and \eqref{Fact1TH3bis} with $r=t=0$,  $w=1$ and we have used that  $n^{b}(\log n)^{-2}<n^b$.  \\
$\bullet$ Finally for \eqref{Hypothese4}, we first observe that for all $k\in\N^*$ and $\alpha\in(1,2)$, $(\log n)^{\alpha}>2\ell_n^{1/3}/\delta_1$ for $n$ large enough so
\begin{align*}
    \Psi^k_n(f^{n,k}\un_{\R\setminus\mathcal{H}^k_{\ell_n^{1/3}/\delta_1}})&=\Eb\Big[\sum_{|x|=k}\frac{e^{-V(x)}}{\sum_{j=1}^kH_{x_j}}\un_{\{V(x)\geq(\log n)^{\alpha},V(x)<\ell_n^{1/3}/\delta_1\}}\un_{\{x\in\mathcal{O}_n\}}\Big]=0. 
\end{align*}
Recall that $W=\sum_{|z|=1}e^{-V(z)}$ and $\Eb[W]=e^{\psi(1)}=1$ so when $b=0$
\begin{align*}
    \sum_{k\geq 1}\big(\Psi^k_{n,n^b/(\log n)^2}(f^{n,k}) +\Eb\big[W\Psi^k_{n,n^b/(W(\log n)^2)}(f^{n,k})\big]\big)&\leq\sum_{k\geq 1}\big(\Psi^k_{n}(f^{n,k}) +\Eb\big[W\Psi^k_{n}(f^{n,k})\big]\big) \\ & =2\sum_{k\geq 1}\Psi^k_n(f^{n,k}),
\end{align*}
and thanks to \eqref{Fact1TH3} for $n$ large enough with $t=0$
\begin{align*}
    2\sum_{k\geq 1}\Psi^k_n(f^{n,k})=2\Eb\Big[\sum_{x\in\mathcal{O}_n}\frac{e^{-V(x)}}{\sum_{j\leq |x|}H_{x_j}}\un_{\{V(x)\geq(\log n)^{\alpha}\}}\Big]&\leq 2e^{-2(\log n)^{\alpha/2}(1-\frac{\varepsilon_1}{4})} \\ & \leq e^{-2(\log n)^{\alpha/2}(1-\frac{\varepsilon_1}{3})}.
\end{align*}
Moreover, $e^{-2(\log n)^{\alpha/2}(1-\frac{\varepsilon_1}{3})}=e^{\frac{4\varepsilon_1}{3}(\log n)^{\alpha/2}}e^{-2(\log n)^{\alpha/2}(1+\frac{\varepsilon_1}{3})}\leq e^{\varepsilon_1 h_n}\sum_{k\geq 1}\Psi^k_{n,n^b}(f^{n,k})$, the last inequality comes from the fact that $h_n\sim 2(\log n)^{\alpha/2}$ and \eqref{Fact2TH3} with $m=M=0$. \\  Otherwise, $b\in(0,1/2)$ and thanks to \eqref{Fact1TH3bis} for $n$ large enough with $r=t=0$, $w=1$ and $\frac{\varepsilon_1}{4}$ instead of $\varepsilon$
\begin{align*}
    \sum_{k\geq 1}\Psi^k_{n,n^b/(\log n)^2}(f^{n,k})=\Big[\sum_{x\in\mathcal{O}_n}\frac{e^{-V(x)}\un_{\{H_x>n^b/(\log n)^2\}}}{r+\sum_{j\leq |x|} H_{x_j}}\un_{\{V(x) \geq (\log n)^{\alpha}\}}\Big]\leq \frac{1}{n^b}e^{-\frac{1}{b}(\log n)^{\alpha-1}(1-\frac{\varepsilon_1}{3})},
\end{align*}
and we also get from \eqref{Fact1TH3bis} with $r=t=0$ and $w=W$ that for $n$ large enough
\begin{align*}
    \Psi^k_{n,n^b/(W(\log n)^2)}(f^{n,k})&=\mathbf{E}\Big[\sum_{x\in\mathcal{O}_n}\frac{e^{-V(x)}\un_{\{H_x>n^b/(W(\log n)^2)\}}}{r+\sum_{j\leq |x|} H_{x_j}}\un_{\{V(x) \geq (\log n)^{\alpha}\}}\Big] \\ & \leq \frac{W+1}{n^b}e^{-\frac{1}{b}(\log n)^{\alpha-1}(1-\frac{\varepsilon_1}{4})}.
\end{align*}
By \eqref{hyp1}, telling that $\Eb[W^2]<\infty$, we have $C_4:=\Eb[W(W+1)+1]=\Eb[W^2+2]<\infty$ and then
\begin{align*}
    \sum_{k\geq 1}\big(\Psi^k_{n,n^b/(\log n)^2}(f^{n,k}) +\Eb\big[W\Psi^k_{n,n^b/(W(\log n)^2)}(f^{n,k})\big]\big)&\leq\frac{C_4}{n^b}e^{-\frac{1}{b}(\log n)^{\alpha-1}(1-\frac{\varepsilon_1}{4})} \\ & \leq \frac{1}{2n^b}e^{-\frac{1}{b}(\log n)^{\alpha-1}(1-\frac{\varepsilon_1}{3})}.
\end{align*}
Moreover, $e^{-\frac{1}{b}(\log n)^{\alpha-1}(1-\frac{\varepsilon_1}{3})}=e^{\frac{\varepsilon_1}{3b}(\log n)^{\alpha-1}}e^{-\frac{1}{b}(\log n)^{\alpha-1}(1+\frac{\varepsilon_1}{3})}\leq n^be^{\varepsilon_1 h_n}\sum_{k\geq 1}\Psi^k_{n,n^b}(f^{n,k})$, the last inequality comes from the fact that $h_n\sim\frac{1}{b}(\log n)^{\alpha-1}$ and \eqref{Fact2TH3bis} with $m=M=0$. This completes the proof for these two cases.

\noindent\textbf{Assume now $\alpha=1$ and $a\in\R$ (with $a>1/\delta_1$ when $d=1$)}, which corresponds to the first case of the theorem.
%Recall that $f^{n,k}(t_1,t_2, \cdots,t_k)= \un_{\{t_{k}\geq a(\log n)^{\alpha}\}}(\sum_{j=1}^{k} e^{t_j-t_k})^{-d}$, $a>0$, $\alpha\in[1,2)$ and $d\in\{0,1\}$. %The proof is devided into two steps. The first one is to prove some facts about $\mathbf{f}^n$. The second is to check that for all $n\geq 1$, $\mathbf{f}^n$ actually satisfies main assumptions \eqref{Hypothese1}, \eqref{Hypothese2} and \eqref{Hypothese3}.\\
%When $\alpha=1$ and $a>d\un_{\{b>0\}}/\delta_1$, 
As usual, let us first state the following two facts: \\
for all $b\in[0,1/(d+1))$, $B,\delta>0$, $\varepsilon\in(0,\varepsilon_b)$ and $n$ large enough, for any $t\geq -B$, $r\geq 0$ and $w>0$
\begin{align}
    \mathbf{E}\Big[\sum_{x\in\mathcal{O}_n}\frac{e^{-V(x)}\un_{\{r+H_x>n^b/(w(\log n)^2)\}}}{\big(r+\sum_{j\leq |x|} H_{x_j}\big)^d}\un_{\{V(x)\geq a\log n-t\}}\Big]\leq (w+1)\ell_n^2 e^{\delta t}n^{-bd},
    \label{Fact1TH3'}
\end{align}
For any $0\leq M\leq n^b$, $\varepsilon<b/3$ (when $b>0$)
\begin{align}
    \mathbf{E}\Big[\sum_{x\in\T}\frac{e^{-V(x)}\un_{\{x\in\Upsilon_n\cap\Hline{\bmlambda}{n^b}\}}}{\big(M|x|+\sum_{j\leq |x|} H_{x_j}\big)^d}\un_{\{V(x)\geq a\log n\}}\Big]\geq \frac{1}{\ell_n^2}n^{-bd},
    \label{Fact2TH3'}
\end{align}
with $\bmlambda=n^{1-11\varepsilon}$ and for any $a'>1/\delta_1$
\begin{align*}
    \Upsilon_{n}=\Upsilon_{n}(\varepsilon):=\{x\in\T;H_x\leq n^{b+\varepsilon}, V(x)\geq a'\log n,\underline{V}(x)\geq-B\}.
\end{align*}
These facts ensure that $\mathbf{f}^n$ satisfies assumptions \eqref{Hypothese1}, \eqref{Hypothese2}, \eqref{Hypothese3} and \eqref{Hypothese4} for $b\in(0,1/(d+1))$. \eqref{Hypothese3} does not hold exactly when $b=0$ so we use \eqref{UpperTH5} (which appears in the proof of Theorem \ref{Prop2}) together with the result when $b>0$ to conclude this case. \\ %(so this part $\alpha=1$ of the  theorem is still a corollary of the main general Theorem).  \\ \\
$\bullet$ Check of \eqref{Hypothese1} and asymptotic of $h_n$. We get from \eqref{Fact2TH3'} that $\kappa_b=\max\mathcal{U}_b\leq bd$ and \eqref{Fact1TH3'} gives $\kappa_b\geq bd$. It follows that for all $b\in[0,1/(d+1))$, $\kappa_b=bd$ and for any $n\geq 2$, $h_n=\log n$. Indeed, on the one hand, \eqref{Fact1TH3'} with $r=t=0$ and $w=1$ gives, for $n$ large enough
\begin{align*}
    n^{\kappa_b}\sum_{k\geq 1}\Psi^k_{n,n^b}(f^{n,k})=n^{bd}\Eb\Big[\sum_{x\in\Hline{n}{n^b}}\frac{e^{-V(x)}}{\big(\sum_{j=1}^kH_{x_j}\big)^d}\un_{\{V(x)\geq a\log n\}}\Big]\leq 2^d\ell_n^2,
\end{align*}
and on the other hand, we get from \eqref{Fact2TH3'}, for $n$ large enough that 
\begin{align*}
    n^{\kappa_b}\sum_{k\geq 1}\Psi^k_{n,n^b}(f^{n,k})\geq\frac{1}{\ell_n^2}.
\end{align*}
From these inequalities, we get that for any $\gamma\in(0,1)$, $|\log(n^{\kappa_b}\sum_{k\geq 1}\Psi^k_{n,n^b}(f^{n,k}))|\leq 3\log\ell_n=o((\log n)^{\gamma})$. Then $h_n=\log n$ and we also deduce that \eqref{Hypothese1} is satisfied. \\
$\bullet$ For \eqref{Hypothese2}, let $|x|=k$ and observe that $f^{n,l+k}_{\varepsilon h_n}(t_1,\ldots,t_l,V(x_1),\ldots,V(x))\leq (H_l(\mathbf{t})+H_x)^{-d}$ so it follows, for all $\varepsilon\in(0,\varepsilon_b)$, $A,\delta,B>0$, $n$ large enough, any $l\in\N^*$, $\mathbf{t}=(t_1,\ldots,t_l)\in\R^l$ and $t_l\geq-B$, by \eqref{Fact1TH3'} with $r=H_l(\mathbf{t})$, $t=t_l$ and $w=1$
\begin{align*}
    \sum_{k\geq 1}\Psi^{k}_{n,n^b-H_l(\mathbf{t})}\big(f^{n,l+k}_{\varepsilon h_n}|\mathbf{t}\big)\leq 2^d\ell_n^2 e^{\delta t_l}n^{-bd}\leq e^{\delta t_l+\frac{\varepsilon}{A}h_n}\sum_{k\geq 1}\Psi^k_{n,n^b}(f^{n,k}),
\end{align*}
where the last inequality comes from \eqref{Fact2TH3'}. \\
$\bullet$ For $\eqref{Hypothese3}$, recall that  $\Upsilon^k_n =\{\mathbf{t}=(t_1,\ldots,t_k)\in\R^k; H_k(\mathbf{t})\leq n^{b+\varepsilon}, V(x)\geq 2\ell_n^{1/3}/\delta_1, t_k\geq-B\}$. For $|x|=k$, we have
\begin{align*}
    f^{n,k}_{\varepsilon h_n}(V(x_1),\ldots,V(x))\geq 
    \un_{\{V(x)\geq (a+\varepsilon)(\log n)\}}\Big(kn^{3\varepsilon}+\sum_{j=1}^kH_{x_j}\Big)^{-d},
\end{align*}
and thanks to \eqref{Fact2TH3'} with $M=n^b$, $b\in(0,1/(d+1))$
\begin{align*}
    \sum_{k\geq 1}\Psi^k_{\lambda_n/2,n^b}(f^{n,k}_{\varepsilon h_n}\un_{\Upsilon^k_n})\geq\Eb\Big[\sum_{x\in\T}\frac{e^{-V(x)}\un_{\{\Upsilon_n\cap\Hline{\bmlambda}{n^b}\}}}{(|x|n^{b}+\sum_{j\leq|x|}H_{x_j})^d}\Big]\geq \frac{1}{\ell_n^2}n^{-bd}\geq e^{-\varepsilon_1h_n}\sum_{k\geq 1}\Psi^k_{n,n^b}(f^{n,k}),
\end{align*}
where we recall $\lambda_n=n^{1-10\varepsilon}$. \\
$\bullet$ Finally, for \eqref{Hypothese4} with $d=1$ (and then $a>1/\delta_1$)
\begin{align*}
    \Psi^k_n(f^{n,k}\un_{\R\setminus\mathcal{H}^k_{\ell_n^{1/3}/\delta_1}})&=\Eb\Big[\sum_{|x|=k}\frac{e^{-V(x)}\un_{\{x\in\mathcal{O}_n\}}}{\big(\sum_{j=1}^kH_{x_j}\big)^d}\un_{\{V(x)\geq a\log n,V(x)<\ell_n^{1/3}/\delta_1\}}\Big]=0. 
\end{align*}
Otherwise, $d=0$ and for any $a\in\R$, thanks to Remark \ref{Rem1}
\begin{align*}
    \sum_{k\geq 1}\Psi^k_n(f^{n,k}\un_{\R\setminus\mathcal{H}^k_{\ell_n^{1/3}/\delta_1}})&=\Eb\Big[\sum_{x\in\mathcal{O}_n}e^{-V(x)}\un_{\{V(x)\geq a\log n,V(x)<\ell_n^{1/3}/\delta_1\}}\Big]\leq\Eb\Big[\sum_{x\in\mathcal{O}_n}e^{-V(x)}\Big]\leq\ell_n,
\end{align*}
which, thanks to \eqref{Fact2TH3'}, is smaller than $e^{\varepsilon_1h_n}\sum_{k\geq 1}\Psi^k_{n,n^b}(f^{n,k})$ for all $\varepsilon_1>0$. We get from \eqref{Fact1TH3'} with $r=t=0$ and $w=W$ that for $n$ large enough
\begin{align*}
    \Psi^k_{n,n^b/(W(\log n)^2)}(f^{n,k})=\mathbf{E}\Big[\sum_{x\in\mathcal{O}_n}\frac{e^{-V(x)}\un_{\{H_x>n^b/(W(\log n)^2)\}}}{(\sum_{j\leq |x|} H_{x_j})^d}\un_{\{V(x) \geq a\log n\}}\Big]\leq (W+1)\ell_n^2 n^{-bd}.
\end{align*}
By \eqref{hyp1}, telling that $\Eb[W^2]<\infty$, we have $C_4:=\Eb[W(W+1)+1]=\Eb[W^2+2]<\infty$ and then
\begin{align*}
    \sum_{k\geq 1}\big(\Psi^k_{n,n^b/(\log n)^2}(f^{n,k}) +\Eb\big[W\Psi^k_{n, n^b/(W(\log n)^2)}(f^{n,k})\big]\big)\leq 2C_4\ell n^{-bd}\leq e^{\varepsilon_1h_n}\sum_{k\geq 1}\Psi^k_{n,n^b}(f^{n,k}),
\end{align*}
where, again, the last inequality comes from \eqref{Fact2TH3'}. This finishes the proof of the result of the theorem for $b\in(0,1/(d+1))$. \\
Now assume $b=0$ and let $\varepsilon>0$. Using the result of the theorem with $b_{\varepsilon}=\varepsilon/(2+d)$ and the fact that $\mathcal{R}_n(\un_{[n^{b_{\varepsilon}},\infty)},\mathbf{f}^n)\leq\mathcal{R}_n(\un_{[1,\infty)},\mathbf{f}^n)$, we get the following lower bound for $\mathcal{R}_n(\un_{[1,\infty)},\mathbf{f}^n)$: $\P(\log^+\mathcal{R}_n(\un_{[1,\infty)},\mathbf{f}^n)<(1-\varepsilon)\log n)$ is smaller than
\begin{align*}
    \P\big(\log^+\mathcal{R}_n(\un_{[n^{b_{\varepsilon}},\infty)},\mathbf{f}^n)<(1-(1+d)b_{\varepsilon}-\varepsilon/(2+d))\log n\big)\to 0, 
\end{align*}
where we have used the case $b>0$.
For the upper bound, we use an intermediate result in the proof of Theorem \ref{Prop2}:  recall that $\kappa_{0}=0$ and $h_n=\log n$. \\ Also recall $\xi=\lim_{n\rightarrow \infty}h_n^{-1}\log(n^{\kappa_b}\sum_{k\geq 1}\Psi^{k}_{n,n^b} (f^{n,k}))$. It's easy to see that $\xi=0$ and by \eqref{UpperTH5}
\begin{align*}
    \P(\log^+\mathcal{R}_n(\un_{[1,\infty)},\mathbf{f}^n)>(1+\varepsilon)\log n)\leq \P\Big(\frac{1}{n}\mathcal{R}_{n}(\un_{[1,\infty)},\mathbf{f}^n)>e^{\varepsilon h_n}\Big)\to 0,
\end{align*}
 this ends the proof of the theorem for all $b\in[0,1/(d+1))$. \end{proof}

\begin{proof}[Proof of Theorem \ref{thm4}] Here $f^{n,k}(t_1,t_2, \cdots,t_k)= \un_{\{t_{\lfloor k/\beta \rfloor} \geq (\log n)^{\alpha}\}}(\sum_{j=1}^{\lfloor k/\beta \rfloor}H_j(\mathbf{t}))^{-1}$ with $\beta>1$ and $\alpha\in(1,2)$. We state the following facts: for all $B,\delta>0$, $\varepsilon\in(0,\varepsilon_b)$, $n$ large enough, any $t\geq -B$ and $i\in\N$
\begin{align}
    \mathbf{E}\Big[\sum_{x;\lfloor |x|+i/\beta\rfloor>i}\frac{e^{-V(x)}\un_{\{x\in\mathcal{O}_n\}}}{\sum_{j=1}^{\lfloor |x|/\beta\rfloor} H_{x_j}}\un_{\{V(x_{\lfloor (|x|+i)/\beta\rfloor-i})\geq (\log n)^{\alpha}-t\}}\Big]\leq e^{\delta t-2(\log n)^{\alpha/2}(1-\varepsilon)},
    \label{Fact1TH4}
\end{align}
and for all $0\leq i,m\leq\log n$, $0\leq M\leq e^{(\log n)^{\alpha/2}}$
\begin{align}
    \mathbf{E}\Big[\sum_{x;\lfloor |x|+i/\beta\rfloor>i}\frac{e^{-V(x)}\un_{\{x\in\Upsilon_{n}\cap\mathcal{O}_{\bm{\lambda}_{n}}\}}}{M|x|+\sum_{j=1}^{\lfloor |x|/\beta\rfloor} H_{x_j}}\un_{\{V(x_{\lfloor (|x|+i)/\beta\rfloor-i})\geq (\log n)^{\alpha}+m\}}\Big]\geq e^{-2(\log n)^{\alpha/2}(1+\varepsilon)},
    \label{Fact2TH4}
\end{align}
 with $\bm{\lambda}_{n}=ne^{-12(\log n)^{\alpha/2}}$ and for any $a>\frac{1}{\delta_1}$
\begin{align*}
    \Upsilon_{n}=\Upsilon_{n}(\varepsilon):=\{x\in\T;H_x\leq e^{2\varepsilon(\log n)^{\alpha/2}}, V(x)\geq a\log n,\underline{V}(x)\geq-B\}.
\end{align*}
Using these two facts, we follow the same lines as in the previous theorem to prove that $h_n\sim 2(\log n)^{\alpha/2}$ and that \eqref{Hypothese1} to \eqref{Hypothese4} are satisfied.
%\DO{Que dire? Gr√¢ce aux facts, $\kappa_b=0$ pour tout $b\in[0,1)$, $h_n\sim 2(\log n)^{\alpha/2}$ et de m√™me que dans la preuve du TH3 quand $b$ valait 0, les assumptions sont verifiees.} 
\end{proof}

\subsection{Proof of Theorem \ref{Prop2}} \label{sec2.3}

First, note that Remark \ref{rem0}  implies that $\xi=\lim_{n\rightarrow \infty}h_n^{-1}\log( n^{\kappa_b}\sum_{k\geq 1}\Psi^{k}_{n,n^b}(f^{n,k}))$ well exists. To prove Theorem \ref{Prop2}, we first show that Assumptions \eqref{Hypothese3} and \eqref{Hypothese4} yield a simpler statement for both lower and upper bound of Proposition \ref{Prop1}. This implies a convergence in probability for stopped ranges $\mathcal{R}_{T^{k_n}}$  with $k_{n} =\lceil n/(\log n)^{3/2}\rceil$ and $\mathcal{R}_{T^{n}}$. %\noindent\DO{Note also that  by  $\mathcal{R}_{T^{k_n}}(g_n,\mathbf{f}^n)$ with $k_n:=\lceil n/(\log n)^{\theta_1}\rceil$, $\theta_1\geq 0$.} \bl{Ici ???} \\
%Existence of $\xi$ :  recall \eqref{Def_hn}, so if $\gamma>0$ is such that $(\log n)^{\gamma}/\log(n^{\kappa_b}\sum_{k\geq 1}\Psi^{k}_{n,n^b} (f^{n,k}))\to 0$ as by definition of $\mathcal{U}_b$ and Remark \ref{Rem1} $\log(n^{\kappa_b}\sum_{k\geq 1}\Psi^{k}_{n,n^b} (f^{n,k}))\leq \log(C_{\infty} \sum_{k\geq 1} \Psi^{k}_{n}(1) ) \leq \log(C_{\infty}\ell_n)$ so necessarily for $n$ large enough $\log(n^{\kappa_b}\sum_{k\geq 1}\Psi^{k}_{n,n^b} (f^{n,k}))<-(\log n)^{\gamma}$, which implies that $\xi$  exists and is equal to $-1$. Otherwise $h_n=\log n$ (by assumption \eqref{Hypothese1}   \red{pourquoi on a besoin de cette assumption ?}) so $\xi$ exists and is equal to $0$. \\ 
%\red{\underline{R√É¬É√Ç¬©ponse A}: on a besoin de \eqref{Hypothese1} car cela implique que $\liminf(\log n)^{-1}\log\big(n^{\kappa_b}\sum_{k\geq 1}\Psi^k_{n,n^b}\big)\geq 0$. De plus, par d√É¬É√Ç¬©finition de $\kappa_b$ et d'apr√É¬É√Ç¬®s la remarque \ref{Rem1}, $\limsup(\log n)^{-1}\log\big(n^{\kappa_b}\sum_{k\geq 1}\Psi^k_{n,n^b}\big)\leq 0$ donc dans le cas $h_n=\log n$ pour tout $n\geq 2$, on obtient bien que $\xi=0$.} \\
%\noindent\DO{Version du 25/09}
%\begin{proof}
Then, we use a result of \cite{HuShi15} (Proposition 2.4) implying that $T_n/(n \log n)$ converges in probability to a positive limit in order to obtain the result for $\mathcal{R}_{{n}}$. Let us start with the

%Recall that $\xi$. \\
\noindent \textit{Lower bound :} Recalling the expression of $u_{1,n}=\sum_{k\geq 1}\Psi^{k}_{\lambda_n/2,n^b}(f^{n,k}_{\varepsilon h_n}\mathds{1}_{\Upsilon^k_n})$ (see below \eqref{LowerProp1}), together with  \eqref{Hypothese3} choosing $\varepsilon_1=\min(1,c_5)\frac{\varepsilon}{4}$ (see Proposition \ref{Prop1} for $c_5$), we get    
\begin{align*}
 u_{1,n}\geq e^{-\min(1,c_5)\frac{\varepsilon}{4}} \sum_{k\geq 1}\Psi^{k}_{n,n^b}\big(f^{n,k}\big).
  %  u_{1,n}\geq e^{-\frac{a\varepsilon}{4}h_n} \sum_{k\geq 1}\Psi^{k}_{n,n^b}\big(f^{n,k}\big)\hspace{0.5cm}  \forall a\in\{1,c_5\}
\end{align*}
This, together with the fact that, by definition of $\xi$, $n^{\kappa_b}\sum_{k\geq 1}\Psi^k_{n,n^b}(f^{n,k})>e^{(\xi-\varepsilon)h_n}$ for $n$ large enough, implies 
\begin{align*}  
    \P\Big(\frac{\mathcal{R}_{T^{k_n}}(g_n,\mathbf{f}^n)}{n^{1-b-\kappa_b}\varphi(n^b)}<e^{(\xi-7\varepsilon) h_n}\Big)&\leq\P\Big(\frac{\mathcal{R}_{T^{k_n}}(g_n,\mathbf{f}^n)}{n^{1-b}\varphi(n^b)\sum_{k\geq 1}\Psi^{k}_{n,n^b}(f^{n,k})}<e^{-6\varepsilon h_n}\Big) \\ & 
    \leq\P\Big(\frac{\mathcal{R}_{T^{k_n}}(g_n,\mathbf{f}^n)}{n^{1-b}\varphi(n^b) u_{1,n}}<e^{-5\varepsilon h_n}\Big). %\leq e^{-\frac{c_5}{10}\varepsilon h_n}+ h_ne^{-\frac{\varepsilon}{5c_6}h_n}+e^{-3h_n+2|\log(n^{\kappa_b}\sum_{k\geq 1}\Psi^{k}_{n,n^b}(f^{n,k}))|}
\end{align*}
%implies that (as $\min(1,c_5)\frac{\varepsilon}{4}\leq \varepsilon$), and 
Also considering \eqref{LowerProp1}, $\P\big(\frac{\mathcal{R}_{T^{k_n}}(g_n,\mathbf{f}^n)}{n^{1-b}\varphi(n^b) u_{1,n}}<e^{-5\varepsilon h_n}\big)$ is smaller than
\begin{align*}
  &e^{(-c_5+\frac{\min(1,c_5)}{2})\varepsilon h_n}+ h_ne^{-\varepsilon h_n}+ \frac{e^{-\min(\varepsilon\log n,3h_n)+\min(1,c_5)\frac{\varepsilon}{2}h_n}}{\big(n^{\kappa_b} \sum_{k\geq 1}\Psi^{k}_{n,n^b}(f^{n,k})\big)^2}  \\
 &  \leq e^{-\frac{\varepsilon c_5}{2}h_n}+ h_ne^{-\varepsilon h_n}+e^{-\min(\varepsilon\log n,3h_n)+\frac{\varepsilon}{2}h_n+2|\log(n^{\kappa_b}\sum_{k\geq 1}\Psi^{k}_{n,n^b}(f^{n,k}))|}.
 %\leq e^{-\frac{c_5}{10}\varepsilon h_n}+ h_ne^{-\frac{\varepsilon}{5c_6}h_n}+e^{-3h_n+2|\log(n^{\kappa_b}\sum_{k\geq 1}\Psi^{k}_{n,n^b}(f^{n,k}))|}
\end{align*}
Now, thanks to Remark \ref{rem0},  for $n$ large enough, $|\log(n^{\kappa_b}\sum_{k\geq 1}\Psi^{k}_{n,n^b}(f^{n,k}))|$ is smaller than $\frac{\varepsilon}{8}\log n\leq
-\min(-\frac{\varepsilon}{8}\log n,-h_n)$ and $\frac{\varepsilon}{2}h_n$ is smaller than $\leq-\frac{1}{2}\min(-\varepsilon\log n,-h_n)$ so $-\min(\varepsilon\log n,3h_n)+\frac{\varepsilon}{2}h_n+2|\log(n^{\kappa_b}\sum_{k\geq 1}\Psi^{k}_{n,n^b}(f^{n,k}))|$ is smaller than  $-\frac{1}{2}\min(\varepsilon\log n,h_n)$. Finally, for all $\varepsilon\in(0,\varepsilon_b)$ and $n$ large enough
\begin{align*}
    \P\Big(\frac{\mathcal{R}_{T^{k_n}}(g_n,\mathbf{f}^n)}{n^{1-b-\kappa_b}\varphi(n^b)}<e^{(\xi-7\varepsilon) h_n}\Big)\leq  e^{-\frac{\varepsilon c_5}{2}h_n}+ h_ne^{-\varepsilon \tilde{c}_2h_n}+e^{-\frac{1}{4}\min(\varepsilon\log n,h_n)},
\end{align*}
then switching $\varepsilon$ by   ${\varepsilon}/{7}$ in the above probability, we obtain  as $h_n \rightarrow+ \infty$,  the desired expression : for all $\varepsilon\in(0,7\varepsilon_b)$
\begin{align*}
    \lim_{n\to\infty}\P\Big(\frac{\mathcal{R}_{T^{k_n}}(g_n,\mathbf{f}^n)}{n^{1-b-\kappa_b}\varphi(n^b)}<e^{(\xi-\varepsilon) h_n}\Big)=0.
\end{align*}
We are now ready to move from $\mathcal{R}_{T^{k_n}}$ to $\mathcal{R}_{n}$. First note that 
\begin{align*}
    \P\Big(\frac{\mathcal{R}_{n}(g_n,\mathbf{f}^n)}{n^{1-b-\kappa_b}\varphi(n^b)}<e^{(\xi-\varepsilon) h_n}\Big)\leq \P\Big(\frac{\mathcal{R}_{n}(g_n,\mathbf{f}^n)}{n^{1-b-\kappa_b}\varphi(n^b)}<e^{(\xi-\varepsilon) h_n},T^{k_n}\leq n\Big) + \P(T^{k_n}>n),
\end{align*}
recalling that $\Ran_n(g_n,\mathbf{f}^n)=\sum_{x \in \T}g_n(\mathcal{L}_x^n) f^{n,|x|}(V(x_1),V(x_2),\cdots,V(x))$ and $g_n(t)=\varphi(t)\un_{\{t\geq n^b\}}$ with $b\in[0,1)$. Then, as $\varphi$ is non-decreasing and positive, so is $g_n$, hence $T^{k_n}\leq n$ implies $g_n(\mathcal{L}_x^{T^{k_n}})\leq g_n(\mathcal{L}_x^n)$ and therefore $\Ran_{T^{k_n}}(g_n,\mathbf{f}^n)\leq \Ran_n(g_n,\mathbf{f}^n)$ since $f^{n,k}\geq 0$. It follows that 
\begin{align*}
     \P\Big(\frac{\mathcal{R}_{n}(g_n,\mathbf{f}^n)}{n^{1-b-\kappa_b}\varphi(n^b)}<e^{(\xi-\varepsilon) h_n}\Big)\leq  \P\Big(\frac{\mathcal{R}_{T^{k_n}}(g_n,\mathbf{f}^n)}{n^{1-b-\kappa_b}\varphi(n^b)}<e^{(\xi-\varepsilon) h_n}\Big)+\P(T^{k_n}>n),
\end{align*}
and thanks to the above convergence, together with the fact that $(T^n/(n\log n))_n$ convergences in $\P$-probability to an almost surely finite and positive random variable, we obtain the desired expression: for all $\varepsilon\in(0,7\varepsilon_b)$:
\begin{align}\label{LowerTH5}
    \lim_{n\to\infty}\P\Big(\frac{\mathcal{R}_{n}(g_n,\mathbf{f}^n)}{n^{1-b-\kappa_b}\varphi(n^b)}<e^{(\xi-\varepsilon) h_n}\Big)=0.
   % \tag{\textbf{L2}}
\end{align}

\noindent \textit{Upper bound}: we prove the following statement, for all $\varepsilon>0$
\begin{align}\label{UpperTH5}
    \lim_{n\to\infty}\P\Big(\frac{\mathcal{R}_{n}(g_n,\mathbf{f}^n)}{n^{1-b-\kappa_b}\varphi(n^b)}>e^{(\xi+\varepsilon) h_n}\Big)=0.
    %\tag{\textbf{U2}}
\end{align}
Recall that $u_{2,n}=\sum_{k\geq 1}(\Psi^{k}_{n} (f^{n,k}\un_{\R^k\setminus \mathcal{H}^k_{\ell_n^{1/3}/\delta_1}})+\Psi^{k}_{n,n^b/(\log n)^2}(f^{n,k})+\Eb[W\Psi^k_{n,n^b/(W(\log n)^2)}(f^{n,k})])$. Assumption \eqref{Hypothese4} with $\varepsilon_1=\frac{\varepsilon}{4}$ gives that 
\begin{align*}
    u_{2,n}\leq e^{\frac{\varepsilon}{4}h_n}\sum_{k\geq 1}\Psi^k_{n,n^b}(f^{n,k}), 
\end{align*}
so for $n$ large enough, as $n^{\kappa_b }\sum_{k\geq 1}\Psi^k_{n,n^b}(f^{n,k})\leq e^{(\xi+\frac{\varepsilon}{2})h_n}$ and $T^n\geq n$
\begin{align*}
   \P\Big(\frac{\mathcal{R}_{n}(g_n,\mathbf{f}^n)}{n^{1-b-\kappa_b}\varphi(n^b)}>e^{(\xi+\varepsilon) h_n}\Big)&\leq\P\Big(\frac{\mathcal{R}_{T^n}(g_n,\mathbf{f}^n)}{n^{1-b-\kappa_b}\varphi(n^b)}>e^{(\xi+\varepsilon) h_n}\Big) \\ & \leq  \P\Big(\frac{\mathcal{R}_{T^n}(g_n,\mathbf{f}^n)}{n^{1-b}\varphi(n^b)\sum_{k\geq 1}\Psi^{k}_{n,n^b}(f^{n,k})}>e^{\frac{\varepsilon}{2}h_n}\Big) \\ & \leq\P\Big(\frac{\mathcal{R}_{T^n}(g_n,\mathbf{f}^n)}{n^{1-b}\varphi(n^b)u_{2,n}}>e^{\frac{\varepsilon}{4} h_n}\Big)\leq e^{-\frac{\varepsilon}{8}h_n}+o(1),
\end{align*}
where the last inequality comes from \eqref{UpperProp1} replacing  $\varepsilon$ by $\frac{\varepsilon}{4}$. Then, taking the limit, we get \eqref{UpperTH5}. \\ 
We are now ready to prove the theorem. We split this proof in three parts depending on the values of (recall) $L=\liminf_{n\rightarrow \infty}h_n^{-1}\log \big(n^{1-b-\kappa_b}\varphi(n^b)\big)$. \\
$\bullet$ Assume $L\in(-\xi,+\infty]$. For any $t\in\R$, $e^{\log^+t}=e^{\log(t\lor 1)}\geq t$ so for any $\varepsilon\in(0,\varepsilon_b)$ and $n$ large enough, $\P\big(\log^+\mathcal{R}_{n}(g_n,\mathbf{f}^n)-\log(n^{1-b-\kappa_b}\varphi(n^b))<(\xi-\varepsilon)h_n\big)$ is smaller than
\begin{align*}
    \P\big(e^{\log^+\mathcal{R}_{n}(g_n,\mathbf{f}^n)}<n^{1-b-\kappa_b}\varphi(n^b)e^{(\xi-\varepsilon) h_n}\big)\leq \P\Big(\frac{\mathcal{R}_{n}(g_n,\mathbf{f}^n)}{n^{1-b-\kappa_b}\varphi(n^b)}<e^{(\xi-\varepsilon) h_n}\Big)\to 0,
\end{align*}
where the limit comes from \eqref{LowerTH5}. Note that this lower bound remains true even when $L\not\in(-\xi,+\infty]$. However, we need that $L\in(-\xi,+\infty]$ for the upper bound. Indeed, in this case, for $n$ large enough, $n^{1-b-\kappa_b}\varphi(n^b)>e^{-\xi h_n}$ and for any $\varepsilon>0$, $n^{1-b-\kappa_b}\varphi(n^b)e^{(\xi+\varepsilon)h_n}>e^{\varepsilon h_n}>1$ so for $n$ large enough $\P(\log^+\mathcal{R}_{n}(g_n,\mathbf{f}^n)-\log(n^{1-b-\kappa_b}\varphi(n^b))>(\xi+\varepsilon)h_n)=\P(\log^+\mathcal{R}_{n}(g_n,\mathbf{f}^n)>\log(n^{1-b-\kappa_b}\varphi(n^b)e^{(\xi+\varepsilon)h_n}),\mathcal{R}_{n}(g_n,\mathbf{f}^n)>1)$. Moreover, when $\mathcal{R}_{n}(g_n,\mathbf{f}^n)>1$, $\log^+\mathcal{R}_{n}(g_n,\mathbf{f}^n)=\log\mathcal{R}_{n}(g_n,\mathbf{f}^n)$ so the previous probability is equal to 
\begin{align*}
   \P\big(\log\mathcal{R}_{n}(g_n,\mathbf{f}^n)>\log(n^{1-b-\kappa_b}\varphi(n^b)e^{(\xi+\varepsilon)h_n}),\mathcal{R}_{n}(g_n,\mathbf{f}^n)>1\big)\leq\P\Big(\frac{\mathcal{R}_{n}(g_n,\mathbf{f}^n)}{n^{1-b-\kappa_b}\varphi(n^b)}>e^{(\xi+\varepsilon) h_n}\Big).
\end{align*}
Then, taking the limit, we get the result thanks to \eqref{UpperTH5}. \\
$\bullet$ Assume $L=-\xi$. Recall that $\underline{\Delta}_n=h_n^{-1}\log(n^{1-b-\kappa_b}\varphi(n^b))-\inf_{\ell\geq n}h_{\ell}^{-1} \log(\ell^{1-b-\kappa_b}\varphi(\ell^b))$. $L=-\xi$ implies that for any $\varepsilon\in(0,\varepsilon_b)$ and $n$ large enough, $\inf_{\ell\geq n}h_{\ell}^{-1}\log(\ell^{1-b-\kappa_b}\varphi(\ell^b))>-\xi-\frac{\varepsilon}{2}$ so $h_n\underline{\Delta}_n<\log(n^{1-b-\kappa_b}\varphi(n^{b}))+(\xi+\frac{\varepsilon}{2})h_n$ and as $e^{\log^+t}\geq t$
\begin{align*}
    \P\big(h_n^{-1}\log^+\mathcal{R}_{n}(g_n,\mathbf{f}^n)<-\varepsilon+\underline{\Delta}_n\big)&\leq \P\big(\mathcal{R}_{n}(g_n,\mathbf{f}^n)<e^{-\varepsilon h_n+h_n\underline{\Delta}_n}\big) \\ & \leq \P\Big(\frac{\mathcal{R}_{n}(g_n,\mathbf{f}^n)}{n^{1-b-\kappa_b}\varphi(n^b)}<e^{(\xi-\frac{\varepsilon}{2}) h_n}\Big)\to 0,
\end{align*}
where the limit comes from \eqref{LowerTH5}. Also, $L=-\xi$ implies that for any $\varepsilon\in(0,\varepsilon_b)$ and $n$ large enough, $\inf_{\ell\geq n}h_{\ell}^{-1}\log(\ell^{1-b-\kappa_b}\varphi(\ell^b))<-\xi+\frac{\varepsilon}{2}$ so $h_n\underline{\Delta}_n>\log(n^{1-b-\kappa_b}\varphi(n^{b}))+(\xi-\frac{\varepsilon}{2})h_n$ and as $h_n(\varepsilon+\underline{\Delta}_n)>0$, $\P(h_n^{-1}\log^+\mathcal{R}_{n}(g_n,\mathbf{f}^n)>\varepsilon+\underline{\Delta}_n)=\P(\log\mathcal{R}_{n}(g_n,\mathbf{f}^n)>h_n(\varepsilon+\underline{\Delta}_n),\mathcal{R}_{n}(g_n,\mathbf{f}^n)>1)$ which is smaller than
\begin{align*}
   \P\big(\mathcal{R}_{n}(g_n,\mathbf{f}^n)>e^{\varepsilon h_n+h_n\underline{\Delta}_n}\big)\leq\P\Big(\frac{\mathcal{R}_{n}(g_n,\mathbf{f}^n)}{n^{1-b-\kappa_b}\varphi(n^b)}>e^{(\xi+\frac{\varepsilon}{2}) h_n}\Big)\to 0,
\end{align*}
where the limit comes from \eqref{UpperTH5}. \\
$\bullet$ Assume $L\in[-\infty,-\xi)$. In this case, there exists an increasing sequence $(n_{\ell})_{\ell}$ of positive integers (with $n_{\ell}=\ell$ when $\lim h_n^{-1}\log(n^{1-b-\kappa_b}\varphi(n^b))=L$) and $\varepsilon_{L}>0$ such that for any $\ell\in\N^*$, $n_{\ell}^{1-b-\kappa_b}\varphi(n_{\ell}^b)<e^{-(\xi+2\varepsilon_{L})h_n}$ and for any $\varepsilon'>0$
\begin{align*}
    \P\big(\mathcal{R}_{n_{\ell}}(g_{n_{\ell}},\mathbf{f}^{n_{\ell}})>\varepsilon'\big)\leq\P\big(\mathcal{R}_{n_{\ell}}(g_{n_{\ell}},\mathbf{f}^{n_{\ell}})>e^{-\varepsilon_L h_n}\big)\leq\P\Big(\frac{\mathcal{R}_{n_{\ell}}(g_{n_{\ell}},\mathbf{f}^{n_{\ell}})}{n_{\ell}^{1-b-\kappa_b}\varphi(n_{\ell}^b)}>e^{(\xi+\varepsilon_L)h_n}\Big)\to 0,
\end{align*}
where the limit comes from \eqref{UpperTH5} with $\varepsilon=\varepsilon_L$, which ends the proof. \hfill $\square$
%\end{proof}

%\Amodif{*Changement de $b_n$ en $n^b$ \\
%*plus de d√É¬É√Ç¬É√É¬Ç√Ç¬É√É¬É√Ç¬Ç√É¬Ç√Ç¬©tails.}

\section{Proof of Proposition \ref{Prop1} \label{sec3}} 
The proof of Proposition  \ref{Prop1} is decomposed as follows. In the first short section below, we present the expression of the generating function with constraint of edge local time. In a second sub-section, we prove the lower bound \eqref{LowerProp1}, this section is itself decomposed in different steps treating successively the random walk at fixed environment and then an important quantity of the environment. Finally, in a third section, we obtain the upper bound \eqref{UpperProp1}. Note that the fact that the upper and the lower bounds are robust when replacing $T^n$ by $T^{k_n}$ with $k_n = \lfloor n/ (\log n)^p \rfloor $, with $p>0$, does not need extra arguments than the ones that follow.
\subsection{Preliminary} \label{sec3.1}

We first introduce the edge local time $N^{n}_x$ of a vertex $x\in\T$,  that is the number of times the random walk $\X$ visits the edge $(x^*,x)$ before the instant $n$:
\begin{align}
    N^{n}_x := \sum_{i=1}^n\un_{\{X_{i-1}=x^*,\;X_i=x\}}, \label{Nxb}
\end{align}
 the law of $N^{T^1}_x$ (recall that $T^1$ is the instant of the first return to the root $e$) and $\sum_{y;y^*=x}N^{T^1}_y$ at fixed environment, that is under $\Pe$, are given by
\begin{lemm}\label{LawGeo}
Let $x\in\T$, and $T_x := \inf\{k>0, X_k=x\}$,  then $\P^{\mathcal{E}}(T_x<T^1)= e^{-V(x)}/H_x$ and for any  $i\in\N^*$, $s\in[0,1]$ and $\nu\geq 0$, %, mettre en premier car on s'en sert dans les preuves de i) et ii)??}
\begin{enumerate}[label=\roman*)]
    \item The distribution of $N^{T^1}_x$ under $\P^{\mathcal{E}}_x(\cdot) =\P^{\mathcal{E}}(\cdot |X_0=x)$ is geometrical on $\N$ with mean $H_x-1=\sum_{1\leq j<|x|}e^{V(x_j)-V(x)}$. In particular
    \begin{align*}
        \E^{\mathcal{E}}\big[s^{\nu N_x^{T^1}}\un_{\{N_x^{T^1}\geq i\}}\big] = \frac{e^{-V(x)}}{H_x^2}\Big(1-\frac{1}{H_x}\Big)^{i-1}\frac{s^{i\nu}}{1-s^{\nu}(1-\frac{1}{H_x})}.
    \end{align*}
    \item For any $z\in\T$ such that $z^*=x$, the distribution of $\sum_{y;y^*=x}N^{T^1}_y$ under $\P^{\mathcal{E}}_z$ is geometrical on $\N$ with mean $\tilde{H}_x:= H_x\sum_{y;y*=x}e^{-V_x(y)}$ with $V_x(y)=V(y)-V(x)$ . In particular
    \begin{align*}
        \E^{\mathcal{E}}\big[s^{\nu \sum_{y;y*=x}N_y^{T^1}}\un_{\{\sum_{y;y*=x}N_y^{T^1}\geq i\}}\big] = \frac{e^{-V(x)}}{H_x}\frac{\tilde{H}_x}{(1+\tilde{H}_x)^2}\Big(1-\frac{1}{1+\tilde{H}_x}\Big)^{i-1}\frac{s^{i\nu}}{1-s^{\nu}(1-\frac{1}{1+\tilde{H}_x})}.
    \end{align*} 
\end{enumerate}
\end{lemm}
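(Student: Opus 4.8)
The plan is to compute the quenched law of $N_x^{T_e}$ and of $\sum_{y;y^*=x} N_y^{T_e}$ by reducing each to a first-return/escape analysis for the Markov chain $(X_k)$, using the strong Markov property at successive visits to $x$ (resp.\ to the children of $x$).

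\textbf{Step 1: the escape probability $\P^{\mathcal E}(T_x<T_e)=e^{-V(x)}/H_x$.} I would look at the path from $e$ to $x$, whose vertices are $e=x_0,x_1,\dots,x_{|x|}=x$, and recall that the walk restricted to a line segment behaves like a one-dimensional random walk in the potential $V$. The standard gambler's-ruin / electrical-network computation for a birth–death chain on $\{x_0,\dots,x_{|x|}\}$ with conductances $c(x_{j-1},x_j)=e^{-V(x_j)}$ gives that, started at $x_1$ (the first step necessarily goes from $e$ to some child, and by symmetry we may as well start the bookkeeping there), the probability of reaching $x$ before returning to $e$ is $e^{-V(x)}/\sum_{j=0}^{|x|-1}e^{-V(x_j)}\cdot(\text{correction})$; matching with the definition $H_x=\sum_{i=1}^{|x|}e^{V(x_i)-V(x)}$ one gets exactly $\P^{\mathcal E}(T_x<T_e)=e^{-V(x)}/H_x$. (Equivalently: harmonic-function computation, $h(z)=\P^{\mathcal E}_z(T_x<T_e)$ is linear in $\sum e^{V(\cdot)}$ along the segment with boundary values $0$ at $e$ and $1$ at $x$, and the branches hanging off the segment are irrelevant since to get from $e$ to $x$ the walk must traverse the segment.)

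\textbf{Step 2: geometric law of $N_x^{T_e}$ under $\P^{\mathcal E}_x$.} Decompose the excursion from $x$ back to $x$: after each visit to $x$, the walk either returns to $x$ before hitting $e$ or reaches $e$ first. By the strong Markov property the number of returns to $x$ before $T_e$ is geometric on $\N$ with failure ( = ``reach $e$'') probability $q_x:=\P^{\mathcal E}_x(T_e<T_x^+)$. A standard reversibility/flow argument (or a direct ruin computation on the same segment, now from $x$) gives $q_x=1/H_x$, hence $\P^{\mathcal E}_x(N_x^{T_e}=i)=(1-1/H_x)^{i-1}(1/H_x)$ for $i\ge1$; including the atom at $0$ when started elsewhere changes only the normalisation. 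This is geometric with mean $H_x-1=\sum_{j<|x|}e^{V(x_j)-V(x)}$. For the generating-function identity I would write, using independence of the conditionally geometric return structure from the ``first arrival'' event,
\begin{align*}
\E^{\mathcal E}\big[s^{\nu N_x^{T_e}}\un_{\{N_x^{T_e}\ge i\}}\big]
=\P^{\mathcal E}(T_x<T_e)\,q_x\sum_{k\ge i}s^{\nu k}(1-q_x)^{k-1},
\end{align*}
and summing the geometric series with $q_x=1/H_x$ and $\P^{\mathcal E}(T_x<T_e)=e^{-V(x)}/H_x$ yields exactly the displayed formula $\frac{e^{-V(x)}}{H_x^2}(1-1/H_x)^{i-1}\frac{s^{i\nu}}{1-s^\nu(1-1/H_x)}$.

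\textbf{Step 3: the children sum.} For $\sum_{y;y^*=x}N_y^{T_e}$ started from a child $z$ of $x$, the key observation is that this sum counts visits to the ``super-edge'' $(x,\{\text{children of }x\})$, and between two such visits the walk sits at $x$ and picks a child (or goes to $x^*$ and possibly escapes). One reduces to the same kind of geometric decomposition but now with escape probability $\frac1{1+\tilde H_x}$ where $\tilde H_x=H_x\sum_{y;y^*=x}e^{-V_x(y)}$ — the extra $+1$ and the weighted sum over children come from comparing, at $x$, the transition weight $e^{-V(x)}$ to $x^*$ against the aggregate weight $\sum_y e^{-V(y)}$ to the children, combined with the escape chance from $x$ as computed in Step 1; I would make this precise by one more harmonic-function computation on the segment $e\to x$ augmented by the star at $x$. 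Plugging into the same generating-function template with $q=\frac1{1+\tilde H_x}$ and prefactor $\P^{\mathcal E}_z(\text{hit the star before }T_e)\cdot q$, which equals $\frac{e^{-V(x)}}{H_x}\cdot\frac{\tilde H_x}{(1+\tilde H_x)^2}$ after collecting terms, gives the stated identity.

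\textbf{Main obstacle.} The routine parts are the geometric decompositions; the delicate point is getting the escape probabilities $1/H_x$ and $1/(1+\tilde H_x)$ and, especially, the correct multiplicative prefactors in the ``$\un_{\{\cdot\ge i\}}$'' identities — one must be careful about whether the walk starts at $x$ (resp.\ $z$) or at $e$, about the first-step conventions at $e^*$, and about not double-counting the initial arrival. I expect the bookkeeping of these prefactors, matching $e^{-V(x)}/H_x^2$ versus $e^{-V(x)}\tilde H_x/(H_x(1+\tilde H_x)^2)$, to be where all the care is needed; everything else is summing a geometric series. Such computations are classical for RWRE on trees, so I would cite the standard line-reduction (e.g.\ as in the references already used for these biased walks) rather than re-derive the ruin estimates from scratch.
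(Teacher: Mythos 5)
Your proposal is correct, and Steps 1--2 coincide with what the paper does (it simply cites Golosov for the escape probability and defers the geometric law of $N_x^{T_e}$ to an earlier reference, exactly the strong-Markov-at-returns-to-$x$ argument you describe). The one genuinely different ingredient is in part (ii): you identify the success parameter $\beta_x=\P^{\mathcal E}_x(T_{\mathcal C_x}<T_e)=\tilde H_x/(1+\tilde H_x)$ by a direct harmonic-function/network computation on the segment $e\to x$ with the children lumped into a single state of conductance $\sum_{y;y^*=x}e^{-V(y)}$, whereas the paper avoids any new ruin computation: having established via the strong Markov property that $\sum_{y;y^*=x}N_y^{T_e}$ is geometric under $\P^{\mathcal E}_z$, it computes its mean in two ways --- as $\beta_x/(1-\beta_x)$ on one hand and as $\sum_{y;y^*=x}\E^{\mathcal E}_z[N_y^{T_e}]=\sum_{y}(H_y-1)=\tilde H_x$ on the other, using part (i) --- and solves for $\beta_x$; the entry probability $\alpha_x$ from the root is obtained the same way. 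Your route is more self-contained but requires the extra network reduction you flag as the delicate point; the paper's moment-matching is shorter and recycles part (i). One small slip: the prefactor in the displayed identity of (ii) should involve $\P^{\mathcal E}(T_{\mathcal C_x}<T_e)$ started from the root $e$ (the paper's $\alpha_x=\sum_{y;y^*=x}e^{-V(y)}/(1+\tilde H_x)$), not from the child $z$ as you wrote; since $\alpha_x(1-\beta_x)=e^{-V(x)}\tilde H_x/(H_x(1+\tilde H_x)^2)$, your final collected expression is nonetheless the right one.
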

\begin{proof} The fact that $\P^{\mathcal{E}}(T_x<T^1)= e^{-V(x)}/H_x$ comes from a standard result for one-dimensional random walks in random environment, see for example \cite{Golosov}.
The proofs of points $i)$ and $ii)$  are very similar and elements for the first one can be found in \cite{AndDiel3} so we will only deal with the second one. \\ 
For any $x\in\T$, $\min_{y;y^*=x}T_y$ is the first hitting time of the set $\{y\in\T;\; y^*=x\}$ of children of $x$ and let $\beta_x:=\P_x^{\mathcal{E}}(\min_{y;y^*=x}T_y<T^1)$ be the quenched probability, starting from $x$, to reach a children of $x$ before hitting the root $e$. Hence, $\sum_{y;y^*=x}N^{T^1}_y$ is the number of times the random walk $\X$ visits the \textrm{\guillemotleft edge\guillemotright } $(x,\{y\in\T;\; y^*=x\})$ before the instant $T^1$. It follows, thanks to the strong Markov property, that for all $z\in\T$ such that $x^*=z$ and $k\in\N$
\begin{align}\label{SumEdge}
    \P_z^{\mathcal{E}}\Big(\sum_{y;y^*=x}N^{T^1}_y=k\Big)=\beta_x^k(1-\beta_x).
\end{align}
Note that the right part above does  not depend on $z$. We now compute $\beta_x$. On the one hand, thanks to \eqref{SumEdge}, we have $\E_z^{\mathcal{E}}[\sum_{y;y^*=x}N^{T^1}_y]=\beta_x/(1-\beta_x)$ and on the other hand, thanks to the first point, $\E_z^{\mathcal{E}}[\sum_{y;y^*=x}N^{T^1}_y]=\sum_{y;y^*=x}\E_z^{\mathcal{E}}[N^{T^1}_y]=\sum_{y;y^*=x}(H_y-1)=H_x\sum_{y;y^*=x}e^{-V_x(y)}=\tilde{H}_x$. $\sum_{y;y^*=x}N^{T^1}_y$ is finally geometrical on $\N$ under $\P_z^{\mathcal{E}}$ with mean $\tilde{H}_x$ and $\beta_x=\tilde{H}_x/(1+\tilde{H}_x)$. \\
Introduce $\alpha_x:=\P^{\mathcal{E}}(\min_{y;y^*=x}T_y<T^1)$, the quenched probability to reach the set $\{y\in\T;\; y^*=x\}$ during the first excursion. Thanks to \eqref{SumEdge}, we have for all $k\in\N^*$ 
\begin{align*}
    \P^{\mathcal{E}}\Big(\sum_{y;y^*=x}N^{T^1}_y=k\Big)=\alpha_x\beta_x^{k-1}(1-\beta_x)\;\;\textrm{ and }\;\; \P^{\mathcal{E}}\Big(\sum_{y;y^*=x}N^{T^1}_y=0\Big)=1-\alpha_x,
\end{align*}
so on the one hand, $\E^{\mathcal{E}}[\sum_{y;y^*=x}N^{T^1}_y]=\alpha_x/(1-\beta_x)$ and on the other hand, thanks to the first point, $\E^{\mathcal{E}}[\sum_{y;y^*=x}N^{T^1}_y]=\sum_{y;y^*=x}\E^{\mathcal{E}}[N^{T^1}_y]=\sum_{y;y^*=x}e^{-V(y)}$. It follows that $\alpha_x=\sum_{y;y^*=x}e^{-V(y)}/(1+\tilde{H}_x)$ and the result is proved. \\
\end{proof}

\subsection{Lower bound for $\mathcal{R}_{T^n}(g_n,\mathbf{f}^n)$} \label{sec3.2}

\noindent Let us first introduce  two key random variables denoted $\mathcal{R}_{T^n}(\mathbf{f}^n)$ and $R(\mathbf{f}^n)$. $\mathcal{R}_{T^n}(\mathbf{f}^n)$ is  a simplified version of $\mathcal{R}_{T^n}(g_n,\mathbf{f}^n)$  which does not depend on the function $g_n$ and with a constraint on $V$ : recall $\lambda_n=ne^{-\min(10\varepsilon\log n,5h_n)}$ and $\mathcal{H}^k_{\mathfrak{z}_n}=\{(t_1,\ldots,t_k)\in \mathbb{R}^k;\; t_k\geq \mathfrak{z}_n\}$ where we set for convenience $\mathfrak{z}_n:=\ell_n^{1/3}/\delta_1$ with $\ell_n=(\log n)^3$ and $\delta_1\in(0,1/2)$ (see \eqref{hyp1}), then
%as indeed this range does not depend explicitly of function  $g$ and more than that this range is focusing on visited vertices excursion by excursion : 
\begin{align*}
    \mathcal{R}_{T^n}(\mathbf{f}^n) &:= \sum_{i=1}^n\;\sum_{x\in\mathcal{O}_{\lambda_n,n^b}}\un_{\{N_x^{T^i}-N_x^{T^{i-1}}\geq n^b\}} \un_{\{\forall j\not=i:N_x^{T^j}-N_x^{T^{j-1}}=0\}} f^{n,|x|}\un_{\mathcal{H}^{|x|}_{\mathfrak{z}_n}}(\mathbf{V}_x), \\
 % &  \bl{N_x^k:=\sum_{m=1}^k \un_{X_{k-1}=x^*,X_{k}=x}}
\end{align*}
where we use the notation $F(\mathbf{V}_x)=F(V(x_1),\cdots,V(x))$. Note that the local time until $T^n$ which appears in $\mathcal{R}_{T^n}(g_n,\mathbf{f}^n)$ is replaced in $\mathcal{R}_{T^n}(\mathbf{f}^n)$ by edge local times excursion by excursion. Also, visited vertices are restricted to some $V$-regular lines $\mathcal{O}_{\lambda_n,{n^b}}$. $\mathcal{R}_{T^n}(g_n,\mathbf{f}^n)$ and $\mathcal{R}_{T^n}(\mathbf{f}^n)$ are related as follows, first since $\varphi$ is non-decreasing
\begin{align*}%\label{MinorRanGen1}
   \mathcal{R}_{T^n}(g_n,\mathbf{f}^n)\geq \varphi(n^b) \sum_{x\in\T}\un_{\{\mathcal{L}_x^{T^n}\geq n^b\}}f^{n,|x|}\un_{\mathcal{H}^{|x|}_{\mathfrak{z}_n}}(\mathbf{V}_x).
\end{align*}
Then, introduce $E_x^n = \sum_{i=1}^n\un_{\{\mathcal{L}_x^{T^i}-\mathcal{L}_x^{T^{i-1}}\geq 1\}}$, the number of excursions to the root where the walk hits vertex $x$. %Since we are looking for a lower bound, it is enough to work on the event $\left\{E_x^n= 1\right\}$\textcolor{red}{(dire ici que c'est l√É¬É√Ç¬É√É¬Ç√Ç¬É√É¬É√Ç¬Ç√É¬Ç√Ç¬É√É¬É√Ç¬É√É¬Ç√Ç¬Ç√É¬É√Ç¬Ç√É¬Ç√Ç¬† o√É¬É√Ç¬É√É¬Ç√Ç¬É√É¬É√Ç¬Ç√É¬Ç√Ç¬É√É¬É√Ç¬É√É¬Ç√Ç¬Ç√É¬É√Ç¬Ç√É¬Ç√Ç¬π tout se passe d'apr√É¬É√Ç¬É√É¬Ç√Ç¬É√É¬É√Ç¬Ç√É¬Ç√Ç¬É√É¬É√Ç¬É√É¬Ç√Ç¬Ö%√É¬É√Ç¬Ç√É¬Ç√Ç¬°s lemma 3.5 dans \cite{ADHeavyRange})} and on the optional line $\mathcal{O}_{\lambda_n}$\textcolor{red}{(pareil)}. 
Notice that $E_x^n= 1$ if and only if there exists $i\in\{1,\ldots,n\}$ such that $\mathcal{L}_x^{T^i}-\mathcal{L}_x^{T^{i-1}}\geq 1$ and for any $j\in\{1,\ldots,n\}, j\not = i$, $\mathcal{L}_x^{T^j}-\mathcal{L}_x^{T^{j-1}} = 0$ that is $N_x^{T^j}-N_x^{T^{j-1}}=0$. Thus
\begin{align*}
% \begin{array}{l c l}
& \sum_{x\in\T}\un_{\{\mathcal{L}_x^{T^n}\geq n^b\}}f^{n,|x|}\un_{\mathcal{H}^{|x|}_{\mathfrak{z}_n}}(\mathbf{V}_x) \geq \sum_{x\in\mathcal{O}_{\lambda_n,n^b}}\un_{\left\{\mathcal{L}_x^{T^n}\geq n^b,E_x^n= 1\right\}}f^{n,|x|}\un_{\mathcal{H}^{|x|}_{\mathfrak{z}_n}}(\mathbf{V}_x) \\
  & \geq \sum_{i=1}^n\;\sum_{x\in\mathcal{O}_{\lambda_n,n^b}}\un_{\{\mathcal{L}_x^{T^i}-\mathcal{L}_x^{T^{i-1}}\geq n^b\}}\un_{\{\forall j\not=i:\;N_x^{T^i}-N_x^{T^{i-1}} = 0\}}f^{n,|x|}\un_{\mathcal{H}^{|x|}_{\mathfrak{z}_n}}(\mathbf{V}_x) ,
%  & \geq & \displaystyle \overset{n}{\underset{i=1}{\sum}}\;\underset{x\in\mathcal{O}_{\lambda_n}}{\underset{|x|\leq l_n}{\sum}}\un_{\left\{N_x^{T^i}-N_x^{T^{i-1}}\geq b_n\right\}}F^{|x|}\un_{\mathcal{H}^{|x|}_{v_n}}(V(x_1),\cdots,V(x))\un_{\left\{\forall j\not=i\;N_x^{T^j}-N_x^{T^{j-1}} = 0\right\}}.
  %  \end{array}
\end{align*}
so finally, as $\mathcal{L}_x^{T^i}-\mathcal{L}_x^{T^{i-1}}\geq N_x^{T^i}-N_x^{T^{i-1}}$, we have the following relation 
\begin{align}
  \mathcal{R}_{T^n}(g_n,\mathbf{f}^n)\geq \varphi(n^b)   \mathcal{R}_{T^n}(\mathbf{f}^n)  \label{MinorRanGen1} .
\end{align}
The second random variable $R(\mathbf{f}^n)$ depends only on the environment : 
\begin{align*}
     R(\mathbf{f}^n) :=\sum_{x\in\mathcal{O}_{\lambda_n,n^b}} e^{-V(x)}\frac{1}{H_x}\Big(1-\frac{1}{H_x}\Big)^{n^b-1}f^{n,|x|}\un_{\mathcal{H}^{|x|}_{\mathfrak{z}_n}}(\mathbf{V}_x), 
\end{align*}  
it can be related to the quenched mean of $\mathcal{R}_{T^n}(\mathbf{f}^n)$ as follows 
\begin{align}\label{EquivMoy}
    1\leq \frac{nR(\mathbf{f}^n)}{\E^{\mathcal{E}}[\mathcal{R}_{T^n}(\mathbf{f}^n)]}\leq (1-e^{-\mathfrak{z}_n})^{-(n-1)}.
\end{align}
Indeed, the random variables $N_x^{T^i}-N_x^{T^{i-1}}, i\in\left\{1,\ldots,n\right\}$, are i.i.d under $\P^{\mathcal{E}}$ so, 
\begin{align*}
     \E^{\mathcal{E}}\left[\mathcal{R}_{T^n}(\mathbf{f}^n)\right] = n\sum_{x\in\mathcal{O}_{\lambda_n,n^b}} \P^{\mathcal{E}}(N_x^{T^1}\geq n ^b)\P^{\mathcal{E}}(N_x^{T^1}=0)^{n-1}f^{n,|x|}\un_{\mathcal{H}^{|x|}_{\mathfrak{z}_n}}(\mathbf{V}_x).
\end{align*}
Moreover, on the event $\{V(x)\geq \mathfrak{z}_n\}$, thanks to Lemma \ref{LawGeo}, $\P^{\mathcal{E}}(N_x^{T^1}=0)^{n-1} = \P^{\mathcal{E}}(T_x>T^1)^{n-1}= (1-e^{-V(x)}/H_x)^{n-1}\geq(1-e^{-V(x)})^{n-1}\geq (1-e^{-\mathfrak{z}_n})^{n-1}$ since $H_x\geq 1$, and thanks to Lemma \ref{LawGeo} $i)$ with $\nu=0$, $\P^{\mathcal{E}}(N_x^{T^1}\geq n^b) = e^{-V(x)}(1-1/H_x)^{n^b-1}/H_x$ which gives \eqref{EquivMoy}.
We are now ready to obtain a relation between a lower bound for $ \mathcal{R}_{T^n}(g_n,\mathbf{f}^n)$ and a lower bound for $R(\mathbf{f}^n)$.
 %The following Lemma makes a link between the tail of the range and the one of $R_n$ :

\begin{lemm}\label{LemmMinorProbaRange1}
Recall $\varepsilon_b=\min(b+\un_{\{b=0\}},1-b)/13$ and let $(a_n)$ be a sequence  of positive numbers. For all $\varepsilon\in(0,\varepsilon_b)$ and $n$ large enough
\begin{align}
   \P^*\big(\mathcal{R}_{T^n}(g_n,\mathbf{f}^n)<n\varphi(n^b)a_n/4n^b\big) \leq  \mathbf{P}^*\left(R(\mathbf{f}^n)<a_n/n^b\right) + \frac{ne^{-\min(9\varepsilon\log n,4h_n)}}{n^{2\kappa_b}a_n^2}.
   \label{MinorProbaRange1}
\end{align}
\end{lemm}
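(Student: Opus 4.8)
The plan is to bound the probability $\P^*(\mathcal{R}_{T^n}(g_n,\mathbf{f}^n)<n\varphi(n^b)a_n/4n^b)$ from above by going through the two intermediate quantities $\mathcal{R}_{T^n}(\mathbf{f}^n)$ and $R(\mathbf{f}^n)$ introduced just above. First I would use the deterministic inequality \eqref{MinorRanGen1}, namely $\mathcal{R}_{T^n}(g_n,\mathbf{f}^n)\geq\varphi(n^b)\mathcal{R}_{T^n}(\mathbf{f}^n)$, to reduce the claim to an upper bound for $\P^*(\mathcal{R}_{T^n}(\mathbf{f}^n)<na_n/4n^b)$. The natural route from there is a second-moment / Paley--Zygmund type argument at fixed environment: on the event $\{R(\mathbf{f}^n)\geq a_n/n^b\}$ (whose complement furnishes the first term on the right of \eqref{MinorProbaRange1}), the quenched mean $\Ee[\mathcal{R}_{T^n}(\mathbf{f}^n)]$ is comparable to $nR(\mathbf{f}^n)$ by \eqref{EquivMoy}, hence at least of order $na_n/n^b$ up to the factor $(1-e^{-v_n})^{-(n-1)}$, which tends to a constant since $v_n=\log(n\ell_n)/\delta_1$ makes $ne^{-v_n}=\ell_n^{-1/\delta_1}n^{1-1/\delta_1}\to 0$ (recall $\delta_1\le 1/2$). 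So on that event the quenched mean is $\gtrsim na_n/n^b$, and the task becomes showing the quenched probability that $\mathcal{R}_{T^n}(\mathbf{f}^n)$ falls below, say, half its mean is small.

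The key computation is the quenched variance (or second moment) of $\mathcal{R}_{T^n}(\mathbf{f}^n)$. Writing $\mathcal{R}_{T^n}(\mathbf{f}^n)=\sum_{i=1}^n Y_i$ where $Y_i:=\sum_{x\in\mathcal{O}_{\lambda_n,n^b}}\un_{\{N_x^{T^i}-N_x^{T^{i-1}}\geq n^b,\ \forall j\neq i:N_x^{T^j}-N_x^{T^{j-1}}=0\}}f^{n,|x|}\un_{\mathcal{H}^{|x|}_{v_n}}(\mathbf{V}_x)$, the $Y_i$ are \emph{not} independent because the constraint ``no other excursion hits $x$'' couples them; but the excursions $(N_x^{T^i}-N_x^{T^{i-1}})_{i\le n}$ themselves are i.i.d.\ under $\Pe$, so one can expand $\Ee[(\sum_i Y_i)^2]$ and control the cross terms. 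I expect the dominant contribution to $\Ee[(\mathcal{R}_{T^n}(\mathbf{f}^n))^2]$ to be of order $(\Ee[\mathcal{R}_{T^n}(\mathbf{f}^n)])^2$ plus a diagonal term controlled using Lemma~\ref{LawGeo}(i) (the geometric law of $N_x^{T_e}$, in particular the identity $\Ee[s^{\nu N_x^{T_e}}\un_{\{N_x^{T_e}\geq i\}}]=\frac{e^{-V(x)}}{H_x^2}(1-\tfrac1{H_x})^{i-1}\frac{s^{i\nu}}{1-s^\nu(1-1/H_x)}$ with $\nu=0$). The restriction to the regular line $\mathcal{O}_{\lambda_n,n^b}$ (so that $H_x\le\lambda_n$) and the constraint $H_x>n^b$ are what keep these terms under control; the factor $\lambda_n=ne^{-\min(10\varepsilon\log n,5h_n)}$ is tuned precisely so that $\P^{\mathcal{E}}(T_x<T_e)=e^{-V(x)}/H_x$ is not too small. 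After this, a one-line Chebyshev inequality at fixed environment gives $\Pe(\mathcal{R}_{T^n}(\mathbf{f}^n)<\tfrac12\Ee[\mathcal{R}_{T^n}(\mathbf{f}^n)])\lesssim \mathrm{Var}_{\mathcal{E}}(\mathcal{R}_{T^n}(\mathbf{f}^n))/(\Ee[\mathcal{R}_{T^n}(\mathbf{f}^n)])^2$, and I would then bound the annealed expectation of that ratio restricted to $\{R(\mathbf{f}^n)\ge a_n/n^b\}$, using $\Ee[\mathcal{R}_{T^n}(\mathbf{f}^n)]\gtrsim n a_n/n^b$ there, to produce the error term $n e^{-\min(9\varepsilon\log n,4h_n)}/(n^{2\kappa_b}a_n^2)$. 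The exponent $\min(9\varepsilon\log n,4h_n)$ and the power $n^{2\kappa_b}$ should emerge from the second-moment bound after invoking \eqref{kappab} (which gives $f^{n,k}\le C_\infty n^{-\kappa_b}$ on $\{H_k>n^b\}$) together with the choice of $\lambda_n$ and Remark~\ref{Rem1} (to sum $e^{-V(x)}$ over $\mathcal{O}_n$, costing a $(\log n)^3$-type factor absorbed into the $5h_n\to 4h_n$ slack).

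The main obstacle, as usual in these range estimates, is the quenched second-moment bound: one must carefully organize the double sum over pairs $x,y\in\mathcal{O}_{\lambda_n,n^b}$ according to their most recent common ancestor $z$, use the branching/Markov structure of $\mathbb{V}$ and the strong Markov property of $\mathbb{X}$ to factorize the excursion contributions through $z$, and check that the combinatorial factor $n^2$ from the double sum over excursion indices $i,i'$, combined with the ``single excursion'' constraint, does not blow up — this is exactly the kind of bookkeeping where the regular-line truncation and the parameters $v_n,\lambda_n,n^b$ all have to conspire. Everything else (the deterministic reduction \eqref{MinorRanGen1}, the comparison \eqref{EquivMoy}, the fact that $(1-e^{-v_n})^{-(n-1)}\to 1$, and the final Chebyshev step) is routine. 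I would present the proof as: (1) reduce via \eqref{MinorRanGen1} and \eqref{EquivMoy} to controlling the quenched fluctuations of $\mathcal{R}_{T^n}(\mathbf{f}^n)$ on the good event; (2) establish the quenched second-moment estimate using Lemma~\ref{LawGeo}; (3) apply Chebyshev and take annealed expectation, reading off the stated error term.
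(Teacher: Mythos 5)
Your plan coincides with the paper's proof: reduce via \eqref{MinorRanGen1} and \eqref{EquivMoy} to the event $\{R(\mathbf{f}^n)\geq a_n/n^b\}$, apply Chebyshev at fixed environment, bound the quenched variance by a double sum of $\E^{\mathcal{E}}[N_x^{T_e}N_y^{T_e}]$ over $x,y$ in the regular line (split by common ancestor, using the $H_x\leq\lambda_n$ truncation, the $\kappa_b$ bound and Remark~\ref{Rem1}), and take annealed expectation. The only cosmetic difference is that the paper quotes the explicit formulas for $\E^{\mathcal{E}}[N_x^{T_e}N_y^{T_e}]$ from an earlier work rather than rederiving them from Lemma~\ref{LawGeo}, so your proposal is essentially the paper's argument.
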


\begin{proof}

Note that thanks to \eqref{EquivMoy}, for $n$ large enough, $nR(\mathbf{f}^n)\leq 2\E^{\mathcal{E}}[\mathcal{R}_{T^n}(\mathbf{f}^n)]$, so by \eqref{MinorRanGen1}, on the event $\{R(\mathbf{f}^n)\geq a_n/n^b\}$
\begin{align*}
   \P^{\mathcal{E}}\big(\mathcal{R}_{T^n}(g_n,\mathbf{f}^n)<n\varphi(n^b)a_n/4n^b\big)\leq  \P^{\mathcal{E}}\big(\mathcal{R}_{T^n}(\mathbf{f}^n)<\E^{\mathcal{E}}[\mathcal{R}_{T^n}(\mathbf{f}^n)]/2\big).
\end{align*}
 Using Bienaym\'e-Tchebychev inequality and the fact that $N_x^{T^i}-N_x^{T^{i-1}}, i\in\{1,\ldots,n\},$ are i.i.d under $\P^{\mathcal{E}}$ implies, on the event $\{R(\mathbf{f}^n)\geq a_n/n^b\}$
\begin{align}
    &\P^{\mathcal{E}}(\mathcal{R}_{T^n}(\mathbf{f}^n)<\E^{\mathcal{E}}\left[\mathcal{R}_{T^n}(\mathbf{f}^n)\right]/2)
    \leq  \frac{4}{\E^{\mathcal{E}}[\mathcal{R}_{T^n}(\mathbf{f}^n)]^2} n\Var^{\mathcal{E}}(\mathcal{R}_{T^1}(\mathbf{f}^n)) \nonumber  \\
    \leq & \frac{16n^{2b}}{a_n^2 n} \sum_{x,y\in\mathcal{O}_{\lambda_n,n^b}}\P^{\mathcal{E}}(N_x^{T^1}\land N_y^{T^1}\geq n ^b)f^{n,|x|}\un_{\mathcal{H}^{|x|}_{\mathfrak{z}_n}}(\mathbf{V}_x)f^{n,|y|}\un_{\mathcal{H}^{|y|}_{\mathfrak{z}_n}}(\mathbf{V}_y). \label{MajorVariance1}
\end{align} 
%indeed \bo{by  independence}, 
%\begin{align*}
%\Var^{\mathcal{E}}(\mathcal{R}_{T^n}(\mathbf{f}^n))=n\Var^{\mathcal{E}}(\mathcal{R}_{T^1}(\mathbf{f}^n)) \leq \sum_{x,y\in\mathcal{O}_{\lambda_n,n^b}}\P^{\mathcal{E}}(N_x^{T^1}\land N_y^{T^1}\geq n ^b)f^{n,|x|}\un_{\mathcal{H}^{|x|}_{v_n}}(\mathbf{V}_x)f^{n,|y|}\un_{\mathcal{H}^{|y|}_{v_n}}(\mathbf{V}_y).
%\end{align*}
The last inequality coming from the fact that, on $\{R(\mathbf{f}^n)\geq a_n/n^b\}$, thanks to (\ref{EquivMoy}) $\E^{\mathcal{E}}[\mathcal{R}_{T^n}(\mathbf{f}^n)]^2 \geq n^2R(\mathbf{f}^n)^2/4\geq n^2a_n^2/4n ^{2b}$. 
%\begin{align*}
%\end{align*}
 Markov inequality in (\ref{MajorVariance1}) yields $  \P^{\mathcal{E}}(N_x^{T^1}\land N_y^{T^1}\geq n^b)\leq \E^{\mathcal{E}}[N_x^{T^1}N_y^{T^1}]/n^{2b} $,

%\noindent Now define the "independant version" $\mathcal{R}_n(\mathbf{f}^n)$ of the range
%\begin{align*}
%    \mathcal{R}_n(\mathbf{f}^n) := \overset{n}{\underset{i=1}{\sum}}\;\underset{x\in\mathcal{O}_{\lambda_n}}{\underset{|x|\leq l_n}{\sum}}\un_{\left\{N_x^{T^i}-N_x^{T^{i-1}}\geq b_n\right\}}F^{|x|}\un_{\mathcal{H}^{|x|}_{v_n}}(V(x_1),\cdots,V(x))\un_{\left\{\forall j\not=i\;N_x^{T^j}-N_x^{T^{j-1}} =  0\right\}}
%\end{align*}

%\noindent and the rescaled mean $R(\mathbf{f}^n)$ of the range
%\begin{align*}
%     R(\mathbf{f}^n) := \underset{x\in\mathcal{O}_{\lambda_n}}{\underset{|x|\leq l_n}{\sum}}e^{-V(x)}\frac{1}{H_x}\left(1-\frac{1}{H_x}\right)^{b_n-1}F^{|x|}\un_{\mathcal{H}^{|x|}_{v_n}}(V(x_1),\cdots,V(x))
%\end{align*}
\noindent so finally, on the event $\{R(\mathbf{f}^n)\geq a_n/n^b\}$
\begin{align}
    \P^{\mathcal{E}}(\mathcal{R}_{T^n}(g_n,\mathbf{f}^n)<n\varphi(n^b)a_n/4n^b) \leq  \frac{16}{na_n^2} \sum_{x,y\in\mathcal{O}_{\lambda_n,n^b}}\E^{\mathcal{E}}[N^{T^1}_xN^{T^1}_y]f^{n,|x|}\un_{\mathcal{H}^{|x|}_{\mathfrak{z}_n}}(\mathbf{V}_x)f^{n,|y|}\un_{\mathcal{H}^{|y|}_{\mathfrak{z}_n}}(\mathbf{V}_y).
    \label{Variance1}
\end{align}
To treat the above sum, we first make a simplification by using the uniform upper bound of the set $\mathcal{U}_b$, see \eqref{Ub}
\begin{align}\label{simpli}
    \sum_{x,y\in\mathcal{O}_{\lambda_n,n ^b}}\E^{\mathcal{E}}[N^{T^1}_xN^{T^1}_y]f^{n,|x|}\un_{\mathcal{H}^{|x|}_{\mathfrak{z}_n}}(\mathbf{V}_x)f^{n,|y|}\un_{\mathcal{H}^{|y|}_{\mathfrak{z}_n}}(\mathbf{V}_y)\leq \frac{C_{\infty}^2}{n^{2\kappa_b}} \sum_{x,y\in\mathcal{O}_{\lambda_n}}\E^{\mathcal{E}}[N^{T^1}_xN^{T^1}_y].
\end{align}
We then split the computations in two distinct steps: the first step is dedicated to the cases $x\leq y$ or $y\leq x$ and the second one to the cases nor $x\leq y$ neither $y\leq x$. The key here is to take into account that we are only interested in vertices belonging to $\lambda_n$-regular lines $\mathcal{O}_{\lambda_n}$ with $\lambda_n=ne^{-\min(10\varepsilon\log n,5h_n)}$  for $\varepsilon\in(0,\varepsilon_b)$. \\
We start with the cases $x\leq y$ and $y\leq x$ and as they are symmetrical, we only deal with the first one. First note that as $\E^{\mathcal{E}}\left[N^{T^1}_xN^{T^1}_y\right]\leq 2e^{-V(y)}H_x = 2H_xe^{-V(x)}e^{-V_x(y)}$ (see \cite{AndDiel3} Lemma 3.6) 
\begin{align*}
    \mathbf{E}\Big[\underset{x,y\in\mathcal{O}_{\lambda_n}}{\sum_{x\leq y}}\E^{\mathcal{E}}[N^{T^1}_xN^{T^1}_y]\Big] & \leq  2\mathbf{E}\Big[\sum_{x\in\mathcal{O}_{\lambda_n}}e^{-V(x)}H_x\underset{y\in \mathcal{O}^x_{\lambda_n}}{\sum_{y\geq x}}e^{-V_x(y)}\Big] \leq 2\mathbf{E}\Big[\sum_{x\in\mathcal{O}_{\lambda_n}}e^{-V(x)}\Big]^2\lambda_n \\ & \leq  2\ell_n^2\lambda_n,
\end{align*}
where for all $\lambda>0$, $\mathcal{O}^x_{\lambda}$ is translated set of $\lambda$-regular lines
\begin{align*}
    \mathcal{O}^x_{\lambda} = \big\{y\in \mathbb{T}, y>x;\; \underset{|x|< j\leq|y|}{\max}\; H_{x,y_j}\leq \lambda\big\},\ H_{x,y_j}=\sum_{ x<w \leq  y_j} e^{V_x(w)-V_x(y_j)}, 
\end{align*} 
also, the second inequality is obtained thanks to the regular line which yields $H_x\un_{\mathcal{O}_{\lambda_n}}(x)\leq \lambda_n$ and the last one comes from Remark \ref{Rem1}.

%\vspace{0.3cm}

\noindent We then move to the second case, neither $x\leq y$ nor $y\leq x$, that we denote $x\not \sim y$. In this case, $\E^{\mathcal{E}}\left[N^{T^1}_xN^{T^1}_y\right] = 2H_{x\land y}e^{V(x\land y)-V(x)-V(y)}$ (see \cite{AndDiel3} Lemma 3.6). Thus
\begin{align*}
    \E^{\mathcal{E}}[N^{T^1}_xN^{T^1}_y] \leq 2\lambda_n\sum_{l\geq 1}\sum_{|z|=l}e^{-V(z)}\un_{\{z\in\mathcal{O}_{\lambda_n}\}}\underset{u^*=v^*=z}{\sum_{u\not = v}}e^{-V_z(u)}e^{-V_z(v)}\underset{x\in\mathcal{O}^u_{\lambda_n}}{\sum_{x\geq u}}e^{-V_u(x)}\underset{y\in\mathcal{O}^v_{\lambda_n}}{\sum_{y\geq v}}e^{-V_v(y)},
\end{align*}

\noindent where we have used again the regular line $\mathcal{O}_{\lambda_n}$ which gives an upper bound for $H_{x\land y}$. Finally, independence of the increments of $V$ conditionally to $(\T,V(w); w\in\T,|w|\leq l+1)$ and Remark \ref{Rem1} yields
\begin{align*}
    \mathbf{E}\Big[\underset{x,y\in\mathcal{O}_{\lambda_n}}{\sum_{x\not \sim y}}\E^{\mathcal{E}}[N^{T^1}_xN^{T^1}_y]\Big]& \leq  2\lambda_n \mathbf{E}\Big[\Big(\sum_{|u|=1}e^{-V(u)}\Big)^2\Big] \mathbf{E}\Big[ \sum_{z\in\mathcal{O}_{\lambda_n}}e^{-V(z)}\Big]^3 \\ 
   & \leq  2\lambda_n \mathbf{E}\Big[\Big(\sum_{|u|=1}e^{-V(u)}\Big)^2\Big](\ell_n)^3,  
\end{align*}
and thanks to \eqref{hyp1}, the second moment above is finite. Collecting the upper bounds for the two cases and moving back to \eqref{simpli}, we get for $n$ large enough
\begin{align}
  \mathbf{E}\Big[\sum_{x,y\in\mathcal{O}_{\lambda_n,n^b}}\E^{\mathcal{E}}[N^{T^1}_xN^{T^1}_y]f^{n,|x|}\un_{\mathcal{H}^{|x|}_{\mathfrak{z}_n}}(V_x)f^{n,|y|}\un_{\mathcal{H}^{|y|}_{\mathfrak{z}_n}}(V_y)\Big]\leq   \frac{(\ell_n)^4\lambda_n}{n^{2\kappa_b}}\leq\frac{ne^{-\min(9\varepsilon\log n,4h_n)}}{n^{2\kappa_b}},
    \label{VarianceNum}
\end{align}
the last inequality is justified by the fact (see Remark \ref{rem0}) that $(\ell_n)^4=o(e^{h_n})$ and $(\ell_n)^4=o(e^{\varepsilon \log n})$. We are now ready to conclude the proof of the lemma : $\P^*\big(\mathcal{R}_{T^n}(g_n,\mathbf{f}^n)<n\varphi(n^b)a_n/4n^b\big)$ is smaller than 
\begin{align*}
   \P^*(R(\mathbf{f}^n)<a_n/n^b) + \P^*\big(\mathcal{R}_{T^n}(g_n,\mathbf{f}^n)<n\varphi(n^b)a_n/4n^b,R(\mathbf{f}^n)\geq a_n/n^b\big),
 \end{align*}
 \noindent then, as the second term in the above inequality is nothing but
 \begin{align*}
     \mathbf{E}^*\big[\P^{\mathcal{E}}\big(\mathcal{R}_{T^n}(g_n,\mathbf{f}^n)<n\varphi(n^b)a_n/4n^b\big)\un_{\{R(\mathbf{f}^n)\geq a_n/n^b\}}\big], 
 \end{align*}
the proof ends thanks to \eqref{Variance1} and \eqref{VarianceNum}.
\end{proof}

\subsubsection{Lower bound for $R(\mathbf{f}^n)$}
This is the most technical part of the proof of Proposition \ref{Prop1}. For any $n\geq 2$ and $\varepsilon\in(0,\varepsilon_b)$, recall that  $\lambda_n=ne^{-\min(10\varepsilon\log n,5h_n)}$ and $\mathfrak{z}_n=\ell_n^{1/3}/\delta_1$, $\delta_1\in(0,1/2)$ (see \eqref{hyp1}) with $\ell_n=(\log n)^3$. For any $\varepsilon>0$, let us choose $(a_n)$ as follows
%The main difficulty is to obtain the upper bound of following random variable $Z_n(B,\mathcal{A})$. Let $(m_n)$ a positive sequence and $v'_n := v_n + \varepsilon h_n$: 
%\begin{align}\label{RVMinor2}
 %   Z_n(B,\mathcal{A}) := \overset{l_n-m_n}{\underset{k=1}{\sum}}\underset{x\in\mathcal{O}_{\lambda_n}}{\underset{|x|=k}{\sum}}e^{-V(x)}F^{k,\mathcal{A}}_{m_n,\varepsilon h_n}\mathbf{H}^{k}_{b_n,\varepsilon h_n}\un_{\mathcal{H}^k_{B,v'_n}}\left(V\left(x_{1}\right), \ldots, V(x)\right).
%\end{align}
\begin{align}\label{Defa_n}
    a_n := e^{-4\varepsilon h_n}\sum_{k\geq 1}\Psi^k_{\lambda_n/2,n^b}\big(f^{n,k}_{\varepsilon h_n}\un_{\Upsilon^k_n}\big)
\end{align}
with $\Upsilon^k_n=\{\mathbf{t}\in\R^k;\;H_k(\mathbf{t})\leq n^be^{\varepsilon h_n}\}\cap\mathcal{H}^k_{B,2\mathfrak{z}_n}$. Recall that $\Psi^k_{\lambda,\lambda'}$, $h_n$, $\mathcal{H}^k_{B,2\mathfrak{z}_n}$ and $f^{n,k}_{\varepsilon h_n}$ can be found  respectively in \eqref{Def_Psi}, \eqref{Def_hn}, \eqref{HighSet} and \eqref{Def_f}.
%$\mathbf{P}\left(R(\mathbf{f}^n)<a_n/b_n\right)$ in (\ref{MinorProbaRange1}), with the right $a_n$, is the most technical point of the section. Indeed, unlike Lemma \ref{Variance2}, the optional line $\mathcal{O}_{\lambda_n}$ is not the key here and can't be used directly to bound $\mathbf{P}\left(R(\mathbf{f}^n)<a_n/b_n\right)$. Therefore, we need to be much more precise.

%\noindent The second step is to bound $\mathbf{P}\left(R(\mathbf{f}^n)<a_n/b_n\right)$ using random variables $Z^u_n(B,\mathcal{A})$ and $Z_n(B,\mathcal{A})$:
\begin{lemm}\label{lemmeMarjorProba}
There exists $c_4>0$ such that for any $\varepsilon\in(0,\varepsilon_b)$ and $n$ large enough
\begin{align}
    \mathbf{P}^*\left(R(\mathbf{f}^n)<a_n/n^b\right)\leq\frac{e^{-\varepsilon\frac{c_4}{c_2}h_n}\mathbf{E}[Z_n^2]}{\big(\sum_{k\geq 1}\Psi^k_{\lambda_n/2,n^b}\big(f^{n,k}_{\varepsilon h_n}\mathds{1}_{\Upsilon^k_n}\big)\big)^2} + h_ne^{-\varepsilon \tilde{c}_2h_n},
    \label{MarjorProba}
\end{align}
with, recall, $m_n = \lceil \varepsilon h_n/c_2\rceil$ (see \eqref{GenPotMax}).
\end{lemm}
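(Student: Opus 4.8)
\textbf{Proof plan for Lemma \ref{lemmeMarjorProba}.}
The plan is to compare the environment-dependent variable $R(\mathbf{f}^n)$ to the second-moment truncated sum $Z_n$ appearing in the statement, and then run a Paley--Zygmund / second-moment argument under $\mathbf{P}^*$. First I would unfold the definition of $R(\mathbf{f}^n)$ and note that on the line-set $\mathcal{O}_{\lambda_n,n^b}$ we have $H_x\le\lambda_n\le n$, so $(1-1/H_x)^{n^b-1}\ge(1-1/H_x)^{H_x}\gtrsim$ a constant times $\exp(-n^b/H_x)$, and more crucially that the prefactor $e^{-V(x)}H_x^{-1}(1-1/H_x)^{n^b-1}$ is, up to the harmless geometric factors, comparable to $e^{-V(x)}$ once we further restrict to the event $H_x\le n^b e^{\varepsilon h_n}$ which is exactly the condition defining $\Upsilon^k_n$. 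This is the step where the constraint $H_x\le n^b e^{\varepsilon h_n}$ is used: on that event $(1-1/H_x)^{n^b-1}\ge e^{-\varepsilon' h_n}$ for a small constant. Hence, after also imposing $\underline V(x)\ge -B$, $V(x)\ge v'_n$ and replacing $f^{n,|x|}$ by its $\varepsilon h_n$-infimum $f^{n,|x|}_{\varepsilon h_n}$ (valid up to the potential-fluctuation event handled by Fact 2, \eqref{GenPotMax}, which produces the additive term $h_n e^{-\varepsilon\tilde c_2 h_n}$ — this is where the first generation of the tree is cut off and the translated/infimum function enters), one gets a lower bound of the form $R(\mathbf{f}^n)\ge e^{-c h_n}\,\widehat Z_n$ in probability, where $\widehat Z_n$ is a sum over $x\in\mathcal{O}_{\lambda_n,n^b}$ of $e^{-V(x)} f^{n,|x|}_{\varepsilon h_n}(\mathbf V_x)$ over exactly the constrained region, i.e. essentially $Z_n$.

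Next I would take the quenched-to-annealed step: since $\mathbf{E}[\widehat Z_n]=\sum_{k\ge1}\Psi^k_{\lambda_n/2,n^b}(f^{n,k}_{\varepsilon h_n}\mathds{1}_{\Upsilon^k_n})$ up to the restriction $\lambda_n/2$ versus $\lambda_n$ (which only costs a constant, cf. the computations in Remark \ref{rem1b}), the event $\{R(\mathbf{f}^n)<a_n/n^b\}$ with $a_n$ as in \eqref{Defa_n} is, modulo the Fact-2 correction term, contained in an event of the form $\{\widehat Z_n < e^{-3\varepsilon h_n}\mathbf{E}[\widehat Z_n]\}$, i.e. $\widehat Z_n$ much smaller than its mean. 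A Chebyshev / Paley--Zygmund bound then gives
\[
\mathbf{P}^*\big(\widehat Z_n< e^{-3\varepsilon h_n}\mathbf{E}[\widehat Z_n]\big)\le \frac{\mathbf{E}[Z_n^2]}{(1-e^{-3\varepsilon h_n})^2\,\mathbf{E}[\widehat Z_n]^2}\lesssim \frac{\mathbf{E}[Z_n^2]}{\big(\sum_{k\ge1}\Psi^k_{\lambda_n/2,n^b}(f^{n,k}_{\varepsilon h_n}\mathds{1}_{\Upsilon^k_n})\big)^2},
\]
and bookkeeping the constants coming from the geometric factors $(1-1/H_x)^{n^b-1}$, from $\lambda_n/2$ vs $\lambda_n$, and from $m_n=\lceil\varepsilon h_n/c_2\rceil$ produces the prefactor $e^{-\varepsilon\frac{c_4}{c_2}h_n}$ for a suitable $c_4>0$ (one has to be slightly careful that the exponential loss is of the right sign, which is why the argument only needs $h_n\to\infty$, guaranteed by Remark \ref{rem0}). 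The second-moment $\mathbf{E}[Z_n^2]$ is left as is; it is controlled separately later using \eqref{Hypothese2}.

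The main obstacle I anticipate is not the Paley--Zygmund mechanics but the careful matching of the \emph{exact} truncation regions: $Z_n$ in the statement lives on $\mathcal{O}_{\lambda_n,n^b}$ (via the definition of $R(\mathbf{f}^n)$, with $\lambda_n=ne^{-\min(10\varepsilon\log n,5h_n)}$) with the constraints $\overline V(x)\ge A\log n$, $\underline V(x)\ge -B$, $\overline V(x)=V(x)$ and the $\varepsilon h_n$-infimum function, whereas the denominator uses $\Psi^k_{\lambda_n/2,n^b}$ and the set $\Upsilon^k_n$ which additionally caps $H_k(\mathbf t)\le n^b e^{\varepsilon h_n}$; showing these discrepancies cost only a factor $e^{\pm O(\varepsilon h_n)}$ requires invoking \eqref{Hypothese1} (to know $\sum\Psi\ge n^{-(\kappa_b+\varepsilon)\wedge1}$, hence that the sum is not so tiny that the cut-offs become relevant) together with Remark \ref{rem0} (to know $(\log n)^\gamma\le h_n\le\log n$, so that $(\ell_n)^c=o(e^{\varepsilon h_n})$ and the polynomial-in-$\log n$ losses of Remark \ref{rem1b} are absorbed). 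I would organize the write-up so that all these ``$e^{O(\varepsilon h_n)}$ losses'' are collected once at the end, rather than tracked through each inequality, and so that the two error terms — the variance term and the potential-fluctuation term $h_n e^{-\varepsilon\tilde c_2 h_n}$ from \eqref{GenPotMax} — appear additively exactly as in \eqref{MarjorProba}.
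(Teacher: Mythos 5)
There is a genuine gap, and it sits exactly at the point you flag as ``bookkeeping the constants''. A plain Chebyshev/Paley--Zygmund bound applied to $\widehat Z_n$ (or $Z_n$) can never produce the decaying prefactor $e^{-\varepsilon\frac{c_4}{c_2}h_n}$: since $\mathbf{E}[Z_n^2]\geq \mathbf{E}[Z_n]^2$, the right-hand side of your displayed inequality is bounded below by a constant, and indeed Lemma \ref{Lemm_Minor_Esp_du_carre} together with \eqref{Hypothese3} only gives $\mathbf{E}[Z_n^2]/u_{1,n}^2\leq e^{O(\varepsilon)h_n}$, i.e.\ a slowly \emph{growing} ratio. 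All the discrepancies you propose to track (the geometric factors $(1-1/H_x)^{n^b-1}$, $\lambda_n/2$ versus $\lambda_n$, the truncation regions) produce losses of the form $e^{+O(\varepsilon h_n)}$, never a gain; so without an extra mechanism the bound \eqref{MarjorProba} would be vacuous and the lower bound of Proposition \ref{Prop1} would not follow.

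The missing mechanism in the paper's proof is that the cut at generation $m_n=\lceil \varepsilon h_n/c_2\rceil$ is used not only to introduce the translated infimum function $f^{n,k}_{\varepsilon h_n}$ and the error term $h_ne^{-\varepsilon\tilde c_2 h_n}$ (which you identified correctly), but primarily to create independence: on $\{\max_{|w|\leq m_n}|V(w)|\leq\varepsilon h_n\}$ one shows $R(\mathbf{f}^n)\geq \frac{e^{-4\varepsilon h_n}}{n^b}\sum_{|u|=m_n}Z_n^u$, where the $Z_n^u$ are, conditionally on the first $m_n$ generations, independent copies of $Z_n$ attached to the subtrees rooted at the vertices $u$ of generation $m_n$. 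One then applies a concentration inequality for such branching sums (Lemma 2.4 of \cite{AndDiel3}), which gives $\mathbf{P}^*(\sum_{|u|=m_n}Z_n^u<\mathbf{E}[Z_n])\leq e^{-c_4m_n}\,\mathbf{E}[Z_n^2]/\mathbf{E}[Z_n]^2$; the factor $e^{-c_4m_n}=e^{-\varepsilon\frac{c_4}{c_2}h_n}$ comes from the supercritical branching at generation $m_n$ (many independent subtrees, each of which alone already has mean $\approx\mathbf{E}[Z_n]$), not from any single-variable second-moment estimate. Your write-up would need to replace the Paley--Zygmund step by this decomposition and concentration lemma; the rest of your plan (the lower bound on the geometric factor on $\{H_x\leq n^be^{\varepsilon h_n}\}$, the use of Fact 2 for the additive error term, and deferring the control of $\mathbf{E}[Z_n^2]$ to \eqref{Hypothese2}) matches the paper.
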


%\begin{lemm}\label{Lemm_Minor_Esp_du_carre}
%Suppose Assumption (\ref{Hypoth√É¬É√Ç¬É√É¬Ç√Ç¬É√É¬É√Ç¬Ç√É¬Ç√Ç¬É√É¬É√Ç¬É√É¬Ç√Ç¬Ö%√É¬É√Ç¬Ç√É¬Ç√Ç¬°se1}) holds. For $n$ large enough
%\begin{align}\label{Minor_Esp_du_carre}
% \mathbf{E}\left[\left(Z_n(B,\mathcal{A})\right)^2\right]\leq C_7\frac{(\log b_n)^2}{b_n^2}e^{2\varepsilon'h_n}\underset{l\leq l_n'}{\sum}\Psi^{l}_{\lambda_n} \left(\left(\Phi^l_n\right)^2\right)
%\end{align}
%\noindent where, for any $n\in\N^*$ (see (\ref{PhiSuite}))
%\begin{align*}
%\Psi_{\lambda_n} = \mathbf{E}\left[\underset{z\in\mathcal{O}_{\lambda_n}}{\underset{|z|<l'_n}{\sum}}e^{-V(z)}\un_{\left\{\underline{V}(z)\geq -B\right\}}\left(\underset{k\geq 1}{\sum}\Psi^k_{\lambda_n}\left(F^{k+|z|+1,\mathcal{A}}_{m_n,\varepsilon h_n}\cdot(V(z_1),\cdots,V(z))\right)\right)^2\right]   
%\end{align*}
%\end{lemm}

\begin{proof}

\noindent Recall the expression of $R(\mathbf{f}^n)$:
\begin{align*}
    R(\mathbf{f}^n) = \sum_{x\in\mathcal{O}_{\lambda_n,n^b}}e^{-V(x)}\frac{1}{H_x}\Big(1-\frac{1}{H_x}\Big)^{n^b-1}f^{n,|x|}\un_{\mathcal{H}^{|x|}_{\mathfrak{z}_n}}(V(x_1),\cdots,V(x)), 
\end{align*}
with $H_x$ and $\mathcal{H}^{|x|}_{\mathfrak{z}_n}$ respectively defined in \eqref{Def_Hx} and  \eqref{HighSet}.
%For any $n\in\N^*$, let $\varepsilon h_n$ be a positive number and l
\noindent The main idea here is to cut the tree at the generation $m_n$ to introduce independence between  generations. First note that %by cutting the tree at generation $m_n$, 
\begin{align*}
    R(\mathbf{f}^n) & \geq &  \sum_{|u|=m_n}\sum_{k\geq 1}\underset{x>u;\;x\in\mathcal{O}_{\lambda_n,n^b}}{\sum_{|x|=k+m_n}}\frac{e^{-V(x)}}{H_x}\Big(1-\frac{1}{H_x}\Big)^{n^b}f^{n,k+m_n}\un_{\mathcal{H}^{k+m_n}_{\mathfrak{z}_n}}(V(x_{1}),\ldots, V(x)), 
\end{align*}      
from here we would like to make a translation to decompose the trajectories of $V$ before and after the generation $m_n$ and to do that, we have in particular to re-write $H_{x_j}$ for $j \leq |x|$. Let $u<x$ with $|u|=m_n$. For all $m_n<j\leq |x|$, we have $H_{x_j}=H_ue^{-V_u(x_j)}+H_{u,x_j}$ where, for any $z<v$, $H_{z,v}:=\sum_{z<w\leq v}e^{V_z(w)-V_z(v)}$. 

\noindent So on the events $\{\max_{|w|\leq m_n}|V(w)|\leq \varepsilon h_n\}$ and $\{\underline{V}_u(x){:=\min_{ u < w \leq x }(V(w)-V(u))} \geq -B\}$, for any $B>0$ :
\begin{align*}
   \forall i\leq m_n:\;H_{x_i}\leq m_ne^{2\varepsilon h_n}\;\;\textrm{ and }\;\; \forall\; m_n<j\leq |x|:\;
    H_{x_j}\leq m_ne^{2\varepsilon h_n+B}+H_{u,x_j}.
\end{align*}
Assume $n^b<H_{u,x}\leq n^be^{\varepsilon h_n}$. Then, $H_x>n^b$ and for $n$ large enough (recall $h_n\leq \log n$ for $n$ large enough, $h_n\to\infty$ and $\varepsilon\in(0,\varepsilon_b)$)
\begin{align*}
    \frac{1}{H_x}\Big(1-\frac{1}{H_x}\Big)^{n^b}\geq \frac{(1-1/n^b)^{n^b}}{m_ne^{2\varepsilon h_n+B}+H_{u,x}}\geq \frac{(1-1/n^b)^{n^b}}{m_ne^{2\varepsilon h_n+B}+n^be^{\varepsilon h_n}}\geq \frac{e^{-3\varepsilon h_n}}{n^b}.
\end{align*}
Now introduce the translated $(\lambda,\lambda')$-regular lines
\begin{align*}
    \mathcal{O}^v_{\lambda,\lambda'} := \big\{y\in \mathbb{T}, y>v;\; \underset{|v|< j\leq|y|}{\max}\; H_{v,y_j}\leq \lambda,\; H_{v,y}>\lambda'\big\}.
\end{align*}
Note that for $n$ large enough, $\mathcal{O}^u_{\lambda_n/2,n^b}\subset \mathcal{O}_{\lambda_n,n^b}$. Indeed, if $|u|=m_n$ and $m_n<j\leq |x|$, then $H_{x_j}\leq m_ne^{2\varepsilon h_n+B}+H_{u,x_j}$. Moreover, $m_ne^{2\varepsilon h_n+B}\leq e^{3\varepsilon h_n}\leq\lambda_n/2$ for $n$ large since $\varepsilon\in(0,1/13)$, so $H_{u,x_j}\leq \lambda_n/2$ implies $H_{x_j}\leq\lambda_n$. \\ 
For $f^{n,m_n+k}$,  we simply write (still on the event $\{\underset{|w|\leq m_n}{\max}|V(w)|\leq \varepsilon h_n \}$)
\begin{align*}
    & f^{n,m_n+k}(V(x_{1}),\ldots, V(x)) \geq f^{n,k}_{\varepsilon h_n}(V_u(x_{m_n+1}),\ldots, V_u(x)),
\end{align*}
where we recall that $f^{n,k}_{h}(t_1,\ldots, t_k) =  \inf_{\mathbf{s}\in [- h, h]^{m}}f^{n,m+k}\left(s_1,\ldots,s_m,t_1+s_m,\ldots, t_k+s_m\right)$ with $m = \lceil h/c_2\rceil$. In the same way, if $|V(u)|\leq\varepsilon h_n$ then $\un_{\left\{V(x)\geq \mathfrak{z}_n\right\}}\geq \un_{\{V_u(x)\geq 2\mathfrak{z}_n\}}$ since $\varepsilon<1$ and $h_n\leq\ell_n^{1/3}$. We finally obtain, for $n$ large enough (independently of the environment)
%\begin{align*}
%    \un_{\left\{V(x)\geq v_n\right\}}\geq \un_{\left\{V_u(x)\geq v'_n\right\}}\un_{\left\{\underline{V}_u(x)\geq -B\right\}} =\un_{\mathcal{H}^k_{B,v'_n}}\left(V_u\left(x_{m_n+1}\right), \ldots, V_u(x)\right) 
%\end{align*}
%and the event of positive potential $\{\min_{x \in \T}V(x) \geq - B\}$, and obtain 
on $\{\max_{|w|\leq m_n}|V(w)|\leq \varepsilon h_n \}$ that  $R(\mathbf{f}^n)$ is larger than
 \begin{align}      
     &\frac{e^{-3\varepsilon h_n}}{n^b} \sum_{|u|=m_n}e^{-V(u)}\sum_{k\geq 1}\underset{x>u;\;x\in\mathcal{O}^u_{\lambda_n/2,n^b}}{\sum_{|x|=k+m_n}}e^{-V_u(x)}\un_{\{H_{u,x}\leq n^be^{\varepsilon h_n}\}} f^{n,k}_{\varepsilon h_n}\un_{\mathcal{H}^k_{B,2\mathfrak{z}_n}} (V_u(x_{m_n+1}), \ldots, V_u(x)) \nonumber \\ & \geq \frac{e^{-4\varepsilon h_n}}{n^b} \sum_{|u|=m_n}\sum_{k\geq 1}\underset{x>u;\;x\in\mathcal{O}^u_{\lambda_n/2,n^b}}{\sum_{|x|=k+m_n}}e^{-V_u(x)}f^{n,k}_{\varepsilon h_n}\un_{\Upsilon^k_n}(V_u(x_{m_n+1}), \ldots, V_u(x)). \label{CutMinor}
\end{align}
Now, introduce the random variable $Z^u_n$
\begin{align*}%\label{Def_Zn}
    Z^u_n :=\sum_{k\geq 1}\underset{x>u;\;x\in\mathcal{O}^u_{\lambda_n/2,n^b}}{\sum_{|x|=k+m_n}}e^{-V_u(x)} f^{n,k}_{\varepsilon h_n}\un_{\Upsilon^k_n}(V_u(x_{m_n+1}), \ldots, V_u(x)),
\end{align*}
we obtain 
\begin{align*}
    \mathbf{P}\big(R(\mathbf{f}^n)<e^{-4\varepsilon h_n}\mathbf{E}[Z_n]/n^b,\max_{|w|\leq m_n}|V(w)|\leq \varepsilon h_n\big)\leq \mathbf{P}\Big(\sum_{|u|=m_n}Z_n^u<\Eb[Z_n]\Big),
\end{align*}
with 
\begin{align}
Z_n:=\sum_{x\in\mathcal{O}_{\lambda_n/2,n^b}}e^{-V(x)} f^{n,|x|}_{\varepsilon h_n}\un_{\Upsilon^{|x|}_n}(V(x_{1}), \ldots, V(x))
 \label{Def_Zn}. 
 \end{align}
  Hence, by Lemma 2.4 in \cite{{AndDiel3}}, there exists $c_4>0$ such that for $n$ large enough
\begin{align}\label{InegConcent}
    \mathbf{P}^*\big(R(\mathbf{f}^n)<e^{-4\varepsilon h_n}\mathbf{E}[Z_n]/n^b,\max_{|w|\leq m_n}|V(w)|\leq \varepsilon h_n\big)\leq e^{-c_4m_n}\frac{\mathbf{E}[Z_n^2]}{\mathbf{E}[Z_n]^2},   
\end{align}
and finally, \eqref{Defa_n} yields
\begin{align*}
     \mathbf{P}^*\big(R(\mathbf{f}^n)<a_n/n^b,\max_{|w|\leq m_n}|V(w)|\leq \varepsilon h_n\big)\leq \frac{e^{-\varepsilon\frac{c_4}{c_2}h_n}\mathbf{E}[Z_n^2]}{\big(\sum_{k\geq 1}\Psi^k_{\lambda_n/2,n^b}\big(f^{n,k}_{\varepsilon h_n}\un_{\Upsilon^k_n}\big)\big)^2},
\end{align*}
we have used that $\Eb[Z_n]=\sum_{k\geq 1}\Psi^k_{\lambda_n/2,n^b}\big(f^{n,k}_{\varepsilon h_n}\un_{\Upsilon^k_n}\big)$ and $m_n=\lceil \varepsilon h_n/c_2\rceil$. Finally, \eqref{GenPotMax} finishes the proof.
\end{proof}

\noindent The next step is to give a lower bound for $\mathbf{E}[Z_n^2]$, we do that in the dedicated section below.
%\vspace{0.3cm}

\subsubsection{Control of the second moment $\mathbf{E}[Z_n^2]$}

In this section we prove the following lemma, 

\begin{lemm}\label{Lemm_Minor_Esp_du_carre}
Assume \eqref{Hypothese1} and \eqref{Hypothese2}  hold. For all $\varepsilon\in(0,\varepsilon_b)$, $A>2/c_3$ and $n$ large enough
\begin{align*}
    \mathbf{E}[Z_n^2]\leq e^{\frac{6\varepsilon}{A} h_n}\Big(\sum_{k\geq 1}\Psi^k_{n,n^b}(f^{n,k})\Big)^2,
    %\label{Minor_Esp_du_carre}
\end{align*}
recall also that $c_3$ comes from Remark \ref{rem1b}. 
\end{lemm}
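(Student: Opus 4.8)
\textbf{Proof plan for Lemma \ref{Lemm_Minor_Esp_du_carre}.}
The plan is to expand the square $Z_n^2$ by summing over pairs $x,y$ of vertices in $\mathcal{O}_{\lambda_n/2,n^b}$ and to split according to the most recent common ancestor $z=x\wedge y$. Writing $Z_n=\sum_{x\in\mathcal{O}_{\lambda_n/2,n^b}}e^{-V(x)}f^{n,|x|}_{\varepsilon h_n}\un_{\Upsilon^{|x|}_n}(\mathbf{V}_x)$, we get
\begin{align*}
Z_n^2=\sum_{z\in\T}\sum_{\substack{x,y>z\\ x\wedge y=z}}\prod_{u\in\{x,y\}}e^{-V(u)}f^{n,|u|}_{\varepsilon h_n}\un_{\Upsilon^{|u|}_n}(\mathbf{V}_u)\;+\;2\sum_{\substack{x\leq y}}e^{-V(x)-V(y)}f^{n,|x|}_{\varepsilon h_n}f^{n,|y|}_{\varepsilon h_n}\un_{\Upsilon^{|x|}_n}\un_{\Upsilon^{|y|}_n},
\end{align*}
where the second (diagonal-type) term, in which one vertex is an ancestor of the other, will be of lower order since there are few such pairs; it is bounded using $C_\infty$ and $\Psi^k_n(1)\le\ell_n$ (Remark \ref{Rem1}), giving at most $C_\infty^2\ell_n\sum_k\Psi^k_{n,n^b}(f^{n,k})$, which is absorbed into $e^{\frac{6\varepsilon}{A}h_n}(\sum_k\Psi^k_{n,n^b}(f^{n,k}))^2$ thanks to \eqref{Hypothese1} (which guarantees $\sum_k\Psi^k_{n,n^b}(f^{n,k})\ge n^{-(\kappa_b+\varepsilon)\wedge1}$, so that the extra $\ell_n$ is negligible against $e^{\frac{\varepsilon}{A}h_n}$ provided $A$ is not too large—this is why $A>2/c_3$ is assumed).

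For the main term, I would condition on the branching random walk up to generation $|z|$ and use the branching property: the two subtrees rooted at the two distinct children of $z$ through which $x$ and $y$ pass are independent given $\mathcal{E}_{|z|}$. Translating potentials by $V(z)$, and using that $u\in\mathcal{O}_{\lambda_n/2,n^b}$ with the constraints in $\Upsilon^{|u|}_n$ forces $u$ to lie in the translated regular line $\mathcal{O}^z_{\lambda_n/2,\,n^b-H_z}$ (because $H_u\le e^{-V_z(u)}H_z+H_{z,u}$ and the relevant events control $H_{z,u}$), each of the two inner sums becomes, up to the translation factor $e^{-V(z)}$, a quantity dominated by $\sum_{k\ge1}\Psi^{k}_{\lambda_n/2,\,n^b-H_z}(f^{n,|z|+k}_{\varepsilon h_n}\mid\mathbf{V}_z)$. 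Crucially, the constraint $\un_{\Upsilon^{|u|}_n}$ inside $\Psi$ includes $\min_{i}t_i\ge -B$, so we may insert the factor $e^{(1-2\delta)B}$ and then use independence of increments exactly as in the heuristic displayed in Section \ref{sec1.2}, obtaining
\begin{align*}
\Eb[Z_n^2]\lesssim e^{(1-2\delta)B}\;\Eb\Big[\sum_{z\in\mathcal{O}_{n}}e^{-V(z)}\Big(e^{\delta V(z)}\sum_{k\ge1}\Psi^{k}_{n,\,n^b-H_z}(f^{n,|z|+k}_{\varepsilon h_n}\mid\mathbf{V}_z)\Big)^2\Big].
\end{align*}
Here I would enlarge $\mathcal{O}_{\lambda_n/2,n^b}$ to $\mathcal{O}_n$ in the outer sum (a valid upper bound) and also note $H_z\le n$ on this event.

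Now I apply Assumption \eqref{Hypothese2}: for $|z|=l\le\lfloor A\ell_n\rfloor$ (the restriction to $l\le\lfloor A\ell_n\rfloor$ being legitimate by Remark \ref{Rem1}, whose tail $n^{-Ac_3}$ beyond generation $\lfloor A\ell_n\rfloor$ is negligible once $A>2/c_3$), with $t_l=V(z)\ge -B$ and $H_l(\mathbf{V}_z)=H_z\le n$, the hypothesis gives $\sum_{k\ge1}\Psi^{k}_{n,\,n^b-H_z}(f^{n,l+k}_{\varepsilon h_n}\mid\mathbf{V}_z)\le e^{\delta V(z)+\frac{\varepsilon}{A}h_n}\sum_{k\ge1}\Psi^k_{n,n^b}(f^{n,k})$. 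Substituting, the inner bracket becomes $e^{2\delta V(z)+\frac{\varepsilon}{A}h_n}\big(\sum_k\Psi^k_{n,n^b}(f^{n,k})\big)$, so
\begin{align*}
\Eb[Z_n^2]\lesssim e^{(1-2\delta)B}\,e^{\frac{2\varepsilon}{A}h_n}\Big(\sum_{k\ge1}\Psi^k_{n,n^b}(f^{n,k})\Big)^2\,\Eb\Big[\sum_{z\in\mathcal{O}_{n}}e^{-V(z)}e^{4\delta V(z)}\Big],
\end{align*}
and it remains to control $\Eb[\sum_{z\in\mathcal{O}_n}e^{-(1-4\delta)V(z)}]$. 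Choosing $\delta=\delta_1/4$ (or any $\delta\le1/8$ compatible with \eqref{hyp1}) and using the many-to-one Lemma \ref{manytoone} together with the second moment condition $\Eb[(\sum_{|z|=1}e^{-(1-\delta_1)V(z)})^2]<\infty$ from \eqref{hyp1}, one shows $\Eb[\sum_{z\in\mathcal{O}_n}e^{-(1-4\delta)V(z)}]$ is at most polylogarithmic in $n$ (of order $\ell_n$ up to constants, by the same argument as Remark \ref{Rem1} but with the weight $e^{-(1-4\delta)V(z)}$ which is still summable on the regular line because $\overline V(z)-V(z)\le\log\lambda_n\le\log n$ on $\mathcal{O}_n$). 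This polylogarithmic factor, as well as the constant $e^{(1-2\delta)B}$, is absorbed into $e^{\frac{6\varepsilon}{A}h_n}$ by the lower bound \eqref{Hypothese1} and the fact that $h_n\to\infty$, yielding $\Eb[Z_n^2]\le e^{\frac{6\varepsilon}{A}h_n}(\sum_k\Psi^k_{n,n^b}(f^{n,k}))^2$ for $n$ large.

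\textbf{Main obstacle.} The delicate point is the bookkeeping that turns the constraints defining $Z_n$ (membership in $\mathcal{O}_{\lambda_n/2,n^b}$, in $\Upsilon^{|x|}_n$, with $f^{n,k}_{\varepsilon h_n}$ rather than $f^{n,k}$) into exactly the conditional quantity $\Psi^{k}_{n,\,n^b-H_z}(f^{n,l+k}_{\varepsilon h_n}\mid\mathbf{V}_z)$ appearing in \eqref{Hypothese2}, without losing track of the translation by $V(z)$ and the various $e^{\pm\varepsilon h_n}$ slacks; in particular one must check that $u\in\mathcal{O}_{\lambda_n/2,n^b}$ with $\overline V(u)=V(u)$—or, in our setting, with the $\Upsilon^{|u|}_n$ constraint—really does imply $u\in\mathcal{O}^z_{\lambda_n/2,\,n^b-H_z}$, and that passing from $\lambda_n/2$ to $n$ in the regular-line parameter only loses a harmless factor. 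Everything else (the diagonal term, the polylogarithmic moment bound, absorbing constants) is routine given Remark \ref{Rem1}, Lemma \ref{manytoone} and \eqref{hyp1}.
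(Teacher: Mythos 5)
Your overall strategy --- expanding $Z_n^2$ over pairs of vertices, splitting at the most recent common ancestor $z$, using the branching property and feeding the translated conditional sums into \eqref{Hypothese2} --- is the same as the paper's, but two quantitative steps fail as written. First, the diagonal term ($x\leq y$): your bound $C_{\infty}^2\ell_n\sum_{k}\Psi^k_{n,n^b}(f^{n,k})$ is \emph{linear} in $\sum_{k}\Psi^k_{n,n^b}(f^{n,k})$, and absorbing it into $e^{\frac{6\varepsilon}{A}h_n}\big(\sum_{k}\Psi^k_{n,n^b}(f^{n,k})\big)^2$ would require $\ell_n\leq e^{\frac{6\varepsilon}{A}h_n}\sum_{k}\Psi^k_{n,n^b}(f^{n,k})$. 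But \eqref{Hypothese1} only guarantees $\sum_{k}\Psi^k_{n,n^b}(f^{n,k})\geq n^{-(\kappa_b+\varepsilon)\land 1}$, which is polynomially small whenever $\kappa_b>0$ (e.g.\ $\kappa_b=b$ in Theorem \ref{thm3}), while $e^{\frac{6\varepsilon}{A}h_n}\leq n^{6\varepsilon/A}$; the absorption therefore fails. What rescues this case is the high-potential constraint $t_k\geq v'_n$ contained in $\Upsilon^{|x|}_n$: since $v'_n\geq \frac{1}{\delta_1}\log(n\ell_n)\geq 2\log n$, one has $e^{-2V(x)}\un_{\{V(x)\geq v'_n\}}\leq n^{-2}e^{-V(x)}$, the diagonal contribution is then $O(\ell_n^2n^{-2})$, and only \emph{then} does \eqref{Hypothese1} (giving $n^{-2}\leq(\sum_k\Psi^k_{n,n^b}(f^{n,k}))^2$) close the estimate. (Also, $A>2/c_3$ is needed for the tail $n^{-Ac_3}$ of generations beyond $\lfloor A\ell_n\rfloor$ in Remark \ref{Rem1}, not for this absorption.)

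Second, your final moment $\Eb\big[\sum_{z\in\mathcal{O}_n}e^{-(1-4\delta)V(z)}\big]$ is not polylogarithmic. By Lemma \ref{manytoone} the weight at generation $l$ is $\Eb[e^{4\delta S_l}\un_{\{\cdots\}}]$, and neither the regular-line condition (which bounds the downfall $\overline{V}(z)-V(z)$, not $V(z)$ itself) nor $V(z)\geq-B$ bounds $V(z)$ from \emph{above}; since $\psi$ is convex with its minimum $0$ at $1$, generically $\psi(1-4\delta)>0$ and the sum over $l\leq\lfloor A\ell_n\rfloor$ is of order $e^{c(\log n)^3}$, which cannot be absorbed into $e^{\frac{6\varepsilon}{A}h_n}$ with $h_n\leq\log n$. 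The exponent $-(1-4\delta)$ arises because you translate at $z$ itself; the paper instead translates at the two children $u,v$ of $z$, so that each application of \eqref{Hypothese2} costs $e^{\delta_1V(w)}$ against a full weight $e^{-V(w)}$, $w\in\{u,v\}$, leaving $e^{-(2-2\delta_1)V(z)}\un_{\{V(z)\geq-B\}}\leq e^{(1-2\delta_1)B}e^{-V(z)}$ on the branch point (exponent at least one, as $\delta_1\leq 1/2$) and a factor $\Eb[(\sum_{|w|=1}e^{-(1-\delta_1)V(w)})^2]$, finite by \eqref{hyp1}. Relatedly, the implication you flag as the "main obstacle" --- that $x\in\mathcal{O}_{n,n^b}$ forces $x\in\mathcal{O}^z_{n,n^b-H_z}$ --- requires $V(x)\geq V(z)$, which $V(x)\geq v'_n$ delivers only when $V(z)<v'_n$; the complementary case (and the boundary case $x^*=z$) genuinely needs the separate treatment carried out in the paper's decomposition into $\Sigma_{1,1}$, $\Sigma_{1,2}$ and $\Sigma_2$, where the event $\{V(u)\geq v'_n\}$ is exploited to produce an extra factor $n^{-1}e^{\delta_1 V(u)}$ rather than fed into \eqref{Hypothese2}.
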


\begin{proof}
The expression of $Z_n^2$ is given by $\sum_{x,y\in\mathcal{O}_{\lambda_n/2,n^b}}e^{-V(x)-V(y)}f^{n,|x|}_{\varepsilon h_n}\un_{\Upsilon^{|x|}_n}(\bm{V}_x)f^{n,|y|}_{\varepsilon h_n}\un_{\Upsilon^{|y|}_n}(\bm{V}_y)$ (see \eqref{Def_Zn}) and $\lambda_n\leq n$ so
\begin{align}
    Z_n^2\leq\sum_{x,y\in\mathcal{O}_{n,n^b}}e^{-V(x)}e^{-V(y)}f^{n,|x|}_{\varepsilon h_n}\un_{\mathcal{H}^{|x|}_{B,2\mathfrak{z}_n}}(\mathbf{V}_x)f^{n,|y|}_{\varepsilon h_n}\un_{\mathcal{H}^{|y|}_{B,2\mathfrak{z}_n}}(\mathbf{V}_y),
    \label{lasomme_bis}
\end{align}
with (recall) $F(\bm{V}_w)=F(V(w_1),\ldots,V(w))$. Let us split the computations of the upper bound of the mean of $Z_n^2$ into two main cases : the first one is when $x$ and $y$ in the sum \eqref{lasomme_bis} are directly related in the tree and the second one when it is not: \\
\textit{Cases 1} ($x\leq y$ or $y\leq x$) : recall $\mathfrak{z}_n=\ell_n^{1/3}/\delta_1$ with $\ell_n=(\log n)^3$, $\delta_1\in(0,1/2)$ (see \eqref{hyp1}). For this case, we simply use the fact that $f^{n,i}_{\varepsilon h_n}\leq C_{\infty}$ and $e^{-2V(w)}\un_{\{V(w) \geq 2\mathfrak{z}_n\}}\leq e^{-V(w)}/n^2$ so by symmetry 
\begin{align*}
    \mathbf{E}\Big[\underset{x,y\in \Hline{n}{n^b}}{\sum_{x\leq y \textrm{ or } y\leq x}} e^{-V(x)-V(y)} \un_{\{V(x) \geq 2\mathfrak{z}_n\}}\Big] &\leq 2\mathbf{E}\Big[\sum_{x\in\mathcal{O}_{n}}e^{-2V(x)}\un_{\{V(x) \geq 2\mathfrak{z}_n\}}\underset{y\in\mathcal{O}^x_{n}}{\sum_{y\geq x}}e^{-V_x(y)}\Big] \nonumber \\ &\leq \frac{2}{n^2}\mathbf{E}\Big[\sum_{x\in\mathcal{O}_{n}}e^{-V(x)}\underset{y\in\mathcal{O}^x_{n}}{\sum_{y\geq x}}e^{-V_x(y)}\Big], 
\end{align*}
which is equal, by using that the increments of $V$ are conditionally independent and stationary, to $2\mathbf{E}[\sum_{x\in\mathcal{O}_{n}}e^{-V(x)}]^2/n^2$. Then, thanks to Remark \ref{Rem1} and the fact that $h_n \geq (\log n)^{\gamma}$ with $0 <\gamma \leq 1$,   $2\mathbf{E}[\sum_{x\in\mathcal{O}_{n}}e^{-V(x)} ]^2\leq \ell_n \leq e^{\varepsilon h_n/A}$. In addition with assumption \eqref{Hypothese1}, the part $\{x\leq y$ or $y\leq x$\} in the sum \eqref{lasomme_bis} is smaller than $e^{\frac{\varepsilon}{A} h_n}\big(\sum_{k\geq 1}\Psi^k_{n,n^b}(f^{n,k})\big)^2$.

\vspace{0.3cm}

\noindent\textit{Cases 2} ($x\not\sim y$) : recall that $x\not\sim y$ if and only if neither $x\leq y$ nor $y\leq x$. First let
\begin{align*}
    \Sigma_0(z):=\underset{x,y\in\mathcal{O}_{n,n^b}}{\sum_{x\not\sim y}}\un_{\{x\land y=z\}}e^{-V(x)}e^{-V(y)}f^{n,|x|}_{\varepsilon h_n}\un_{\mathcal{H}^{|x|}_{B,2\mathfrak{z}_n}}(\mathbf{V}_x)f^{n,|y|}_{\varepsilon h_n}\un_{\mathcal{H}^{|y|}_{B,2\mathfrak{z}_n}}(\mathbf{V}_y).
\end{align*}
We decompose $\Sigma_0(z)$ as follows: for all $A>2/c_3$
\begin{align}
    \sum_{z\in\T}\Sigma_0(z) = \sum_{|z|\geq\lfloor A\ell_n\rfloor}\Sigma_0(z) + \sum_{|z|<\lfloor A\ell_n\rfloor}(\Sigma_1(z)+\Sigma_2(z)), \label{lem3.4eq1}
\end{align}
and for any $i\in\{1,2\}$, 
\begin{align*}
    \Sigma_i(z):=\underset{x,y\in\mathcal{O}_{n,n^b}}{\sum_{x\not\sim y}}\un_{\{x\land y=z\}}e^{-V(x)}e^{-V(y)}f^{n,|x|}_{\varepsilon h_n}\un_{\mathcal{H}^{|x|}_{B,2\mathfrak{z}_n}}(\mathbf{V}_x)f^{n,|y|}_{\varepsilon h_n}\un_{\mathcal{H}^{|y|}_{B,2\mathfrak{z}_n}}(\mathbf{V}_y)\un_{\{(x,y)\in\mathcal{C}_{i,z}\}},
\end{align*}
with $\mathcal{C}_{1,z}:=\{(x,y)\in\T^2;x^*>z\textrm{ and }y^*>z\}$ and $\mathcal{C}_{2,z}:=\{(x,y)\in\T^2;x^*=z\textrm{ or }y^*=z\}$. \\
Let us  start with the easiest part: $\sum_{|z|\geq\lfloor A\ell_n\rfloor}\Sigma_0(z)$. Observe that
\begin{align*}
    \sum_{|z|\geq\lfloor A\ell_n\rfloor}\Sigma_0(z)\leq C_{\infty}^2 \sum_{l\geq\lfloor A\ell_n\rfloor}\sum_{|z|=l}\un_{\{V(z)\geq -B,\;z\in\mathcal{O}_n\}}\underset{u^*=v^*=z}{\sum_{u\not=v}}\;\;\underset{x\in\mathcal{O}_{n}}{\sum_{x\geq u}}e^{-V(x)}\underset{y\in\mathcal{O}_{n}}{\sum_{y\geq v}}e^{-V(y)}.
\end{align*}
By conditional independence of the increments of $V$ and Remark \ref{Rem1}, for any  $n$ large enough
\begin{align}
    \Eb\Big[\sum_{|z|\geq\lfloor A\ell_n\rfloor}\Sigma_0(z)\Big]&\leq C_{\infty}^2e^{B}\Eb\Big[\Big(\sum_{|u|=1}e^{-V(u)}\Big)^2\Big]\Eb\Big[\sum_{x\in\mathcal{O}_n}e^{-V(x)}\Big]^2\sum_{l\geq\lfloor A\ell_n\rfloor}\Eb\Big[\sum_{|z|=l}e^{-V(z)}\un_{\{z\in\mathcal{O}_n\}}\Big] \nonumber \\ & \leq C_{\infty}^2e^{B}\Eb\Big[\Big(\sum_{|u|=1}e^{-V(u)}\Big)^2\Big]\ell_n^2n^{-2}\leq \sum_{k\geq 1}\Psi^k_{n,n^b}(f^{n,k}), \label{lem3.4eq2}
\end{align}
where we have used \eqref{Hypothese1} and \eqref{hyp1} for the last inequality. \\
For $\Sigma_1(z)$, $|z|<\lfloor A\ell_n\rfloor$, we decompose according to the value of $V(w)$ with $w\in\{u,v\}$: $\Sigma_1(z)=\Sigma_{1,1}(z)+\Sigma_{1,2}(z)$ with 
\begin{align*}
    \Sigma_{1,1}(z):= \underset{u^*=v^*=z}{\sum_{u\not=v}}\un_{\{V(u)\lor V(v)<2\mathfrak{z}_n\}}\underset{x\in\mathcal{O}_{n,n^b}}{\sum_{x> u}}e^{-V(x)}f^{n,|x|}_{\varepsilon h_n}\un_{\mathcal{H}^{|x|}_{B,2\mathfrak{z}_n}}(\mathbf{V}_x)\underset{y\in\mathcal{O}_{n,n^b}}{\sum_{y> v}}e^{-V(y)}f^{n,|y|}_{\varepsilon  h_n}\un_{\mathcal{H}^{|y|}_{B,2\mathfrak{z}_n}}(\mathbf{V}_y),
\end{align*}
and 
\begin{align*}
    \Sigma_{1,2}(z):= \underset{u^*=v^*=z}{\sum_{u\not=v}}\un_{\{V(u)\lor V(v)\geq 2\mathfrak{z}_n\}}\underset{x\in\mathcal{O}_{n,n^b}}{\sum_{x> u}}e^{-V(x)}f^{n,|x|}_{\varepsilon h_n}\un_{\mathcal{H}^{|x|}_{B,2\mathfrak{z}_n}}(\mathbf{V}_x)\underset{y\in\mathcal{O}_{n,n^b}}{\sum_{y> v}}e^{-V(y)}f^{n,|y|}_{\varepsilon  h_n}\un_{\mathcal{H}^{|y|}_{B,2\mathfrak{z}_n}}(\mathbf{V}_y).
\end{align*}
We first deal with $\Sigma_{1,1}(z)$. Observe that $x\in\Hline{n}{n^b}$ (resp. $y\in\Hline{n}{n^b}$) means $H_u\leq n$ (resp. $H_v\leq n$), $x\in\mathcal{O}^u_n$ (resp. $y\in\mathcal{O}^v_n$) and $n^b-H_ue^{-V_u(x)}<H_{u,x}$ (resp. $n^b-H_ve^{-V_v(y)}<H_{v,y}$). Besides, $V(u)<2\mathfrak{z}_n$ and $V(x)>2\mathfrak{z}_n$ (resp. $V(v)<2\mathfrak{z}_n$ and $V(y)>2\mathfrak{z}_n$) implies $V_u(x)>0$ (resp. $V_v(y)>0$) that is $n^b-H_u<H_{u,x}$ (resp. $n^b-H_v<H_{v,y}$), so $\Sigma_{1,1}(z)$ is smaller than
\begin{align}\label{lasomme_bis'}
   \underset{u^*=v^*=z}{\sum_{u\not=v}}\un_{\{\underline{V}(u)\land \underline{V}(v)\geq -B,H_u\lor H_v\leq n\}}\underset{x\in\mathcal{O}^u_{n,n^b-H_u}}{\sum_{x>u}}e^{-V(x)}f^{n,|x|}_{\varepsilon h_n}(\mathbf{V}_x)\underset{y\in\mathcal{O}^v_{n,n^b-H_v}}{\sum_{y>v}} e^{-V(y)}f^{n,|y|}_{\varepsilon h_n}(\mathbf{V}_y). 
\end{align}
We now move to $\Sigma_{1,2}(z)$. Note that $\{V(u)\lor V(v)\geq 2\mathfrak{z}_n\}=\{V(u)\geq 2\mathfrak{z}_n,V(v)<2\mathfrak{z}_n\}\cup\{V(v)\geq 2\mathfrak{z}_n,V(u)<2\mathfrak{z}_n\}\cup\{V(u)\land V(v)\geq 2\mathfrak{z}_n\}$. By symmetry, $\Sigma_{1,2}(z)$ is equal to 
\begin{align*}
    &2\underset{u^*=v^*=z}{\sum_{u\not=v}}\un_{\{V(u)\geq 2\mathfrak{z}_n,V(v)<2\mathfrak{z}_n\}}\underset{x\in\mathcal{O}_{n,n^b}}{\sum_{x> u}}e^{-V(x)}f^{n,|x|}_{\varepsilon h_n}\un_{\mathcal{H}^{|x|}_{B,2\mathfrak{z}_n}}(\mathbf{V}_x)\underset{y\in\mathcal{O}_{n,n^b}}{\sum_{y> v}}e^{-V(y)}f^{n,|y|}_{\varepsilon  h_n}\un_{\mathcal{H}^{|y|}_{B,2\mathfrak{z}_n}}(\mathbf{V}_y) \\ & +\underset{u^*=v^*=z}{\sum_{u\not=v}}\un_{\{V(u)\land V(v)\geq 2\mathfrak{z}_n\}}\underset{x\in\mathcal{O}_{n,n^b}}{\sum_{x> u}}e^{-V(x)}f^{n,|x|}_{\varepsilon h_n}\un_{\mathcal{H}^{|x|}_{B,2\mathfrak{z}_n}}(\mathbf{V}_x)\underset{y\in\mathcal{O}_{n,n^b}}{\sum_{y> v}}e^{-V(y)}f^{n,|y|}_{\varepsilon  h_n}\un_{\mathcal{H}^{|y|}_{B,2\mathfrak{z}_n}}(\mathbf{V}_y).
\end{align*}
The same decomposition of $H_y$ we used for $\Sigma_{1,1}(z)$ also works for the part $\{V(v)<2\mathfrak{z}_n\}$ in the above sum, so as in \eqref{lasomme_bis'} and first using that on $\{V(u)\geq 2\mathfrak{z_n}\}\cap\{\underline{V}(y)\geq-B\}$ ,$V(u) \geq (1-\delta_1)V(u)+ 2\log n$ and $\delta_1V(v)\geq -\delta_1B$, then using that on $\{V(u)\land V(v)\geq 2\mathfrak{z}_n\}$, $V(u)+V(v)\geq (1-\delta_1)(V(u)+V(v))+4\log n$, $\Sigma_{1,2}(z)$ is smaller than 
\begin{align*}
    &\un_{\{V(z)\geq-B,  z \in \mathcal{O}_{n}\}}2e^{\delta_1B}\frac{C_{\infty}^2}{n^2}\underset{u^*=v^*=z}{\sum_{u\not=v}}e^{-(1-\delta_1)(V(u)+V(v))}\underset{x\in\mathcal{O}_{n}^u}{\sum_{x>u}}e^{-V_u(x)}\underset{y\in\mathcal{O}^v_{n}}{\sum_{y>v}}e^{-V_v(y)} \\ & + \un_{\{V(z)\geq-B,  z \in \mathcal{O}_{n}\}}\frac{C_{\infty}^2}{n^4}\underset{u^*=v^*=z}{\sum_{u\not=v}}e^{-(1-\delta_1)(V(u)+V(v))}\underset{x\in\mathcal{O}_{n}^u}{\sum_{x>u}}e^{-V_u(x)}\underset{y\in\mathcal{O}^v_{n}}{\sum_{y>v}}e^{-V_v(y)} \\ &  \leq\un_{\{V(z)\geq-B,  z \in \mathcal{O}_{n}\}}3e^{\delta_1B}\frac{C_{\infty}^2}{n^2}\underset{u^*=v^*=z}{\sum_{u\not=v}}e^{-(1-\delta_1)(V(u)+V(v))}\underset{x\in\mathcal{O}_{n}^u}{\sum_{x>u}}e^{-V_u(x)}\underset{y\in\mathcal{O}^v_{n}}{\sum_{y>v}}e^{-V_v(y)}.
\end{align*}
Note that the genealogical common line between $x$ and $y$ is the common line of individuals before $u$ and $v$ so for any $p\leq |z|$, $x_p=y_p=u_p=v_p$ and
\begin{align*}
   f^{n,|x|}_{\varepsilon h_n}(\mathbf{V}_x)= f^{n,|x|}_{\varepsilon h_n}(V(u_1),\cdots,V(u),V_u(x_{|u|+1})+V(u),\cdots,V_u(x)+V(u)), 
\end{align*}
and
\begin{align*}
   f^{n,|y|}_{\varepsilon h_n}(\mathbf{V}_y) = f^{n,|y|}_{\varepsilon h_n}(V(v_1),\cdots,V(v),V_v(y_{|v|+1})+V(v),\cdots,V_v(y)+V(v)). 
\end{align*}
Recall that for all $q\geq 1$ and $\mathbf{t}_q=(t_1,\ldots,t_q)\in\R^q$, \begin{align*}
    \Psi^k_n(F|\mathbf{t}_p)=\Eb\Big[\sum_{|x|=k}e^{-V(x)}F(t_1,\ldots,t_p,V(x_1)+t_p,\ldots,V(x)+t_p)\un_{\mathcal{O}_n}(x)\Big].
\end{align*}
We naturally note $\Psi^k_n(F|\bm{V}_w)$ when we evaluate the function $\Psi^k_n(F|\cdot)$ at $(V(w_1),\ldots,V(w))$.\\ By  conditional independence of the increments of $V$, $\Eb[\sum_{|z|=l}\Sigma_{1}(z)]=\Eb[\sum_{|z|=l}\Sigma_{1,1}(z)+\Sigma_{1,2}(z)]$ is smaller, for $n$ large enough with $l<\lfloor A\ell_n\rfloor$, than
\begin{align*}
    &\Eb\Big[\sum_{|z|=l}\underset{u^*=v^*=z}{\sum_{u\not=v}}\un_{\{\underline{V}(u)\land \underline{V}(v)\geq -B,H_u\lor H_v\leq n \}}\sum_{i,j\geq 1}\prod_{(k,w)\in{\{(i,u);(j,v)\}}}e^{-V(w)}\Psi^k_{n,n^b-H_w}\big(f^{n,|w|+k}_{\varepsilon h_n}|\mathbf{V}_w\big)\Big] \\ & + 3e^{\delta_1B}\ell_n^2\frac{C_{\infty}^2}{n^2}\Eb\Big[\sum_{|z|=l}\un_{\{V(z)\geq-B,  z \in \mathcal{O}_{n}\}}\underset{u^*=v^*=z}{\sum_{u\not=v}}e^{-(1-\delta_1)(V(u)+V(v))}\Big],
\end{align*}
where we have used that $\Eb[\sum_{x\in\mathcal{O}_{n}}e^{-V(x)}]\leq\ell_n$. Then, by assumption \eqref{Hypothese2} with $\delta=\delta_1$ (see \eqref{hyp1} for the definition of $\delta_1$), for all $l<\lfloor A\ell_n\rfloor$ $(|u|=|v|=l+1)$ and $n$ large enough, on the event $\{V(u)\land V(v)\geq -B, H_u\lor H_v\leq n\}$
\begin{align*} 
    \sum_{i,j\geq 1}\prod_{(k,w)\in{\{(i,u);(j,v)\}}}\Psi^k_{n,n^b-H_w}\big(f^{n,|w|+k}_{\varepsilon h_n}|\mathbf{V}_w\big)\leq e^{\delta_1V(u)+\delta_1 V(v)+\frac{2\varepsilon}{A}h_n}\Big(\sum_{k\geq 1}\Psi^k_{n,n^b}( f^{n,k})\Big)^2.
\end{align*}
Hence, $\Eb[\sum_{|z|<\lfloor A\ell_n\rfloor}\Sigma_{1}]$ is smaller, for $n$ large enough, than
\begin{align*}
    & e^{\frac{2\varepsilon}{A}h_n}\Eb\Big[\Big(\sum_{|w|=1}e^{-(1-\delta_1)V(w)}\Big)^2\Big]\Eb\Big[\sum_{|z|<\lfloor A\ell_n\rfloor}e^{-V(z)-(1-2\delta_1)V(z)}\un_{\{V(z)\geq -B\}}\Big]\Big(\sum_{k\geq 1}\Psi^k_{n,n^b}( f^{n,k})\Big)^2 \\ & +3e^{\delta_1B}\ell_n^2\frac{C_{\infty}^2}{n^2}\Eb\Big[\Big(\sum_{|w|=1}e^{-(1-\delta_1)V(w)}\Big)^2\Big]\Eb\Big[\sum_{z\in\mathcal{O}_n}e^{-V(z)-(1-2\delta_1)V(z)}\un_{\{V(z)\geq -B\}}\Big].
\end{align*} 
Finally, thanks to assumption \eqref{Hypothese1}, \eqref{hyp1} and by Remark \ref{Rem1}, for $n$ large enough
\begin{align}
    \Eb\Big[\sum_{|z|<\lfloor A\ell_n\rfloor}\Sigma_{1}(z)\Big]\leq e^{\frac{5\varepsilon}{A}h_n}\Big(\sum_{k\geq 1}\Psi^k_{n,n^b}( f^{n,k})\Big)^2.
    \label{Sigma1n}
\end{align}
We now turn to $\Sigma_2(z)$, that is the sum
\begin{align*}
    \underset{x,y\in\mathcal{O}_{n,n^b}}{\sum_{x\not\sim y}}\un_{\{x\land y=z\}}e^{-V(x)}e^{-V(y)}f^{n,|x|}_{\varepsilon h_n}\un_{\mathcal{H}^{|x|}_{B,2\mathfrak{z}_n}}(\mathbf{V}_x)f^{n,|y|}_{\varepsilon h_n}\un_{\mathcal{H}^{|y|}_{B,2\mathfrak{z}_n}}(\mathbf{V}_y)\un_{\{(x,y)\in\mathcal{C}_{2,z}\}},
\end{align*}
with $\mathcal{C}_{2,z}:=\{(x,y)\in\T^2;x^*=z\textrm{ or }y^*=z\}$. The first step is to split the set $\{x^*=z \textrm{ or } y^*=z\}$ into three disjoint sets: $\{x^*=z \textrm{ and } y^*>z\}$, $\{x^*>z \textrm{ and } y^*=z\}$ and $\{x^*=z \textrm{ and } y^*=z\}$. By symmetry, the previous sum is equal to 
\begin{align*}
    &2\underset{x^*=v^*=z}{\sum_{x\not=v}}\un_{\{x\in\Hline{n}{n^b}\}}e^{-V(x)}f^{n,|x|}_{\varepsilon h_n}\un_{\mathcal{H}^{|x|}_{B,2\mathfrak{z}_n}}(\mathbf{V}_x)\underset{y\in\Hline{n}{n^b}}{\sum_{y>v}}e^{-V(y)}f^{n,|y|}_{\varepsilon h_n}\un_{\mathcal{H}^{|y|}_{B,2\mathfrak{z}_n}}(\mathbf{V}_y) \\ & + \underset{x^*=y^*=z}{\sum_{x\not=y}}e^{-V(x)}e^{-V(y)}\un_{\{x,y\in\Hline{n}{n^b}\}}f^{n,|x|}_{\varepsilon h_n}\un_{\mathcal{H}^{|x|}_{B,2\mathfrak{z}_n}}(\bm{V}_x)f^{n,|y|}_{\varepsilon h_n}\un_{\mathcal{H}^{|y|}_{B,2\mathfrak{z}_n}}(\bm{V}_y).
\end{align*}
We then use a similar approach as the one we used for $\Sigma_1(z)$ to obtain
\begin{align*}
    \sum_{|z|=l}\Sigma_2(z)\leq&\frac{2C_{\infty}^2}{n^2}\sum_{|z|=l}\underset{x^*=v^*=z}{\sum_{x\not=v}}e^{-(1-\delta_1)V(x)}\un_{\{\underline{V}(v)\geq -B\}}\underset{y\in\mathcal{O}^v_{n}}{\sum_{y>v}}e^{-V(y)} \\ & + \frac{C_{\infty}^2}{n^4}\sum_{|z|=l}\un_{\{V(z)\geq -B\}}\underset{x^*=y^*=z}{\sum_{x\not=y}}e^{-(1-\delta_1)(V(x)+V(y))}.
\end{align*}
Hence, by using conditional independence of the increments of $V$, $\Eb[\sum_{|z|=l}\Sigma_{2}(z)]$ is smaller, for $n$ large enough, than
\begin{align*}
    &2e^{\delta_1B}\ell_n\frac{C_{\infty}^2}{n^2}\Eb\Big[\sum_{|z|=l}\un_{\{V(z)\geq -B\}}\underset{x^*=v^*=z}{\sum_{x\not=v}}e^{-(1-\delta_1)(V(x)+V(v))}\Big] \\ & + \frac{C_{\infty}^2}{n^4}\Eb\Big[\sum_{|z|=l}\un_{\{V(z)\geq -B\}}\underset{x^*=y^*=z}{\sum_{x\not=v}}e^{-(1-\delta_1)(V(x)+V(y))}\Big],
\end{align*} 
where we used as usual $\Eb[\sum_{x\in\mathcal{O}_{n}}e^{-V(x)}]\leq\ell_n$. Hence, thanks to assumption \eqref{Hypothese1} and \eqref{hyp1}, for $n$ large enough
\begin{align}
    \Eb\Big[\sum_{|z|<\lfloor A\ell_n\rfloor}\Sigma_{2}(z)\Big]\leq e^{\frac{3\varepsilon}{A}h_n}\Big(\sum_{k\geq 1}\Psi^k_{n,n^b}( f^{n,k})\Big)^2.
    \label{Sigma2n}
\end{align}
Collecting Case 1, Case 2 (\eqref{lem3.4eq1}, inequalities \eqref{lem3.4eq2}, \eqref{Sigma1n} and \eqref{Sigma2n}) and considering \eqref{lasomme_bis} give the lemma. 

\end{proof}

\noindent We are now ready to prove the lower bound of $\mathcal{R}_{T^n}(g_n,\mathbf{f}^n)$ in Proposition \ref{Prop1}. Recall $u_{1,n}=\sum_{k\geq 1}\Psi^{k}_{\lambda_n/2,n^b}\big(f^{n,k}_{\varepsilon h_n}\mathds{1}_{\Upsilon^k_n}\big)$ where $\Upsilon^k_n=\{\mathbf{t}\in\R^k;\;H_k(\mathbf{t})\leq n^be^{\varepsilon h_n}\}\cap\mathcal{H}^k_{B,2\mathfrak{z}_n}$, $\mathcal{H}^k_{B,2\mathfrak{z}_n}$  is defined in \eqref{HighSet} and $\mathfrak{z}_n=2\ell_n^{1/3}/\delta_1$. Thanks to Lemmata \ref{LemmMinorProbaRange1},  \ref{lemmeMarjorProba} and the expression of $a_n$ \eqref{Defa_n}, for $n$ large enough, as $e^{-\varepsilon h_n}\leq\frac{1}{4}$, the probability $\P(\mathcal{R}_{T^n}(g_n,\mathbf{f}^n)<n^{1-b}\varphi(n^b)e^{-5\varepsilon h_n}u_{1,n})$ is smaller than
\begin{align*}
    \P\big(\mathcal{R}_{T^n}(g_n,\mathbf{f}^n)<n\varphi(n^b)e^{-4\varepsilon h_n}u_{1,n}/4n^b\big)\leq e^{-\varepsilon\frac{c_4}{c_2}h_n}\frac{\mathbf{E}[Z_n^2]}{u_{1,n}^2} + h_ne^{-\varepsilon \tilde{c}_2h_n}+\frac{e^{8\varepsilon h_n-\min(9\varepsilon\log n,4h_n)}}{n^{2\kappa_b}u_{1,n}^2}.
\end{align*} 
Then, Lemma \ref{Lemm_Minor_Esp_du_carre} provides the upper bound of $\Eb[Z_n^2]$ so $ \P\big(\mathcal{R}_{T^n}(g_n,\mathbf{f}^n)<n\varphi(n^b)e^{-4\varepsilon h_n}u_{1,n}/4n^b\big)$ is smaller, for $n$ large enough, than (recall that $h_n\leq \log n$)
\begin{align*}
    e^{-(\frac{c_4}{c_2}-\frac{6}{A})\varepsilon h_n}\Big(\sum_{k\geq 1}\Psi^k_{n,n^b}(f^{n,k})/u_{1,n}\Big)^2+h_ne^{-\varepsilon \tilde{c}_2h_n}+\frac{e^{-\min(\varepsilon\log n,3h_n)}}{n^{2\kappa_b}u_{1,n}^2}, 
\end{align*}
 which yields the lower bound of Proposition \ref{Prop1}.

\subsection{Upper bound for $\mathcal{R}_{T^n}(g_n,\mathbf{f}^n)$} \label{sec3.3}

\noindent For all $n\geq 1$ and $x\in\T$, recall that $E^n_x$ is the number of excursions, among the first $n$ excursions to the root, for which the  edge $(x^*,x)$ is reached. In a similar way, $\tilde{E}^{n}_x$ is the number of excursions such that the vertex $x$ is reached more often from above than from below : 
\begin{align*}
    E^n_x=\sum_{i=1}^n\un_{\{ N^{T^{i}}_x-N^{T^{i-1}}_x\geq 1\}}\;\;\textrm{ and }\;\; \tilde{E}^{n}_x:=\sum_{i=1}^n\un_{\{\sum_{y;y^*=x} N^{T^i}_y-N^{T^{i-1}}_y>N^{T^i}_x-N^{T^{i-1}}_x\}}.
\end{align*}
Also introduce the event $\mathcal{A}_n$ such that all vertices of the trace of $\{ X_k,k \leq T^n\}$ have exponential downfall fluctuation lower than $n$, potential larger than $\mathfrak{z}_n=\ell_n^{1/3}/\delta_1$ and which are visited during a single excursion to the root
\begin{align}
     \mathcal{A}_n := \Big\{\forall \; j\leq T^n, X_j\in\mathcal{O}_n,\sum_{x\in\mathcal{O}_n}(\un_{\{E^{n}_x \geq 2\}}+\un_{\{\tilde{E}^{n}_x \geq 2\}})\un_{\mathcal{H}^{|x|}_{\mathfrak{z}_n}}(\bm{V}_x)=0\Big\}. \label{A_n}
\end{align}
Note that $\lim_{n\to\infty}\P(\mathcal{A}_n)=1$. Indeed, $\tilde{E}^n_x\geq 2$ implies $E^n_x\geq 2$ so
\begin{align*}
   1-\P(\mathcal{A}_n)\leq\P(\exists\;j\leq T^n: X_j\not\in\mathcal{O}_n)+\P\Big(\sum_{x\in\mathcal{O}_n}\un_{\{E^{n}_x \geq 2\}}\un_{\mathcal{H}^{|x|}_{\mathfrak{z}_n}}(\bm{V}_x)> 0\Big). 
\end{align*}
By \cite{AndChen} (equation 2.2), $\P(\exists\;j\leq T^n: X_j\not\in\mathcal{O}_n)\to 0$. Moreover, $\P(\sum_{x\in\mathcal{O}_n}\un_{\{E^{n}_x \geq 2\}}\un_{\mathcal{H}^{|x|}_{\mathfrak{z}_n}}(\bm{V}_x)>0)$ is smaller than
\begin{align*}
    \Eb\Big[\sum_{x\in\mathcal{O}_n}\P^{\mathcal{E}}(E^n_x\geq 2)\un_{\mathcal{H}^{|x|}_{\mathfrak{z}_n}}(\bm{V}_x)\Big]=\Eb\Big[\sum_{x\in\mathcal{O}_n}\big(\P^{\mathcal{E}}(E^n_x\geq 1)-\P^{\mathcal{E}}(E^n_x=1)\big)\un_{\mathcal{H}^{|x|}_{\mathfrak{z}_n}}(\bm{V}_x)\Big].
\end{align*}
Thanks to the strong Markov property, $N_x^{T^i}-N_x^{T^{i-1}}, i\in\{1,\ldots,n\},$ are i.i.d under $\P^{\mathcal{E}}$ so $\P^{\mathcal{E}}(E^n_x\geq 1)-\P^{\mathcal{E}}(E^n_x=1)\leq \E^{\mathcal{E}}[E^n_x]-\P^{\mathcal{E}}(E^n_x=1)=n\P^{\mathcal{E}}(N_x^{T^1}\geq 1)(1-\P^{\mathcal{E}}(N_x^{T^1}=0)^{n-1})\leq n^2\P^{\mathcal{E}}(N_x^{T^1}\geq 1)^2$ and by Lemma \ref{LawGeo}, for all $x$ with $V(x)\geq \mathfrak{z}_n$, $n^2\P^{\mathcal{E}}(N_x^{T^1}\geq 1)^2\leq n^2e^{-2V(x)}\leq n^{2-1/\delta_1}e^{-V(x)}/\ell_n^{1/\delta_1}$. $\delta_1\in(0,1/2)$, hence, by Remark \ref{Rem1}
\begin{align*}
    \P\Big(\sum_{x\in\mathcal{O}_n}\un_{\{E^{n}_x \geq 2\}}\un_{\mathcal{H}^{|x|}_{\mathfrak{z}_n}}(\bm{V}_x)>0\Big)\leq\frac{n^{2-1/\delta_1}}{\ell_n^{1/\delta_1}}\Eb\Big[\sum_{x\in\mathcal{O}_n}e^{-V(x)}\Big]\leq \frac{n^{2-1/\delta_1}}{\ell_n^{1/\delta_1-1}}\to 0.
\end{align*}

%\bl{[... j ai enleve tous les rappels qui suivaient, pas pertinent  cet endroit ?]}

%\noindent\bo{Recall that for any $n\in\N^*$ and $b\in[0,1)$
%\begin{align*}
%    \mathcal{R}_{T^n}(g_n,\mathbf{f}^n) = \sum_{x\in\T}g_n\big(\mathcal{L}_x^{T^n}\big)f^{n,|x|}(V(x_1),\ldots,V(x)),
%\end{align*}
%with $g_n(t)=\varphi(t)\un_{\{t\geq n^b\}}$. $\varphi:\R^+\longrightarrow\R^+$ is an increasing function and the application $t\in[1,\infty)\longmapsto\varphi(t)/t$ is non-increasing.} \\

%\noindent\bo{Recall $N_x^{T^n}$ is the number of times the edge $(x^*,x)$ is visited by the walk during the first $n$ excursions and $\mathcal{L}_x^{T^n}$ is the number of times the walk visits the vertex $x$ during the first $n$ excursions that is: $\mathcal{L}_x^{T^n}=\sum_{j=1}^n(N_x^{T^j}-N_x^{T^{j-1}})+\sum_{j=1}^n\sum_{y;y^*=x}(N_y^{T^j}-N_y^{T^{j-1}})$.} \\

%\noindent\bo{recall $h_n$? } 

 \begin{lemm}\label{RangeUpBound1}
Let $(u_n,n)$ be a sequence of positive numbers, then 
\begin{align*}
    \P^{\mathcal{E}}(\mathcal{R}_{T^n}(g_n,\mathbf{f}^n)>u_n,\mathcal{A}_n)\leq \frac{2n^{1-b}\varphi(n^b)}{u_n}(\mathcal{X}_{1,n}+\mathcal{X}_{2,n}+\mathcal{X}_{3,n}), 
\end{align*}
where
\begin{align}\label{DefX_1,n}
   \mathcal{X}_{1,n} := \sum_{x\in\mathcal{O}_n}\un_{\{V(x)<\mathfrak{z}_n\}}\Big(e^{-V(x)}+\sum_{y;y^*=x}e^{-V(y)}\Big)f^{n,|x|}(\bm{V}_x),
\end{align}
\begin{align}\label{DefX_2,n}
    \mathcal{X}_{2,n} := \sum_{x\in\mathcal{O}_n}\un_{\{V(x)\geq \mathfrak{z}_n\}}\frac{e^{-V(x)}}{H_x}\Big(1-\frac{1}{H_x}\Big)^{\lceil n^b/2 \rceil-1}( n^b+H_x)f^{n,|x|}(\bm{V}_x),
\end{align}
and    
\begin{align}\label{DefX_3,n}
    \mathcal{X}_{3,n} := \sum_{x\in\mathcal{O}_n}\un_{\{V(x)\geq \mathfrak{z}_n\}}\frac{e^{-V(x)}}{H_x}\frac{\tilde{H}_x}{1+\tilde{H}_x}\Big(1-\frac{1}{1+\tilde{H}_x}\Big)^{\lceil n^b/2 \rceil-1}( n^b+1+\tilde{H}_x)f^{n,|x|}(\bm{V}_x),
\end{align}
recall the definition of  $\tilde{H}_x$ in Lemma \ref{LawGeo}.
\end{lemm}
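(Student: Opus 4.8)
The plan is to decompose the range $\mathcal{R}_{T^n}(g_n,\mathbf{f}^n)=\sum_{x\in\T}\varphi(\mathcal{L}^{T^n}_x)\un_{\{\mathcal{L}^{T^n}_x\geq n^b\}}f^{n,|x|}(\bm{V}_x)$ on the event $\mathcal{A}_n$, where every visited vertex is $\mathcal{O}_n$-regular and is visited during a single excursion to the root. On $\mathcal{A}_n$ the local time $\mathcal{L}^{T^n}_x$ at a vertex $x$ with $V(x)\geq v_n$ coincides, up to controlled fluctuations, with the local time accumulated during the one excursion that hits $x$; since $\varphi(t)/t$ is decreasing we have $\varphi(\mathcal{L}^{T^n}_x)\leq \varphi(n^b)\mathcal{L}^{T^n}_x/n^b$ on $\{\mathcal{L}^{T^n}_x\geq n^b\}$, which is why the factor $n^{1-b}\varphi(n^b)$ and the $1/u_n$ both come out after a Markov inequality. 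The first step is thus to bound
\begin{align*}
\P^{\mathcal{E}}(\mathcal{R}_{T^n}(g_n,\mathbf{f}^n)>u_n,\mathcal{A}_n)\leq \frac{\varphi(n^b)}{u_n n^b}\E^{\mathcal{E}}\Big[\sum_{x\in\T}\mathcal{L}^{T^n}_x\un_{\{\mathcal{L}^{T^n}_x\geq n^b\}}\un_{\{x\in\mathcal{O}_n\}}f^{n,|x|}(\bm{V}_x)\un_{\mathcal{A}_n}\Big],
\end{align*}
and then to split the sum over $x$ according to whether $V(x)<v_n$ or $V(x)\geq v_n$.

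For the low-potential part, $V(x)<v_n$, one cannot control $\mathcal{L}^{T^n}_x$ via a single excursion, so instead I would bound the contribution crudely: on $\{\mathcal{L}^{T^n}_x\geq n^b\}$ the edge $(x^*,x)$ or one of the edges $(x,x^j)$ must be crossed at least $\lceil n^b/2\rceil$ times during the $n$ excursions, and $\E^{\mathcal{E}}[N^{T^n}_x]=n\,e^{-V(x)}/H_x$ while $\E^{\mathcal{E}}[\sum_{y;y^*=x}N^{T^n}_y]=n\,e^{-V(x)}\tilde H_x/H_x$. Using $\mathcal{L}^{T^n}_x\leq N^{T^n}_x+\sum_{y;y^*=x}N^{T^n}_y$ and the trivial bound $\mathcal{L}^{T^n}_x\un_{\{\mathcal{L}^{T^n}_x\geq n^b\}}\leq n\,(\text{that local time})$, the expectation is at most $n^2 e^{-V(x)}(1/H_x)(1+\tilde H_x/H_x)$, and since $H_x\geq 1$ and $\E^{\mathcal{E}}[\sum_y N^{T_e}_y]=\sum_{y;y^*=x}e^{-V(y)}$ one recovers exactly the term $e^{-V(x)}+\sum_{y;y^*=x}e^{-V(y)}$ with a factor $n^2/(n^b)=n^{2-b}$; dividing by the $n^b$ already extracted and by $u_n$ gives the $2n^{1-b}\varphi(n^b)u_n^{-1}\mathcal{X}_{1,n}$ shape (the factor $2$ absorbing the two geometric contributions). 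Here one should be a little careful: the cleanest route is to say that $V(x)<v_n$ forces, via $\un_{\mathcal{A}_n}$ being irrelevant on this part, a Markov-type estimate $\P^{\mathcal{E}}(\mathcal{L}^{T^n}_x\geq n^b)\leq \E^{\mathcal{E}}[\cdots]/n^b$ and then bound $\mathcal{L}^{T^n}_x$ on that event by $n^b$ times the number of excursions that hit $x$, which is at most $n$; rearranging gives $\mathcal{X}_{1,n}$ essentially as stated.

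For the high-potential part, $V(x)\geq v_n$, I would use $\mathcal{A}_n$: exactly one excursion $i$ has $N^{T^i}_x-N^{T^{i-1}}_x\geq 1$, and for that excursion $N^{T^i}_x-N^{T^{i-1}}_x$ and $\sum_{y;y^*=x}(N^{T^i}_y-N^{T^{i-1}}_y)$ are governed by Lemma \ref{LawGeo}. Since $\mathcal{L}^{T^n}_x=(N^{T^i}_x-N^{T^{i-1}}_x)+\sum_{y;y^*=x}(N^{T^i}_y-N^{T^{i-1}}_y)$ on $\mathcal{A}_n$, the event $\{\mathcal{L}^{T^n}_x\geq n^b\}$ forces one of the two summands to be $\geq\lceil n^b/2\rceil$. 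Using the explicit generating-function identity from Lemma \ref{LawGeo} (i) with $\nu=0$, $\E^{\mathcal{E}}[(N^{T_e}_x)\un_{\{N^{T_e}_x\geq \lceil n^b/2\rceil\}}]$ is of the form $\frac{e^{-V(x)}}{H_x}(1-1/H_x)^{\lceil n^b/2\rceil-1}(n^b+H_x)$ up to constants (the $(n^b+H_x)$ coming from the tail of a geometric with mean $H_x-1$ truncated at $\lceil n^b/2\rceil$), giving $\mathcal{X}_{2,n}$; the analogous computation with Lemma \ref{LawGeo} (ii) and mean $\tilde H_x$ gives $\mathcal{X}_{3,n}$. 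Summing the two contributions and the $n$-excursions prefactor, and collecting with the low-potential part, yields the claimed bound. The main obstacle is bookkeeping rather than conceptual: one must make sure the reduction from $\varphi(\mathcal{L}^{T^n}_x)$ to $\varphi(n^b)\mathcal{L}^{T^n}_x/n^b$, the splitting of $\{\mathcal{L}^{T^n}_x\geq n^b\}$ into the two geometric tails on $\mathcal{A}_n$, and the tail estimates for truncated geometrics (where the factor $(n^b+H_x)$, resp. $(n^b+1+\tilde H_x)$, replaces the naive mean) all fit together with the right constants, so that the final constant is genuinely $2$; I would handle the geometric tails by the elementary identity $\E[G\un_{\{G\geq m\}}]=\sum_{k\geq m}\P(G\geq k)=(m+\E G)\P(G\geq m)$ for a geometric $G$ on $\N$, which produces the $n^b+H_x$ and $n^b+1+\tilde H_x$ factors cleanly.
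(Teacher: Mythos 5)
Your overall strategy coincides with the paper's: bound $g_n(t)=\varphi(t)\un_{\{t\geq n^b\}}$ by $t\,n^{-b}\varphi(n^b)$ using the monotonicity of $t\mapsto\varphi(t)/t$, apply a quenched Markov inequality, split the sum over $x\in\mathcal{O}_n$ at the level $v_n$, and on the high-potential part use $\mathcal{A}_n$ to say that exactly one excursion contributes, so that $\mathcal{L}_x^{T^n}$ is at most twice one of the two edge local times $N_x^{T_e}$ or $\sum_{y;y^*=x}N_y^{T_e}$ of that excursion, whose truncated means are then computed from Lemma~\ref{LawGeo} by the geometric tail identity. That is exactly how $\mathcal{X}_{2,n}$ and $\mathcal{X}_{3,n}$ arise in the paper, including the factors $(n^b+H_x)$ and $(n^b+1+\tilde{H}_x)$, so this part of your proposal is sound.

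The low-potential part of your write-up, however, does not work as stated. The bound ``$\mathcal{L}^{T^n}_x\un_{\{\mathcal{L}^{T^n}_x\geq n^b\}}\leq n\cdot(\text{that local time})$'' introduces a spurious factor $n$ and leads you to $n^{2-b}$, which is off by a factor $n$ from the claimed $n^{1-b}$; and your proposed repair, bounding $\mathcal{L}^{T^n}_x$ by $n^b$ times the number of excursions hitting $x$, is false, since the local time accumulated during a single excursion is a (quenched) geometric variable and is not bounded by $n^b$. You also misquote the mean: $\E^{\mathcal{E}}[N_x^{T^n}]=n\,e^{-V(x)}$, not $n\,e^{-V(x)}/H_x$ (summing the tail $\P^{\mathcal{E}}(N_x^{T_e}\geq i)=\tfrac{e^{-V(x)}}{H_x}(1-\tfrac1{H_x})^{i-1}$ over $i\geq 1$ gives $e^{-V(x)}$). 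The correct treatment, which is what the paper does and which is simpler than anything you attempt, is to drop the indicator entirely when $V(x)<v_n$: from $g_n(t)\leq t\,n^{-b}\varphi(n^b)$ one gets $\E^{\mathcal{E}}[g_n(\mathcal{L}_x^{T^n})]\leq n^{-b}\varphi(n^b)\,\E^{\mathcal{E}}[\mathcal{L}_x^{T^n}]=n^{1-b}\varphi(n^b)\,\E^{\mathcal{E}}[\mathcal{L}_x^{T_e}]=n^{1-b}\varphi(n^b)\big(e^{-V(x)}+\sum_{y;y^*=x}e^{-V(y)}\big)$ by linearity over the $n$ i.i.d.\ excursions and Lemma~\ref{LawGeo}, which is precisely $\mathcal{X}_{1,n}$ with the right prefactor. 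With that one-line replacement your argument reproduces the lemma.
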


\begin{proof}
Since $g_n(0)=0$, we have, by Markov inequality, that $\P^{\mathcal{E}}(\mathcal{R}_{T^n}(g_n,\mathbf{f}^n)>u_n,\mathcal{A}_n)$ is smaller than
\begin{align*}
     \frac{2}{u_n}\Big(&\sum_{x\in\mathcal{O}_n}\un_{\{V(x)<\mathfrak{z}_n\}}\E^{\mathcal{E}}\big[g_n\big(\mathcal{L}_x^{T^n}\big)\big]f^{n,|x|}(\bm{V}_x) \\ & + \sum_{x\in\mathcal{O}_n}\un_{\{V(x)\geq \mathfrak{z}_n\}}\E^{\mathcal{E}}\big[g_n\big(\mathcal{L}_x^{T^n}\big)\un_{\{E^{n}_x,\tilde{E}^{n}_x\in\{0,1\}\}}\big]f^{n,|x|}(\bm{V}_x)\Big).
\end{align*}
The first part in the above sum is the easiest to deal with. Indeed, the application $t\in [1,\infty)\mapsto \varphi(t)/t$ is non-increasing so $g_n(t)\leq tn^{-b}\varphi(n^b)$ and we have
\begin{align*}
    \sum_{x\in\mathcal{O}_n}\un_{\{V(x)<\mathfrak{z}_n\}}\E^{\mathcal{E}}\big[g_n\big(\mathcal{L}_x^{T^n}\big)\big]f^{n,|x|}(\bm{V}_x)&\leq n^{1-b}\varphi(n^b)\sum_{x\in\mathcal{O}_n}\un_{\{V(x)<\mathfrak{z}_n\}}\E^{\mathcal{E}}\big[\mathcal{L}_x^{T^1}\big]f^{n,|x|}(\bm{V}_x) \\ & =n^{1-b}\varphi(n^b)\mathcal{X}_{1,n}.
\end{align*}
We have used that for all $1\leq i\leq n$, $\mathcal{L}^{T^i}_x-\mathcal{L}^{T^{i-1}}_x$ is distributed as $\mathcal{L}^{T^1}_x$ under $\P^{\mathcal{E}}$ with mean $e^{-V(x)}+\sum_{y;y^*=x}e^{-V(y)}$ by Lemma \ref{LawGeo}. \\ \\
We then move to the high potential part. Assume $E^n_x\in\{0,1\}$ and $\tilde{E}^n_x\in\{0,1\}$. If $E^n_x = 0$, then the vertex $x$ is never visited during any of the first $n$ excursions and $\tilde{E}^n_x = 0$. Thus, $g_n\big(\mathcal{L}_x^{T^n}\big)=g_n(0)=0$. If $E^n_x=1$ and $\tilde{E}^n_x=0$, then there exists $i\in\{1,\ldots,n\}$ such that $N^{T^i}_x-N^{T^{i-1}}_x\geq 1$ and $\forall j\not= i$, $N^{T^j}_x-N^{T^{j-1}}_x= 0$ and $\forall m\in\{1,\ldots,n\}$, $\sum_{y;y^*=x} N^{T^m}_y-N^{T^{m-1}}_y\leq N^{T^m}_x-N^{T^{m-1}}_x$. In particular, since, starting from the root $e$, $\mathcal{L}^{T^n}_x = \sum_{j=1}^{n} \big(N^{T^j}_x-N^{T^{j-1}}_x + \sum_{y;y^*=x} N^{T^j}_y-N^{T^{j-1}}_y\big)$, we have, on $\{E^n_x=1,\tilde{E}^n_x=0\}$
\begin{align}\label{Exc1_0}
    \mathcal{L}^{T^n}_x = N^{T^i}_x-N^{T^{i-1}}_x + \sum_{y;y^*=x}N^{T^i}_y-N^{T^{i-1}}_y \leq 2\big(N^{T^i}_x-N^{T^{i-1}}_x\big).
\end{align}

\noindent Otherwise, if $E^n_x=1$ and $\tilde{E}^n_x=1$, then there exists $ i\in\{1,\ldots,n\}$ such that $N^{T^i}_x-N^{T^{i-1}}_x\geq 1$ and $\forall j\not= i$, $N^{T^j}_x-N^{T^{j-1}}_x= 0$ and $\exists m'\in \{1,\ldots,n\}$ such that $\sum_{y;y^*=x} N^{T^{m'}}_y-N^{T^{m'-1}}_y> N^{T^{m'}}_x-N^{T^{m'-1}}_x$ and $\forall m\not= m'$, $\sum_{y;y^*=x} N^{T^m}_y-N^{T^{m-1}}_y\leq N^{T^m}_x-N^{T^{m-1}}_x$. So we have necessarily $m'=i$ and, on $\{E^n_x=1,\tilde{E}^n_x=1\}$
\begin{align}\label{Exc1_1}
    \mathcal{L}^{T^n}_x = N^{T^i}_x-N^{T^{i-1}}_x + \sum_{y;y^*=x}N^{T^i}_y-N^{T^{i-1}}_y \leq 2\sum_{y;y^*=x}N^{T^i}_y-N^{T^{i-1}}_y.
\end{align}

\noindent $g_n$ is non-decreasing so \eqref{Exc1_0} and \eqref{Exc1_1} give, when $E^n_x\in\{0,1\}$ and $\tilde{E}^n_x\in\{0,1\}$
\begin{align*}
    g_n\big(\mathcal{L}^{T^n}_x\big)&\leq \sum_{i=1}^n g_n\big(2\big(N^{T^i}_x-N^{T^{i-1}}_x\big)\big) + \sum_{i=1}^n g_n\big(2\sum_{y;y^*=x}N^{T^i}_y-N^{T^{i-1}}_y\big).
\end{align*}
From this inequality, it follows that $\E^{\mathcal{E}}\big[g_n\big(\mathcal{L}_x^{T^n}\big)\un_{\{E^{n}_x,\tilde{E}^{n}_x\in\{0,1\}\}}\big]$ is smaller than
\begin{align*}
    n\E^{\mathcal{E}}\big[g_n\big(2N_x^{T^1}\big)\big] + n\E^{\mathcal{E}}\big[g_n\big(2\sum_{y;y^*=x}N^{T^1}\big)\big]\leq & n^{1-b}\varphi(n^b)\E^{\mathcal{E}}\big[N_x^{T^1}\un_{\{N_x^{T^1}\geq \lceil n^b/2 \rceil\}}\big] \\ & + n^{1-b}\varphi(n ^b)\E^{\mathcal{E}}\Big[\sum_{y;y^*=x}N_y^{T^1}\un_{\{\sum_{y;y^*=x}N_y^{T^1}\geq \lceil n^b/2 \rceil\}}\Big].
\end{align*}
We have used that for all $1\leq i\leq n$, $N^{T^i}_x-N^{T^{i-1}}_x$(resp. $\sum_{y;y^*=x}N^{T^i}_y-N^{T^{i-1}}_y$) is distributed as $N^{T^1}_x$ (resp. $\sum_{y;y^*=x}N^{T^1}_y$) under $\P^{\mathcal{E}}$ and the fact that the application $t\in [1,\infty)\mapsto \varphi(t)/t$ is non-increasing. Then, by Lemma \ref{LawGeo}
\begin{align*}
    \E^{\mathcal{E}}\big[N_x^{T^1}\un_{\{N_x^{T^1}\geq \lceil n^b/2 \rceil\}}\big] \leq \frac{e^{-V(x)}}{H_x}\Big(1-\frac{1}{H_x}\Big)^{\lceil n^b/2 \rceil-1}(n^b+H_x),
\end{align*}
and
\begin{align*}
    \E^{\mathcal{E}}\Big[\sum_{y;y^*=x}N_y^{T^1}\un_{\{\sum_{y;y^*=x}N_y^{T^1}\geq \lceil n^b/2 \rceil\}}\Big]\leq \frac{e^{-V(x)}}{H_x}\frac{\tilde{H}_x}{1+\tilde{H}_x}\Big(1-\frac{1}{1+\tilde{H}_x}\Big)^{\lceil n^b/2 \rceil-1}( n^b+1+\tilde{H}_x),
\end{align*}
which ends the proof.
\end{proof}

\begin{lemm}\label{RangeUpBound2}
Let $b\in[0,1)$. For $n$ large enough
\begin{align*}
    \Eb[\mathcal{X}_{1,n}+\mathcal{X}_{2,n}+\mathcal{X}_{3,n}]\leq 3(\log n)^2 u_{2,n}.
\end{align*}
where we recall
$u_{2,n}=\sum_{k\geq 1}\big(\Psi^{k}_{n} \big(f^{n,k}\un_{\R^k\setminus \mathcal{H}^k_{\mathfrak{z}_n}}\big)+\Psi^{k}_{n,n^b/(\log n)^2} (f^{n,k})+\Eb[W\Psi^k_{n,n^b/(W(\log n)^2)}(f^{n,k})]\big)$, with $W=\sum_{|z|=1}e^{-V(z)}$. 
\end{lemm}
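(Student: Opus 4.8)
The three quantities $\mathcal{X}_{1,n},\mathcal{X}_{2,n},\mathcal{X}_{3,n}$ are sums over $x\in\mathcal{O}_n$ of $e^{-V(x)}$ (or $\sum_{y;y^*=x}e^{-V(y)}$) times $f^{n,|x|}(\bm V_x)$ times an extra factor coming from the geometric tail bounds of Lemma~\ref{LawGeo}. The strategy is to bound each of these extra factors by something of the form $(\log n)^2$ times an indicator which, after taking $\Eb[\cdot]$ and applying the \mto, turns $\Eb[\mathcal{X}_{i,n}]$ into one of the four pieces of $u_{2,n}$. So the plan is: treat $\mathcal{X}_{1,n}$, $\mathcal{X}_{2,n}$, $\mathcal{X}_{3,n}$ separately, in each case identify which term(s) of $u_{2,n}$ it produces, then sum up.

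\textbf{Step 1: the low-potential term $\mathcal{X}_{1,n}$.} Here $\un_{\{V(x)<v_n\}}=\un_{\R^{|x|}\setminus\mathcal{H}^{|x|}_{v_n}}(\bm V_x)$ by the very definition of $\mathcal{H}^k_{v_n}$, so $\Eb[\sum_{x\in\mathcal{O}_n}e^{-V(x)}\un_{\{V(x)<v_n\}}f^{n,|x|}(\bm V_x)]=\sum_{k\geq1}\Psi^k_n(f^{n,k}\un_{\R^k\setminus\mathcal{H}^k_{v_n}})$. For the second piece, $\sum_{y;y^*=x}e^{-V(y)}$, I would use the many-to-one formula at the level of $x$ together with independence of increments: conditionally on $\bm V_x$, $\Eb[\sum_{y;y^*=x}e^{-V(y)}\,|\,\bm V_x]=e^{-V(x)}\Eb[W]=e^{-V(x)}$ since $\Eb[W]=e^{\psi(1)}=1$. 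Hence this contribution is also bounded by $\sum_{k\geq1}\Psi^k_n(f^{n,k}\un_{\R^k\setminus\mathcal{H}^k_{v_n}})$ — but one must be slightly careful that enlarging the set $\mathcal{C}_x$ of children does not move $x$ out of $\mathcal{O}_n$; since the regular-line condition only constrains $H_{x_j}$ along the ancestral line of $x$, adding children is harmless. So $\Eb[\mathcal{X}_{1,n}]\leq 2\sum_{k\geq1}\Psi^k_n(f^{n,k}\un_{\R^k\setminus\mathcal{H}^k_{v_n}})$.

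\textbf{Step 2: the high-potential terms $\mathcal{X}_{2,n}$ and $\mathcal{X}_{3,n}$.} The key elementary estimate is that for $H\geq 1$ and $0<\lambda'\le n^b$,
\[
\frac{1}{H}\Big(1-\frac1H\Big)^{\lceil n^b/2\rceil-1}(n^b+H)\ \leq\ C(\log n)^2\Big(\un_{\{H>\lambda'\}}+\un_{\{H\le\lambda'\}}e^{-cn^b/H}\Big),
\]
or more simply: when $H>n^b(\log n)^{-2}=\lambda'_n$ the factor $n^b+H$ is at most $2n^b\le 2(\log n)^2 H$ so the whole thing is $\le 2(\log n)^2$; when $H\le\lambda'_n$ the geometric factor $(1-1/H)^{\lceil n^b/2\rceil-1}$ is super-polynomially small and dominates, so that term is negligible. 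This gives
\[
\mathcal{X}_{2,n}\ \leq\ 2(\log n)^2\sum_{x\in\mathcal{O}_n}\un_{\{V(x)\ge v_n\}}e^{-V(x)}\un_{\{H_x>\lambda'_n\}}f^{n,|x|}(\bm V_x)\ +\ (\text{negligible}),
\]
whose expectation is $\le 2(\log n)^2\sum_{k\ge1}\Psi^k_{n,\lambda'_n}(f^{n,k})$, since $\{x\in\mathcal{O}_n,\ H_x>\lambda'_n\}=\{x\in\mathcal{O}_{n,\lambda'_n}\}$ and dropping $\un_{\{V(x)\ge v_n\}}$ only enlarges. For $\mathcal{X}_{3,n}$ the same computation applies with $H_x$ replaced by $1+\tilde H_x$; here $1+\tilde H_x=1+H_x\sum_{y;y^*=x}e^{-V_x(y)}=1+H_xW_x$ where $W_x:=\sum_{y;y^*=x}e^{-V_x(y)}$ has the law of $W$ and is independent of $\bm V_x$. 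So $\{1+\tilde H_x>\lambda'_n\}\supseteq\{H_x>\lambda'_n/W_x\}$ (using $1+\tilde H_x\ge H_xW_x$), and after conditioning on $W_x$ and applying \mto\ in the subtree one gets $\Eb[\mathcal{X}_{3,n}]\le 2(\log n)^2\Eb\big[W\sum_{k\ge1}\Psi^k_{n,\lambda'_n/W}(f^{n,k})\big]$; the extra factor $W$ comes from the change of measure at generation $|x|+1$, exactly as $\Eb[W]=1$ was used in Step~1.

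\textbf{Step 3: assembling.} Adding the three bounds, and noting that $\sum_{k\ge1}\Psi^k_{n,n^b}(f^{n,k})\ge0$ is freely available, we get $\Eb[\mathcal{X}_{1,n}+\mathcal{X}_{2,n}+\mathcal{X}_{3,n}]\le 2(\log n)^2 u_{2,n}$ for $n$ large (the "negligible" error terms from the $H\le\lambda'_n$ range are absorbed, e.g.\ into the $\Psi^k_{n,n^b}(f^{n,k})$ term via \eqref{Hypothese1}, or simply into the constant since they are $o(1)$). \textbf{The main obstacle} I anticipate is the bookkeeping in Step~2 for $\mathcal{X}_{3,n}$: making the split $\{H_x>\lambda'_n/W_x\}$ clean requires handling the randomness of $W_x$ correctly and checking that the translated regular-line constraint is preserved under conditioning, and making sure the comparison $1+\tilde H_x\ge H_xW_x$ together with $n^b+1+\tilde H_x\le 2\max(n^b,1+\tilde H_x)$ really does produce the factor $\Eb[W\Psi^k_{n,\lambda'_n/W}(f^{n,k})]$ and not something larger. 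Everything else is elementary manipulation of geometric tails plus repeated use of the \mto\ and Remark~\ref{Rem1}.
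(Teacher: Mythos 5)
Your overall strategy is the same as the paper's: handle $\mathcal{X}_{1,n}$ by $\Eb[W]=1$, and handle $\mathcal{X}_{2,n},\mathcal{X}_{3,n}$ by splitting at a threshold of order $\lambda'_n$, using $(n^b+H)/H\leq 1+n^b/\lambda'_n\leq 2(\log n)^2$ on the large side and the geometric factor on the small side. Your Steps 1 and 2 (for $\mathcal{X}_{1,n}$ and $\mathcal{X}_{2,n}$) are correct; your split directly at $\lambda'_n$ is even a bit more direct than the paper's, which splits at $\Tilde{\lambda}_n:=(\lceil n^b/2\rceil-1)/\log q_n$ with $q_n=4C_{\infty}\ell_n n^b/\sum_k\Psi^k_{n,n^b}(f^{n,k})$ and then uses \eqref{Hypothese1} to check $\Tilde{\lambda}_n-1\geq\lambda'_n$; your version instead absorbs the small-$H$ part via the superpolynomial decay $e^{-c(\log n)^2}$ plus \eqref{Hypothese1}, which is fine (for $b=0$ that part is empty since $H_x>1>\lambda'_n$).

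There is, however, a genuine direction error in your $\mathcal{X}_{3,n}$ step, precisely at the point you flagged as the obstacle. You write $\{1+\tilde{H}_x>\lambda'_n\}\supseteq\{H_x>\lambda'_n/W_x\}$, but to bound $\mathcal{X}_{3,n}$ from above you need the reverse inclusion: you must dominate $\un_{\{1+\tilde{H}_x>\lambda'_n\}}$ by $\un_{\{H_x>\lambda'_n/W_x\}}$ in order to land on $\Eb[W\Psi^k_{n,\lambda'_n/W}(f^{n,k})]$. The exact identity is
\begin{align*}
\{1+\tilde{H}_x>\mu\}=\{H_xW_x>\mu-1\}=\{H_x>(\mu-1)/W_x\},
\end{align*}
so splitting at $\mu=\lambda'_n$ produces the threshold $(\lambda'_n-1)/W_x<\lambda'_n/W_x$, hence the larger quantity $\Psi^k_{n,(\lambda'_n-1)/W}(f^{n,k})\geq\Psi^k_{n,\lambda'_n/W}(f^{n,k})$, which is not controlled by $u_{2,n}$. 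The fix is cheap and is exactly what the paper builds in: split at any $\mu$ with $\mu-1\geq\lambda'_n$ (e.g. $\mu=\lambda'_n+1$, or the paper's $\Tilde{\lambda}_n$, which satisfies $\Tilde{\lambda}_n-1\geq\lambda'_n$ by \eqref{Hypothese1} for $b>0$); then $\{1+\tilde{H}_x>\mu\}\subseteq\{H_x>\lambda'_n/W_x\}$, the ratio $(n^b+1+\tilde{H}_x)/(1+\tilde{H}_x)\leq 1+n^b/\mu\leq 2(\log n)^2$ still holds, and the small side is handled as for $\mathcal{X}_{2,n}$. (The degenerate case $b=0$, where $\lceil n^b/2\rceil-1=0$, should be treated separately: there $\lambda'_n<1<H_x$, so everything collapses onto $2\sum_k\Psi^k_{n,\lambda'_n}(f^{n,k})=2\sum_k\Psi^k_n(f^{n,k})$.) With this one-line correction your proof is complete and matches the paper's.
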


\begin{proof}
We start with the easiest part, that is the expression of $\Eb[\mathcal{X}_{1,n}]$. Thanks to hypothesis \eqref{hyp0}
\begin{align*}
    \Eb[\mathcal{X}_{1,n}] & = \Eb\Big[\sum_{x\in\mathcal{O}_n}\un_{\{V(x)<\mathfrak{z}_n\}}\Big(e^{-V(x)}+e^{-V(x)}\sum_{y;y^*=x}e^{-V_x(y)}\Big)f^{n,|x|}(\bm{V}_x)\Big] \\ & = 2\Eb\Big[\sum_{x\in\mathcal{O}_n}\un_{\{V(x)<\mathfrak{z}_n\}}e^{-V(x)}f^{n,|x|}(\bm{V}_x)\Big] = 2\sum_{k\geq 1}\Psi^k_n\big(f^{n,k}\un_{\R^k\setminus \mathcal{H}^k_{\mathfrak{z}_n}}\big).
\end{align*}
 Let 
\begin{align*}
    \Tilde{\lambda}_n:=\frac{\lceil n^b/2\rceil-1}{\log q_n}\;\;\textrm{ with }\;\;q_n:=\frac{4C_{\infty}\ell_nn^b}{\sum_{k\geq 1}\Psi^k_{n,n^b/(\log n)^2}\big(f^{n,k}\big)}, 
\end{align*}
and let us find an upper bound for $\Eb[\mathcal{X}_{2,n}]$. For that, we decompose $\mathcal{X}_{2,n}$ into two parts according to the value of $H_x$:
\begin{align*} 
    \mathcal{X}_{2,n} &\leq\sum_{x\in\mathcal{O}_n}(\un_{\{H_x\leq\Tilde{\lambda}_n\}}+\un_{\{H_x>\Tilde{\lambda}_n\}})\frac{e^{-V(x)}}{H_x}\Big(1-\frac{1}{H_x}\Big)^{\lceil n^b/2 \rceil-1}(n^b+H_x)f^{n,|x|}(\bm{V}_x) \\ & \leq C_{\infty}\big(n^b+\Tilde{\lambda}_n\big)\big(1-\frac{1}{\Tilde{\lambda}_n}\big)^{\lceil n^b/2 \rceil-1}\sum_{x\in\mathcal{O}_n}e^{-V(x)}+\big(1+\frac{n^b}{\Tilde{\lambda}_n}\big)\sum_{x\in\Hline{n}{\Tilde{\lambda}_n}}e^{-V(x)}f^{n,|x|}(\bm{V}_x).
\end{align*}
By definition of $\Tilde{\lambda}_n$ and $q_n$ (see above), $(1-1/\Tilde{\lambda}_n)^{\lceil n^b/2 \rceil-1}\leq 1/q_n$. Moreover, by Remark \ref{Rem1}, $\Eb[\sum_{x\in\mathcal{O}_n}e^{-V(x)}]\leq\ell_n$ and $\Eb[\sum_{k\geq 1}\Psi^k_{n,n^b}\big(f^{n,k}\big)] \leq C_{\infty}  \Eb[\sum_{x\in\mathcal{O}_n}e^{-V(x)} ] \leq C_{\infty} \ell_n$ so for $n$ large enough ($q_n\geq 4n^b$  implying $\Tilde{\lambda}_n\leq n^b$),  we obtain 
\begin{align*}
    \Eb[\mathcal{X}_{2,n}]\leq\frac{1}{2}\sum_{k\geq 1}\Psi^k_{n,n^b/(\log n)^2}\big(f^{n,k}\big)+\big(1+\frac{n^b}{\Tilde{\lambda}_n}\big)\sum_{k\geq 1}\Psi^k_{n,\Tilde{\lambda}_n}\big(f^{n,k}\big).
\end{align*}
For $\Eb[\mathcal{X}_{3,n}]$, we decompose $\mathcal{X}_{3,n}$ into two parts according to the value of $\tilde{H}_x$: $\mathcal{X}_{3,n}$ is smaller than
\begin{align*}
     &\sum_{x\in\mathcal{O}_n}(\un_{\{1+\tilde{H}_x\leq\Tilde{\lambda}_n\}}+\un_{\{1+\tilde{H}_x>\Tilde{\lambda}_n\}})\frac{e^{-V(x)}}{H_x}\frac{\tilde{H}_x}{1+\tilde{H}_x}\Big(1-\frac{1}{1+\tilde{H}_x}\Big)^{\lceil n^b/2 \rceil-1}( n^b+1+\tilde{H}_x)f^{n,|x|}(\bm{V}_x) \\ & \leq C_{\infty}\big(n^b+\Tilde{\lambda}_n\big)\big(1-\frac{1}{\Tilde{\lambda}_n}\big)^{\lceil n^b/2 \rceil-1}\sum_{x\in\mathcal{O}_n}e^{-V(x)}+\big(1+\frac{n ^b}{\Tilde{\lambda}_n}\big)\sum_{x\in\mathcal{O}_n}e^{-V(x)}\un_{\{1+\tilde{H}_x>\Tilde{\lambda}_n\}} \\ & \times \sum_{y;y^*=x}e^{-V_x(y)}f^{n,|x|}(\bm{V}_x).
\end{align*}
%Again by definition of $\Tilde{\lambda}_n$ and $q_n$ together with the fact that $\Tilde{\lambda}_n\leq n^b$ and Remark \ref{Rem1}, 
Then, as above, $C_{\infty}\big(n^b+\Tilde{\lambda}_n\big)\big(1-1/\Tilde{\lambda}_n\big)^{\lceil n^b/2 \rceil-1}\leq\sum_{k\geq 1}\Psi^k_{n,n^b/(\log n)^2}\big(f^{n,k}\big)/2$, also recall that $\tilde{H}_x=H_x\sum_{y;y^*=x}e^{-V_x(y)}$ so by conditional independence of $H_x$ and $\sum_{y;y^*=x}e^{-V_x(y)}$  together with the fact that this random variable has the same law as $W=\sum_{|x|=1}e^{-V(x)}$, 
\begin{align*}
    \Eb\Big[\sum_{x\in\mathcal{O}_n}e^{-V(x)}\un_{\{1+\tilde{H}_x>\Tilde{\lambda}_n\}}
    \sum_{y;y^*=x}e^{-V_x(y)}f^{n,|x|}(\bm{V}_x)\Big]=\sum_{k\geq 1}\Eb\Big[W\Psi^k_{n,(\Tilde{\lambda}_n-1)/W}(f^{n,k})\Big].
\end{align*}
Hence
\begin{align*}
    \Eb[\mathcal{X}_{3,n}]\leq\frac{1}{2}\sum_{k\geq 1}\Psi^k_{n,n^b/(\log n)^2}\big(f^{n,k}\big)+\big(1+\frac{n ^b}{\Tilde{\lambda}_n}\big)\sum_{k\geq 1}\Eb\big[W\Psi^k_{n,(\Tilde{\lambda}_n-1)/W}(f^{n,k})\big].
\end{align*}
Finally, note that $\Psi^k_{n,n^b}(f^{n,k})\leq \Psi^k_{n,n^b/(\log n)^2}\big(f^{n,k}\big)$ so using assumption \eqref{Hypothese1}, we get $q_n\leq 4C_{\infty}\ell_nn^{1+b}$ thus giving $\Tilde{\lambda}_n-1\geq n^b(\log n)^{-2}$ for all $b\in(0,1)$ and $n$ large enough. Hence, for all $b\in[0,1)$ and $n$ large enough, $(1+n^b/\Tilde{\lambda}_n)\leq 2(\log n)^2$ and $\Psi^k_{n,(\Tilde{\lambda}_n-1)/W}(f^{n,k})$ (resp. $\Psi^k_{n,\Tilde{\lambda}_n}(f^{n,k})$) is smaller than $\Psi^k_{n,n^b/(W(\log n)^2)}(f^{n,k})$ (resp. $\Psi^k_{n,n^b/(\log n)^2}(f^{n,k})$) so we obtain the result.
\end{proof}
\noindent We are now ready to prove the upper bound in Proposition \ref{Prop1}. Recall $\eqref{A_n}$ and let $\varepsilon>0$
\begin{align*}
     \P\Big(\frac{\mathcal{R}_{T^n}(g_n,\mathbf{f}^n)}{n^{1-b}\varphi(n^b)u_{2,n}}>e^{\varepsilon h_n}\Big)\leq\P\Big(\frac{\mathcal{R}_{T^n}(g_n,\mathbf{f}^n)}{n^{1-b}\varphi(n^b)u_{2,n}}>e^{\varepsilon h_n},\mathcal{A}_n\Big)+1-\P(\mathcal{A}_n),
\end{align*}
where $u_{2,n}=\sum_{k\geq 1}(\Psi^{k}_{n} \big(f^{n,k}\un_{\R^k\setminus \mathcal{H}^k_{\mathfrak{z}_n}}\big)+\Psi^{k}_{n,n^b/(\log n)^2} (f^{n,k})+\Eb\big[W\Psi^k_{n, n^b/(W(\log n)^2)}(f^{n,k})\big])$. By Lemma \ref{RangeUpBound1} with $u_n=e^{\varepsilon h_n}n^{1-b}\varphi(n^b)u_{2,n}$ and Lemma \ref{RangeUpBound2}, for $n$ large enough
\begin{align*}
    \P\Big(\frac{\mathcal{R}_{T^n}(g_n,\mathbf{f}^n)}{n^{1-b}\varphi(n^b)u_{2,n}}>e^{\varepsilon h_n},\mathcal{A}_n\Big)&\leq \frac{2e^{-\varepsilon h_n}}{u_{2,n}} \Eb[\mathcal{X}_{1,n}+\mathcal{X}_{2,n}+\mathcal{X}_{3,n}]\leq 6(\log n)^2e^{-\varepsilon h_n},
\end{align*}
and then for $n$ large enough
\begin{align*}
    \P\Big(\frac{\mathcal{R}_{T^n}(g_n,\mathbf{f}^n)}{n^{1-b}\varphi(n^b)u_{2,n}}>e^{\varepsilon h_n}\Big)\leq 6(\log n)^2e^{-\varepsilon h_n}+1-\P(\mathcal{A}_n). 
\end{align*}
Finally, observe (see Remark \ref{rem0}) that $(\log n)^2=o(e^{\varepsilon h_n})$ and we complete the proof of the upper bound recalling (see below \eqref{A_n}) that $1-\P(\mathcal{A}_n)=o(1)$.

 \section{Technical estimates for one-dimensional random walk \label{sec4bis}}

In this section, we prove some technical expressions involving sums of i.i.d. random variables introduced via the \mto\ at the beginning of Section \ref{sec2}.
Recall that $(S_i-S_{i-1}, i \geq 1)$ is a sequence of i.i.d. random variables such that $\E(S_1)=0$, there exists $\eta >0$ for which $\E(e^{\eta S_1})<+ \infty$. Also we denote $\sigma^2=\psi''(1)=\E(S_1^2)$. We also use the following notations : for any $a$, $\tau_a:= \inf\{k>0,\ S_k \geq a\}$, $\tau_a^-:= \inf\{k>0,\ S_k \leq a\}$ and $\tau^{\bar S-S}_{a}:= \inf\{k>0,\overline{S}_k-S_k \geq a\}$ with $\overline{S}_k:= \max_{1 \leq m \leq k} S_m$ and $H_j^S:=\sum_{i=1}^j e^{S_i-S_j}$. %We start with some facts which proof can be found explicitly in the literature : \\

%\noindent {\bf Fact 1}

%Ici on montre ce dont on a besoin sur l'environnement.

%On remarque que cette derni√É¬É√Ç¬É√É¬Ç√Ç¬É√É¬É√Ç¬Ç√É¬Ç√Ç¬Ç√É¬É√Ç¬É√É¬Ç√Ç¬Ç√É¬É√Ç¬Ç√É¬Ç√Ç¬ère chose est quasiment le th√É¬É√Ç¬É√É¬Ç√Ç¬É√É¬É√Ç¬Ç√É¬Ç√Ç¬Ç√É¬É√Ç¬É√É¬Ç√Ç¬Ç√É¬É√Ç¬Ç√É¬Ç√Ç¬éor√É¬É√Ç¬É√É¬Ç√Ç¬É√É¬É√Ç¬Ç√É¬Ç√Ç¬Ç√É¬É√Ç¬É√É¬Ç√Ç¬Ç√É¬É√Ç¬Ç√É¬Ç√Ç¬ème de Caravenna, mais pas tout √É¬É√Ç¬É√É¬Ç√Ç¬É√É¬É√Ç¬Ç√É¬Ç√Ç¬Ç√É¬É√Ç¬É√É¬Ç√Ç¬Ç√É¬É√Ç¬Ç√É¬Ç√Ç¬à fait 

%\begin{align*}
%& \P( \forall m \leq n , 0 \leq  S_m  \leq  c, a \leq S_{n} < a+s )  \\ 
%&  \geq  \sum_{k} \P(T_k=n, S_n \in [a,a+s),  \underline{S}_n \geq 0 )
%\end{align*}
\subsection{Two Laplace transforms}
In this section, we deal with Laplace transforms which appear when we study the range with underlying constraint on $V$.% of the reflecting barrier and also when a penalization via cumulative downfalls of $V$ is introduced. % $\E \left[e^{-\frac{c \sigma^2}{2m^2} \tau_{r} } \un_{\tau_{r} \leq \tau^{\bar S-S}_{m}} \right]$ when $m$ and $r=r(m)$ go to infinity. It appears when studying the range of high potential with the underlying constraint of the reflecting barrier (see \textbf{[faire r\'ef\'erence aux \'equations]}). %The proof of the result is quite usual (no !), we present them however for completion and also because we did not find it as we need it in the literature (especially with constraint $\tau_{r} \leq \tau^{\bar S-S}_m$).
%We treat the lower and upper bound in two seperate lemmata.
 
%\begin{align}
%\E \left[e^{-c \tau_{r}/ m^2} \un_{\tau_{r} \leq \tau^{\bar S-S}_{m}} \right] \geq e^{-\sqrt{c} \frac{r}{m}}. \end{align} 
%So we are left to deal with the overshoots, and also see if we can have the upper bound too.
%But we knows that $\frac{\tau_{-m}^-}{2 m^2}$ converge almost surely to a finite positive constant(pas vraiment n√É¬É√Ç¬É√É¬Ç√Ç¬É√É¬É√Ç¬Ç√É¬Ç√Ç¬Ç√É¬É√Ç¬É√É¬Ç√Ç¬Ç√É¬É√Ç¬Ç√É¬Ç√Ç¬écessaire mais bon ...),  so for large $m$ 
%\begin{align}
%\E \left(e^{ -\frac{c \sigma^2}{2 m^2} \tau_{a} } \un_{\tau_{a} < \tau_{-m}^-} \right) \geq C  e^{-\sqrt{c} \frac{a}{m}} , 
%\end{align}
%with $1>C>0$. 

%\red{[ A mettre au bon endroit : 
%About the correct constant, in the paper of Faraud-Hu-Shi, we have at some point (Proposition 3.1 page 15)  : 
%\begin{align}
%\lim_{n\rightarrow + \infty }\frac{ a_n^2}{n} \log \P(G_0(n))=-\frac{\pi^2 \sigma^2}{8} * \int_0^1 \frac{ds}{f^2(s)},
%\end{align}
%here $f=1$, so considering above Lemma, $\sqrt{c} = \sqrt{\frac{\pi^2}{4}}= \frac{ \pi}{2}$. Je suis g\^en\'e par le fait que cela d\'epende de $\pi$. We have to check the other computations and instead of taking half the path (for the condition of high potential) take an arbitrary proportion of the generation. }

\begin{lemm} \label{Laplace2} Let $r:=r(\ell)$ such that $\lim_{\ell \rightarrow + \infty}r(\ell)/\ell=+ \infty$, then for any $\varepsilon>0$
\begin{align*}
e^{-(1+\sqrt{c}- \rho(c)) \frac{r}{\ell}(1+ \varepsilon)} \leq \E \left[e^{-\frac{c \sigma^2}{2\ell^2} \tau_{r} } \un_{\tau_{r} \leq \tau^{\bar S-S}_{\ell}} \right] \leq e^{-(1+\sqrt{c}- \rho(c)) \frac{r}{\ell}(1- \varepsilon)},
\end{align*}
with $\rho(c)= \frac{c \sigma }{\sqrt{2\pi}} \int_{0}^{+ \infty} {e^{- \frac{c \sigma^2}{2} u} f(u)}du$, and $f(u)=\frac{2}{u^{1/2}}\P(\overline{\mathfrak{m}}_1>1/\sqrt{u \sigma^2})  - \frac{1}{2}\int_u^{+\infty} \frac{1}{ y^{3/2}} \P(\overline{\mathfrak{m}}_1>1/\sqrt{y \sigma^2})dy$. Note that $\rho$ can be explicitly calculated : for any $c>0$
\begin{align*}
\rho(c)& =2 \sqrt{c}\Big ( \frac{1-e^{-\sqrt{c}}}{\sinh(\sqrt{c})} \Big) -2\Big( \sqrt c-\log( (e^{\sqrt{c}}+1)/2) \Big) .
% & = 2 \sqrt{c}\Big ( \frac{1-e^{-\sqrt{c}}}{\sinh(\sqrt{c})} \Big)-2\Big( \frac{\sqrt c}{2}-\log(\cosh({\sqrt{c}/2})) \Big)  
\end{align*}
%\red{Si $c \rightarrow 0$,  apr\`es un truc vite fait j'obtiens que cela mange la $\sqrt c$ dans l'exponentiel et je me retrouve avec $1+9c/8+o(c)$ ... bref on a continuit\'e ! A voir aussi avec $c=1$ ($\rho(1) \sim 0.316$), je ne vois pas de $\coth$ non plus surtout avec un $\log$ au milieu mais peut \^etre en d\'eveloppant en s\'erie enti\`ere ... }
\end{lemm}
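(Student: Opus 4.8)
\textbf{Plan of proof for Lemma \ref{Laplace2}.}

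The plan is to analyse the Laplace transform $\E[e^{-\frac{c\sigma^2}{2\ell^2}\tau_r}\un_{\{\tau_r\leq\tau^{\bar S-S}_\ell\}}]$ by a renewal-type decomposition of the random walk along its record times, combined with a scaling limit on the scale $\ell$. First I would recall the basic structure: on the event $\{\tau_r\leq\tau^{\bar S-S}_\ell\}$ the walk reaches level $r$ before its maximal downfall from the current record exceeds $\ell$. Writing $\tau_r=\sum_{j}(\text{time to jump to next record})$ summed over the successive ladder epochs up to the first record above $r$, and using the strong Markov property at ladder times, the quantity factorises multiplicatively: each "record-to-record" block contributes an independent copy of a transform of the form $\E[e^{-\frac{c\sigma^2}{2\ell^2}T}\un_{\{\text{downfall during block}\leq\ell\}}]$ weighted by the record increment. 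Since $r/\ell\to\infty$, the number of blocks needed to climb from $0$ to $r$ is of order $r/(\ell\cdot\text{mean record increment/}\ell)$, i.e. of order $(r/\ell)\times(1/\bar h)$ where $\bar h$ is the typical record increment measured on scale $\ell$; this is what produces the $r/\ell$ in the exponent.

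The second step is to identify the per-block exponential rate. Rescaling space by $\ell$ and time by $\ell^2$, the walk observed between records converges (via Donsker / the standard excursion-theoretic description of Brownian motion reflected at its supremum) to a Brownian motion, and the downfall constraint $\bar S-S\leq\ell$ becomes the event that a Brownian meander stays below height $1$. The quantity $\overline{\mathfrak{m}}_1=\sup_{s\leq1}\mathfrak{m}_s$ and its law $\P(\overline{\mathfrak{m}}_1>\cdot)$ enter precisely here, through the Laplace transform in the block duration of the reflected process killed when the supremum-to-current distance hits $1$. Carrying out the Laplace computation for Brownian motion — using the known density of the meander maximum and an integration by parts in the time variable — yields the function $f(u)$ and hence $\rho(c)=\frac{c\sigma}{\sqrt{2\pi}}\int_0^\infty e^{-\frac{c\sigma^2}{2}u}f(u)\,du$; the rate per unit of $r/\ell$ is then $1+\sqrt c-\rho(c)$, the $1$ coming from the deterministic ladder-height normalisation, the $\sqrt c$ from the elementary Laplace transform of a first passage time for Brownian motion, and $-\rho(c)$ from the meander correction due to the reflecting barrier. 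One then checks the explicit evaluation $\rho(c)=2\sqrt c\big(\frac{1-e^{-\sqrt c}}{\sinh(\sqrt c)}\big)-2\big(\sqrt c-\log((e^{\sqrt c}+1)/2)\big)$ by computing the two elementary integrals that appear after substituting the closed form of $\P(\overline{\mathfrak{m}}_1>a)=\sum_{k}(-1)^{k+1}(\dots)$ or, more simply, by solving the associated ODE boundary value problem for $e^{-\sqrt{c}\,\cdot}$-type eigenfunctions on $[0,1]$ with reflecting/absorbing conditions.

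The third step is to turn the heuristic multiplicative factorisation into rigorous two-sided bounds with the stated $(1\pm\varepsilon)$ slack. For the upper bound one restricts to a favourable (but overwhelmingly likely) event on which the number of record blocks up to $\tau_r$ is within $(1\pm\varepsilon')$ of its mean and each block obeys the meander estimate; submultiplicativity / subadditivity of the relevant hitting-time Laplace transforms (a Chebyshev-type / Markov argument on the record structure) then gives $\leq e^{-(1+\sqrt c-\rho(c))(r/\ell)(1-\varepsilon)}$. For the lower bound one forces the walk to perform a "typical" climb — a union of good blocks, each of prescribed length $O(\ell^2)$ and prescribed record increment $O(\ell)$, all staying within the downfall window — and estimates the probability of this strategy from below, again using Donsker to control each block; FKG-type positive-correlation or simple independence across blocks then delivers $\geq e^{-(1+\sqrt c-\rho(c))(r/\ell)(1+\varepsilon)}$. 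I expect the main obstacle to be the uniform control of the diffusive approximation over the $\asymp r/\ell\to\infty$ many blocks: one needs the Brownian estimate for the meander-maximum Laplace transform to hold with errors that are summable (or geometrically small) in the number of blocks, which requires a moderate-deviation / Berry–Esseen-type refinement of Donsker's theorem for the record-to-record excursions rather than the bare convergence in law, together with care that the constraint $\overline{S}_k-S_k\leq\ell$ is "rigid" enough that rare atypical blocks do not spoil the exponential rate. The explicit algebraic simplification of $\rho(c)$ is routine once the right eigenfunction picture is in place, so I would relegate it to a short computation citing the meander density.
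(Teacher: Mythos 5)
Your plan is, in outline, exactly the paper's proof: decompose along the strict ladder epochs $T_k$, use the strong Markov property to factorise into i.i.d.\ blocks whose transform is $\E\big[e^{-\frac{c\sigma^2}{2\ell^2}\tau_0^+}\un_{\{\tau_0^+\leq\tau_{-\ell}^-\}}\big]$, expand this single-block quantity to order $1/\ell$ (the ruin probability $\P(\tau_0^+>\tau_{-\ell}^-)\sim\E(S_{\tau_0^+})/\ell$ gives the $1$, the unconstrained Laplace transform of $\tau_0^+$ gives the $\sqrt c$, and the cross term $\E[(1-e^{-\lambda\tau_0^+/\ell^2})\un_{\{\tau_0^+>\tau_{-\ell}^-\}}]$, evaluated through the tail $\ell\,\P(\tau_0^+>z\ell^2,\tau_0^+>\tau_{-\ell}^-)$ and the invariance principle for the walk conditioned to stay positive, produces $f$ and hence $-\rho(c)$), then raise to the power $k\sim r/\E(S_{\tau_0^+})$ while killing the overshoot event $\{S_{T_k}\geq r\}$ by a Chernoff bound; the closed form of $\rho$ comes from the series for $\P(\overline{\mathfrak{m}}_1>u)$. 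Two points in your plan would, however, fail as written.

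First, your lower bound uses blocks of record increment $O(\ell)$. Such a block has a per-block cost equal to a genuine constant $p(c)\in(0,1)$ (a non-perturbative Brownian functional), and $p(c)^{r/\ell}=e^{-|\log p(c)|\,r/\ell}$ gives the right order but not, without a further sub/supermultiplicativity argument, the sharp constant $1+\sqrt c-\rho(c)$. The paper instead forces the climb in steps of \emph{constant} (large) size $a$: going from $r_k$ to $r_k+a$ before dropping to $r_k-\ell$ keeps the per-block transform perturbative in $1/\ell$, and the constant is matched by letting $a\to\infty$ at the end via the renewal asymptotics $R(a)/a\to\theta^{-1}(2/(\pi\sigma^2))^{1/2}$ and $\E(-S_{\tau_{-a}})/a\to 1$. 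Second, your worry about needing a Berry--Esseen/moderate-deviation refinement of Donsker over the $\asymp r$ blocks is misplaced: the blocks are exactly i.i.d., so only the deterministic $o(1/\ell)$ error in the \emph{single}-block expansion matters, and $k\cdot o(1/\ell)=o(r/\ell)$ is absorbed by the $(1\pm\varepsilon)$. The real technical content is inside one block: a Tauberian argument for the cross term, which needs the convergence $\ell\,\P(\tau_0^+>z\ell^2,\tau_0^+>\tau_{-\ell}^-)\to\sqrt{2/\pi}\,\E(S_{\tau_0^+})\sigma^{-1}f(z)$ to hold uniformly on compacts (monotonicity plus Dini) together with Feller/Caravenna-type local estimates — not a rate of convergence in the functional CLT.
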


\begin{proof} %\red{Attention rien n'est finaliser ! m\^eme si les id\'ees de preuves sont ok maintenant.}
We start with the \textit{upper bound}. \\
 \noindent Let us introduce the usual strict ladder epoch sequence $(T_k:= \inf\{i>T_{k-1}, S_i > S_{T_{k-1}}\},k;\\T_0=0)$. Then for any $k$
 \begin{align}
\E \left[e^{-\frac{c \sigma^2}{2\ell^2} \tau_{r} } \un_{\tau_{r} \leq \tau^{\bar S-S}_{\ell}} \right] & \leq \E \left[e^{-\frac{c \sigma^2}{2\ell^2} \tau_{r} } \un_{S_{T_k}<r} \un_{\tau_{r} \leq \tau^{\bar S-S}_{\ell}} \right]+ \P(S_{T_k}\geq r) \nonumber \\
%& \leq \E \left[e^{-\frac{c \sigma^2}{2m^2} T_k } \un_{ \max_{1 \leq  m \leq k}  (S_{T_{m-1}}- \min_{T_{m-1} \leq j \leq T_m } {S_j}) \leq m} \right]+ \P(S_{T_k}\geq r) \\
&  \leq  \left(\E \left[e^{-\frac{c \sigma^2}{2\ell^2} \tau_0 }\un_{\tau_0 \leq \tau_{-\ell}^-}\right] \right)^{k} + \P(S_{T_k}\geq r), \label{Eq8.1}
\end{align}
where the last equality comes from the strong Markov property and equality $T_1= \tau_0:= \inf\{m>0, S_m >0\}$. From here we need the asymptotic in $\ell$ of $ \E \Big[e^{-\frac{c \sigma^2}{2\ell^2} \tau_0 }\un_{\tau_0 \leq \tau_{-\ell}^-}\Big]$. First we use following identity 
\begin{align}
\E \left[e^{-\frac{\lambda}{\ell^2} \tau_0 }\un_{\tau_0 \leq \tau_{-\ell}^-}\right]= \E [e^{-\frac{\lambda}{\ell^2} \tau_0 }  ]- \P(\tau_0> \tau_{-\ell}^-) +\E\left ( (1 - e^{\frac{-\lambda }{\ell^2}\tau_0})\un_{\tau_0>\tau_{-\ell}}\right), \label{ident}
\end{align}
and then give an upper bound for each of the three terms. Lemma 2.2 in  \cite{Aidekon} gives for $m$ large enough 
\begin{align}
\P(\tau_0> \tau_{-\ell}^-) =\frac{\E(S_{\tau_0})}{\ell}+ o\left(\frac{1}{\ell}\right),  \label{aid}
\end{align}
Both of the other terms can be obtained with a Tauberian theorem, we give here some details for the third one which is more delicate. Let $dH_\ell(u)$ the measure defined by  $\P( \tau_0 > z \ell^2, \tau_0> \tau_{-\ell}^-)=\int_{z}^{\infty}dH_\ell(u) $, integration by part gives
$ \E\left ( (1 - e^{\frac{-\lambda }{\ell^2}\tau_0})\un_{\tau_0>\tau_{-\ell}^-}\right)= \int_{0}^{+ \infty}(1- e^{-\lambda u}) dH_\ell(u)  
%&= \left[  (1-e^{-\lambda/\varepsilon})\P(\tau_0/m^2>1/√É¬É√Ç¬É√É¬Ç√Ç¬É√É¬É√Ç¬Ç√É¬Ç√Ç¬É√É¬É√Ç¬É√É¬Ç√Ç¬Ç√É¬É√Ç¬Ç√É¬Ç√Ç¬ä\varepsilon)-(1-e^{-\lambda \varepsilon})\P(\tau_0/m^2>\varepsilon) \right]+√É¬É√Ç¬É√É¬Ç√Ç¬É√É¬É√Ç¬Ç√É¬Ç√Ç¬É√É¬É√Ç¬É√É¬Ç√Ç¬Ç√É¬É√Ç¬Ç√É¬Ç√Ç¬ä\lambda \int_{\varepsilon}^{1/ \varepsilon} e^{-\lambda u} \P(\tau_0/m^2>u)du \\
= \lambda \int_{0}^{+ \infty} e^{-\lambda u}$ $ \P( \tau_0 > u \ell^2, \tau_0> \tau_{-\ell}^-)du$. So we need an asymptotic in $\ell$ of the tail probability  $ \P( \tau_0 > u \ell^2, \tau_0> \tau_{-\ell}^-)$.
%for that first notice that 
% \begin{align}
% \E \Big[e^{-\frac{c \sigma^2}{2m^2} \tau_0 }\un_{\tau_0 \leq \tau_{-m}^-}\Big]=  \E \Big[e^{-\frac{c \sigma^2}{2m^2} \tau_0 } \Big]- \E \Big[e^{-\frac{c \sigma^2}{2m^2} \tau_0 }\un_{\tau_0 > \tau_{-m}^-}\Big],
%\end{align}
%and 
%\begin{align*}
%\E \Big[e^{-\frac{c \sigma^2}{2m^2} \tau_0 }\un_{\tau_0 > \tau_{-m}^-}\Big]  & \geq \E \Big[e^{-\frac{c \sigma^2}{2m^2} \tau_0 }\un_{\tau_0 > \tau_{-m}^-} \un_{ \tau_0 \leq m^{1+ \varepsilon} } \Big]   
%& \geq \Big(1-\frac{Cte}{m^{1- \varepsilon}}\Big) \P \Big[{\tau_0 > \tau_{-m}^-},{ \tau_0 \leq m^{1+ \varepsilon} } \Big] \geq  \Big(1-\frac{Cte}{m^{1- \varepsilon}}\Big) \left ( \P \Big[{\tau_0 > \tau_{-m}^-} \Big]- \P({ \tau_0 > m^{1+ \varepsilon} }) \right)
%\end{align*}
%use following equality of \cite{Aidekon} : for large $y$, $\P(\tau_0> \tau_{-y})= \E(S_{\tau_0})/y + o(1/y)$.
Let us decompose this probability as follows
\begin{align}
\P( \tau_0 > z \ell^2, \tau_0< \tau_{-\ell}^-)&=\P( \tau_0 >   \tau_{-\ell}^-> z\ell^2)+\P( \tau_0 > z \ell^2,  \tau_{-\ell}^- \leq z \ell^2) \nonumber \\
&= \P(\tt_0^- >   \tt_{\ell} > z\ell^2)+\P( \tt_0 >z\ell^2,  \tt_{\ell} \leq z \ell^2)=:P_1+P_2. \label{56}
\end{align}
where $\tt_0^-:=\inf \{k>0,\ \tS_k < 0\}$ with for any $k$, $\tS_k =-S_k$ and similarly $\tt_{\ell}:=\inf \{k>0,\ \tS_k \geq \ell\}$. \\
For $P_2$, we just use Donsker's theorem for conditioned random walk to remain positive  obtain in \cite{Bolth} which gives $\lim_{\ell  \rightarrow +\infty} \P(\tt_{\ell} \leq z \ell^2|\tt_0 > z \ell^2)=\P(\overline{\mathfrak{m}}_1 > 1/\sigma \sqrt z )$, where $\mathfrak{m}$ is the Brownian meander and $\overline{\mathfrak{m}}_1= \sup_{s \leq 1} \mathfrak{m}_s$. 
 Also we know from Feller \cite{Feller} (see the first equivalence page 514 of  Caravenna \cite{Carav} for the expression we use here) that for any $z>0$ :
\begin{align}
 \lim_{\ell \rightarrow  \infty} \ell  \P( \tt_0 > z \ell^2)=\sqrt{\frac{2}{\pi}} \frac{\E(S_{\tau_0}) }{\sqrt{ z \sigma^2}},  \label{lim45}
\end{align}
so 
\begin{align}
 \lim_{\ell \rightarrow  \infty} \ell P_2=\sqrt{\frac{2}{\pi}} \frac{\E(S_{\tau_0}) }{\sqrt{ z \sigma^2}} \P(\overline{\mathfrak{m}}_1> 1/\sigma \sqrt z ).  \label{45p}
\end{align}
For $P_1$ we use a similar strategy, for any $A>x$, $\varepsilon>0$ and $\ell$ large enough
\begin{align}
P_1 & \leq  \P( z \ell^2 \leq \tt_\ell \leq  A \ell^2, \ \tt_0 > \tt_\ell)+  \P(\tt_0 > A \ell ^2) \nonumber  \\ %\leq \sum_{ k = xm ^2 }^{A m^2} \P(\tt_m=k, \ \tt_0 >k) + C / m*A^{1/2} \\
& \leq \sum_{ k = z\ell ^2 }^{A \ell^2} \P(\overline \tS_{k-1} \leq \ell,  \tS_{k} > \ell | \ \tt_0 >k)\P(\tt_0 >k)  + \P(\tt_0 > A \ell ^2)  \nonumber \\
& \leq (1+ \varepsilon)\sqrt{\frac{2}{\pi}} \frac{\E(S_{\tau_0}) }{{\ell \sigma}}  \sum_{ k = z\ell ^2 }^{A \ell^2}\P(\overline \tS_{k-1} \leq \ell,  \tS_{k} > \ell | \ \tt_0 >k) \frac{\ell}{k^{1/2}}  + \frac{C}{\ell A^{1/2}}, \nonumber
\end{align}
where we have used \eqref{lim45} for the last inequality and $C>0$ is a constant. Also functional limit theorem \cite{Bolth} implies  that $ \lim_{\ell \rightarrow +\infty} \sum_{ k = z\ell ^2 }^{A \ell^2}\P(\overline \tS_{k-1} \leq \ell,  \tS_{k} > \ell | \ \tt_0 >k) \frac{\ell}{k^{1/2}} = -\int_z^A \frac{1}{y^{1/2}} d\P(\overline{\mathfrak{m}}_1>1/\sqrt{y \sigma^2}) $. We deduce from that, taking limits $A \rightarrow +\infty$ and $\varepsilon \rightarrow 0$, 
\begin{align*}
& \lim_{\ell \rightarrow  \infty} \ell*P_1 \\ 
& \leq -\sqrt{\frac{2}{\pi}} \frac{\E(S_{\tau_0}) }{ \sigma}  \int_z^{+\infty} \frac{1}{y^{1/2}} d\P(\overline{\mathfrak{m}}_1>1/\sqrt{y \sigma^2})\\
 & =  \sqrt{\frac{2}{\pi}} \frac{\E(S_{\tau_0}) }{\sigma }\Big (  \frac{1}{z^{1/2}}\P(\overline{\mathfrak{m}}_1>1/\sqrt{z \sigma^2})  - \frac{1}{2}\int_z^{+\infty} \frac{1}{ y^{3/2}} \P(\overline{\mathfrak{m}}_1>1/\sqrt{y \sigma^2})dy \Big ). 
\end{align*}
Note that just by noticing that $P_1 \geq  \P( z \ell^2 \leq \tt_\ell \leq  A \ell^2, \ \tt_0 > \tt_\ell)$, above expression is also a lower bound for $\lim_{\ell \rightarrow  \infty} \ell*P_1$. Considering this, \eqref{45p} and \eqref{56}, we obtain 
\begin{align}
 \lim_{\ell \rightarrow  \infty} \ell  \P( \tau_0 > z \ell^2, \tau_0> \tau_{-\ell}^-) = \sqrt{\frac{2}{\pi}} \frac{\E(S_{\tau_0}) }{\sigma}f(z) \label{lim45b}
\end{align}
where $f$ is the function given in the statement of the Lemma. 
\noindent Note that this convergence is uniform on any compact set in $(0,\infty)$ by monotonicity of $z \rightarrow \ell  \P( \tau_0 > z \ell^2, \tau_0< \tau_{-\ell}^-)$, continuity of the limit and Dini's theorem. 
From here we follow the same lines of the proof of a Tauberian theorem (Feller \cite{Feller}) for completion we recall the main lines for our particular case.
%Let $dH_m(u)$ the measure defined by  $\P( \tau_0 > z m^2, \tau_0> \tau_{-m}^-)=\int_{z}^{\infty}dH_m(u) $, integration by part gives
%$ \int_{0}^{+ \infty}(1- e^{-\lambda u}) dH_m(u)  
%&= \left[  (1-e^{-\lambda/\varepsilon})\P(\tau_0/m^2>1/√É¬É√Ç¬É√É¬Ç√Ç¬É√É¬É√Ç¬Ç√É¬Ç√Ç¬É√É¬É√Ç¬É√É¬Ç√Ç¬Ç√É¬É√Ç¬Ç√É¬Ç√Ç¬ä\varepsilon)-(1-e^{-\lambda \varepsilon})\P(\tau_0/m^2>\varepsilon) \right]+√É¬É√Ç¬É√É¬Ç√Ç¬É√É¬É√Ç¬Ç√É¬Ç√Ç¬É√É¬É√Ç¬É√É¬Ç√Ç¬Ç√É¬É√Ç¬Ç√É¬Ç√Ç¬ä\lambda \int_{\varepsilon}^{1/ \varepsilon} e^{-\lambda u} \P(\tau_0/m^2>u)du \\
%= \lambda \int_{0}^{+ \infty} e^{-\lambda u}$ $ \P( \tau_0 > u m^2, \tau_0> \tau_{-m}^-)du$.
For any $\varepsilon>0$, by the uniform convergence we have talked about just above, 
\begin{align*}
\lim_{\ell \rightarrow + \infty} \ell \int_{\varepsilon}^{1/ \varepsilon}e^{-\lambda u} \P( \tau_0 > u \ell^2, \tau_0> \tau_{-\ell}^-)du=\sqrt{\frac{2}{\pi}} \frac{\E(S_{\tau_0}) }{\sigma} \int_{\varepsilon}^{1/ \varepsilon} {e^{- \lambda u} f(u)}du.
\end{align*}
By \eqref{lim45}, we also have for any $\ell$ and $z >0$, $  \P( \tau_0 > z \ell^2, \tau_0> \tau_{-\ell}^-) \leq \frac{Const}{z^{1/2} \ell}$ and as $\int_{0}^{+\infty}e^{-\lambda u} u^{-1/2}du<+\infty$, we get 
%\begin{align*}
$\lim_{\varepsilon  \rightarrow 0} \lim_{\ell \rightarrow +\infty} \int_{0}^{\varepsilon}e^{-\lambda u}\ell \P(\tau_0/\ell^2>u) =0$. 

\noindent Similarly $\lim_{\varepsilon  \rightarrow 0} \lim_{\ell \rightarrow +\infty} \int_{1/ \varepsilon }^{+\infty}e^{-\lambda u}\ell \P(\tau_0/\ell^2>u,\tau_0> \tau_{-\ell}^-)du =0. $
Finally  
\begin{align}
 \lim_{\ell \rightarrow +\infty } \ell \int_{0}^{+\infty}(1- e^{-\lambda u}) dH_\ell(u) &=  \lim_{\ell \rightarrow +\infty } \ell \E\left ( (1 - e^{\frac{-\lambda }{\ell^2}\tau_0})\un_{\tau_0>\tau_{-\ell}}\right) \nonumber \\
  & =\lambda \sqrt{\frac{2}{\pi}} \frac{\E(S_{\tau_0}) }{\sigma} \int_{0}^{+ \infty} {e^{- \lambda u} f(u)}du. \label{intf}
 \end{align}
 Note also that just by using \eqref{lim45} we also have $\lim_{\ell \rightarrow +\infty }  \ell\E [1-e^{-\frac{\lambda}{\ell^2} \tau_0 }  ] = {\sqrt{2 \lambda} \E(S_{\tau_0})}\sigma^{-1}$. Then collecting \eqref{ident}, \eqref{aid} and \eqref{intf} and taking $\lambda=c \sigma^2/2$  we obtain for $\ell$ large enough
\begin{align}
\E \left[e^{-\frac{c \sigma^2}{2\ell^2} \tau_0 }\un_{\tau_0 \leq \tau_{-\ell}^-}\right] = 1-  \frac{\E(S_{\tau_0}) }{\ell} \left(1+ {\sqrt{c}}-   \frac{c \sigma }{\sqrt{2\pi}} \int_{0}^{+ \infty} {e^{-\frac{c \sigma^2 u}{2}} f(u)}du\right) +o \Big (\frac{1}{\ell} \Big). \label{Ltr1}
\end{align}
To obtain an explicit expression for the above integral, we integrate by parts  
\begin{align*} 
&\int_{0}^{+ \infty} {e^{- \lambda u} f(u)}du \\
&=2 \int_{0}^{+ \infty} \frac{e^{- \lambda u} }{u^{1/2}} \P(\overline{\mathfrak{m}}_1>1/\sqrt{u \sigma^2}) du-\frac{1}{2 \lambda} \int_0^{+\infty} \frac{1}{u^{3/2}}  (1-e^{-\lambda u})\P(\overline{\mathfrak{m}}_1>1/\sqrt{u \sigma^2})  du, \end{align*}
then using the expression of $\P(\overline{\mathfrak{m}}_1>u):= -2\sum_{k=1}(-1)^k \exp(-(ku)^2/2),\ \forall u>0$, and elementary computations 
\begin{align} 
\int_{0}^{+ \infty} {e^{- \lambda u} f(u)}du =  2 \sqrt{\frac{\pi}{\lambda}}\Big ( \frac{1}{\sinh(\sqrt{2 \lambda}/ \sigma)}-\frac{e^{-\sqrt{2 \lambda}/ \sigma}}{\sinh(\sqrt{2 \lambda}/ \sigma)} \Big) - \frac{\sigma\sqrt{2 \pi}}{\lambda}\Big( \frac{\sqrt{2 \lambda}}{\sigma}-\log((e^{\sqrt{2 \lambda}/\sigma}+1)/2) \Big). \label{eq59}
\end{align}
Now we deal with the probability $\P(S_{T_k}\geq r)$ in the same way as  \cite{HuShi15b}. As $T_k$ can be written 	as a sum of i.i.d random variables with common law given by $\tau_0$, the exponential Markov property gives for any $\eta>0$, $\P(S_{T_k}\geq r) \leq e^{-\eta r} (\E(e^{\eta S_{\tau_0}}))^k$. Taking $k=(1- \varepsilon)r/\E(S_{\tau_0})$ we can find constants $c'$ and $c"$ such that $\P(S_{T_k}\geq r) \leq c' e^{- c" r}$ for any $r \geq 1$. So replacing this and \eqref{Ltr1} in \eqref{Eq8.1}, we finally get for any $m$ large enough
\begin{align*}
\E \left[e^{-\frac{c \sigma^2}{2\ell^2} \tau_{r} } \un_{\tau_{r} \leq \tau^{\bar S-S}_{\ell}} \right]  & \leq  \left(\E \left[e^{-\frac{c \sigma^2}{2\ell^2} \tau_0 } \un_{ \tau_0 \leq \tau_{-\ell}} \right] \right)^{k} + \P(S_{T_k}\geq r)  \\
& \leq \left(1-  \frac{\E(S_{\tau_0}) }{\ell} \Big(1+ {\sqrt{c}}-   \frac{c \sigma }{\sqrt{2\pi}} \int_{0}^{+ \infty} {e^{- \frac{c \sigma^2}{2} u} f(u)}du \Big) \right)^{(1- \varepsilon)r/\E(S_{\tau_0}) }+c' e^{- c" r},
%& \leq \left(1-\frac{\sqrt{c }\cdot \E(S_{\tau_0})}{m} \right)^{(1- \varepsilon)r/\E(S_{\tau_0}) }+c' e^{- c" r},
\end{align*}
which gives the upper bound.  \\
For the \textit{lower bound} the very beginning starts with the same spirit as the proof of Lemma A.2 in \cite{HuShi15b} : let $r_k=a*k$ for $ 0 \leq k \leq N:=  \frac{r}{a}$ and $a>0$ (chosen later) then 
\[\cap_{k=0}^N \{ \inf\{ i > \tau_{r_{k}}, S_i \geq r_{k+1} \}< \inf\{ i > \tau_{r_{k}}, S_i \leq r_{k}-\ell \}  \}  \subset \{ \tau_{r} \leq \tau^{\bar S-S}_{\ell} \}, \]
then, the strong Markov property gives
\begin{align}
\E \left[e^{-\frac{c \sigma^2}{2\ell^2} \tau_{r}} \un_{\tau_{r} \leq \tau^{\bar S-S}_{\ell}} \right] & \geq \Pi_{k=0}^N \E_{r_k} \left(e^{-\frac{c \sigma^2}{2\ell^2} \tau_{r_{k+1}}} \un_{ \tau_{r_{k+1}}<\tau_{r_k-\ell}^- }  \right) \nonumber \\
& =\Pi_{k=0}^N \E \left(e^{-\frac{c \sigma^2}{2\ell^2} \tau_{r_{k+1}-r_k}} \un_{ \tau_{r_{k+1}-r_k}<\tau_{-\ell}^- }  \right) \nonumber \\ 
& = \left( \E \left(e^{-\frac{c \sigma^2}{2\ell^2}  \tau_{a} } \un_{ \tau_{a}<\tau_{-\ell}^- }  \right) \right)^{N+1}.  \nonumber
\end{align}
 So we only need a lower bound for Laplace transform of the form $\E (e^{-h \tau_{a}} \un_{ \tau_{a}<\tau_{-\ell}^- }  )$, with $h=h(\ell) \rightarrow 0$. From here we follow the same lines as for the upper bound with following differences, $\tau_0$ (resp. $\tt_0^-$)  is replaced by $\tau_a$ (resp. by $\tt_{-a}^-$), also estimation \eqref{lim45} should be replaced by following one that can be found in \cite{AidShi} : there exists $0<\theta<+ \infty$ such that uniformly in $a \in [0, a_\ell]$ with $a_\ell=o(\ell^{1/2})$
  \begin{align*}
 \ell \P(\tt_{-a}^- \geq z \ell^2) \sim \frac{\theta R(a) }{\sqrt{ z }},
 \end{align*}
 for large $\ell$, where $R$ is the usual  renewal  function (see (2.3) in \cite{AidShi}) with following property (see (2.6) together with Lemma 2.1 in \cite{AidShi})
\begin{align}
\lim_{a \rightarrow \infty} \frac{R(a)}{a} =\frac{1}{\theta} \left(\frac{2}{\pi \sigma^2} \right)^{1/2}. \label{lesconstantes}
\end{align}
Now considering \eqref{56}, with the change we have just talked above, as for any $a>0$, $\lim_{\ell  \rightarrow +\infty} \P(\tt_{\ell} \leq z \ell^2|\tt_{-a}^- > z \ell^2)=\P(\overline{\mathfrak{m}}_1> 1/\sigma \sqrt z )$, we obtain 
\begin{align*}
 \lim_{m \rightarrow  \infty} \ell P_2= \lim_{\ell \rightarrow  \infty} \ell \P( \tt_{-a}^- > z \ell^2,  \tt_{\ell} \leq z \ell^2) = \frac{\theta R(a) }{\sqrt{ z }} \P(\overline{\mathfrak{m}}_1> 1/\sigma \sqrt z ),  
\end{align*}
similarly for $P_1= \P(\tt_a^- >   \tt_{\ell} > z\ell^2)$, for $\ell$ large enough and then taking the limit $A \rightarrow +\infty$
\begin{align*}
P_1 & \geq (1- \varepsilon)\frac{\theta R(a) }{\ell}  \sum_{ k = z\ell ^2 }^{A \ell^2}\P(\overline \tS_{k-1} \leq \ell,  \tS_{k} > \ell | \ \tt_a^- >k) \frac{\ell}{k^{1/2}} \\
&  \geq (1- 2 \varepsilon)\frac{\theta R(a) }{{ \ell}}    \int_z^{+\infty} \frac{1}{y^{1/2}} d\P(\overline{\mathfrak{m}}_1>1/\sqrt{y \sigma^2}).
\end{align*}
We then obtain the equivalent of \eqref{lim45b}, that is  $ \lim_{\ell \rightarrow  \infty} \ell  \P( \tau_{a} > z \ell^2, \tau_a> \tau_{-\ell}^-) = {\theta R(a) } f(z)$ from which we deduce following lower bound for associated Laplace transform :
\begin{align*}
% \lim_{m \rightarrow  \infty} m  \P( \tau_a > z m^2, \tau_a> \tau_{-m}^-) \geq \frac{\theta R(a) } {\sigma}f(z),
\lim_{\ell \rightarrow +\infty } m \E\left ( (1 - e^{\frac{-\lambda }{\ell^2}\tau_a})\un_{\tau_a>\tau_{-\ell}}\right) 
 = \lambda{\theta R(a) } \int_{0}^{+ \infty} {e^{- \lambda u} f(u)}du. 
 \end{align*}
In the same spirit $\lim_{\ell \rightarrow +\infty }  \ell\E [1-e^{-\frac{\lambda}{\ell^2} \tau_{-a} }  ] = \sqrt{ \lambda \pi } \theta R(a) $. Also first Lemma 2.2 in  \cite{Aidekon}  gives for any $a>0$ and any $\ell$  large $\P(\tau_{-a}> \tau_{-\ell}^-)= \P_{-a}(\tau_{0}> \tau_{-\ell-a}^-) \sim \E(-S_{\tau_{-a}})/\ell  $. So finally collecting these estimates and taking $\lambda=\sigma^2 c/2$, for any $\varepsilon>0$ and $\ell$ large enough
\begin{align*}
& \E \left[e^{-\frac{c \sigma^2}{2\ell^2} \tau_{r}} \un_{\tau_{r} \leq \tau^{\bar S-S}_{\ell}} \right] \\ 
& \geq 
 \left(1- \Big(\frac{\E(-S_{\tau_{-a}})}{\ell} + \frac{ \theta R(a) }{\ell } \Big( \sqrt{\frac{ \pi}{2}}\sigma \sqrt{c} - \frac{c \sigma^2}{2}\int_{0}^{+ \infty} {e^{- \frac{c \sigma^2}{2} u} f(u)}du  \Big)\Big) (1+ \varepsilon)   \right)^{N+1}.
\end{align*}
Now recall that $N=r/a$, so let us take $a$ large enough in such a way that (using \eqref{lesconstantes}) $R(a)/a \leq \frac{1}{\theta} \left(\frac{2}{\pi \sigma^2} \right)^{1/2} (1+ \varepsilon)$. Also for large $a$, $\E(-S_{\tau_{-a}})/a \leq (1+\varepsilon)$ (this can be seen easily, noticing that undershoot $S_{\tau_{-a}}-a$ has a second moment). This finishes the proof.
\end{proof}

\begin{lemm}\label{EspMaxLine}  For any $\varepsilon>0$, $\beta>0$, any $r$ large enough uniformly in $t=t(r)$ {with $\lim_{r \rightarrow + \infty} r-t=+\infty$}, 
\begin{align*} 
%\bo{e^{-2 \sqrt{r-t}(1+ \varepsilon)} \leq \E\left(e^{-\max_{1 \leq j \leq \tau_{r-t}} \overline{S}_j-S_j}\un_{\tau_{r-t} \leq \tau_{-B}^-} \right)\leq }
\E\left(e^{-\max_{1 \leq j \leq \tau_{r-t}} \overline{S}_j-S_j} \right) \leq e^{-2 \sqrt{r-t}(1- \varepsilon)}.
\end{align*}
%\bo{ce bout n'est pas utlile car on fait la minoration directement dans la preuve des th√É¬É√Ç¬É√É¬Ç√Ç¬É√É¬É√Ç¬Ç√É¬Ç√Ç¬©or√É¬É√Ç¬É√É¬Ç√Ç¬É√É¬É√Ç¬Ç√É¬Ç√Ç¬®mes.}
\end{lemm}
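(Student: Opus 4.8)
The quantity to control is $\E[e^{-\max_{1\le j\le\tau_{r-t}}(\overline S_j-S_j)}]$, and the target bound $e^{-2\sqrt{r-t}(1-\varepsilon)}$ is exactly the leading term that would come from Lemma~\ref{Laplace2} if we could convert the Laplace transform in the exponential downfall $\max_j(\overline S_j-S_j)$ into a Laplace transform in time $\tau_{r-t}$. The plan is to reduce the statement to Lemma~\ref{Laplace2} by a renewal/ladder-epoch decomposition similar to the one used in its proof. Write $\rho:=r-t\to\infty$. First I would introduce the strict ascending ladder epochs $(T_k)_k$ with $T_0=0$, $T_{k}=\inf\{i>T_{k-1}:S_i>S_{T_{k-1}}\}$, and note that between two consecutive records the running maximum is frozen, so that $\max_{1\le j\le\tau_\rho}(\overline S_j-S_j)$ is at least the largest downfall accumulated inside a single inter-record block before $S$ first exceeds $\rho$. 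More usefully, I would slice the level interval $[0,\rho]$ into $N:=\lceil\rho/a\rceil$ sub-levels $\rho_k=ak$ and observe that on the event that for each $k$ the walk goes from level $\rho_k$ up to $\rho_{k+1}$ before dropping by $a$ (i.e. the maximal downfall stays $\le a$ on the whole stretch up to $\tau_\rho$), we have $\max_{1\le j\le\tau_\rho}(\overline S_j-S_j)\le a$; but this is too crude for an exponential bound, so instead I would keep track of the downfall itself.

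The cleaner route: by the strong Markov property applied successively at $\tau_{\rho_1},\tau_{\rho_2},\dots$, and using that $e^{-\max_j(\overline S_j-S_j)}\le \prod_{k}e^{-(\text{downfall in block }k)}$ is \emph{false} in general (the max is not additive), I will instead dominate via
\[
\E\Big[e^{-\max_{1\le j\le\tau_\rho}(\overline S_j-S_j)}\Big]\le \E\Big[e^{-\max_{1\le j\le\tau_\rho}(\overline S_j-S_j)}\un_{\{\tau_\rho\le \tau^{\bar S-S}_\ell\}}\Big]+\P\big(\tau^{\bar S-S}_\ell<\tau_\rho\big),
\]
for a well-chosen barrier $\ell=\ell(\rho)$. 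On the first event the downfall never reaches $\ell$, which is a weak bound, so this decomposition alone does not give the $e^{-2\sqrt\rho}$ rate either; the actual gain must come from choosing $\ell$ of order $\sqrt\rho$ and summing a geometric-type series. Concretely, I would write, for any barrier $\ell$,
\[
\E\Big[e^{-\max_{1\le j\le\tau_\rho}(\overline S_j-S_j)}\Big]=\sum_{m\ge 1}\E\Big[e^{-\max_{1\le j\le\tau_\rho}(\overline S_j-S_j)}\un_{\{m-1\le \max_{j\le\tau_\rho}(\overline S_j-S_j)<m\}}\Big]\le \sum_{m\ge 1}e^{-(m-1)}\P\big(\max_{1\le j\le\tau_\rho}(\overline S_j-S_j)<m\big),
\]
and then invoke Lemma~A.3 of \cite{HuShi15b}, which gives $\P(\max_{j\le\tau_\rho}(\overline S_j-S_j)\le m)\le e^{o(1)}e^{-\rho\pi^2\sigma^2/(8m^2)}$ — wait, that is the wrong tail shape. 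The correct input is rather the precise asymptotics: $\P(\max_{j\le\tau_\rho}(\overline S_j-S_j)<m)$ behaves like $e^{-(1+o(1))\rho/m}$ as $\rho/m^2\to\infty$ (this is exactly what underlies the $e^{-r/\log n}$ estimate used repeatedly in the paper). Plugging this in,
\[
\E\Big[e^{-\max_{1\le j\le\tau_\rho}(\overline S_j-S_j)}\Big]\lesssim \sum_{m\ge 1}e^{-m-\rho/m},
\]
and the sum is dominated by its largest term, achieved near $m\asymp\sqrt\rho$, giving $e^{-2\sqrt\rho(1+o(1))}$ — but the wrong direction of the inequality for the $o(1)$. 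To get the clean upper bound $e^{-2\sqrt\rho(1-\varepsilon)}$ it suffices to truncate the sum at $m\le \rho$, bound each term by $e^{-2\sqrt\rho}$ up to a subexponential factor $\rho$, handle $m>\rho$ trivially by $\sum_{m>\rho}e^{-m}\le e^{-\rho}$, and absorb the polynomial factor into $e^{\varepsilon\sqrt\rho}$.

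The uniformity in $t$ with $\rho=r-t\to\infty$ is free because all estimates only involve $\rho$ and the constants in the tail asymptotics of $\P(\max_{j\le\tau_\rho}(\overline S_j-S_j)<m)$ are uniform. \textbf{The main obstacle} I anticipate is pinning down the precise form $\P(\max_{j\le\tau_\rho}(\overline S_j-S_j)<m)=e^{-(1+o(1))\rho/m}$ with an error uniform in $m$ over the relevant range $1\le m\le\rho$: one must quote (or re-derive from Lemma~A.3 in \cite{HuShi15b} and a ladder-epoch argument as in the proof of Lemma~\ref{Laplace2} above) a two-sided estimate, whereas for this lemma only the upper bound $\P(\max_{j\le\tau_\rho}(\overline S_j-S_j)<m)\le e^{-(1-\varepsilon)\rho/m}$ valid for $\rho/m$ large is actually needed, which is the easier half and follows by cutting $[0,\rho]$ into blocks of height comparable to $m$ and using independence of the block excursions together with $\P(\text{a given block overshoots by }\ge m\text{ before dropping }m)\le 1-c/m$. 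Once that estimate is in hand, the summation over $m$ is routine.
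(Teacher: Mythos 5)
Your proposal is correct and rests on the same ingredients as the paper's proof: the strict ladder-epoch decomposition into i.i.d.\ downfall blocks $Y_k$, the sharp tail $\P(\tau_0^+>\tau_{-y}^-)\sim\E(S_{\tau_0^+})/y$, the exponential Markov bound on $\P(S_{T_k}\geq r-t)$, and the optimization $\min_x\big(x+\tfrac{\rho}{x}\big)=2\sqrt{\rho}$ with $\rho=r-t$. The only difference is organizational: you run a discrete layer-cake over the value of the global maximum and invoke the uniform bound $\P(\max_{j\leq\tau_\rho}(\overline S_j-S_j)<m)\leq e^{-(1-\varepsilon)\rho/m}$, whereas the paper conditions on which ladder block realizes the maximum and integrates $e^{-x}(1-c/x)^{m-1}$ by parts against the law of a single $Y$ — the two computations are equivalent, and your correct observation that only the one-sided (upper) tail estimate with the sharp constant $\E(S_{\tau_0^+})$ is needed closes the one point where the argument could have leaked a constant.
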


\begin{proof} 
%\textit{Lower bound} is easily obtain just notice that
%\begin{align*}
%& \E\left(e^{-\max_{1 \leq j \leq \tau_{r-t}} \overline{S}_j-S_j}\un_{\tau_{r-t} \leq \tau_0} \right) \\
%& \geq \E\left(e^{-\max_{1 \leq j \leq \tau_{r-t}} \overline{S}_j-S_j} \un_{ \max_{1 \leq j \leq \tau_{r-t}} \overline{S}_j-S_j} \leq  \sqrt{r-t} , \tau_{r-t} \leq \tau_0\right) \\
%& \geq  e^{-\sqrt{r-t}} \P\left({ \max_{1 \leq j \leq \tau_{r-t}} \overline{S}_j-S_j} \leq  \sqrt{r-t},\tau_{r-t} \leq \tau_0 \right) \geq e^{-\sqrt{r-t}} e^{- \sqrt{r-t} (1+ \varepsilon)},
%\end{align*}
%where last inequality comes from \eqref{A.2_Hu_Shi}. \\ 
% \noindent The {upper bound} needs some more work
Like in the proof of  Lemma \ref{Laplace2} we use strict ladder epoch sequence $(T_k:= \inf\{s>T_{k-1}, S_s > S_{T_{k-1}}\},k;T_0=0)$, also let us introduce random variable $Y_k := \max_{T_{k-1} \leq j \leq T_k} \overline{S}_j-S_j$ for any $k\geq 1$. Let $m$ a positive integer to be chosen later, by the strong Markov property 
\begin{align*}
 \E\left(e^{-\max_{1 \leq k \leq m} Y_k} \right) 
 %e^{-(1+\varepsilon) (\E(S_{\tau_0}) m)^{1/2} }e^{-(1-\varepsilon)^{-1} (\E(S_{\tau_0}) m)^{1/2} }
 & = \sum_{k=1}^m \E(e^{-Y_k} \un_{Y_k> \max_{i \leq k-1}Y_i,\ Y_k \geq \max_{k+1\leq i \leq m } Y_i  } ) \\
 & \leq m\E(e^{-Y_2}(1- \P(Y_1>Y_2|Y_2))^{m-1}).
% & \leq m e^{- m^{1/2}(1-\varepsilon)}+ \sum_{k=1}^m \E(e^{-X_k} \un_{X_k> \max_{i \leq k-1}X_i,\ X_k \geq \max_{k+1\leq i \leq m } X_i  } ) 
\end{align*} 
At this point we need an asymptotic in $y$ of $M(y):=\P(Y_1>y)=\P(\max_{0 \leq s \leq T_0 }S_s <-y )=\P(\tau_0>\tau_{-y})$, for that we use following equality (see for example \cite{Aidekon} Lemma 2.2) : for large $y$, $\P(\tau_0> \tau_{-y})= \E(S_{\tau_0})/y + o(1/y)$. So for any large $A$, and $\varepsilon>0$
\begin{align*}
& e^{-Y_2}(1- \P(Y_1>Y_2|Y_2))^{m-1} \\
=&e^{-Y_2}(1- \P(Y_1>Y_2|Y_2))^{m-1}\un_{Y_2>A}+e^{-Y_2}(1- \P(Y_1>Y_2|Y_2))^{m-1}\un_{Y_2 \leq A} \\
\leq & e^{-Y_2}\left(1-{\E(S_{\tau_0})(1-\varepsilon)}{(Y_2)^{-1}}\right)^{m-1}\un_{Y_2>A}+(1- \P(Y_1>A))^{m-1},
\end{align*} 
For the second term above we can find constant $c=c(A)$ such that $(1- \P(Y_1>A))^{m-1}\leq e^{-c m}$. For the first term , let us introduce measure $dM$ defined as $M(x)=\int_x^{+\infty}dM(z)dz$, then integrating by parts 
\begin{align*}
 & \E(e^{-Y_2}\left(1-{\E(S_{\tau_0})(1-\varepsilon)}{(Y_2)^{-1}}\right)^{m-1}\un_{Y_2>A})  =  -\int_{A}^{+ \infty} e^{-x}\left(1-\frac{\E(S_{\tau_0})(1-\varepsilon)}{x}\right)^{m-1}dR(x) \\
& \leq  e^{-A} \Big(1-\frac{\E(S_{\tau_0})(1-\varepsilon)}{A}\Big)^{m-1} -\int_A^{+\infty}   e^{-x}\left(1-\frac{\E(S_{\tau_0})(1-\varepsilon)}{x}\right)^{m-1}R(x)dx \\
& -(m-1)S_{\tau_0}(1-\varepsilon) \int_A^{+\infty}   \frac{e^{-x}}{x^2}\left(1-\frac{\E(S_{\tau_0})(1-\varepsilon)}{x}\right)^{m-2}R(x)dx \\
& \leq e^{-2(1-4 \varepsilon) \sqrt{\E(S_{\tau_0}) m}},
%& \sum_{k=1}^m  \int_{0}^{+ \infty} e^{-x} \E( \un_{x> \max_{i \leq k-1}X_i,\ x \geq \max_{k+1\leq i \leq m } X_i  } )dR(x) \leq m  \int_{0}^{+ \infty} e^{-x} (1-R(x))^{m-1} dR(x) \\
%& = m  \Big(\int_{0}^{\varepsilon m^{1/2}} e^{-x} (1-R(x))^{m-1} dR(x)+\int_{\varepsilon m^{1/2}}^{+ \infty} e^{-x} (1-R(x))^{m-1} dR(x) \Big) \\
%&= m(I_1+I_2).
\end{align*} 
the last inequality is definitely not optimal but enough for what we need, we can obtain it easily decomposing the interval $(A,+\infty)$ on the intervals $(A,\sqrt{\E(S_{\tau_0})m}(1-\varepsilon))$, $(\sqrt{\E(S_{\tau_0})m}(1-\varepsilon),\sqrt{\E(S_{\tau_0})m}(1+\varepsilon))$ and $(\sqrt{\E(S_{\tau_0})m}(1+\varepsilon), +\infty)$. 
Collecting the above inequalities, we obtain that for any $\varepsilon>0$ and $m$ large enough 
\begin{align*}
 \E\left(e^{-\max_{1 \leq k \leq m} Y_k} \right) \leq  2 m e^{-2(1-4 \varepsilon) \sqrt{\E(S_{\tau_0}) m}}.\end{align*}  To finish the proof we follow the same lines as the end of the proof of Lemma \ref{Laplace2} (below \eqref{eq59}), that is saying that $\E\left(e^{-\max_{1 \leq j \leq \tau_{r-t}} \overline{S}_j-S_j} \right) \leq \E\left(e^{-\max_{1 \leq k \leq m} Y_k} \right)  + \P(S_{T_{k}} \geq r-t) $ then taking  $k=(1- \varepsilon)(r-t)/\E(S_{\tau_0})$.%\red{je n'ai pas ajuste les $\varepsilon$, cela reste implicite}.
 % & \leq m e^{- m^{1/2}(1-\varepsilon)}+ \sum_{k=1}^m \E(e^{-X_k} \un_{X_k> \max_{i \leq k-1}X_i,\ X_k \geq \max_{k+1\leq i \leq m } X_i  } ) 
%\red{Ce truc au-dessus m'√É¬É√Ç¬É√É¬Ç√Ç¬É√É¬É√Ç¬Ç√É¬Ç√Ç¬Ç√É¬É√Ç¬É√É¬Ç√Ç¬Ç√É¬É√Ç¬Ç√É¬Ç√Ç¬étonne un peu j'oublie un truc sans doute.}
%Then  
%for $I_1$, integrating by parts and noticing that we just notice that for $m$ large enough $I_1 \leq m(1-R(\varepsilon m^{1/2}))^{m-1} \leq e^{- \frac{ \E(S_{\tau_0}) m^{1/2}}{2\varepsilon}}$. For $I_2$
\end{proof}

\subsection{Additional technical estimates}

%Following Lemma and more especially inequality \eqref{lem4.1b} below is used when we ask for the behavior of heavy range together with high potential. 

\begin{lemm}\label{Minor_HRHP1}  
Let $(t_{\ell})$  a positive increasing sequence such that ${t_{\ell}\ell^{-1/2}}\to+\infty$ but ${t_{\ell}}{\ell^{-1}}  \to 0$. For any $B>0$ and $\ell$ large enough
\begin{align}
\P( \tau^{\overline S-S}_{{\ell^{1/2}}} \vee {\tau}_{-B}^- > \tau_{t_{\ell}}) \geq  e^{- \frac{t_{\ell}}{\sqrt \ell}(1+o(1)) } .\label{A.2_Hu_Shi}
\end{align}
 %of $H_j^S= \sum_{i=1}^j e^{S_i-S_j}$, 
Let $A>0$ large, $d\in(0,1/2)$, $a>0$, $0<b < 1$, $q \in[b,1]$, $a_b:=a(2\un_{q>b}-1)$ and $c>0$
\begin{align}
 &\sum_{{j\leq A\ell^{3/2}}}  \P\big(S_j \geq t_{\ell}, \sup_{m \leq j} H_m^S \leq e^{{q}\sqrt{\ell}- a_b{\tlp}}, {e^{b \sqrt \ell} \leq H_j^S \leq e^{b \sqrt \ell+c {\tlp}  },\underline{S}_j \geq -B }\big) \geq e^{-{\frac{t_{\ell}}{q\sqrt{\ell}}}(1+o(1))}.  \label{lem4.1b}
\end{align}
%\DO{On a besoin de prendre $q=b$ dans la preuve des th√É¬É√Ç¬©or√É¬É√Ç¬®mes. Du coup, on peut mettre $q\sqrt{\ell}-a_b\ell^d$ avec $q\in[b,1]$ et $a_b=:a(2\un_{\{q>b\}}-1)$ et $a>0$ a la place ?} 

%let $A>0$ and $(a_{\ell})$ a positive increasing sequence such that   $\log (A \ell^{2}) :=o(a_{\ell} \sqrt \ell)$ and $\lim_{\ell \rightarrow + \infty} a_{\ell}= 0$, for any $0<b<1$, \bo{$b\leq q\leq 1$, $\varepsilon>0$} and $n$ large enough
%\begin{align}
% &\sum_{\bo{j\leq A\ell^{3/2}}}  \P\big(S_j \geq t_{\ell}, {e^{b \sqrt \ell} \leq H_j \leq e^{b \sqrt \ell+\bo{\varepsilon}\ell^d}, \sup_{m \leq j} H_m \leq e^{(\bo{q}+ a_{\ell} )\sqrt{\ell}},\underline{S}_j \geq -B }\big) \geq e^{-\bo{\frac{t_{\ell}}{q\sqrt{\ell}}}(1+o(1))}.  \label{lem4.1b}
%\end{align}
%\bo{encore vrai?}
\end{lemm}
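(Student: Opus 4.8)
\textbf{Proof strategy for Lemma~\ref{Minor_HRHP1}.}
The plan is to treat the two inequalities separately, the first being essentially a variant of Lemma A.2 in \cite{HuShi15b} while the second is the real point and will be derived from the first. For \eqref{A.2_Hu_Shi}, I would follow the ladder-epoch decomposition: write $r=t_\ell$, cut the interval $[0,r]$ into $N=\lceil r/a_\ell\rceil$ blocks of height $a_\ell$ with $a_\ell=\ell^{1/2}/(\log\ell)$ say (so $a_\ell=o(\ell^{1/2})$ but $a_\ell\to\infty$), and observe that on the event that within each block the walk climbs from $r_k$ to $r_{k+1}$ before going down by $\ell^{1/2}$ \emph{and} before going below $-B$, one has $\tau^{\overline S-S}_{\ell^{1/2}}\vee\tau^-_{-B}>\tau_r$. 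By the strong Markov property this probability is bounded below by a product of $N$ one-block probabilities $\Pb_{r_k}(\tau_{r_{k+1}}<\tau^-_{r_k-\ell^{1/2}}\wedge\tau^-_{-B})$; for the blocks with $k\geq 1$ the barrier at $-B$ is irrelevant since the walk starts at $r_k\geq a_\ell$, and each factor is $\geq \Pb(\tau_{a_\ell}<\tau^-_{-\ell^{1/2}})\geq (1-o(1))$ uniformly, using \eqref{aid} (Lemma 2.2 of \cite{Aidekon}) which gives $\Pb(\tau^+_0>\tau^-_{-\ell^{1/2}})\sim \E(S_{\tau^+_0})/\ell^{1/2}$ together with $a_\ell=o(\ell^{1/2})$. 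For the zeroth block I additionally use $\Pb(\underline S_{\tau_{a_\ell}}\geq -B)$ which is a fixed positive constant. Taking the product, $(1-c/\ell^{1/2})^{N}\geq \exp(-(1+o(1))r/\ell^{1/2})$ since $N a_\ell\sim r$ and $a_\ell/\ell^{1/2}\to 0$; the extra constant from the zeroth block disappears in the $o(1)$ because $r/\ell^{1/2}\to\infty$.

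For \eqref{lem4.1b}, the idea is that the constraint $\sup_{m\le j}H^S_m\le e^{q\sqrt\ell-a_b\ell^d}$ is, up to the subleading $\ell^d$ correction, a reflecting-barrier constraint $\overline S_m-S_m\le q\sqrt\ell$, while $e^{b\sqrt\ell}\le H^S_j\le e^{b\sqrt\ell+c\ell^d}$ forces the final downfall $\overline S_j-S_j$ to sit in a window of width $c\ell^d$ around $b\sqrt\ell$ (more precisely, when $\overline S_j=S_{\tau}$ for the last record time, $H^S_j\approx e^{\overline S_j-S_j}$). So I would look for a trajectory that: (i) during $[0,\tau_{t_\ell}]$ stays above $-B$ and keeps $\overline S_m-S_m$ below $q\sqrt\ell - a_b\ell^d - 1$ (so that $\max_m H^S_m$, which is at most $m\,e^{\overline S_m-S_m}\le A\ell^{3/2}e^{\overline S_m-S_m}$, stays below $e^{q\sqrt\ell-a_b\ell^d}$ for $\ell$ large — here the polynomial factor $A\ell^{3/2}$ is absorbed into the $\ell^d$ slack since $d>0$); then (ii) after reaching level $t_\ell$, performs a controlled descent so that at the terminal time $j$ one has $\overline S_j - S_j$ within $[b\sqrt\ell, b\sqrt\ell + c\ell^d/2]$ and $\overline S_j=S_j+(\text{that downfall})$ with $S_j$ still large, and $j\le A\ell^{3/2}$. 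Step (i)'s probability is handled exactly as \eqref{A.2_Hu_Shi} with $\ell^{1/2}$ replaced by $q\sqrt\ell$ (rescaling $\ell\mapsto q^2\ell$ in that estimate), giving $e^{-(1+o(1))t_\ell/(q\sqrt\ell)}$; the descent in step (ii) has probability bounded below by a constant depending only on $b,c,q$ (a Brownian-type event: starting from a record, go down by roughly $b\sqrt\ell$ in order $\ell$ steps while staying in the prescribed band and not creating a new record that violates the $q\sqrt\ell$ constraint, then stop), which again is swallowed by $o(1)$ because $t_\ell/\sqrt\ell\to\infty$. Multiplying, and noting the sum over $j$ only helps (one valid $j$ suffices), yields the lower bound $e^{-(1+o(1))t_\ell/(q\sqrt\ell)}$.

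The bookkeeping to watch is the interplay of the three exponential scales $q\sqrt\ell$, $b\sqrt\ell$ and the correction $\ell^d$, together with the role of $a_b=a(2\un_{q>b}-1)$: when $q>b$ the barrier $e^{q\sqrt\ell-a\ell^d}$ is \emph{lowered} by $a\ell^d$ and one must check the descent band $[b\sqrt\ell,b\sqrt\ell+c\ell^d]$ still fits comfortably below it (it does, since $q>b$ gives a gap of order $\sqrt\ell\gg\ell^d$), whereas when $q=b$ the barrier is \emph{raised} to $e^{b\sqrt\ell+a\ell^d}$ precisely so that the target window $H^S_j\in(e^{b\sqrt\ell},e^{b\sqrt\ell+c\ell^d}]$ sits strictly inside $\{\max_m H^S_m\le e^{b\sqrt\ell+a\ell^d}\}$ provided $c<a$ — this is an implicit constraint the applications respect (e.g. in the proof of Theorem~\ref{thm3} one takes $a_b=-a$ i.e. $q=b$ with $a=\varepsilon/(3b)=c$, borderline, handled by enlarging $a$ slightly).

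\textbf{Main obstacle.} The delicate point is making the ``controlled descent'' in step (ii) rigorous while \emph{simultaneously} keeping the reflecting-barrier constraint from step (i) active: once the walk has reached $t_\ell$ and begins to descend, its running maximum is frozen at $\ge t_\ell$, so the event $\overline S_m-S_m\le q\sqrt\ell$ becomes a genuine lower barrier on $S$ during the descent, and one needs the descent to reach depth $\approx b\sqrt\ell$ without the walk dipping a further $(q-b)\sqrt\ell$ below. When $q=b$ this is a zero-probability-in-the-limit squeeze and the $\ell^d$ slack is what saves it, so the estimates in \cite{AidShi}, \cite{Bolth} on conditioned walks and meanders must be invoked at the right scale. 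I would isolate this as a short sub-lemma (descent probability bounded below by a positive constant), proved by a Donsker/meander argument on the time-reversed walk exactly as in the $P_1,P_2$ analysis inside the proof of Lemma~\ref{Laplace2}, and then the rest is routine multiplication of independent blocks.
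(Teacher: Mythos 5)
Your strategy is essentially the paper's own proof: \eqref{A.2_Hu_Shi} is obtained exactly as Lemma A.2 of \cite{HuShi15b} by the block/ladder decomposition, and \eqref{lem4.1b} by reducing the $H^S$-constraints to reflecting-barrier constraints on $\overline{S}_m-S_m$ (absorbing the polynomial factor $A\ell^{3/2}$ into the $\ell^d$ slack, as you note), then splitting the trajectory at the last record time into a climb handled by \eqref{A.2_Hu_Shi} and a descent into the window $[b\sqrt{\ell},b\sqrt{\ell}+c'\ell^d]$ handled by conditioned-walk/meander estimates (Caravenna--Chaumont and Caravenna), with the prefactor swallowed by the $o(1)$ in the exponent.

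Two small corrections to your sketch, both harmless: the climb in step (i) must target level $t_\ell+b\sqrt{\ell}+O(\ell^d)$ rather than $t_\ell$, otherwise the descent by $\approx b\sqrt{\ell}$ leaves $S_j<t_\ell$ (the paper requires $S_k\geq t_\ell+b\sqrt{\ell}+c'\ell^d+r$ at the record time, and this costs nothing since $b\sqrt{\ell}+c'\ell^d=o(t_\ell)$); and the descent probability is not a constant but of order $\ell^{d-1}\geq C/\ell$ (positivity of the reversed walk over $\sim\ell$ steps costs $\ell^{-1/2}$ and landing in a window of width $c'\ell^d\ll\sqrt{\ell}$ costs another $\ell^{d-1/2}$), which is still absorbed by the same argument you give.
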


%\red{Voici les deux expressions, exactement comme elles apparaissent dans la preuve des th√É¬É√Ç¬©or√É¬É√Ç¬®mes:\begin{align*}
%    \sum_{k\geq 1}\Pb(S_k\geq \alpha_n,\max_{j\leq k}H_j^S\leq\bmlambda, n^b<H_k^S<n^be^{\varepsilon(\log n)^{\alpha-1}},\underline{S}_k\geq-B)
%\end{align*}
%et
%\begin{align*}
%     \sum_{k\leq\lfloor A\ell_n\rfloor}\Pb(S_k\geq \alpha_n,\max_{j\leq k}H_k^S\leq n^be^{\frac{\varepsilon}{3b}(\log n)^{\alpha-1}}, H_k^S>n^b,\underline{S}_k\geq-B)
%\end{align*}
%avec $b>0$, $\ell_n=(\log n)^3$, $\alpha_n=(\log n)^{\alpha}+\log n$, $\bmlambda=ne^{-6(\log n)^{\alpha-1}}$.}

\begin{proof} 
The proof of \eqref{A.2_Hu_Shi} follows the same lines as the proof of Lemma A.2 in \cite{HuShi15b}. %\red{[attention reference doit etre mise \` a jour -> pas d'autre mise \`a jour, article non publi\'e ?]} \\
For \eqref{lem4.1b}, as $j \leq A \ell^{3/2}$, for any $(d,e)$ and any $m \leq j$, $A \ell^{3/2} \exp(\overline S_m-S_m) \leq e^{d \sqrt \ell+e \tlp}$ implies $H_m^S \leq e^{d \sqrt \ell+e \tlp}$ then 
\begin{align*} 
&   \P\left[  S_j \geq t_{\ell}, {e^{b \sqrt \ell} \leq H_j^S \leq e^{b \sqrt \ell+c \tlp}, \sup_{m \leq j} H_m^S \leq e^{q*\sqrt{\ell}-a_b \tlp},\underline{S}_j \geq -B } \right] \\
& \geq \P\left[  S_j \geq t_{\ell}, {{b \sqrt \ell} \leq \overline S_j-S_j  \leq b\sqrt\ell+c' \tlp, \sup_{m \leq j} \overline S_m-S_m \leq q{\sqrt{\ell}}-a'{\tlp},\underline{S}_j \geq -B } \right] 
%& \geq \frac{1}{e^{b(1+ 2\varepsilon_n)\sqrt{n}}} \P\left[ {b \sqrt n  \leq \overline S_j-S_j \leq b (1+ \varepsilon_n) \sqrt{n}, \sup_{m \leq j} (\overline S_m-S_m) \leq \sqrt{n}, S_j \geq t_n,\underline{S}_j \geq -B } \right].
\end{align*} 
with $c'=c/2$ and $a'=a_b+1$. To obtain a lower bound for the above probability, the idea is to say that maximum of $S$ is obtained at a certain instant  $k \leq j$ and that this maximum is larger than  $t_{\ell}+b\sqrt\ell+c'\tlp+r$ for a certain $r>0$ to be chosen latter, then above probability is larger than : 
\begin{align*}
\sum_{k \leq j} &\P(\overline{S}_{k-1}<S_k, S_k \geq t_{\ell}+b\sqrt\ell+c'\tlp+r,\sup_{m \leq k} \overline{S}_m-S_m \leq \sqrt{\ell}-a'\tlp, \underline{S}_k \geq -B; S_j- S_k \geq t_{\ell}-S_k, \nonumber \\
& b \sqrt \ell \leq S_k-S_j \leq  b\sqrt\ell+c'\tlp,  \forall m \geq k+1, S_m \leq S_k, S_k-S_m \leq \sqrt{\ell}-a'\tlp , S_m -S_k \geq -B - S_k ). %\label{sum4.1}
\end{align*}
Now, the events $ \{S_m -S_k \geq -B - x\}$, as well as $\{S_j- S_k \geq t_{\ell}-x\}$ increases in $x$ and as $S_k \geq t_{\ell}+b\sqrt\ell+c'\tlp+r$ so we can replace, in the two events of the above probability,\textrm{ \guillemotleft$-S_k$\guillemotright } by $-(t_{\ell}+b\sqrt\ell+c'\tlp+r)$. This makes appear two independent events, so above probability is larger than 
\begin{align}
& \P(\overline{S}_{k-1}<S_k, S_k \geq t_{\ell}+b\sqrt\ell+c'\tlp+r, \sup_{m \leq k} \overline{S}_m-S_m \leq \sqrt{\ell}-a'\tlp, \underline{S}_k \geq -B)  \times  \nonumber \\
& \P( S_j- S_k \geq -b\sqrt\ell-c'\tlp-r, b \sqrt \ell  \leq S_k-S_j \leq  b\sqrt\ell+c'\tlp, \forall m \geq k+1, \nonumber \\  
&  -B - t_{\ell}-b\sqrt\ell+c'\tlp-r \leq  S_m -S_k \leq 0,    S_m-S_k \geq - \sqrt{\ell}+a'\tlp  )=:p_1(k)*p_2(k,j). \label{eq29}
\end{align}
Probability $p_2$ can be easily simplified, indeed as $\lim_{\ell \rightarrow + \infty} t_{\ell}/\sqrt \ell=+ \infty$ and $\ell$ large, $ -B - t_{\ell}-b\sqrt\ell+c'\tlp-r  \leq - \sqrt{\ell}$ and by taking $r=c' \tlp $, $p_2$ is smaller than  
\begin{align*}
 & \P(-b\sqrt\ell-c'\tlp  \leq S_j-S_k \leq  -b \sqrt{\ell}, \forall m \geq k+1, - \sqrt{\ell}+ a'\tlp \leq  S_m -S_k \leq 0 ) \\
 =& \P( \forall m \leq j-k , - \sqrt{\ell}+a'\tlp \leq  S_m  \leq 0,-b\sqrt\ell-c'\tlp \leq S_{j-k} \leq  -b \sqrt{\ell} ) \\
 =&\P( \forall m \leq j-k,\  \tS_m  \leq  \sqrt{\ell}-a'\tlp | \underline{\tS}_{j-k} \geq 0,  \tS_{j-k} \in  [b \sqrt{\ell},b\sqrt\ell+c'\tlp] ) \times \\ 
  & \P( \underline{\tS}_{j-k} \geq 0,  \tS_{j-k} \in  [b \sqrt{\ell},b\sqrt\ell+c'\tlp] ),
\end{align*}
with $\tS_m=-S_m$ for any $m$.  For the conditional probability we can use a similar result proved by Caravenna and Chaumont \cite{CaraChau2} telling that the distribution $\P_x(\cdot | \forall m \leq n, S_m \geq 0, S_n \in[0,h)) $ converges. Note that they need in their work additional hypothesis on the distribution of $S_1$ (more especially absolute continuity of the distribution of $S_1$) which is not necessary here as the size of interval  $[b \sqrt{\ell},b\sqrt\ell+c'\tlp] $ equals $c'\tlp \rightarrow + \infty$. So as $a'\tlp =o (\sqrt \ell)$
\begin{align*}
\lim_{\ell \rightarrow + \infty}\P( \forall m \leq \ell ,   \tS_m  \leq  c \sqrt \ell - a'\tlp|  \underline{\tS}_{\ell} \geq 0,   \tS_{\ell}  \in [b \sqrt{\ell},b\sqrt\ell+c'\tlp] )=Cte>0. %\textbf{[!! il faut expliciter un peu]} %\P(\overline R_1 \leq c/ \sigma | R_1=a/ \sigma ) \textbf{}.
\end{align*}
Moreover another work of  Caravenna (\cite{Carav} Theorem 1)  gives for large $\ell$,  $\P(  \underline{\tS}_{\ell} \geq 0,   \tS_{\ell}  \in [b \sqrt{\ell},b\sqrt\ell+c'\tlp ) \geq   b /{\ell}$. So finally when $j-k$ is of the order of $\ell$, there exists a constante $Cte>0$ such that  $p_2(k,j) \geq Cte* \ell^{-1}$. %\textbf{[check this constant and its dependence in $\varepsilon$]}. 
Turning back to \eqref{eq29} and summing over $k$ and $j$, we obtain
%and then to the sum in \eqref{sum4.1} and finally taking the sum on $j$ : 
\begin{align*}
 & \sum_{j \leq A \ell^{3/2}} \sum_{k \leq j} p_1(k)p_2(k,j) \\ 
 & = \sum_{k \leq A \ell^{3/2}} p_1(k) \sum_{j \geq  k }p_2(k,j) \geq \sum_k p_1(k) \sum_{j, j-k \sim \ell }p_2(k,j) \\ 
%& \times \sum_{j \geq k}  \P( \forall m \leq j-k , 0 \leq  \tS_m  \leq  \sqrt{n} ,  \tS_{j-k}-\log b- \log j  \in [0,\log  (h/j)  ] ) \\
%& \geq  \sum_{k  \geq 0}  \P(\overline{S}_{k-1}<S_k, S_k \geq t_n+b+r, \underline{S}_k \geq -B, \sup_{m \leq k} \overline{S}_m-S_m \leq \sqrt{n}) \\ 
%& \times \sum_{j-k \sim n}  \P( \forall m \leq j-k , 0 \leq  \tS_m  \leq  \sqrt{n} ,  \tS_{j-k}-\log b- \log j  \in [0,\log  (h/j)  ] ) \\
%& \P(\overline{S}_{k-1}<S_k, S_k \geq t_n+b (1+ \varepsilon_n) \sqrt{n}+r, \sup_{m \leq k} \overline{S}_m-S_m \leq \sqrt{n}, \underline{S}_k \geq -B)\\
& \geq \frac{Cte}{\ell}  \sum_{k  \leq  A \ell^{3/2}}  \P(\overline{S}_{k-1}<S_k, S_k \geq t_{\ell}+b (1+ 2\varepsilon_{\ell})\sqrt \ell, \sup_{m \leq k} \overline{S}_m-S_m \leq \sqrt{\ell},  \underline{S}_k \geq -B)  \\
&  \geq \frac{Cte}{\ell} \Big(\P( \tau^{\overline S-S}_{\sqrt{\ell}} \vee \underline{\tau}_{-B} > \tau_{t_{\ell}+b\sqrt\ell+c'\tlp})- \sum_{k  >  A \ell^{3/2}}  \P( \sup_{m \leq k} \overline{S}_m-S_m \leq \sqrt{\ell}) \Big)
\end{align*}
Now we can check that above sum $\sum_{k  >  A \ell^{3/2}} \cdots$ as a negligible contribution, indeed the probability $\P( \sup_{m \leq k} \overline{S}_m-S_m \leq \sqrt{\ell})$ is smaller, thanks to Proposition 3.1 in \cite{HuShi10b},  to $e^{-\pi^2 \sigma^2 j/4 \ell}$ this implies that $ \sum_{k  >  A \ell^{3/2}}  \P( \sup_{m \leq k} \overline{S}_m-S_m \leq \sqrt{\ell})   \leq e^{-\pi^2 \sigma^2 A \ell^{1/2}/2} $.
Now if we apply \eqref{A.2_Hu_Shi} to the first probability above as $b\sqrt\ell+c'\tlp=o(t_{\ell})$, this finishes the proof.%$b (1+ 2\varepsilon_n)\sqrt n=o(t_n)$, this finish the proof. \textbf{[Attention aux sommes ... celle sur k doit aller \`a l'infini.]}
\end{proof}

\noindent Lemma below is a simple extension of FKG inequality. %\red{[Je crois que Shi l'utilise juste en disant "FKG" dans le papier  "high potential" il me semble, \`a voir mais on laisse pour l'instant.]} \red{\underline{r√É¬É√Ç¬©ponse A}: effectivement, Shi l'utilise sous le nom de "FKG" mais √É¬É√Ç¬ßa ne me paraissait pas tr√É¬É√Ç¬®s clair car la vraie "FKG" s'applique seulement pour une mesure sur les bor√É¬É√Ç¬©liens de $\R$ $\mathcal{B}(\R)$ alors que nous souhaitons l'appliquer √É¬É√Ç¬† la loi de $(S_i)_{i\geq 1}$ qui est une mesures sur $\mathcal{B}(\R)^{\otimes\N}$ et pour cela on peut passer par une extension "triviale" de FKG pour un produit fini de mesures.} \\

\noindent In the following, a function $F:\R^k\longrightarrow \R$ is said to be non-decreasing if: for all $\mathbf{s}=(s_1,\ldots,s_k)\in\R^k$ and $\mathbf{t}=(t_1,\ldots,t_k)\in\R^k$, $\mathbf{s}\leq_k \mathbf{t}$ implies $F(\mathbf{s})\leq F(\mathbf{t})$ where $\mathbf{s}\leq_k \mathbf{t}$ if and only if $s_j\leq t_j$ for all $j\in\{1,\ldots,k\}$.

\begin{lemm}\label{LemmFKG}
Let $r>0$, $k\in\N^*$, $f_1,f_2:\R^k\longrightarrow\R^+$. For any $i\in\{1,2\}$, introduce $\Tilde{f}_i(u_1,\ldots,u_k):=f_i(u_1,u_1+u_2,\ldots,u_1+u_2+\ldots+u_k)$. If $\Tilde{f}_1$ and $\Tilde{f}_2$ are non-decreasing then
\begin{align*}
    \E\big[f_1(S_1,S_2,\ldots,S_k)f_2(S_1,S_2,\ldots,S_k)\big] \geq \E\big[f_1(S_1,S_2,\ldots,S_k)\big]\E\big[f_2(S_1,S_2,\ldots,S_k)\big].
\end{align*}
\end{lemm}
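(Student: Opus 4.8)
The statement to prove is Lemma~\ref{LemmFKG}, an extension of the classical FKG/Harris inequality to functionals of a random walk. The plan is to reduce it to the standard FKG inequality for product measures on $\R^k$.

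\textbf{Step 1: reformulate in terms of increments.} Write $S_j = \sum_{i=1}^j X_i$ where $X_i := S_i - S_{i-1}$ are i.i.d. Then $f_i(S_1,\ldots,S_k) = \widetilde f_i(X_1,\ldots,X_k)$ by the very definition of $\widetilde f_i$ given in the statement. So the claimed inequality is exactly
\begin{align*}
    \E\big[\widetilde f_1(X_1,\ldots,X_k)\widetilde f_2(X_1,\ldots,X_k)\big]\geq \E\big[\widetilde f_1(X_1,\ldots,X_k)\big]\,\E\big[\widetilde f_2(X_1,\ldots,X_k)\big],
\end{align*}
where now the underlying law is the product measure $\mu^{\otimes k}$ on $\R^k$, $\mu$ being the law of $X_1$, and $\widetilde f_1,\widetilde f_2$ are, by hypothesis, coordinatewise nondecreasing and nonnegative.

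\textbf{Step 2: invoke the FKG inequality for product measures.} A product measure on $\R^k$ (more generally on any product of totally ordered spaces) is a positively associated measure: for any two bounded measurable functions that are both nondecreasing with respect to the coordinatewise partial order $\leq_k$, their covariance is nonnegative. This is Harris' inequality; one standard route is induction on $k$, the base case $k=1$ being the elementary fact that $\E[(g(X)-g(X'))(h(X)-h(X'))]\geq 0$ for $X,X'$ i.i.d.\ and $g,h$ monotone in the same direction, and the inductive step being obtained by conditioning on $X_k$ and using that $x_k\mapsto \E[\widetilde f_i(X_1,\ldots,X_{k-1},x_k)\mid X_k=x_k]$ is again nondecreasing together with Fubini. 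A mild technical point: the functions here are only assumed bounded (recall $\phi$ in the paper is bounded, and in the applications $\widetilde f_i$ are indicators or products thereof, hence bounded), so no integrability issue arises; if one wished to allow unbounded nonnegative $\widetilde f_i$ one would truncate at level $N$, apply the bounded case, and pass to the limit by monotone convergence on each of the three expectations.

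\textbf{Step 3: conclude.} Applying Step 2 with $g=\widetilde f_1$, $h=\widetilde f_2$ gives $\Cov(\widetilde f_1,\widetilde f_2)\geq 0$ under $\mu^{\otimes k}$, which is precisely the reformulated inequality of Step 1, and translating back via $S_j=\sum_{i\le j}X_i$ yields the statement. I do not expect a genuine obstacle here: the only thing to be careful about is matching the hypothesis correctly — the lemma assumes monotonicity of $\widetilde f_i$ (the functions of the \emph{increments}), not of $f_i$ (the functions of the \emph{walk}), which is exactly what FKG for the product law requires, so the reduction is clean. The ``hard part'', such as it is, is merely stating the FKG/Harris lemma in the precise form needed and noting it applies to product measures on $\R^k$; everything else is bookkeeping.
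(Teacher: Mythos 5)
Your proof is correct and follows essentially the same route as the paper: both rewrite the functionals in terms of the i.i.d.\ increments $S_i-S_{i-1}$, so that the claim becomes Harris's inequality for a product measure on $\R^k$ with coordinatewise nondecreasing functions, and both establish it by conditioning on one increment, applying the one-dimensional FKG/Chebyshev inequality, and inducting on $k$. Your extra remark on truncation for unbounded nonnegative functions is a harmless refinement the paper omits.
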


\begin{proof}
When $\R^k$ is a totally order set, the first inequality above is the well known regular FKG inequality. Here, we can easily extend it to the partial order $\leq_k$. Indeed, since $\Tilde{f}_i$ is non-decreasing for any $i\in\{1,2\}$, we have, by independence of increments of $S$
\begin{align*}
    \prod_{i\in\{1,2\}}\E\big[f_i(S_1,S_2,\ldots,S_k)\big]=\prod_{i\in\{1,2\}}\E\big[\Tilde{f}_i(S_1,S_2-S_1,\ldots,S_k-S_{k-1})\big]=\E[\mathbf{F}_1(S_1)]\E[\mathbf{F}_2(S_1)],
\end{align*}
with $\mathbf{F}_i(u_1):=\E\big[\Tilde{f}_i(u_1,S_2-S_1,\ldots,S_k-S_{k-1})\big]$ for any $i\in\{1,2\}$. Since $\Tilde{f}_i$ is non-decreasing, $\mathbf{F}_i$ is also non-decreasing so thanks to the regular FKG inequality, $\E[\mathbf{F}_1(S_1)]\E[\mathbf{F}_2(S_1)]\leq \E[\mathbf{F}_1\mathbf{F}_2(S_1)]$. Again, using that the increments of $S$ are independent and stationary, the result follows by induction.
\end{proof}

\begin{lemm}\label{MinorRangeHP2}
Let $(t_{\ell})$ a sequence of positive numbers such that $t_{\ell}/\ell\to 0$. For all $d\in(0,1/2]$ such that $t_{\ell}/\ell^{d}\to+\infty$ and all $\varepsilon,B>0$, $a\geq 0$ and $0\leq d'<d$ for $n$ large enough
\begin{align*}
    \sum_{k\leq\ell^2}\P\big(S_k\geq t_{\ell}, \max_{j\leq k}H_j^S\leq e^{\ell^d-a\ell^{d'}},\underline S_k\geq-B,\overline{S}_k=S_k\big)\geq e^{-\frac{t_{\ell}}{\ell^d}(1+\varepsilon)}.
\end{align*}
\end{lemm}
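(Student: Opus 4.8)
The plan is to bound the sum below by a single well-chosen term $k\asymp \ell^2$ and to realize the event $\{S_k\geq t_\ell,\ \max_{j\le k}H_j^S\le e^{\ell^d-a\ell^{d'}},\ \underline S_k\ge -B,\ \overline S_k=S_k\}$ by imposing that $S$ first climbs above $t_\ell$ (slightly boosted) through a high-ladder argument, then stays below its running maximum for the remaining $\asymp \ell^2$ steps while keeping the exponential-downfall functional $H_j^S$ under the barrier $e^{\ell^d-a\ell^{d'}}$. Concretely, I would write, for a fixed small $\varepsilon'>0$ and $k=\lfloor \ell^2\rfloor$,
\begin{align*}
\P\big(S_k\geq t_\ell,\ \max_{j\leq k}H_j^S\leq e^{\ell^d-a\ell^{d'}},\ \underline S_k\geq-B,\ \overline S_k=S_k\big)\\
\geq \sum_{m\leq k}\P\big(\overline S_{m-1}<S_m,\ S_m\geq t_\ell+\ell^{d},\ \max_{j\leq m}\overline S_j-S_j\leq \tfrac12(\ell^d-a\ell^{d'}),\ \underline S_m\geq -B\big)\\ \times\, q_2(m,k),
\end{align*}
where, after using that on the first piece $\max_{j\le m}H_j^S\le m\,e^{\max_j(\overline S_j-S_j)}\le \ell^2 e^{(\ell^d-a\ell^{d'})/2}\le e^{\ell^d-a\ell^{d'}}$ for $\ell$ large (since $\ell^{d'}=o(\ell^d)$ forces $a\ell^{d'}$ to be a lower-order correction, and $\log \ell^2=o(\ell^d)$), the factor $q_2(m,k)$ is the probability that the post-$m$ increments stay in $[-\ell^d/2,\,0]$ (relative to $S_m$), so that $\overline S_k=S_m=\overline S_m$ is preserved, that $S_k\ge t_\ell$ still holds (guaranteed by the $+\ell^d$ boost and the $-\ell^d/2$ floor), and that $\max_{m<j\le k}\overline S_j-S_j\le \ell^d/2$ as well.

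The first (ladder) factor I would control exactly as in the proof of \eqref{lem4.1b} in Lemma \ref{Minor_HRHP1}: summing over $m$ and discarding the tail $m>A\ell^2$ via Proposition 3.1 in \cite{HuShi10b} (i.e.\ $\P(\sup_{j\le m}\overline S_j-S_j\le \ell^d/2)\le e^{-c\sigma^2 m/\ell^{2d}}$, whose contribution for $m>A\ell^{2}$ is negligible since $\ell^{2-2d}\to\infty$), what remains is
$\P(\tau^{\overline S-S}_{\ell^d/2}\vee \tau^-_{-B}>\tau_{t_\ell+\ell^d})$, and by \eqref{A.2_Hu_Shi} of Lemma \ref{Minor_HRHP1} (applied with $\ell^{2d}$ in place of $\ell$, legitimate because $d\le 1/2$ so $\ell^{2d}\le \ell$, and because $t_\ell/\ell^d\to\infty$ while $t_\ell/\ell^{2d}\to 0$ from $t_\ell/\ell\to0$ — this is where $d\le 1/2$ is used) this is at least $e^{-(t_\ell+\ell^d)(1+o(1))/\ell^d}=e^{-(t_\ell/\ell^d)(1+o(1))}$, the boost $\ell^d$ and the halving of the barrier being absorbed into the $o(1)$. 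For the second factor $q_2(m,k)$ with $k-m\asymp\ell^2$, I would recognize the event as: the reversed walk $\tilde S_j=-(S_{m+j}-S_m)$ stays in $[0,\ell^d/2]$ for $j\le k-m$, which by the local/functional estimates for walks confined to a strip of width $\ell^d/2$ (again Proposition 3.1 in \cite{HuShi10b}, or equivalently the estimates underlying Lemma \ref{LemmBorneUnif}) is at least $e^{-c\sigma^2(k-m)/\ell^{2d}}\ge e^{-c'\ell^{2-2d}}$; crucially $\ell^{2-2d}=o(t_\ell/\ell^d)$ is \emph{not} automatic, so I would instead only demand the strip condition on a sub-window of length $\asymp \ell^{2d}$ around the far end (enough to force $\overline S_k=\overline S_m$ with probability bounded below by a constant times $\ell^{-1}$, as in the $p_2$ analysis of Lemma \ref{Minor_HRHP1}), and note that on the complementary middle stretch one only needs $S$ to stay $\le S_m$ and $\ge -B$, which, after the reversal, is the probability a positive-conditioned walk of length $\asymp\ell^2$ is well-behaved — bounded below by $Cte/\ell$ by Caravenna \cite{Carav}, Theorem 1. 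Collecting, $q_2(m,k)\ge Cte/\ell$ uniformly over the relevant range of $m$.

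Multiplying the two factors and using $\ell^{-1}=e^{-\log\ell}=e^{o(t_\ell/\ell^d)}$ (since $t_\ell/\ell^d\to\infty$), the whole sum is at least $e^{-(t_\ell/\ell^d)(1+o(1))}\ge e^{-(t_\ell/\ell^d)(1+\varepsilon)}$ for $n$ (hence $\ell$) large, which is the claim. The main obstacle I anticipate is the bookkeeping in the second factor: one must thread the needle between the strip width $\ell^d/2$ (narrow enough that $m\,e^{\ell^d/2}$ stays under $e^{\ell^d-a\ell^{d'}}$, yet wide enough that the confinement cost $e^{-c\ell^{2-2d}}$ over the \emph{long} middle stretch is not incurred — hence the decomposition into a short $\ell^{2d}$-window where the strip is enforced plus a long window where only positivity relative to $S_m$ is asked), and to verify that the $a\ell^{d'}$ and $B$ terms are genuinely lower order; everything else is a direct transcription of the argument for \eqref{lem4.1b}.
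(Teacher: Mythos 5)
There is a genuine gap, and it is structural rather than a matter of bookkeeping. The target event contains the constraint $\overline{S}_k=S_k$: the walk must sit \emph{at} its running maximum at the terminal time. Your construction fixes $k=\lfloor\ell^2\rfloor$, places the maximum at an earlier ladder time $m$, and then confines the post-$m$ path to $[S_m-\ell^d/2,\,S_m]$; this yields $\overline{S}_k=S_m\geq S_k$, with equality only on the (generically negligible) event $S_k=S_m$. So the event you build is not contained in the event you must lower-bound. You have transcribed the architecture of \eqref{lem4.1b}, but there the terminal constraint $H_j^S\geq e^{b\sqrt{\ell}}$ \emph{forces} the walk to end far below its maximum, so the post-peak descent is the whole point; here it is exactly what is forbidden. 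Your patch for the confinement cost compounds the problem: on the long middle stretch you propose to require only $-B\leq S_j\leq S_m$, but the constraint $\max_{j\leq k}H_j^S\leq e^{\ell^d-a\ell^{d'}}$ must hold on all of $[0,k]$, and dropping the strip there allows $\overline{S}_j-S_j$ of order $t_{\ell}+\ell^d+B\gg\ell^d$, hence $H_j^S\gg e^{\ell^d}$. Keeping the strip over a stretch of length $\asymp\ell^2$ costs $e^{-c\,\ell^{2-2d}}$, and since $t_{\ell}/\ell^d=o(\ell^{1-d})$ while $2-2d\geq 1-d$, this swamps the target $e^{-t_{\ell}\ell^{-d}(1+\varepsilon)}$. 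So the two-piece decomposition with a fixed terminal time $k\asymp\ell^2$ cannot be repaired.

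The correct move is to not fix $k$ at all: since the events $\{k=\tau_{t_{\ell}}\}$ are disjoint and $k=\tau_{t_{\ell}}$ automatically gives $S_k\geq t_{\ell}$ and $\overline{S}_k=S_k$ (first passage above a positive level is a new maximum), while $H_j^S\leq \ell^2 e^{\overline{S}_j-S_j}$ converts the $H$-constraint into a downfall barrier $\ell^d-a\ell^{d'}-2\log\ell\sim\ell^d$, the sum is bounded below by $\P\big(\overline{S}_{\ell^2}\geq t_{\ell},\ \forall j\leq\tau_{t_{\ell}}:\overline{S}_j-S_j\leq\ell^d-a\ell^{d'}-2\log\ell,\ S_j\geq-B\big)$. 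There is then no post-peak stretch to control. The remaining difficulty is that the event $\{\tau_{t_{\ell}}\leq\ell^2\}$ and the path constraint up to $\tau_{t_{\ell}}$ are not independent; this is handled by the FKG-type Lemma \ref{LemmFKG} (both indicators are increasing functions of the increments), which decouples them into $\P(\overline{S}_{\ell^2}\geq t_{\ell})\to 1$ (using $t_{\ell}=o(\ell)$) times the first-passage estimate \eqref{A.2_Hu_Shi} applied with $\ell^{2d}$ in place of $\ell$, giving $e^{-t_{\ell}\ell^{-d}(1+\varepsilon)}$. Your use of \eqref{A.2_Hu_Shi} for the climbing piece is the right ingredient; what is missing is the identification $k=\tau_{t_{\ell}}$ and the FKG decoupling that replaces your second factor $q_2$ entirely.
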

\begin{proof}
Recall that $\tau_{r}=\inf\{i\geq 1;\;S_i\geq r\}$. First, observe that for all $j\leq k\leq\ell^2$, $H_j^S\leq \ell^2e^{\overline{S}_j-S_j}$ so 
\begin{align*}
    &\sum_{k\leq\ell^2}\P\big(S_k\geq t_{\ell}, \max_{j\leq k}H_j^S\leq e^{\ell^d-a\ell^{d'}},\underline S_k\geq-B,\overline{S}_k=S_k\big) \\ & \geq \sum_{k\leq\ell^2}\P\big(k=\tau_{t_{\ell}}, \max_{j\leq k}\overline{S}_j-S_j\leq\ell^d-a\ell^{d'}-2\log \ell,\underline S_k\geq-B\big),
\end{align*}
which is equal to  $\P\big(\overline{S}_{\ell^2}\geq t_{\ell},\forall  j\leq\tau_{t_{\ell}}:\overline{S}_j-S_j\leq \ell^d-a\ell^{d'}-2\log\ell,S_j\geq-B\big)$. \\ 
Now let $k_{\ell}=\lfloor(e^{\ell}t_{\ell})^2\rfloor+\ell^2$. First note that, since $\ell^2\leq k_{\ell}$, we have, on $\{\overline{S}_{\ell^2}\geq t_{\ell}\}$, $\tau_{t_{\ell}}=\tau^{k_{\ell}}_{t_{\ell}}$ with $\tau^{k_{\ell}}_{t_{\ell}}:=k_{\ell}\land\inf\{i\leq k_{\ell};\;S_i\geq t_{\ell}\}$ so
\begin{align*}
    &\P\big(\overline{S}_{\ell^2}\geq t_{\ell}, \forall j\leq\tau_{t_{\ell}}:\overline{S}_j-S_j\leq\ell^d-a\ell^{d'}-2\log \ell,S_j\geq-B\big) \\ & =\P\big(\overline{S}_{\ell^2}\geq t_{\ell}, \forall j\leq\tau^{k_{\ell}}_{t_{\ell}}:\overline{S}_j-S_j\leq\ell^d-a\ell^{d'}-2\log \ell,S_j\geq-B\big).    
\end{align*}
For any $k\in\N^*$ and $r>0$, let $\mathbf{t}=(t_1,\ldots,t_k)\in\R^k$ and define the $\mathbf{t}$-version $\tau^{k,\mathbf{t}}_r$ of $\tau^k_r$ that is
\begin{align*}
    \tau^{k,\mathbf{t}}_r := k\land\inf\big\{i\leq k;\; t_i\geq r\big\},
\end{align*}
with the usual convention $\inf\varnothing=+\infty$. Then
\begin{align*}
   \P\big(\overline{S}_{\ell^2}\geq t_n, \forall j\leq\tau^{k_{\ell}}_{t_{\ell}}:\overline{S}_j-S_j\leq\ell^d-a\ell^{d'}-2\log \ell,S_j\geq-B\big)=\E\big[f_1f_2(S_1,S_2,\ldots,S_{k_{\ell}})\big],
\end{align*}
with for all $i\in\{1,2\}$, $f_i:=\un_{A^{\ell}_i}$, $f_1f_2(\mathbf{u})=f_1(\mathbf{u})f_2(\mathbf{u})$ and 
\begin{align*}
    A^{\ell}_1:=\big\{\mathbf{u}=(u_1,\ldots,u_{k_{\ell}})\in\R^{k_{\ell}};\exists\; j\leq \ell^2: u_j\geq t_{\ell}\big\},
\end{align*}
and
\begin{align*}
    A^{\ell}_2:=\big\{\mathbf{u}=(u_1,\ldots,u_{k_{\ell}})\in\R^{k_{\ell}};\forall\;j\leq\tau^{k_{\ell},\mathbf{t}}_{t_{\ell}}, \forall i<j: u_j-u_i\geq -\ell^d+a\ell^{d'}+2\log\ell,\;u_j\geq-B\big\}.
\end{align*}
Then, it is easy to see that for all $i\in\{1,2\}$, $\tilde{f}_i$ (see Lemma \ref{LemmFKG} for the definition) is non-decreasing according to the partial order $\leq_{k_{\ell}}$ defined above. Then, thanks to Lemma \ref{LemmFKG}, $\E[f_1f_2(S_1,S_2,\ldots,S_{k_{\ell}})]$ is larger than
\begin{align*}
     & \geq\P\big((S_1,S_2,\ldots,S_{k_n})\in A^{\ell}_1\big)\P\big((S_1,S_2,\ldots,S_{k_{\ell}})\in A^{\ell}_2\big) \\ & \geq\P(\overline{S}_{\ell^2}\geq t_{\ell})\P\big(\forall j\leq\tau^{k_{\ell}}_{t_{\ell}}:\overline{S}_j-S_j\leq\ell^d-a\ell^{d'}-2\log\ell,S_j\geq-B,\tau_{t_{\ell}}\leq k_{\ell}\big).
\end{align*}
Again, on $\{\tau_{t_{\ell}}\leq k_{\ell}\}$, $\tau^{k_{\ell}}_{t_{\ell}}=\tau_{t_{\ell}}$ and thanks to \cite{Kozlov1976} (Theorem A), there exists $C_K>0$ such that for $\ell$ large enough
\begin{align*}
    &\P\big(\forall j\leq\tau^{k_{\ell}}_{t_{\ell}}:\overline{S}_j-S_j\leq\ell^d-a\ell^{d'}-2\log\ell,S_j\geq-B,\tau_{t_{\ell}}\leq k_{\ell}\big) \\ & \geq \P\big(\forall j\leq\tau_{t_{\ell}}:\overline{S}_j-S_j\leq\ell^d-a\ell^{d'}-2\log\ell,S_j\geq-B\big)-\P(\tau_{t_{\ell}}>k_{\ell}) \\ & \geq \P\big(\forall j\leq\tau_{t_{\ell}}:\overline{S}_j-S_j\leq\ell^d-a\ell^{d'}-2\log\ell,S_j\geq-B\big)-C_Ke^{-\ell}.
\end{align*}
Moreover, $t_{\ell}/\ell\to 0$ so $\P(\overline{S}_{\ell^2}\geq t_{\ell})\to 1$. Finally, by \eqref{A.2_Hu_Shi} together with the fact that $\ell^d\sim\ell^d-a\ell^{d'}-2\log\ell$ (as $d>d'$) for $\ell$ large enough, $\P\big(\forall j\leq\tau_{t_{\ell}}:\overline{S}_j-S_j\leq\ell^d-a\ell^{d'}-2\log\ell,S_j\geq-B\big)\geq 2e^{-t_{\ell}\ell^{-d}(1+\varepsilon)}$ and since $t_{\ell_n}/\ell^d=o(\ell)$, $C_Ke^{-\ell}\leq e^{-t_{\ell}\ell^{-d}(1+\varepsilon)}$, the result follows.
\end{proof}

\begin{lemm}\label{LemmBorneUnif}
Let $\alpha\in(1,2)$ and $\varepsilon_{\alpha}\in[0,\alpha-1)$ and introduce $\bm{L}_{\ell}:=\lfloor \chi\ell^{1+\frac{\varepsilon_{\alpha}}{2}}\rfloor$, $\chi>0$. For all $\varepsilon>0$, $\ell$ large enough and any $k\in\{\bm{L}_{\ell},\ldots, \ell^2\}$
\begin{align}\label{LemmBorneUnif1}
    \P\big(\max_{j\leq k}\;H_j^S\leq e^{\sqrt{\ell}}\big)\leq e^{-\frac{k\pi^2\sigma^2}{8\ell}(1-\varepsilon)}, 
\end{align}
and for any $a,d,c>0$, $b\in(0,1)$, $\ell$ large enough and any $k\in\{\bm{L}_{\ell},\ldots, \ell^2\}$
\begin{align}\label{LemmBorneUnif2}
    \P\big(\max_{j\leq k} H_j^S\leq e^{\sqrt{\ell}-a\ell^d},e^{b\sqrt{\ell}}<H_k^S\leq e^{b\sqrt{\ell}+c\ell^d},\underline{S}_k\geq 0\big)\geq e^{-\frac{k\pi^2\sigma^2}{8\ell}(1+\varepsilon)}.
\end{align}
\end{lemm}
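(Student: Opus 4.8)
The plan is to reduce both bounds to the well‑understood behaviour of the ``drawdown'' process $\overline S_j-S_j$, via the elementary sandwich
\begin{align*}
 e^{\,\overline S_j-S_j}\ \le\ H_j^S\ =\ \sum_{i=1}^j e^{S_i-S_j}\ \le\ j\,e^{\,\overline S_j-S_j},
\end{align*}
which holds for every $j\ge1$, together with the observation that on the range $j\le k\le\ell^2$ the polynomial factor $j$ is at most $\ell^2=e^{2\log\ell}$, with $2\log\ell=o(\sqrt\ell)$.

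For \eqref{LemmBorneUnif1}: the left inequality gives $\{\max_{j\le k}H_j^S\le e^{\sqrt\ell}\}\subset\{\max_{j\le k}(\overline S_j-S_j)\le\sqrt\ell\}$, so it suffices to bound the probability that the drawdown of a centred random walk with finite exponential moments stays below level $r=\sqrt\ell$ for $k$ steps. I would invoke the standard estimate $\P\big(\max_{j\le k}(\overline S_j-S_j)\le r\big)\le\exp\!\big(-\tfrac{k\pi^2\sigma^2}{8r^2}(1-\varepsilon)\big)$, valid for $r$ large uniformly over $k$ with $k/r^2$ bounded below — this is Proposition~3.1 in \cite{HuShi10b}, already used in the proof of Remark~\ref{rem1b} and of Lemma~\ref{Minor_HRHP1}. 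Since $\bm{L}_{\ell}\asymp\ell^{1+\varepsilon_{\alpha}/2}$, the ratio $k/\ell$ is bounded away from $0$ on all of $\{\bm{L}_{\ell},\dots,\ell^2\}$, so the estimate applies with $r=\sqrt\ell$ and yields \eqref{LemmBorneUnif1} directly.

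For \eqref{LemmBorneUnif2} I would argue in the opposite direction. Using the right inequality of the sandwich and $\ell^2=e^{2\log\ell}$, the event
\begin{align*}
 E_\ell:=\Big\{\max_{j\le k}(\overline S_j-S_j)\le\sqrt\ell-a\ell^d-2\log\ell,\ \ b\sqrt\ell<\overline S_k-S_k\le b\sqrt\ell+c\ell^d-2\log\ell,\ \ \underline S_k\ge0\Big\}
\end{align*}
is contained in the event appearing in \eqref{LemmBorneUnif2} (for $\ell$ large and in the regime $d<1/2$, so that $(a+c)\ell^d<(1-b)\sqrt\ell$ makes $E_\ell$ consistent). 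It then remains to bound $\P(E_\ell)$ below by $e^{-\frac{k\pi^2\sigma^2}{8\ell}(1+\varepsilon)}$. Here I would follow the scheme used to prove \eqref{A.2_Hu_Shi} (itself modelled on Lemma~A.2 of \cite{HuShi15b}): split $[0,k]$ into an initial segment of length $k-m$, $m\asymp\ell$, on which the walk keeps its drawdown below the moving barrier $\sqrt\ell-a\ell^d-2\log\ell$ and stays nonnegative — whose probability, by the matching lower bound for the drawdown‑confinement probability, is at least $e^{-\frac{(k-m)\pi^2\sigma^2}{8(\sqrt\ell-a\ell^d-2\log\ell)^2}(1+\varepsilon/2)}$, and since $\sqrt\ell-a\ell^d-2\log\ell\sim\sqrt\ell$ this is $\ge e^{-\frac{k\pi^2\sigma^2}{8\ell}(1+\varepsilon)}$ up to factors polynomial in $\ell$ — and a final segment of $m\asymp\ell$ steps on which one prescribes a Brownian‑scale descent of height about $b\sqrt\ell$ from the last record, staying nonnegative and landing $\overline S_k-S_k$ inside the window of width $c\ell^d-O(\log\ell)$; this last event has probability bounded below by a positive constant, by Donsker‑type arguments and the local limit estimates of Caravenna \cite{Carav} and Caravenna–Chaumont \cite{CaraChau2} already used in Lemma~\ref{Minor_HRHP1}. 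The polynomial‑in‑$\ell$ losses are absorbed into the factor $(1+\varepsilon)$ because $k/\ell\to\infty$ on the range $k\ge\bm{L}_{\ell}$ (when $\varepsilon_{\alpha}>0$; when $\varepsilon_{\alpha}=0$ one takes $\chi$ large so that $k/\ell$ exceeds a prescribed large constant).

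The main obstacle is precisely this lower bound: unlike one‑sided confinement, it requires combining (i) keeping the drawdown below the barrier $\sqrt\ell-a\ell^d$, (ii) forcing $H_k^S$, equivalently $\overline S_k-S_k$, into the prescribed terminal window, and (iii) keeping $S$ nonnegative, and one must check that the joint cost is only the claimed $e^{-\frac{k\pi^2\sigma^2}{8\ell}(1+\varepsilon)}$ — i.e. that the endpoint and positivity constraints cost at most polynomially in $\ell$ and are swallowed by the super‑linear range of $k$. Everything else is a routine application of the sandwich and of estimates already invoked in the paper.
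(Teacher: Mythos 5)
Your overall architecture is the same as the paper's: reduce $H_j^S$ to the drawdown $\overline S_j-S_j$ via the sandwich $e^{\overline S_j-S_j}\le H_j^S\le j\,e^{\overline S_j-S_j}$ with $j\le\ell^2=e^{2\log\ell}$, get the exponential cost from the Hu--Shi confinement estimate, and for the lower bound split the path into a long confinement segment followed by a final $O(\ell)$-step descent of height $b\sqrt\ell$ controlled by Donsker/Caravenna-type local estimates. That part of your plan is right.

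The step you treat as quotable, however, is where essentially all the work in the paper's proof lives: you assert that the estimate $\P(\max_{j\le k}(\overline S_j-S_j)\le r)\le e^{-\frac{k\pi^2\sigma^2}{8r^2}(1-\varepsilon)}$ from \cite{HuShi10b} holds \emph{uniformly} over $k\in\{\bm L_\ell,\dots,\ell^2\}$ with $r=\sqrt\ell$, and likewise that a ``matching lower bound'' holds jointly with positivity and the terminal window. Neither is available off the shelf for a whole range of $k$ at fixed $\ell$; the reference gives the asymptotics at a single scale. The paper manufactures the uniformity: for \eqref{LemmBorneUnif1} it applies the one-block estimate only at length $\bm L_\ell$ and extends to all $k$ by sub-multiplicativity (Markov property), $\P(\max_{j\le k}\overline S_j-S_j\le\sqrt\ell)\le\P(\max_{j\le\bm L_\ell}\overline S_j-S_j\le\sqrt\ell)^{\lfloor k/\bm L_\ell\rfloor}$; for \eqref{LemmBorneUnif2} it runs an induction over blocks of length $\bm L_\ell-\ell$, using the FKG-type Lemma~\ref{LemmFKG} at each step to decouple the confinement event from $\{\underline S\ge0\}$ and from the condition that the running maximum is attained at the cut point (which is also what makes your two segments genuinely independent and lets the terminal window be read off the last $\ell$ steps alone). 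It is precisely this blocking-plus-FKG mechanism that turns the per-block polynomial losses into a total loss $e^{-O(k\log\ell/\bm L_\ell)}$, absorbable into $(1+\varepsilon)\frac{k\pi^2\sigma^2}{8\ell}$ because $\bm L_\ell\gg\ell$ (or $\chi$ large when $\varepsilon_\alpha=0$). So your proposal is not wrong, but to be a proof it must supply the sub/supermultiplicativity argument and the FKG decoupling rather than cite them.
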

\begin{proof}
Let us start with the upper bound. Thanks to the Markov property, for any $k\in\N$, $k>\bm{L}_{\ell}$
\begin{align*}
  \P\big(\max_{j\leq k}\;H_j^S\leq e^{\sqrt{\ell}}\big)\leq\P\big(\max_{j\leq k}\;\overline{S}_j-S_j\leq\sqrt{\ell}\big)\leq \P\big(\max_{j\leq \bm{L}_{\ell}}\;\overline{S}_j-S_j\leq\sqrt{\ell}\big)^{\lfloor\frac{k}{\bm{L}_{\ell}}\rfloor},
\end{align*}
and thanks to \cite{HuShi10b}, for $\ell$ large enough, $\P\big(\max_{j\leq \bm{L}_{\ell}}\;\overline{S}_j-S_j\leq\sqrt{\ell}\big)\leq e^{-\frac{\pi^2\sigma^2\bm{L}_{\ell}}{8\ell}(1-\frac{\varepsilon}{2})}$, so for any $\varepsilon$, $\ell$ large enough and any $k>\bm{L}_{\ell}$
\begin{align*}
  \P\big(\max_{j\leq k}\;\overline{S}_j-S_j\leq\sqrt{\ell}\big)\leq e^{-(1-\frac{\varepsilon}{2})\frac{\pi^2\sigma^2\bm{L}_{\ell}}{8\ell}\lfloor\frac{k}{\bm{L}_{\ell}}\rfloor}\leq e^{-(1-\varepsilon)\frac{k\pi^2\sigma^2}{8\ell}}.
\end{align*}
For the lower bound, observe that for any $k\leq\ell^2$, $\P\big(\max_{j\leq k} H_j^S\leq e^{\sqrt{\ell}-a\ell^d},e^{b\sqrt{\ell}}<H_k^S\leq e^{b\sqrt{\ell}+c\ell^d},\underline{S}_k\geq 0\big)$ is larger than $\P\big(\max_{j\leq k} \overline{S}_j-S_j\leq \bm{\lambda}'_{\ell},b\sqrt{\ell}<\overline{S}_k-S_k\leq b\sqrt{\ell}+c\ell^d-\log\ell^2,\underline{S}_k\geq 0\big)$, where $\bm{\lambda}'_{\ell}:=\sqrt{\ell}-a\ell^d-\log\ell^2$. As $\frac{c}{2}\ell^d\geq \log\ell^2$ ($d>0$), the previous probability is larger than $\P\big(\max_{j\leq k} \overline{S}_j-S_j\leq \bm{\lambda}'_{\ell},b\sqrt{\ell}<\overline{S}_k-S_k\leq b\sqrt{\ell}+\frac{c}{2}\ell^d,\underline{S}_k\geq 0\big)$. We need independence to compute this probability so for all $k\in\N^*$, $\bm{L}_{\ell}<k\leq\ell^2$, we say that $\overline{S}_k=S_{k-\ell}\geq\bm{\lambda}'_{\ell}$ which gives that for all $k-\ell<j\leq k$, $\overline{S}_j\leq S_{k-\ell}$ and then, $\max_{k-\ell<j\leq k}S_{k-\ell}-S_j\leq\bm{\lambda}'_{\ell}$ implies that $S_j\geq S_{k-\ell}-\bm{\lambda}'_n\geq 0$ for all $k-\ell<j\leq k$. Hence
\begin{align*}
    \P\big(\max_{j\leq k} \overline{S}_j-S_j\leq \bm{\lambda}'_{\ell},b\sqrt{\ell}<\overline{S}_k-S_k\leq b\sqrt{\ell}+\frac{c}{2}\ell^d,\underline{S}_k\geq 0\big)\geq\P(A_{k,\ell}\cap B_{k,\ell})=\P(A_{k,\ell})\P(B_{k,\ell}),
\end{align*}
with 
\begin{align*}
    A_{k,\ell}:=\big\{\max_{j\leq k-\ell}\;\overline{S}_j-S_j\leq\bm{\lambda}'_{\ell},\underline{S}_{k-\ell}\geq0, S_{k-\ell}=\overline{S}_{k-\ell}\geq\bm{\lambda}'_{\ell}\big\},
\end{align*}
and 
\begin{align*}
    B_{k,\ell}:=\big\{\forall\; k-\ell<j\leq k, S_{k-\ell}-S_j\leq\bm{\lambda}'_{\ell}, S_j\leq S_{k-\ell},b\sqrt{\ell}<S_{k-\ell}-S_{k}\leq b\sqrt{\ell}+\frac{c}{2}\ell^d\big\}.
\end{align*}
Let $\bm{S}:=-S$. $\P(B_{k,\ell})$ is nothing but
\begin{align*}
    \P\big(\overline{\bm{S}}_{\ell}\leq\bm{\lambda}'_{\ell},\underline{\bm{S}}_{\ell}\geq 0,\bm{S}_{\ell}\in(b\sqrt{\ell},b\sqrt{\ell}+\frac{c}{2} \ell^d]\big)=&\P(\underline{\bm{S}}_{\ell}>0)\P\big(\bm{S}_{\ell}\in(b\sqrt{\ell},b\sqrt{\ell}+\frac{c}{2} \ell^d]|\underline{\bm{S}}_{\ell}\geq 0\big) \\ & \times \P\big(\overline{\bm{S}}_{\ell}\leq\bm{\lambda}'_{\ell}|\underline{\bm{S}}_{\ell}>0,\bm{S}_{\ell}\in(b\sqrt{\ell},b\sqrt{\ell}+\frac{c}{2}\ell^d]\big),
\end{align*}
{which is larger than $C/\ell$ for $\ell$ large enough (see Lemma \ref{Minor_HRHP1}).} \\ 
We then deal with $\P(A_{k,\ell})$. Thanks to Lemma \ref{LemmFKG}, this probability is larger than
\begin{align*}
   \P\big(\max_{j\leq k-\ell}\;\overline{S}_j-S_j\leq\bm{\lambda}'_{\ell}\big)\P\big(\overline{S}_{k-\ell}\geq\bm{\lambda}'_{\ell}\big)\P\big(\underline{S}_{k-\ell}\geq 0\big)^{2},
\end{align*}
and again, using \cite{Kozlov1976} together with the fact that $\P(\overline{S}_{\bm{L}_{\ell}}\geq\bm{\lambda}'_{\ell})\to 1$, there exists $C>0$ such that for $\ell$ large enough and any $k\in\{\bm{L}_{\ell},\ldots,\ell^2\}$,
\begin{align*}
    \P\big(\overline{S}_{k-\ell}\geq \sqrt{\ell}\big)\P\big(\underline{S}_{k-\ell}\geq 0\big)\geq \P\big(\overline{S}_{\bm{L}_{\ell}}\geq \sqrt{\ell}\big)\P\big(\underline{S}_{\ell^2}\geq 0\big)^{2}\geq\frac{C}{\ell^{2}}.
\end{align*}
We now turn to the most important part: $\P\big(\max_{j\leq k-\ell}\;\overline{S}_j-S_j\leq\bm{\lambda}'_{\ell}\big)$. We follow the same lines as the proof of \eqref{LemmBorneUnif1}: for any $k\in\{\bm{L}_{\ell},\ldots,\ell^2\}$, $k-\ell>\bm{L}_{\ell}-\ell$ so $\max_{j\leq\bm{L}_{\ell}-\ell}\;\overline{S}_j-S_j\leq\bm{\lambda}'_{\ell}$ together with $\overline{S}_{\bm{L}_{\ell}-\ell}=S_{\bm{L}_{\ell}-\ell}\leq S_j$ and $\max_{\bm{L}_{\ell}-\ell<i\leq j}S_i-S_j\leq\bm{\lambda}'_{\ell}$ for all $\bm{L}_{\ell}-\ell<j\leq k-\ell$ implies that $\max_{j\leq k-\ell}\;\overline{S}_j-S_j\leq\bm{\lambda}'_{\ell}$. It follows that $\P\big(\max_{j\leq k-\ell}\;\overline{S}_j-S_j\leq\bm{\lambda}'_{\ell}\big)$ is larger than 
\begin{align*}
    &\P\big(\max_{j\leq\bm{L}_{\ell}-\ell}\;\overline{S}_j-S_j\leq\bm{\lambda}'_{\ell},\overline{S}_{\bm{L}_{\ell}-\ell}=S_{\bm{L}_{\ell}-\ell},\max_{\bm{L}_{\ell}-\ell<i\leq j}S_i-S_j\leq\bm{\lambda}'_{\ell}, S_j\geq S_{\bm{L}_{\ell}-\ell}\;\forall\; \bm{L}_{\ell}-\ell<j\leq k-\ell\big) \\ & = \P\big(\max_{j\leq\bm{L}_{\ell}-\ell}\;\overline{S}_j-S_j\leq\bm{\lambda}'_{\ell},\overline{S}_{\bm{L}_{\ell}-\ell}=S_{\bm{L}_{\ell}-\ell}\big)\P\big(\max_{j\leq k-\ell-(\bm{L}_{\ell}-\ell)}\;\overline{S}_j-S_j\leq\bm{\lambda}'_{\ell},\underline{S}_{k-\ell-(\bm{L}_{\ell}-\ell)}\geq 0\big).
\end{align*}
Moreover, by  Lemma \ref{LemmFKG}, $\P\big(\max_{j\leq k-\ell-(\bm{L}_{\ell}-\ell)}\;\overline{S}_j-S_j\leq\bm{\lambda}'_{\ell},\underline{S}_{k-\ell-(\bm{L}_{\ell}-\ell)}\geq 0\big)$ is larger than $\P(\max_{j\leq k-\ell-(\bm{L}_{\ell}-\ell)}\;\overline{S}_j-S_j\leq\bm{\lambda}'_{\ell})\P(\underline{S}_{k-\ell-(\bm{L}_{\ell}-\ell)}\geq 0)$. By induction, we get that $\P\big(\max_{j\leq k-\ell}\;\overline{S}_j-S_j\leq\bm{\lambda}'_{\ell}\big)$ is larger than
\begin{align*}
    \P\big(\max_{j\leq\bm{L}_{\ell}-\ell}\;\overline{S}_j-S_j\leq\bm{\lambda}'_{\ell},\overline{S}_{\bm{L}_{\ell}-\ell}=S_{\bm{L}_{\ell}-\ell}\big)^{\bm{L}_{\ell}(k)}\prod_{i\leq\bm{L}_{\ell}(k)}\P\big(\underline{S}_{k-\ell-i(\bm{L}_{\ell}-\ell)}\geq 0\big)
\end{align*}
with $\bm{L}_{\ell}(k):=\lfloor (k-\ell)/(\bm{L}_{\ell}-\ell)\rfloor$. Again, by  Lemma \ref{LemmFKG}, $\P(\max_{j\leq\bm{L}_{\ell}-\ell}\;\overline{S}_j-S_j\leq\bm{\lambda}'_{\ell},\overline{S}_{\bm{L}_{\ell}-\ell}=S_{\bm{L}_{\ell}-\ell}\big)\geq\P\big(\max_{j\leq\bm{L}_{\ell}-\ell}\;\overline{S}_j-S_j\leq\bm{\lambda}'_{\ell})\Pb(\underline{S}_{\bm{L}_{\ell}-\ell}\geq 0)$ and as $k\leq\ell^2$, $\P(\underline{S}_{k-\ell-i(\bm{L}_{\ell}-\ell)}\geq 0)\geq\P\big(\underline{S}_{k}\geq 0\big)\geq\P\big(\underline{S}_{\ell^2}\geq 0\big)$. Hence, by \cite{Kozlov1976}
\begin{align*}
    \P\big(\max_{j\leq k-\ell}\;\overline{S}_j-S_j\leq\bm{\lambda}'_{\ell}\big)\geq \Big(\frac{C}{\ell\sqrt{\bm{L}_{\ell}-\ell}}\P\big(\max_{j\leq \bm{L}_{\ell}-\ell}\;\overline{S}_j-S_j\leq\bm{\lambda}'_{\ell}\big)\Big)^{\bm{L}_{\ell}(k)}
\end{align*}
for some $C>0$. Then, thanks to \cite{HuShi10b}, for all $\varepsilon>0$ and $\ell$ large enough $\P(\max_{j\leq \bm{L}_{\ell}-\ell}\;\overline{S}_j-S_j\leq\bm{\lambda}'_{\ell})\geq e^{-(1+\frac{\varepsilon}{4})\frac{\pi^2\sigma^2(\bm{L}_{\ell}-\ell)}{8}(\bm{\lambda}'_{\ell})^{-2}}$ so for $\ell$ large enough and any $k\in\{\bm{L}_{\ell},\ldots,\ell^2\}$, $\P\big(\max_{j\leq k-\ell}\;\overline{S}_j-S_j\leq\bm{\lambda}'_{\ell}\big)$ is larger than
\begin{align*}
    \Big(\frac{C}{\ell\sqrt{\bm{L}_{\ell}-\ell}}e^{-(1+\frac{\varepsilon}{4})\frac{\pi^2\sigma^2(\bm{L}_{\ell}-\ell)}{8(\bm{\lambda}'_{\ell})^2}}\Big)^{\bm{L}_{\ell}(k)}\geq e^{-(1+\frac{\varepsilon}{3})\frac{\pi^2\sigma^2(\bm{L}_{\ell}-\ell)}{8(\bm{\lambda}'_{\ell})^2}\bm{L}_{\ell}(k)}\geq e^{-(k-\ell)(1+\frac{\varepsilon}{2})\frac{\pi^2\sigma^2}{8(\bm{\lambda}'_{\ell})^2}},
\end{align*}
where we have used for the first inequality that $e^{-\eta\frac{\pi^2\sigma^2(\bm{L}_{\ell}-\ell)}{8(\bm{\lambda}'_{\ell})^2}}$ is smaller than $\frac{1}{\ell^{\eta'}}$ for any $\eta,\eta'>0$. Collecting previous inequalities, we obtain
\begin{align*}
    \P(A_{k,\ell})\geq\frac{C}{\ell^{2}}e^{-(k-\ell)(1+\frac{\varepsilon}{2})\frac{\pi^2\sigma^2}{8(\bm{\lambda}'_{\ell})^2}}.
\end{align*}
Finally, observe that $\bm{\lambda}'_{\ell}\sim\sqrt{\ell}$ and then for any $k\in\{\bm{L}_{\ell},\ldots,\ell^2\}$
\begin{align*}
    \P(A_{k,\ell})\geq e^{-\frac{k\pi^2\sigma^2}{8\ell}(1+\varepsilon)},
\end{align*}
which completes the proof.
\end{proof}

\section*{Notations \label{notations}}
In this section, we have summarized the transversal notations, give a short description of them when it is possible and the page or equation where they are introduced. \\

\noindent \textit{Sequences and constants in the statement of the main theorem}

\hspace{1cm} $\kappa_b$  (equation \eqref{kappab}), critical exponent. 

\hspace{1cm} $h_n$ (equation \eqref{Def_hn}), resume the constraint on $V$ and second order for $\mathcal{R}_n({g_n},\mathbf{f}^n)$.

\hspace{1cm} $L$ (equation \eqref{limite_L}).

\hspace{1cm} $\xi$ (equation \eqref{Limite_xi}).

\hspace{1cm}

\noindent \textit{Different form of the cumulative exponential drop of $V$}

%\begin{array}{l{1cm}}
 \hspace{1cm}   $H_x$   (below \eqref{Def_Hx}), variable appearing in the distribution under $\Pe$ of the edge local time 
 
 \hspace{1.6cm} at $x$ before the instant $T^1$.
 
\hspace{1cm}    $\tilde H_x$  (Lemma \ref{LawGeo}), variable appearing in the distribution under $\Pe$  of the sum of edge 

 \hspace{1.6cm} local  times of the descendants of $x$ before the instant $T^1$.  
 
 \hspace{1cm}   $H^S$  (page \pageref{HS}) version of above $H$ after the many to one Lemma is applied.
%\end{array}
\hspace{1cm} 

\vspace{0.3cm}
\noindent \textit{The regular lines and their possible parameters : $ \mathcal{O}_{\lambda,\lambda'} := \big\{x\in \mathbb{T};\; \underset{j\leq|x|}{\max}\; H_{x_j}\leq \lambda,\; H_x>\lambda'\big\}$ } 

\hspace{1cm} $\lambda=\lambda_n$ (above \ref{UpperProp1}), $\lambda= \bm{\lambda}_n$ (below \eqref{Fact2TH1}), $\lambda=\bm{\lambda}_{n,1}$ and $\lambda=\bm{\lambda}_{n,2}$ (in the proof of Theorem
\vspace{0.3cm}
\hspace{1.4cm} \ref{thm3}), $\lambda =\Tilde{\lambda}_n$ in the proof of Lemma \ref{RangeUpBound2}. All along the paper  $\lambda'$ is typically of order $n^b$.

%\hspace{1cm}     

%\hspace{1cm} 

%\hspace{1cm} $\bm{\tilde \lambda}_n$ (page 17) utile ?

%\hspace{1cm} 

%\hspace{1cm}

\noindent \textit{Secondary constraints on the environment : $ \mathcal{H}^k_{B,\mathfrak{z}}= \{\left(t_1,\ldots,t_k\right)\in \mathbb{R}^k;\; t_k\geq \mathfrak{z}, \min_{i \leq k} t_i \geq -B \} $}

%\hspace{1cm} 

 \hspace{1cm} $\mathfrak{z}=\mathfrak{z}_n=\ell_n^{1/3}/\delta_1$ (beginning of Section  \ref{sec3.2}).
 
 %\hspace{1cm} $\mathfrak{z}=2\mathfrak{z}_n=2\ell_n^{1/3}/\delta_1$ \RevAlexisBis{utile? $\mathfrak{z}_n$ est une notation "locale"}

\hspace{1cm} $\Upsilon_.^.$ (Proposition \ref{Prop1}) : various intersections of  conditions on $H$ and  $\mathcal{H}^k_{B,.}.$

\hspace{1cm} 

\noindent \textit{The branching function $\Psi$}

\hspace{1cm} $\Psi^{k}_{\lambda,\lambda'}$ (equation \eqref{Def_Psi}), $k$ is a generation, $\lambda$ an upper bound for $H$, $\lambda'$ a lower bound for $H$.

\hspace{1cm} $\Psi^{k}_{\lambda,\lambda'}(\cdot|\cdot)$ (equation \eqref{PsiCondi}) a conditional version of $\Psi^{k}_{\lambda,\lambda'}$.

\hspace{1cm} 

\noindent \textit{Elementary random variables related to the random walk $\X$}

 \hspace{1cm} $N_x^n$ Edge local time at $(x^*,x)$ before $n$ (equation \eqref{Nxb}).
 
 %\hspace{1cm} $T^1$ first instant of return to the root $e$. 
 
 \hspace{1cm}  $T^n$ (page \pageref{defoftn}) $n$-th instant of return to the root $e$.  

\hspace{1cm} $E_x^n$, $\tilde E_x^n$  (above \ref{A_n}). 

\hspace{1cm} 

\noindent \textit{Different ranges} 

\hspace{1cm}  $ \Ran_n(g_n,\mathbf{f}^n)$ the generalized range (equation \eqref{genRange}) with

 \hspace{2cm} $g_n$  function of constraints on the trajectory of $(X_n,n)$, 
 
\hspace{2cm} $\mathbf{f}^n$  function of constraints on the potential $V$.

\hspace{1cm} $\overline{\mathcal{R}}_{T^n}(g_n,\mathbf{f}^n)$ variant of $ \Ran_{T^n}(g_n,\mathbf{f}^n)$ with additional condition on $\overline{V}$ (page \pageref{leRbar}).

\hspace{1cm} 

%\noindent Autres

%$q_n$, $u_n$ dans preuve du Lemme \ref{RangeUpBound2}, très locale

\bibliographystyle{alpha}
\bibliography{thbiblio}

\end{document}